\newtheorem{theorem}{Theorem}[section]
\newtheorem{defn}[theorem]{Definition}
\newtheorem{lemma}[theorem]{Lemma}
\newtheorem{eple}[theorem]{Example}
\newtheorem{rmk}[theorem]{Remarks}
\newtheorem{dsc}[theorem]{Discussion}
\newtheorem{nota}[theorem]{Notation}
\newsavebox{\indbin}
\savebox{\indbin}{\begin{picture}(0,0)
\newlength{\gnu}
\settowidth{\gnu}{$\smile$} \setlength{\unitlength}{.5\gnu}
\put(-1,-.65){$\smile$} \put(-.25,.1){$|$}
\end{picture}}
\newcommand{\be}{\begin{enumerate}}
\newcommand{\bd}{\begin{defn}}
\newcommand{\bt}{\begin{theorem}}
\newcommand{\bl}{\begin{lemma}}
\newcommand{\ee}{\end{enumerate}}
\newcommand{\ed}{\end{defn}}
\newcommand{\et}{\end{theorem}}
\newcommand{\el}{\end{lemma}}
\begin{document}
\title{A Theory of Branches for Algebraic Curves}
\author{Tristram de Piro}
\address{Mathematics Department, The University of Camerino, Camerino, Italy}
 \email{tristam.depiro@unicam.it}
\thanks{The author was supported by a Marie Curie Research Fellowship}
\begin{abstract}
This paper develops some of the methods of the "Italian School" of
algebraic geometry in the context of infinitesimals. The results
of this paper have no claim to originality, they can be found in
\cite{Sev}, we have only made the arguments acceptable by modern
standards. However, as the question of rigor was the main
criticism of their approach, this is still a useful project. The
results are limited to algebraic curves. As well as being
interesting in their own right, it is hoped that these may also
help the reader to appreciate their sophisticated approach to
algebraic surfaces and an understanding of singularities. The
constructions are also relevant to current research in Zariski
structures, which have played a major role both in model theoretic
applications to diophantine geometry and in recent work on
non-commutative geometry.
\end{abstract}
\maketitle
\begin{section}{Introduction, Preliminary Definitions, Lemmas and Notation}

We begin this section with the preliminary reminder to the reader
that the following results are concerned with \emph{algebraic}
curves. However, the constructions involved are geometric and rely
heavily on the techniques of Zariski structures, originally
developed in \cite{Z} and \cite{HZ}. One might, therefore,
speculate that the results could, in themselves, be used to
develop further the general theory of such structures. Our
starting point is the main Theorem 17.1 of \cite{Z}, also
formulated for Zariski geometries in \cite{HZ};\\

\begin{theorem}{Main Theorem 17.1 of \cite{Z}}\\

Let $M$ be a Zariski structure and $C$ a presmooth Zariski curve
in $M$. If $C$ is non-linear, then there exists a nonconstant
continuous map;\\

$f:C\rightarrow P^{1}(K)$\\

Moreover, $f$ is a finite map ($f^{-1}(x)$ is finite for every
$x\in C$), and for any $n$, for any definable $S\subseteq C^{n}$,
the image $f(S)$ is a constructible subset (in the sense of
algebraic geometry) of $[P^{1}(K)]^{n}$.
\end{theorem}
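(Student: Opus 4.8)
The plan is to reconstruct an algebraically closed field $K$ from the geometry of $C$ and then coordinatize $C$ by a finite map into $P^1(K)$; the non-linearity hypothesis is precisely what makes such a field appear. First I would unpack non-linearity as the statement that $C$ supports a sufficiently rich, at least two-dimensional, definable family $\{D_b\}_{b\in B}$ of plane curves in $C\times C$: concretely, a family for which two generic members meet in a bounded finite set while through two generic points of $C\times C$ there pass members realizing independent conditions. The existence of such a family is where one uses that $C$ fails to be linear (locally modular); in a linear geometry every definable family of curves has dimension at most one, so non-linearity is exactly the extra parameter needed to build a pseudo-plane.

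Next I would exploit presmoothness. Presmoothness supplies the dimension (intersection) theorem: for definable $S,T$ inside a presmooth ambient set, every component of $S\cap T$ has dimension at least $\dim S+\dim T-\dim(\text{ambient})$, together with additivity of dimension along definable maps. Applied to the incidence relation between the points of $C\times C$ and the curves $D_b$, this controls how members of the family intersect and guarantees that generic incidences behave as in an algebraic plane. From this I would assemble a group configuration: six points, namely two generic points of the pseudo-plane, two generic curves, and their intersection data, whose dimensions and dependencies satisfy the hypotheses of the group configuration theorem.

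The main obstacle, and the technical heart of the argument, is converting this configuration into an interpretable group and then a field. Invoking the group configuration theorem I obtain a one-dimensional definable group $G$ acting definably and transitively on (a piece of) $C$; because the geometry is non-modular rather than merely affine, the action is essentially sharply two-transitive, and this forces $G$ to be the group of affine transformations of an interpreted field $K$. One then checks that $K$, being an infinite definable field in a presmooth Zariski structure, is algebraically closed. This step is delicate: one must verify the precise independence and dimension equalities entering the configuration, using presmoothness at each turn, and rule out the degenerate modular case, which is exactly what non-linearity excludes.

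Finally I would coordinatize and derive the stated properties. Having interpreted $K$, the transitive affine action on $C$ together with a second generic coordinate yields a definable map $f\colon C\to P^1(K)$; nonconstancy is immediate from nontriviality of the action, and finiteness of each fibre follows because $C$ and $P^1(K)$ are both one-dimensional, so a nonconstant definable map has finite fibres by the dimension theorem. Continuity means $f$ is a morphism of Zariski structures, which holds because $f$ is built from the closed incidence and action relations. For the image clause, I would observe that $f$ and its powers $f^n\colon C^n\to [P^1(K)]^n$ are morphisms into an algebraically closed field, so for definable (hence constructible, as $M$ is a Zariski structure) $S\subseteq C^n$ the image $f(S)$ is the image of a constructible set under a morphism of $K$-varieties; Chevalley's theorem then gives that $f(S)$ is constructible in $[P^1(K)]^n$.
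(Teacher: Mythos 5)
You should first be aware that the paper does not prove this statement at all: it is quoted verbatim as Theorem 17.1 of \cite{Z} (also \cite{HZ}) and used as a black box, with the remark following it explicitly deferring to the original source. So there is no internal proof to compare against; what your sketch should be measured against is the Hrushovski--Zilber argument itself. At the level of outline you reproduce it faithfully: non-linearity yields a two-dimensional definable family of plane curves on $C\times C$, presmoothness supplies the dimension theorem needed to control incidences, the group configuration theorem produces a one-dimensional group and then a two-dimensional group acting on a one-dimensional set, Hrushovski's classification of such actions forces $K^{+}\rtimes K^{\times}$ for a definable field $K$, and Macintyre's theorem (infinite $\omega$-stable fields are algebraically closed) gives algebraic closedness. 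The step you describe as ``essentially sharply two-transitive'' is in fact obtained by a further construction (acting on the group by the curve family to raise the dimension), not read off from the configuration, but that is a compression rather than an error.

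The genuine gap is in your last paragraph. You derive constructibility of $f(S)$ from Chevalley's theorem, ``since $S$ is definable, hence constructible, as $M$ is a Zariski structure.'' But $S\subseteq C^{n}$ is constructible only in the abstract Noetherian topology of the Zariski structure $M$; it is not given as a constructible subset of any algebraic variety over $K$, and $f^{n}$ is not yet known to be a morphism of $K$-varieties in the sense Chevalley requires. The entire content of that clause of the theorem is precisely that the structure induced on $P^{1}(K)$ by pushing forward definable subsets of $M$ is no richer than the algebraic (constructible) structure --- the ``purity'' of the interpreted field. In \cite{HZ} this is a separate and substantial theorem, proved via the theory of specializations, infinitesimal neighborhoods and the analysis of unramified finite covers, and it cannot be obtained from Chevalley without assuming what is to be proved. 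As written, your final step is circular, and the finiteness and continuity claims, while correct, do not repair it.
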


\begin{rmk}
Here, $K$ denotes an algebraically closed field, we refer the
reader to the original paper for the remaining terminology of the
Theorem.
\end{rmk}

Using this theorem, one can already see that there exists a close
connection between a geometric theory of \emph{algebraic} curves
and Zariski curves. We begin this section by pointing out some of
the main obstacles to developing the results of this paper in the
context of Zariski curves, leaving the resolution of the main
technical problems for another occasion. This discussion continues
up to $(\dag\dag)$, when we introduce the main notation and
preliminary lemmas of the
paper.\\

In Section 2, the first major obstacle that we encounter is a
suitable generalisation of the notion of a linear system. Using
Theorem 1.1, any linear system $\Sigma$ of algebraic hypersurfaces
on $[P^{1}(K)]^{n}$ will define a linear system of Zariski
hypersurfaces on $C^{n}$ , by composing with the finite cover $f$
$(*)$. One would then expect to be able to develop much of the
theory of $g_{n}^{r}$ given in Section $2$ for such systems,
applied to a Zariski curve $S\subseteq C^{n}$. This follows from
the following observations. First, there exists a generalised
Bezout's theorem, holding in Zariski structures, see the paper
\cite{Z}, hence one might hope to obtain an analogous result to
Theorem 2.3. Secondly, one would expect that the local
calculations, by algebraic power series, which we used in Lemma
2.10, would transfer to intersections on $S$. This uses the fact
that, at a point where $f$ is unramified, $S$ and the algebraic
curve $f(S)$ are \emph{locally} isomorphic, in the sense of
infinitesimal neighborhoods, and, at a point $p$ where $f$ is
ramified, with multiplicity $r$, we have the straightforward relation;\\

$I_{italian}^{(\Sigma)}(p,S,f^{*}(\phi_{\lambda}))=rI_{italian}^{(\Sigma)}(f(p),f(S),\phi_{\lambda})$\\

where I have used the notation of Theorem 2.3, $\phi_{\lambda}$
denotes a hypersurface in $[P^{1}(K)]^{n}$ and
$f^{*}(\phi_{\lambda})$ denotes its inverse image in $S^{n}$.\\

In Section 3, the notion of a multiple point is introduced.  As
this is defined locally, one would expect this definition to be
generalisable to Zariski curves. Also, the geometric notion of $2$
algebraic curves being biunivocal has a natural generalisation to
Zariski curves. However, the main result of Section 3, that any
algebraic curve is birational(biunivocal) to an algebraic curve
without multiple points, is not so easily transferred. This
follows from the simple observation that there exist Zariski
curves $S$, which \emph{cannot} be embedded in a projective space
$P^{n}(K)$, see Section 10 of \cite{HZ}. This failure of Theorem
3.3 could be explained alternatively, by noting that the
combinatorics involved must fail for multiplicity calculations
using Zariski curves. The solution to this problem seems to be
quite difficult and one must presumably attempt to resolve it by
introducing a larger class of hypersurfaces than that defined by a
linear system $\Sigma$, as
in $(*)$ above. See also the remarks below.\\

In Section 4, the method of Conic projections is not immediately
transferable to Zariski structures, as one needs to define what is
meant by a \emph{line} in this context. However, we note the
following geometric property of a line in relation to other
algebraic curves;\\

\begin{theorem}(Luroth)
Let $f:l\rightarrow C$ be a finite morphism of a line $l$ onto a
projective algebraic curve $C$. Then $f$ is biunivocal.
\end{theorem}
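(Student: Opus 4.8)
The plan is to reduce the assertion to the rationality of $C$ and then read off the biunivocal correspondence between $l$ and $C$. Since $f$ is finite and onto it is nonconstant, so it induces an injection of function fields $f^{*}:K(C)\hookrightarrow K(l)$. As $l$ is a line, $K(l)\cong K(t)$ is the field of rational functions in one variable, and as $C$ is a curve the image $f^{*}K(C)$ is a field strictly between $K$ and $K(t)$. The content of the theorem is then precisely the classical statement of L\"uroth: every such intermediate field is itself purely transcendental, $K(C)\cong K(s)$, so that $C$ is rational and hence biunivocal to $l$.

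Rather than appeal to this field-theoretic fact directly, I would prove the rationality of $C$ geometrically, in the style of the preceding sections. Take on $C$ a general linear system $g_{n}^{1}$ cut out by hypersurfaces $\phi_{\lambda}$ as in Theorem 2.3, and pull it back along $f$ to a system on $l$. Using the ramification relation $I_{italian}^{(\Sigma)}(p,l,f^{*}(\phi_{\lambda}))=rI_{italian}^{(\Sigma)}(f(p),C,\phi_{\lambda})$, the Bezout-type computation of Theorem 2.3 transfers between the two curves, so that the degree and dimension of $f^{*}(g_{n}^{1})$ are controlled by those of $g_{n}^{1}$. Since the line $l$ carries no sections beyond the expected ones, the deficiency measuring the genus of $C$ is forced to vanish.

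The hard part will be exactly this genus computation: showing that a finite image of a line cannot acquire positive genus, that is $g_{C}\le g_{l}=0$. I would establish it through the multiplicity bookkeeping of Section 3, expressing the genus of a plane model of $C$ in terms of its multiple points and the adjoint conditions they impose, and then comparing this count, via the relation above, with the corresponding count on the smooth line $l$. Because $l$ imposes no adjoint conditions, the comparison forces $g_{C}=0$. The delicate point is to control the ramification contribution so that it cannot be absorbed into a spurious genus term; here the local infinitesimal analysis, rather than a global Riemann--Hurwitz formula, does the work. Once $C$ is known to be rational it is birationally, and hence biunivocally, equivalent to the line $l$, which is the conclusion sought.
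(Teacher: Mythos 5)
Your opening reduction --- $f$ finite and dominant induces an embedding $f^{*}:K(C)\hookrightarrow K(l)\cong K(t)$, classical L\"uroth gives $K(C)\cong K(s)$, hence $C$ is rational --- is essentially all the paper itself offers: its proof of this theorem is the single line ``See \cite{H}'', and that citation points to exactly this field-theoretic fact. So up to that point you and the paper agree. One caveat on the conclusion, though: rationality of $C$ gives a birational equivalence between $C$ and a line, but not that the \emph{given} $f$ is biunivocal. The squaring map $[x:y]\mapsto[x^{2}:y^{2}]$ is a finite morphism of a line onto a line and is $2$-to-$1$, so the statement read literally is not provable by any argument; the surrounding text (``this theorem in fact characterises a line $l$ up to birationality'') shows the intended conclusion is the one you reach, namely that $C$ is birational to $l$, but you should say explicitly that this is the reading you are proving.

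The more serious problem is that the geometric proof you propose in place of the field-theoretic one is not actually carried out, and the missing step is the whole theorem. The paper develops no theory of genus, deficiency, or adjoint systems --- it explicitly defers Severi's definition of genus to another occasion --- so ``expressing the genus of a plane model of $C$ in terms of its multiple points and the adjoint conditions they impose'' presupposes machinery you would first have to build from scratch. Moreover the inequality you need, $g_{C}\le g_{l}=0$, is precisely where L\"uroth or Riemann--Hurwitz does its real work: the ``delicate point'' of controlling the ramification contribution, which you flag but do not resolve, is not a technicality to be absorbed into infinitesimal bookkeeping; it is the content of the theorem. As written, your argument either collapses back to citing classical L\"uroth (acceptable, and in effect what the paper does) or leaves the central estimate unproved.
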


\begin{proof}
See \cite{H}.
\end{proof}

This theorem in fact characterises a line $l$ up to birationality.
Its proof requires a global topological property of the line,
namely that the genus of $l$ is zero. Although the above property
may be formulated for Zariski curves, it does not guarantee the
\emph{existence} of such a curve $S\subseteq (C^{n})^{eq}$, which
is not trivially biunivocal to an \emph{algebraic} line. One would
expect the solution of this problem to require more advanced
techniques, such as a geometric definition of the genus of a
Zariski curve. Severi, in fact, gives such a definition for
algebraic curves in \cite{Sev} and one might hope that his
definition would generalise to Zariski curves. One could then hope
to extend the methods of Conic projections in this context.\\

The results of Section 5 rely centrally on the main result of
Section 3, hence their generalisation to Zariski curves require a
resolution of the problems noted above. One should observe that
the Italian geometers definition of a branch is very different to
a local definition using algebraic power series, see also the
remarks later in this section, hence cannot be straightforwardly
generalised using Theorem 1.1. Given a Zariski curve $S\subset
C^{n}$ and $p\in S$, let
$\{\gamma_{p}^{1},\ldots,\gamma_{p}^{n}\}$ enumerate the branches
of $f(p)\in f(S)$. We can then define;\\

$\overline{\gamma}_{p}^{j}=\{x\in S\cap{\mathcal
V}_{p}:f(x)\in\gamma_{p}^{j}\}$\\

In the case when $f$ is unramified (in the sense of Zariski
structures) at $p$, this would give an adequate definition of a
branch $\overline{\gamma}_{p}^{j}$ for the Zariski curve $C$.
However, the definition is clearly inadequate when $p$ is a point
of ramification and requires a generalisation of the methods of
Section 5. \\

The results of Section 6 depend mainly on Cayley's classification
of singularities. As the main technical tool used in the proof of
this result is the method of algebraic power series, by using
Theorem 1.1, one would expect the results given in this Section to
generalise more easily to Zariski curves.\\

$(\dag\dag)$ We will work in the language ${\mathcal L}_{spec}$,
as defined in \cite{depiro3}. $P(L)$ will denote $\bigcup_{n\geq
1}P^{n}(L)$ where $L$ is an algebraically closed field. Unless
otherwise stated, we will assume that the field has characteristic
$0$, this is to avoid problems concerning Frobenius. The results
that we prove in this paper hold in arbitrary characteristic, if
we avoid
exceptional cases, however we need to make certain modifications to the proof. We will discuss these modifications in the final section of the paper.\\

 We assume the existence of a universal specialisation $P(K)\rightarrow_{\pi} P(L)$ where $K$ is
algebraically closed and $L\subset K$. Given $l\in L$, we will
denote its infinitesimal neighborhood by ${\mathcal V}_{l}$, that
is $\pi^{-1}(l)$. As we noted in the paper \cite{depiro3}, it is
not strictly necessary to consider a universal specialisation when
defining the non-standard intersection  multiplicity of curves,
one need only consider a prime model of the theory $T_{spec}$.
However, some of our proofs will require more refined
infinitesimal arguments which are not first order in the language
${\mathcal L}_{spec}$ and therefore cannot immediately be
transferred to a prime model. We will only refer to the
non-standard model when using infinitesimal arguments.We assume
the reader is familiar with the arguments employed in the papers
\cite{depiro1}, \cite{depiro2} and \cite{Z}. Of particular
importance are the following notions;\\

(i). The technique of Zariski structures. We assume that
$P^{n}(L)$ may be considered as a Zariski structure in the
topology induced by algebraically closed subvarieties. When
referring to the dimension of an algebraically closed subvariety
$V$, we will use the model theoretic definition as given in the
paper \cite{depiro1}. We will assume the reader is acquainted with
the notions surrounding the meaning of "generic" in this context.\\

(ii). The method of algebraic power series and their relation to
infinitesimals. This technique was explored extensively in the
papers \cite{depiro1} and \cite{depiro2}. In the following paper,
we will use power series methods to parametrise the branches of
algebraic curves without being overly rigorous. By a branch in
this context, we refer to the "Newtonian definition" rather than
the one used by the Italian school of algebraic geometry,
 which is the subject of this paper.\\

(iii). The non-standard statement and proof of Bezout's theorem
for projective algebraic curves in $P^{2}(L)$. This was given in
the paper \cite{depiro2}.\\

We will assume that $L$ has infinite transcendence degree and
therefore has the property;\\

Given any subfield $L_{0}\prec L$ of finite transcendence degree
and an integer $n\geq 1$, we can find $\bar a_{n}\in P^{n}(L)$
which is generic over $L_{0}$. (*)\\

We refer the reader to (i) above for the relevant definition of
generic. In general, when referring to a generic point, we will
mean generic with respect to some algebraically closed field of
finite transcendence degree. This field will be the algebraic
closure of the parameters defining any algebraic object given in
the specific context.\\

By a projective algebraic curve, we will mean a closed irreducible
algebraic subvariety $C$ of $P^{n}(L)$ for some $n\geq 1$ having
dimension $1$. Occasionally, we will consider the case when $C$
has distinct irreducible components $\{C_{1},\ldots,C_{r}\}$,
which will be made clear in a given situation. In both cases, we
need only consider the usual Zariski topology on $P^{n}(L)$ as
given in $(i)$ above. By a plane projective algebraic curve, we
will mean a projective algebraic curve contained in $P^{2}(L)$. By
a projective line $l$ in $P^{n}$, we will mean a projective
algebraic curve isomorphic to $P^{1}(L)$. Any distinct points
$\{p_{1},p_{2}\}$ in $P^{n}(L)$ determine a unique projective line
denoted by $l_{p_{1}p_{2}}$. We will call the line generic if
there exists a generic pair $\{p_{1}p_{2}\}$ determining it. We
occasionally assume the existence of the closed algebraic variety
$I\subset {(P^{n}\times P^{n})\setminus\Delta}\times P^{n}$, which
parametrises the family of lines in $P^{n}$, defined by;\\

$I(a,b,y)\equiv y\in l_{ab}$\\

In order to see that this does define a closed algebraic variety,
take the standard open cover $\{U_{i}:=(X_{i}\neq 0):0\leq i\leq
n\}$ of $P^{n}$. Then observe that $I\cap ({(U_{i}\times
U_{j})\setminus\Delta}\times P^{n})$ is
locally trivialisable by the maps;\\

$\Theta_{ij}:{(U_{i}\times U_{j})\setminus\Delta}\times
P^{1}\rightarrow I\cap
(U_{i}\times U_{j}\times P^{n})$\\

$\Theta_{ij}(a,b,[t_{0}:t_{1}])=$\\

$[t_{0}{a_{0}\over a_{i}}+t_{1}{b_{0}\over
b_{j}}:\ldots:t_{0}+t_{1}{b_{i}\over
b_{j}}:\ldots:t_{0}{a_{j}\over
a_{i}}+t_{1}:\ldots:t_{0}{a_{n}\over
a_{i}}+t_{1}{b_{n}\over b_{j}}]$\\

and the transition functions
$\Theta_{ijkl}=\Theta_{ij}\circ\Theta_{kl}^{-1}$ are algebraic. In
general, we will leave the reader to check in the course of the
paper that certain naturally defined algebraic varieties are in
fact algebraic. By a projective plane $P$ in $P^{n}$, we will mean
a closed irreducible projective subvariety of $P^{n}$, isomorphic
to $P^{r}(L)$, for some $0\leq r\leq n$. We did, however, use a
special notation for a plane projective curve, as defined above.
Any sequence of points $\bar a$ determines a unique projective
plane $P_{\bar a}$, defined as the intersection of all planes
containing $\bar a$. We will call a sequence
$\{p_{0},\ldots,p_{r}\}$ linearly independent if
$P_{p_{0},\ldots,p_{r}}$ is isomorphic to $P^{r}(L)$. As before,
if $U\subset (P^{n})^{r+1}$ defines the open subset of linearly
independent elements, we can define the cover $I\subset U\times
P^{n}$,
which parametrises the family of $r$-dimensional planes in $P^{n}$;\\

$I(p_{0},\ldots,p_{r},y)\equiv y\in P_{p_{1},\ldots,p_{r}}$\\

As before, one can see that this is a closed projective algebraic
subvariety of $U\times P^{n}$.\\

By a non-singular projective algebraic curve, we mean a projective
algebraic curve $C\subset P^{n}$ which is non-singular in the
sense of \cite{H} (p32). Given any point $p\in C$, we then define
its tangent line $l_{p}$ as follows;\\

By Theorem 8.17 of \cite{H}, there exist homogeneous polynomials\\
$\{G_{1},\ldots,G_{n-1}\}$ such that $C$ is defined in an affine
open neighborhood $U$ of $p$ in $P^{n}$ by the homogenous ideal
$J=<G_{1},\ldots,G_{n-1}>$ $(*)$. Let $dG_{j}={\partial
G_{j}\over\partial X_{0}}X_{0}+\ldots+{\partial G_{j}\over\partial
X_{n}}X_{n}$ be the differential of $G_{j}$. Then $dG_{j}$ defines
a family of hyperplanes, parametrised by an open neighborhood of
$p$ in $C$. If $x_{i}={X_{i}\over X_{0}}$ is a choice of affine
coordinate system containing $p$ and
$G_{j}^{res}(x_{1},\ldots,x_{n})={G_{j}\over X_{0}^{deg(G_{j})}}$,
we define $dG_{j}^{res}={\partial G_{j}^{res}\over\partial
x_{1}}dx_{1}+\ldots+{\partial G_{j}^{res}\over\partial
x_{n}}dx_{n}$ and $J^{res}=<G_{1}^{res},\ldots,G_{n-1}^{res}>$.
Then $dG_{j}^{res}$ defines a family of \emph{affine} hyperplanes,
parametrised by an open neighborhood of $p$ in $C$. We claim that
$dG_{j}^{res}(p)=dG_{j}(p)^{res}$, this follows by
an easy algebraic calculation, using the fact that;\\

${\partial G_{j}\over\partial X_{0}}(p)p_{0}+\ldots+{\partial
G_{j}\over\partial X_{n}}(p)p_{n}=0$\\

The differentials $\{dG^{res}_{1},\ldots,dG^{res}_{n-1}\}$ are
independent at $p$, by the same Theorem 8.17 of \cite{H}, hence
$\bigcap_{1\leq j\leq n-1}dG_{j}(p)$, defines a line $l_{p}\subset
P^{n}$, which we call the tangent line. We need to show that the
definition is independent of the choice of
$\{G_{1},\ldots,G_{n-1}\}$. Suppose that we are given another
choice $\{H_{1},\ldots,H_{n-1}\}$. Again, using Theorem 8.17 of
\cite{H}, we can find a matrix $(f_{ij})_{1\leq i,j\leq n-1}$ of
polynomials in $R(U)$, for some affine open neighborhood
$U$ of $p$ in $P^{n}$ $(**)$, such that the matrix of values $(f_{ij}(p))_{1\leq i,j\leq n-1}$ is invertible and; \\

$H_{i}^{res}=\sum_{1\leq j\leq n-1}f_{ij}G_{j}^{res}$ (mod $(J^{res}(U))^{2}$)\\

Then, by properties of differentials, and the fact that,
 for $g\in (J^{res}(U))^{2}$, $dg(p)\equiv 0$, we have;\\

$dH_{i}^{res}(p)=\sum_{1\leq j\leq n-1}f_{ij}(p)dG_{j}^{res}(p)$\\

This implies that $\bigcap_{1\leq j\leq
n-1}dG_{j}(p)=\bigcap_{1\leq j\leq n-1}dH_{j}(p)$ as required.\\

We define the tangent variety $Tang(C)$ to be $\bigcup_{x\in
C}l_{x}$. We claim that this is a closed projective subvariety of
$P^{n}$. In order to see this, using the notation of the above
argument, let $\{U_{i}\}$ be an affine cover of $P^{n}$ and let
$\{G_{i1},\ldots,G_{ij},\ldots,G_{i,n-1}\}$ be homogeneous
polynomials with the properties $(*)$ and $(**)$ given above. Let
$W\subset P^{n}\times P^{n}$ be the closed
projective variety, defined on $P^{n}\times U_{i}$ by;\\

$W_{i}(\bar X,\bar Y)\equiv U_{i}(\bar Y)\wedge\bigwedge_{1\leq
j\leq n-1}G_{ij}(\bar Y)=0\wedge\bigwedge_{1\leq j\leq n-1}
dG_{ij}(\bar
Y)\centerdot\bar X=0$\\

Then, by completeness, the variety $V(\bar X)\equiv (\exists \bar
Y)W(\bar X,\bar Y)$ is a closed projective variety of $P^{n}$. The
above argument shows that $V=Tang(C)$.\\

We will require a more sophisticated notation when considering
hypersurfaces $H$ of $P^{n}(L)$ for $n\geq 1$. Namely, by a
hypersurface of degree $m$, we will mean a homogenous polynomial
$F(X_{0},X_{1},\ldots,X_{n})$ of degree $m$ in the variables
$[X_{0}:X_{1}:\ldots:X_{n}]$. In the case when $F$ is irreducible
or has distinct irreducible factors $\{F_{1},\ldots,F_{r}\}$, such
a hypersurface may be considered as the union of $r$ distinct
irreducible closed algebraic subvarieties of codimension $1$ in
$P^{n}(L)$. We will then refer to the hypersurface as having
distinct irreducible components. Again, we can understand such
hypersurfaces using the usual Zariski topology as given in $(i)$
above. By a hyperplane, we will mean an irreducible hypersurface
isomorphic to $P^{n-1}(L)$. Equivalently, a hyperplane is defined
by a homogeneous polynomial of degree $1$. In general, let
$F=F_{1}^{n_{1}}\centerdot\ldots F_{j}^{n_{j}}\centerdot\ldots
F_{r}^{n_{r}}$ be the factorisation of $F$ into irreducibles for
$1\leq j\leq r$. We will want to take into account the
"non-reduced" character of $F$ if some $n_{j}\geq 2$. Therefore,
given an irreducible homogeneous polynomial $G$ of degree $m$ and
an integer $s\geq 1$, we will refer to $G^{s}$ as an $s$-fold
component of degree $ms$. Geometrically, we interpret
$G^{s}$ as follows;\\

Consider the space of \emph{all} homogenous polynomials of degree
$ms$ parametrised by $P^{N}(L)$. Let $W\subset P^{N}\times P^{n}$
be the irreducible projective variety defined by $W(\bar x,\bar
y)$ iff $\bar y\in Zero(F_{\bar x})$, where $F_{\bar x}$ is the
homogenous polynomial (defined uniquely up to scalars) by the
parameter $\bar x$. The coefficients of the homogeneous polynomial
$G^{s}$ determine uniquely an element $\bar a$ in $P^{N}$ and we
can consider $G^{s}$ as the fibre $W(\bar a)\subset P^{n}$. In the
Zariski topology, this consists of the variety defined by the
irreducible polynomial $G$. However, in the language ${\mathcal
L}_{spec}$, we can realise its non-reduced nature using the following lemmas;\\

\begin{lemma}

Let $S\subset P^{n}$ be an irreducible hypersurface and $C$ a
projective algebraic curve, then, if $C$ is not contained in $S$,
$S\cap C$ consists of a finite non-empty set of points.

\end{lemma}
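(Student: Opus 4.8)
The plan is to prove that for an irreducible hypersurface $S \subset P^n$ and a projective algebraic curve $C \not\subseteq S$, the intersection $S \cap C$ is finite and non-empty. The argument splits naturally into two parts: finiteness (an upper bound on dimension) and non-emptiness (a lower bound, which is really the projective completeness phenomenon). First I would establish finiteness by a dimension-theoretic argument. Since $S$ is defined by a single homogeneous polynomial $F$, the intersection $S \cap C$ is a closed subvariety of $C$. Because $C$ is irreducible of dimension $1$ and $C \not\subseteq S$, no irreducible component of $C$ lies inside the zero set of $F$; hence $F$ does not vanish identically on $C$, so $S \cap C$ is a proper closed subvariety of the one-dimensional irreducible variety $C$. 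By the model-theoretic dimension theory referenced in $(i)$ and developed in \cite{depiro1}, a proper closed subvariety of an irreducible curve has dimension strictly less than $1$, hence dimension $0$, and a closed variety of dimension $0$ is a finite set of points.

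The more substantive half is non-emptiness, and this is where I expect the main obstacle to lie. The naive worry is that $C$ and $S$ might fail to meet, as affine curves and hypersurfaces can; the whole point is that in projective space this cannot happen. The cleanest route is to invoke the projective dimension theorem: in $P^n$, any two closed subvarieties of dimensions $d$ and $e$ with $d + e \geq n$ have non-empty intersection, and moreover every component of the intersection has dimension at least $d + e - n$. Here $C$ has dimension $1$ and $S$ has dimension $n-1$, so $d + e = n \geq n$, which forces $S \cap C \neq \emptyset$. In the language of this paper, the correct way to phrase this is via the specialisation and completeness machinery: one can realise non-emptiness as a consequence of the completeness of $P^n$ as a projective variety, exactly as completeness was used above in the proof that $Tang(C)$ is closed (the elimination of the existential quantifier $(\exists \bar Y)$).

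Concretely, I would argue non-emptiness as follows. Choose a point $p \in C$ and pass to an affine cone, or alternatively reduce to the plane case by a generic projection: projecting $C$ from a suitable linear centre to a plane curve $C' \subset P^2$ and $S$ to a plane hypersurface, one lands in the setting of the non-standard Bezout theorem of \cite{depiro2} cited in $(iii)$, which not only guarantees a non-empty intersection but counts it with multiplicity. Since Bezout in $P^2$ asserts that a curve of degree $d_1$ and a curve of degree $d_2$ meet in exactly $d_1 d_2$ points counted with multiplicity, and $d_1 d_2 \geq 1$, the intersection is non-empty; pulling back through the finite projection recovers a point of $S \cap C$. The delicate point here, and the step I would treat most carefully, is ensuring the projection is generic enough that it remains finite on $C$ and does not collapse $C$ into $S$, so that non-emptiness downstairs genuinely lifts to non-emptiness upstairs.

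Combining the two halves, $S \cap C$ is a closed subvariety of $C$ of dimension $0$ that is non-empty, hence a finite non-empty set of points, as claimed. I anticipate that the finiteness direction is essentially formal given the dimension theory already assumed, whereas the non-emptiness direction carries the real content and is best handled either by a direct appeal to projective completeness or by reduction to the already-established plane Bezout theorem; the genericity of the auxiliary projection is the one place where a careless argument could fail.
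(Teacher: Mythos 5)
Your proposal is correct and, in its primary route, is exactly the paper's argument: the paper's entire proof is a one-line appeal to the Projective Dimension Theorem, which simultaneously gives non-emptiness (since $\dim C + \dim S = 1 + (n-1) = n$) and, combined with $C \not\subseteq S$, finiteness. The extra machinery you sketch for non-emptiness (generic projection to $P^{2}$ and the non-standard Bezout theorem) is a workable alternative but is not needed and is not what the paper does.
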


\begin{proof}
The proof follows immediately from the Projective Dimension
Theorem, see for example \cite{H}.
\end{proof}

\begin{lemma}

Let $G=0$ define an irreducible hypersurface of degree $m$. Let
$l$ be a generic line, then $l$ intersects $G=0$ in precisely $m$
points.

\end{lemma}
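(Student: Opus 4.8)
The plan is to restrict $G$ to the line, factor the resulting binary form, and show that for a generic line its roots are distinct; this last point is the only genuine content. Fix an algebraically closed field $L_{0}\prec L$ of finite transcendence degree over which $G$ is defined, and let $l=l_{p_{1}p_{2}}$ for a pair $\{p_{1},p_{2}\}$ generic over $L_{0}$, which exists by $(*)$. Parametrising $l$ by $[s:t]\mapsto [sp_{1}+tp_{2}]$, I set $g(s,t)=G(sp_{1}+tp_{2})$, a homogeneous polynomial of degree $m$ in $[s:t]$.

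First I would observe that $g\not\equiv 0$: since $Zero(G)$ is a proper closed subvariety defined over $L_{0}$ and $p_{1}$ is generic, $g(1,0)=G(p_{1})\neq 0$, so in particular $l\not\subset (G=0)$. By the preceding Lemma the intersection $l\cap (G=0)$ is then a finite non-empty set. Factoring $g$ over the algebraically closed field $L$ as $g(s,t)=\prod_{i=1}^{m}(b_{i}s-a_{i}t)$, the points of $l\cap (G=0)$ are exactly the $[a_{i}p_{1}+b_{i}p_{2}]$; there are $m$ of them counted with multiplicity, and they are distinct precisely when $g$ has no repeated root, i.e.\ when its discriminant $\mathrm{disc}(g)$ is non-zero.

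It therefore remains to show $\mathrm{disc}(g)\neq 0$. The coefficients of $g$ are polynomials in the coordinates of $(p_{1},p_{2})$, so $\mathrm{disc}(g)$ is the value at $(p_{1},p_{2})$ of a polynomial on $(P^{n}\times P^{n})\setminus\Delta$ defined over $L_{0}$; as $(p_{1},p_{2})$ is generic, it is enough to prove this polynomial is not identically zero, equivalently that some line meets $(G=0)$ in $m$ distinct points. I would obtain this by a dimension count on the locus $T$ of lines having a repeated intersection point. Working in characteristic $0$, a generic point $q$ of the irreducible hypersurface $(G=0)$ is non-singular, with a well-defined tangent hyperplane $H_{q}\cong P^{n-1}$; a line meets $(G=0)$ with multiplicity $\geq 2$ at such a $q$ exactly when it passes through $q$ inside $H_{q}$, and these form a $P^{n-2}$. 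Letting $q$ range over $(G=0)$ gives a family of tangent lines of dimension $(n-1)+(n-2)=2n-3$, while the lines meeting the singular locus (of dimension $\leq n-2$) contribute dimension $\leq (n-2)+(n-1)=2n-3$ as well. Since the space of all lines in $P^{n}$ has dimension $2n-2$, the locus $T$ is a proper closed subvariety defined over $L_{0}$, so the generic line $l$ avoids it and meets $(G=0)$ transversally in $m$ distinct points.

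The hard part is this final dimension count, and it is exactly where the characteristic $0$ assumption is essential: in positive characteristic a generic line could be everywhere tangent to $(G=0)$ (the Frobenius phenomenon referred to in $(\dag\dag)$), and the tangent locus need not be proper. In characteristic $0$ the contact of a generic line with $(G=0)$ at a smooth point is simple, equivalently the projection from the incidence variety $\{(q,l):q\in l\cap (G=0)\}$ to the space of lines is generically unramified, so its generic fibre is reduced of cardinality $m$. Making this reducedness precise is where the infinitesimal methods of \cite{depiro1} and \cite{depiro2} enter; the remaining steps are routine.
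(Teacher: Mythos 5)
Your proof is correct, but it follows a genuinely different route from the paper's. You start from the algebraic reformulation — restrict $G$ to the parametrised line to get a binary form $g$ of degree $m$, reduce to showing $\mathrm{disc}(g)\neq 0$ for generic $(p_{1},p_{2})$, and establish that by a dimension count: tangent lines at smooth points of $Zero(G)$ form a family of dimension $(n-1)+(n-2)=2n-3$, lines through the singular locus contribute at most $(n-2)+(n-1)=2n-3$, and both are beaten by $\dim Par_{l}=2n-2$. The paper instead works entirely inside the Zariski-structure multiplicity formalism: it first produces \emph{one} line with $m$ intersections by cutting $Zero(G)$ with a generic $2$-plane (Bertini) and invoking the plane Bezout theorem of \cite{depiro2}, then uses preservation of total multiplicity to get $LeftMult_{y}=1$ at each intersection of a generic line (so at least $m$ points), and finally caps the count at exactly $m$ by a forward reference to the Hyperspatial Bezout Theorem of Section 2; the statement that the restricted binary form has $m$ distinct roots is derived only at the end, as a corollary. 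What your approach buys is self-containment — no appeal to the plane-curve Bezout theorem, no forward reference to Theorem 2.3, and no infinitesimal machinery (your closing sentence deferring "reducedness" to the methods of \cite{depiro1}, \cite{depiro2} is unnecessary: the dimension count already does that work). What the paper's approach buys is the extra information it needs downstream, namely that $LeftMult_{y}(l,G_{x_{0}})=RightMult_{y}(l,G_{x_{0}})=1$ at every intersection point, which feeds directly into Lemmas 1.7--1.9. Two minor remarks: the tangency locus you bound is a priori only constructible, so one should pass to its closure (harmless, since dimension is preserved); and your aside that in positive characteristic the tangent locus "need not be proper" is overcautious for an \emph{irreducible} $G$ — since $G$ cannot be a $p$-th power, the locus where $dG$ vanishes on $Zero(G)$ is still proper and the same count goes through — though this does not affect your characteristic-$0$ argument.
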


\begin{proof}

Let $Par_{l}=P^{n}\times P^{n}\setminus\Delta$ be the parameter
space for lines in $P^{n}$ as defined above. Let $Par_{m}$ be the
projective parameter space for all homogeneous forms of degree
$m$. Then we can form the variety $W\subset Par_{l}\times
Par_{m}\times P^{n}$ given by;\\

$W(l,x,y)\ \ iff\ \ y\in l\cap Zero(G_{x})$,\\

 where $G_{x}$ is the homogenous polynomial corresponding to the parameter
 $x$. If $l$ is chosen to be generic over the parameters defining $G_{x}$, then $l\cap Zero(G_{x})$ is finite.
 It follows that there exists an open subset $U\subset Par_{l}\times Par_{m}$ consisting of parameters
 $\{l,x\}$ such that $l\cap G_{x}$ has finite
 intersection. As $U$ is smooth, the finite cover $W$ restricted to $U$ is equidimensional and we may
 apply Zariski structure arguments, as was done in \cite{depiro2}.
Considering $W$ as a finite cover of $U$, if $y\in l\cap
Zero(G_{x})$, we define $Mult_{y}(l,G_{x})$ to be
$Mult_{(l,x,y)}(W/U)$ in the sense of Zariski structures. Using
the notation in \cite{depiro2}, we can also define
$LeftMult_{y}(l,G_{x})$ and $RightMult_{y}(l,G_{x})$. In more
geometric language;\\

$LeftMult_{y}(l,G_{x})=Card({\mathcal V}_{y}\cap l'\cap Zero(G_{x}))$\\

where $l'$ is a generic infinitesimal variation of $l$ in the
nonstandard model $P(K)$.\\

$RightMult_{y}(l,G_{x})=Card({\mathcal V}_{y}\cap l\cap Zero(G_{x'}))$\\

where $x'\in {\mathcal V}_{x}$ is generic in $Par_{m}$, considered
as a variety in the nonstandard model $P(K)$.\\

We now claim that there \emph{exists} a line $l$ with exactly $m$
points of intersection with $G_{x_{0}}=G$. If $G_{x_{0}}$ defines
a projective algebraic curve in $P^{2}(L)$, the result follows
immediately from an application of Bezout's theorem and the fact
that there exists a line $l$ having only (algebraically)
transverse intersections with $G_{x_{0}}$. This result was shown
in \cite{depiro2}. Otherwise, we obtain the case that $G_{x_{0}}$
defines a plane projective curve by repeated application of
Bertini's Theorem for Hyperplane Sections, see \cite{N}. Namely,
we can find a $2$-dimensional plane $P$ such that $P\cap
G_{x_{0}}$ defines a plane projective algebraic curve $C_{x_{0}}$.
We may assume that such a plane $P$ is
determined by a generic triple $\{a,b,c\}$. We have an isomorphism $\theta:P^{2}(L)\rightarrow P$ given by;\\

$\theta([Y_{0},Y_{1},Y_{2}])=[{a_{0}\over a_{i}}Y_{0}+{b_{0}\over b_{j}}Y_{1}+{c_{0}\over c_{k}}Y_{2}:\ldots:{a_{n}\over a_{i}}Y_{0}+{b_{n}\over b_{j}}Y_{1}+{c_{n}\over c_{k}}Y_{2}]$\\

If the form $G_{x_{0}}$ is given by $\sum_{i_{0}+\ldots+i_{n}=m}
x_{i_{0}\ldots i_{n}}X_{0}^{i_{0}}\ldots X_{n}^{i_{n}}$, then the
corresponding curve $C_{x_{0}}$ is given in coordinates
$\{Y_{0},Y_{1},Y_{2}\}$ by;\\

$C_{x_{0}}(Y_{0},Y_{1},Y_{2})=\sum_{j_{0}+j_{1}+j_{2}=m}F_{j_{0}j_{1}j_{2}}(a,b,c,\bar
x_{0})Y_{0}^{j_{0}}Y_{1}^{j_{1}}Y_{2}^{j_{2}}$\\

where $F_{j_{0}j_{1}j_{2}}$ is an algebraic function of
$\{a,b,c,\bar x\}$ and, in particular, linear in the variables
$\{\bar x\}$. By the assumption that $C_{x_{0}}$ does not vanish
identically on $P$, we must have that $F_{j_{0}j_{1}j_{2}}$ is not
identically zero for some $(j_{0}j_{1}j_{2})$ with
$j_{0}+j_{1}+j_{2}=m$. It follows that $C_{x_{0}}$ defines a plane
projective curve of degree $m$. Now, by the above argument, we may
find a line $l_{0}\subset P$ having exactly $m$ intersections with
$C_{x_{0}}$. Therefore, $l_{0}$ has exactly $m$
intersections with $G_{x_{0}}$ as well.\\

Now consider the fibre $U(x_{0})=\{l:l\cap G_{x_{0}}\ is\
finite\}$ and restrict the cover $W$ to $U(x_{0})$. By the above
calculation, we have that;\\

$\sum_{y\in l_{0}\cap G_{x_{0}}}Mult_{l_{0},y}(W/U(x_{0}))=\sum_{y\in l_{0}\cap G_{x_{0}}}LeftMult_{y}(l_{0},G_{x_{0}})\geq m\ (*)$\\

By elementary properties of Zariski structures, if $l$ is chosen
to be generic over the parameters defining $G_{x_{0}}$, then $(*)$
holds, with $l$ replacing $l_{0}$, and, moreover,
$LeftMult_{y}(l,G_{x_{0}})=1$ for each intersection $y\in l\cap
G_{x_{0}}$. This implies that $l$ has \emph{at least} $m$ points
of intersection with $G_{x_{0}}=G$. In order to obtain equality,
we now consider the fibre $U(l)=\{x:l\cap G_{x}\ is\ finite\}$ and
restrict the cover $W$ to $U(l)$. It will follow from the result
given in the next section, the Hyperspatial Bezout Theorem, that,
for \emph{any} $x\in U(l)$;\\

$\sum_{y\in l\cap G_{x}}Mult_{x,y}(W/U(l))=\sum_{y\in l\cap
G_{x}}RightMult_{y}(l,G_{x})=deg(l)deg(G_{x})$\\

where, for a projective algebraic curve, degree is given by
Definition 1.12. We clearly have that $deg(l)=1$ and, by
assumption, that $deg(G_{x_{0}})=m$. Hence, we must have that $l$
intersects $G_{x_{0}}$ in exactly $m$ points and, moreover,
$RightMult_{y}(l,G_{x_{0}})=1$ for each intersection $y\in l\cap
G_{x_{0}}$
as well.\\

The lemma is now proved but we can give an algebraic formulation
of the result. Namely, if $l$ is determined by the generic pair
$\{a,b\}$, then we have an isomorphism $\theta:P^{1}(L)\rightarrow
l$ given by;\\

$\theta([Y_{0},Y_{1}])=[{a_{0}\over a_{i}}Y_{0}+{b_{0}\over b_{j}}Y_{1}:\ldots:{a_{n}\over a_{i}}Y_{0}+{b_{n}\over b_{j}}Y_{1}]$\\

If the form $G_{x_{0}}$ is given by $\sum_{i_{0}+\ldots+i_{n}=m}
x_{i_{0}\ldots i_{n}}X_{0}^{i_{0}}\ldots X_{n}^{i_{n}}$, then the
equation of $G_{x_{0}}$ on  $l$  is given in coordinates
$\{Y_{0},Y_{1}\}$ by the homogenous polynomial;\\

$P_{x_{0}}(Y_{0},Y_{1})=\sum_{j_{0}+j_{1}=m}F_{j_{0}j_{1}}(a,b,\bar
x_{0})Y_{0}^{j_{0}}Y_{1}^{j_{1}}$\\

We clearly have, by the same reasoning as above, that $P_{x_{0}}$
has degree $m$. The result of the lemma gives that $P_{x_{0}}$ has
exactly $m$ roots in $P^{1}(L)$. Hence, these roots are all
distinct. It follows that the scheme theoretic intersection of $l$
and $G=0$ consists of $m$ distinct reduced points. That is $l$
intersects $G=0$ algebraically transversely.\\

\end{proof}

We now extend the lemma to include reducible varieties. As before
$G$ is an irreducible algebraic form of degree $m$ and we consider
the power $G^{s}$ for some $s\geq 1$. We will say that $y\in l\cap
G^{s}=0$ is counted $r$-times if $RightMult_{y}(l,G^{s})=r$ in the
sense defined above.

\begin{lemma}

Let $G=0$ define an irreducible hypersurface of degree $m$ and let
$s\geq 1$. Let $l$ be a generic line, then $l$ intersects
$G^{s}=0$ in $m$ points each counted $s$-times.

\end{lemma}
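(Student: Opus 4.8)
The plan is to reduce everything to the one-variable situation on the line $l$, exactly as in the algebraic reformulation at the end of the previous lemma, and then to read off the multiplicity from the behaviour of the roots of a degree-$m$ binary form when it is raised to the $s$-th power and perturbed. First I would observe that set-theoretically $Zero(G^{s})=Zero(G)$, since $G^{s}(y)=0$ if and only if $G(y)=0$. Hence, by the previous lemma, for $l$ generic the intersection $l\cap Zero(G^{s})$ consists of exactly the same $m$ distinct points $\{y_{1},\ldots,y_{m}\}$ at which $l$ meets $G=0$ transversely. So the only genuine content of the present statement is the multiplicity claim $RightMult_{y_{i}}(l,G^{s})=s$ at each of these points.

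Next I would pass to the algebraic formulation on $l$. Writing $\theta:P^{1}(L)\rightarrow l$ for the isomorphism from the previous proof, the form $G$ restricts to a homogeneous polynomial $P(Y_{0},Y_{1})$ of degree $m$, and by the previous lemma $P$ has $m$ distinct reduced roots, one lying over each $y_{i}$. Consequently $G^{s}$ restricts to $P^{s}$, a homogeneous polynomial of degree $ms$ whose zero set is again $\{y_{1},\ldots,y_{m}\}$, but now with each root of algebraic multiplicity exactly $s$. Concretely, in an affine coordinate $t$ centred at $y_{i}$ we have $P(t)=a_{i}t+O(t^{2})$ with $a_{i}\neq 0$, so $P^{s}(t)=a_{i}^{s}t^{s}+O(t^{s+1})$ vanishes to order precisely $s$ at $y_{i}$.

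Now I would unwind the definition $RightMult_{y_{i}}(l,G^{s})=Card({\mathcal V}_{y_{i}}\cap l\cap Zero(G_{x'}))$, where $x'$ is a generic point of ${\mathcal V}_{x}$ inside the parameter space $Par_{ms}$ of degree-$ms$ forms and $x$ is the parameter of $G^{s}$. The key observation is that the linear restriction map sending a degree-$ms$ form on $P^{n}$ to its restriction along $\theta$ is surjective onto the space of degree-$ms$ binary forms: since two of the ambient coordinates restrict to independent linear forms on $l$, the image already contains every monomial $Y_{0}^{j}Y_{1}^{ms-j}$. Hence the restriction $Q$ of $G_{x'}$ is a \emph{generic} degree-$ms$ binary form infinitesimally close to $P^{s}$. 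Because $Q$ specialises to $P^{s}$ under $\pi$, all $ms$ of its roots lie in $\bigcup_{i}{\mathcal V}_{y_{i}}$, and near each $y_{i}$ we have $Q(t)=a_{i}^{s}t^{s}+\mbox{(generic infinitesimal lower-order terms)}$. A Newton-polygon argument of the power-series type used in the earlier lemmas then shows that such a polynomial has exactly $s$ roots in ${\mathcal V}_{y_{i}}$, all simple: the multiplicity-$s$ root of $P^{s}$ splits into $s$ distinct, infinitesimally close roots of $Q$. Summing over $i$ accounts for all $ms$ roots of the generic form $Q$, exactly $s$ in each neighbourhood, and so $RightMult_{y_{i}}(l,G^{s})=s$, as required.

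I expect the main obstacle to be this genericity-and-splitting step. One must verify carefully that perturbing $G^{s}$ generically within the full linear system $Par_{ms}$ (rather than within the much smaller locus of $s$-th powers of degree-$m$ forms) really induces a generic infinitesimal perturbation of the restricted binary form $P^{s}$, and that such a perturbation splits each multiplicity-$s$ root into precisely $s$ roots confined to the corresponding neighbourhood ${\mathcal V}_{y_{i}}$ without migrating any root out of $\bigcup_{i}{\mathcal V}_{y_{i}}$. Establishing surjectivity of the restriction map and then invoking the standard count of roots inside ${\mathcal V}_{y_{i}}$ through the Zariski-structure and power-series machinery of \cite{depiro2} is where the real work lies.
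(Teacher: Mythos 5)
Your argument is correct in outline, but it takes a genuinely different route from the paper's. The paper first shows that the parameters $\lambda$ of $G$ and $\mu$ of $G^{s}$ are interdefinable (projective Nullstellensatz plus uniqueness of factorisation), so that a line generic for $G^{s}$ is automatically generic for $G$ and the previous lemma applies; it then obtains the total count $ms$ from the Hyperspatial Bezout Theorem (a forward reference to Section 2), and proves only the inequality $RightMult_{y}(l,G^{s})\geq s$ at each point, by cutting with a generic $2$-plane containing $l$, quoting the plane-curve identity $RightMult_{y}(l,C^{s})=s\cdot RightMult_{y}(l,C)$ from \cite{depiro2}, and lifting infinitesimal variations from the plane parameter space to $Par_{ms}$ through the surjective linear restriction map $\overline{F}$; equality then follows because $m$ points each counted at least $s$ times must exhaust the total $ms$. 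You instead restrict all the way down to the line itself: $G^{s}$ restricts to the binary form $P^{s}$ with $m$ roots of multiplicity $s$, the surjective linear restriction from $Par_{ms}$ onto the space of degree-$ms$ binary forms on $l$ carries a generic point of ${\mathcal V}_{x}$ to a generic point of ${\mathcal V}_{P^{s}}$ (the fibres are translates of the kernel, so the dimension count goes through), and a Newton-polygon count shows each multiplicity-$s$ root splits into exactly $s$ distinct infinitesimal roots, none escaping $\bigcup_{i}{\mathcal V}_{y_{i}}$ since roots specialise to roots. Your version buys exactness directly, stays entirely one-dimensional, and avoids both the forward appeal to Bezout and the plane-curve lemma of \cite{depiro2}; the paper's version buys economy by reusing the already-developed plane-curve machinery and only having to establish an inequality. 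One small patch you should make: your appeal to the previous lemma silently assumes that a line generic over the parameter of $G^{s}$ is generic over the parameter of $G$; this is true, but it is precisely the interdefinability of $\lambda$ and $\mu$ that the paper spends its opening paragraph establishing, and your set-theoretic identity $Zero(G^{s})=Zero(G)$ does not by itself supply it.
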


\begin{proof}
We first show that $l$ is generic with respect to $G=0$. Let
$\lambda=\{\lambda_{i}\}$ be the parameters defining $G=0$ and let
$\mu=\{\mu_{j}\}$ be the parameters defining $G^{s}=0$. We claim
that $\lambda$ and $\mu$ are interdefinable, considered as
elements of the projective spaces $Par_{m}$ and $Par_{ms}$, in the
structure $P(L)$, which was considered in \cite{depiro1}. The fact
that $\mu\in dcl(\lambda)$ is clear. Conversely, let $\alpha$ be
an automorphism fixing $\mu$ and let $G_{\alpha(\lambda)}$ be the
algebraic form of degree $m$ obtained from $G=G_{\lambda}$. As
$(G_{\lambda})^{s}=G_{\mu}$, we have that
$(G_{\alpha(\lambda)})^{s}=G_{\mu}$. Hence,
$Zero(G_{\alpha(\lambda)})=Zero(G_{\lambda})$. By the projective
Nullstellenstatz, and the fact that both algebraic forms are
irreducible, we must have that
$<G_{\lambda}>=<G_{\alpha(\lambda)}>$. Hence, there must exist a
unit $U$ in the ring $L[X_{0},\ldots,X_{n}]$ such that
$G_{\lambda}=UG_{\alpha(\lambda)}$. As the only such units are
scalars, we obtain immediately that $\alpha(\lambda)=\lambda$ in
$Par_{ms}$. As $P(L)$ is sufficiently saturated, we obtain that
$\lambda\in dcl(\mu)$. By the previous lemma, we obtain that $l$
intersects $G=0$ in exactly $m$ points, hence $l$ intersects
$G^{s}=0$ in exactly $m$ points as well. Therefore, the first part
of the lemma is shown. We now apply the Hyperspatial Bezout
Theorem, see below, to obtain that;\\

$\sum_{y\in l\cap
G^{s}=0}RightMult_{y}(l,G^{s})=deg(l)deg(G^{s})=ms$\\

Hence, the lemma is shown by proving that for \emph{any} $y\in
l\cap G^{s}=0$, $RightMult_{y}(l,G^{s})\geq s$. As before, we
choose a generic plane $P$ containing $l$, such that
$C_{x_{0}}=P\cap G_{x_{0}}$ defines a projective algebraic curve
of degree $m$. Using an explicit presentation of an isomorphism
$\theta:P^{2}(L)\rightarrow L$, as was done above, we clearly have
that the intersection $(P\cap (G_{x_{0}})^{s}=0)$ consists of the
non-reduced curve $C_{x_{0}}^{s}=0$, which has degree $ms$. We now
apply a result from the paper \cite{depiro2},(Lemma 4.16), which
gives that, for $y\in l\cap C_{x_{0}}$;\\

$RightMult_{y}(l,C_{x_{0}}^{s},Par_{Q_{ms}})=s\centerdot
RightMult_{y}(l,C_{x_{0}},Par_{Q_{ms}})\geq s$\\

where we have use the fact that the parameter defining
$C_{x_{0}}^{s}$ moves in the projective parameter space $Q_{ms}$
defining plane projective curves of degree $ms$ in $P^{2}(L)$. We
now claim that, for $y\in l\cap G_{x_{0}}=0$;\\

$RightMult_{y}(l,G_{x_{0}}^{s})\geq RightMult_{y}(l,C_{x_{0}}^{s},Par_{Q_{ms}})$ $(*)$\\

Recall that, given $G_{x}$ an algebraic form of degree $m$, the
restriction of $G_{x}$ to the plane $P_{abc}$ is given by the
formula;\\

$C_{x}(Y_{0},Y_{1},Y_{2})=\sum_{j_{0}+j_{1}+j_{2}=m}F_{j_{0}j_{1}j_{2}}(a,b,c,\bar
x)Y_{0}^{j_{0}}Y_{1}^{j_{1}}Y_{2}^{j_{2}}$\\

We first claim that each $F_{j_{0}j_{1}j_{2}}$ is \emph{not}
identically zero $(**)$. Let $H_{x}$ be a hyperplane given in
coordinates by;\\

$H_{x}=\sum_{r=0}^{n}x_{i}X_{i}=0$\\

Then the restriction of $H_{x}$ to $P_{abc}$ is given by the plane;\\

$P_{x}=(\sum_{r=0}^{n}{x_{r}a_{r}\over
a_{i}})Y_{0}+(\sum_{r=0}^{n}{x_{r}b_{r}\over
b_{j}})Y_{1}+(\sum_{r=0}^{n}{x_{r}c_{r}\over c_{k}})Y_{2}$\\

By elementary linear algebra, we can find hyperplanes
$\{H_{0},H_{1},H_{2}\}$ whose restriction to $P_{abc}$ define the
planes $\{Y_{0}=0,Y_{1}=0,Y_{2}=0\}$. It follows by direct
calculation that the algebraic form of degree $m$ defined by
$H_{0}^{j_{0}}H_{1}^{j_{1}}H_{2}^{j_{2}}$ restricts to the curve
of degree $m$ defined by
$Y_{0}^{j_{0}}Y_{1}^{j_{1}}Y_{2}^{j_{2}}=0$. Hence, $(**)$ is
shown. Now consider the function;\\

$\overline F=\{F_{j_{0}j_{1}j_{2}}\}:Par_{m}\rightarrow
Par_{Q_{m}}$\\

By earlier remarks, the algebraic function $\overline F$ is linear
in the variables $\{x_{i}\}$ and defined over $\{a,b,c\}$. Hence,
its image defines a plane $P\subset Par_{Q_{m}}$. By the above
calculation, this plane $P$ contains the linearly independent set
$\{p_{j_{0}j_{1}j_{2}}:j_{0}+j_{1}+j_{2}=m\}$ where
$C_{p_{j_{0}j_{1}j_{2}}}$ defines the curve
$Y_{0}^{j_{0}}Y_{1}^{j_{1}}Y_{2}^{j_{2}}=0$. Hence, $\overline F$
is surjective. Moreover, by elementary facts about linear maps,
the fibres of $\overline F$ are equidimensional. We can then show
$(*)$. Suppose that;\\

$RightMult_{y}(l,C_{x_{0}}^{s},Par_{Q_{ms}})=k$\\

Then one can find $x'\in{\mathcal V}_{x_{1}}\cap Par_{Q_{ms}}$,
generic over the parameter $x_{1}$ defining $C_{x_{0}}^{s}$, such
that $C_{x'}$ intersects $l$ in the distinct points
$\{y_{1},\ldots,y_{k}\}\subset{\mathcal V}_{y}$. As $x_{1}$ is
regular for the cover $\overline F$, if $x_{2}$ defines
$G_{x_{0}}^{s}$, we can find $x''\in{\mathcal V}_{x_{2}}\cap
Par_{ms}$, such that $\overline{F}(x'')=x'$. The algebraic form
defined by $G_{x''}$ then intersects the plane $P_{abc}$ in the
curve $C_{x'}$, hence it must intersect the line $l$ in the
distinct points $\{y_{1},\ldots,y_{k}\}\subset{\mathcal V}_{y}$ as
well. This implies that;\\

$RightMult_{y}(l,G_{x_{0}}^{s})\geq k$\\

Hence $(*)$ is shown. The lemma is then proved. In this lemma we
have not shown anything interesting algebraically. Namely, if one
considers the restriction of $G_{x_{0}}^{s}$ to $l$, we obtain the
homogeneous polynomial $P_{x_{0}}^{s}$. By the previous lemma,
$P_{x_{0}}$ has $m$ distinct roots in $P^{1}(L)$, hence
$P_{x_{0}}^{s}$ has $m$ distinct roots with multiplicity $s$.
Therefore, the scheme theoretic intersection of $l$ with
$G_{x_{0}}^{s}$ consists of $m$ distinct copies of the non-reduced
scheme $L[t]/(t)^{s}$. The usefulness of the result will be shown
in the following lemmas.
\end{proof}

\begin{rmk}
Note that the latter part of the argument in fact shows that, for
\emph{any} line intersecting $G_{x_{0}}^{s}$ in finitely many
points, we must have that each point of intersection $y$ is
counted at least $s$-times.

\end{rmk}

\begin{lemma}
Let $F=0$ define a hypersurface of degree $k$. Let
$F=F_{1}^{n_{1}}\centerdot\ldots\centerdot
F_{j}^{n_{j}}\centerdot\ldots\centerdot F_{r}^{n_{r}}$ be its
factorisation into irreducibles, with $degree(F_{j})=m_{j}$. Then
there exists a line $l$, intersecting each component $F_{j}$ in
exactly $m_{j}$ points, each counted $n_{j}$ times, with the
property that the sets $\{(F_{j}\cap l):1\leq j\leq r\}$ are
pairwise disjoint. Moreover, the set of lines having this property
form a Zariski open subset of $Par_{l}$, defined over the
parameters of $F=0$.

\end{lemma}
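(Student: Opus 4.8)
The plan is to produce $l$ by intersecting finitely many nonempty Zariski open subsets of the irreducible parameter space $Par_{l}=P^{n}\times P^{n}\setminus\Delta$: for each irreducible factor I use the preceding lemma to pin down the intersection multiplicities with $F_{j}^{n_{j}}$, and separately I impose that $l$ avoid every pairwise intersection $Zero(F_{i})\cap Zero(F_{j})$, which is exactly what forces the sets $F_{j}\cap l$ to be pairwise disjoint. Since a finite intersection of nonempty open subsets of an irreducible variety is again nonempty and open, this gives both the existence of $l$ and the final openness assertion, provided each condition is defined over the parameters of $F$. To that end I first note that the parameters defining each factor $F_{j}$, together with its exponent $n_{j}$, lie in $dcl$ of the parameters defining $F$: this is the interdefinability argument already used at the start of the proof of the preceding lemma, since the factorisation of $F$ into irreducibles is canonical up to scalars and any automorphism fixing $F$ fixes each $\langle F_{j}\rangle$. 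Hence any locus defined below over the parameters of some $F_{j}$ is automatically defined over the parameters of $F$.

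Next, for each fixed $j$ I apply the preceding lemma to the irreducible form $F_{j}$ of degree $m_{j}$ with exponent $n_{j}$, and let $U_{j}\subseteq Par_{l}$ be the set of lines meeting $F_{j}^{n_{j}}=0$ in exactly $m_{j}$ points, each counted $n_{j}$ times. By the Remark following that lemma, every line meeting $F_{j}^{n_{j}}=0$ in finitely many points has $RightMult_{y}(l,F_{j}^{n_{j}})\geq n_{j}$ at each point, while the Hyperspatial Bezout Theorem forces the total to be $m_{j}n_{j}$; consequently membership in $U_{j}$ is equivalent to $l$ meeting $Zero(F_{j})$ in $m_{j}$ distinct points, which by the single-form intersection lemma is a nonempty Zariski open condition defined over the parameters of $F_{j}$, hence of $F$.

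It remains to force disjointness, and this is the step I expect to be the main obstacle. For $i\neq j$ set $Z_{ij}=Zero(F_{i})\cap Zero(F_{j})$. Because $F_{i}$ and $F_{j}$ are distinct irreducibles, neither zero set contains the other, so $Z_{ij}$ is a proper closed subvariety of the irreducible hypersurface $Zero(F_{i})$ and therefore $\dim Z_{ij}\leq n-2$. Using the incidence variety $I$ of the Introduction, the set $B_{ij}=\{l\in Par_{l}:l\cap Z_{ij}\neq\emptyset\}$ is the projection to $Par_{l}$ of the closed incidence set $\{(a,b,p):p\in Z_{ij},\ p\in l_{ab}\}$, hence closed by completeness; a routine incidence count gives $\dim B_{ij}\leq\dim Z_{ij}+(n+1)\leq 2n-1<2n=\dim Par_{l}$, so $B_{ij}$ is a proper closed subset and its complement $V_{ij}=Par_{l}\setminus B_{ij}$ is nonempty, open, and defined over the parameters of $F$. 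Since a common point of $F_{i}\cap l$ and $F_{j}\cap l$ is precisely a point of $Z_{ij}\cap l$, the condition $l\in V_{ij}$ says exactly that $F_{i}\cap l$ and $F_{j}\cap l$ are disjoint.

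Finally I would set $U=\bigcap_{1\leq j\leq r}U_{j}\cap\bigcap_{i\neq j}V_{ij}$. This is a finite intersection of nonempty Zariski open subsets of the irreducible variety $Par_{l}$, hence nonempty and open, and it is defined over the parameters of $F$; every $l\in U$ meets each $F_{j}$ in exactly $m_{j}$ points counted $n_{j}$ times and has the $\{F_{j}\cap l\}$ pairwise disjoint, which is the assertion of the lemma. The genuine content lies entirely in the disjointness step: once one knows $\dim Z_{ij}\leq n-2$ and that a generic line misses any fixed subvariety of codimension at least two, the multiplicity statements are inherited directly from the preceding lemma, and the only remaining work is the bookkeeping that keeps every locus defined over the parameters of $F$.
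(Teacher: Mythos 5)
Your proposal is correct and follows essentially the same route as the paper: interdefinability of the factor parameters with those of $F$, openness of the multiplicity condition for each component via the preceding lemma and its remark (together with the Bezout count), and disjointness by noting that the union of pairwise intersections of components has dimension at most $n-2$ so that the lines meeting it form a proper closed subset of $Par_{l}$. The final step of intersecting these finitely many nonempty open sets is exactly the paper's conclusion.
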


\begin{proof}
Let $(x_{1},\ldots,x_{r})\in Par_{m_{1}}\times\ldots\times
Par_{m_{r}}$ be the tuple defining each reduced irreducible
component of $F=0$. By an elementary argument, extending the proof
in Lemma 1.6, the tuple is interalgebraic with the tuple $x\in
Par_{k}$ defining the hypersurface $F=0$. Let
$\theta_{(x_{j},m_{j},n_{j})}(y)\subset Par_{l}$ be the statement
that a line $l_{y}$ intersects $F_{j}$ in exactly $m_{j}$
points, each counted $n_{j}$ times;\\

$\exists_{z_{1}\neq\ldots\neq z_{m_{j}}}[\bigwedge_{1\leq i\leq
m_{j}}z_{i}\in l_{y}\cap F_{j,x_{j}}\wedge
RightMult_{z_{i}}(l_{y},F_{j,x_{j}})=n_{j}\wedge.\ \ \forall
w(w\in l_{y}\cap F_{j,x_{j}}\rightarrow\bigvee_{1\leq i\leq
m_{j}}z_{i}=w)]$\\

By definability of multiplicity in Zariski structures, Lemmas 1.5
and Lemmas 1.6, each $\theta_{(x_{j},m_{j},n_{j})}(y)$ is
definable over $x_{j}$ and is a Zariski dense algebraic subset of
$Par_{l}$. By the previous remark, the complement of
$\theta_{(x_{j},m_{j},n_{j})}(y)$ in the set of lines having
finite intersection with $F_{j}$
is given by;\\

$\exists w[w\in l_{y}\cap F_{j,x_{j}}\wedge
RightMult_{w}(l_{y},F_{j,x_{j}})\geq n_{j}+1]$\\

It follows that $\theta_{(x_{j},m_{j},n_{j})}(y)$ defines a
Zariski open subset of $Par_{l}$. Now let;\\

$\theta(y)=\bigwedge_{1\leq j\leq
r}\theta_{(x_{j},m_{j},n_{j})}(y)$\\

Then $\theta(y)$ defines a Zariski open subset of $Par_{l}$.
Finally, let $W$ be the the union of the pairwise intersections of
the irreducible components $F_{j}$. Then, by elementary dimension
theory, $W$ is Zariski closed of dimension at most $n-2$. Hence,
the condition on $Par_{l}$ that a line passes through $W$ defines
a proper closed set over the parameters $(x_{1},\ldots,x_{r})$.
Let $U(y)$ be the Zariski open complement of this set in
$Par_{l}$. Then any line $l$ satisfying $\theta(y)\wedge U(y)$ has
the properties required of the lemma.

\end{proof}

\begin{defn}
We will call a line $l$ satisfying the conclusion of the lemma
\emph{transverse} to $F$.

\end{defn}

We will now give an alternative characterisation of
transversality.

\begin{lemma}
Let $F=0$ define a hypersurface of degree $k$. Then a line $l$ is
transverse to $F$ iff $l$ intersects $F$ in finitely many points
and, for each $y\in l\cap (F=0)$, $LeftMult_{y}(l,F)=1$. Moreover,
the notion of transversality may be formulated by a predicate in
the language ${\mathcal L}_{spec}$, $Transverse_{k}\subset
Par_{k}\times Par_{l}$;\\

$Transverse_{k}(\lambda,l)$ iff $l$ is transverse to $F_{\lambda}$\\
\end{lemma}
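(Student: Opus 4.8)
\emph{Proof proposal.} The plan is to deduce the characterisation by isolating exactly what the left multiplicity measures. Write $F_{\mathrm{red}} = F_{1}\cdots F_{r}$ for the radical of $F$, so that $\deg F_{\mathrm{red}} = \sum_{j} m_{j} =: d$. The first observation is that $LeftMult$ is insensitive to the multiplicities $n_{j}$: by definition $LeftMult_{y}(l,F) = Card(\mathcal{V}_{y}\cap l'\cap Zero(F))$ for a generic infinitesimal variation $l'$ of $l$, and this counts points of a \emph{set}-theoretic intersection with the zero set $Zero(F) = Zero(F_{\mathrm{red}})$. Hence $LeftMult_{y}(l,F) = LeftMult_{y}(l,F_{\mathrm{red}})$ for every $y$, and moreover, if $y$ lies on the single component $F_{j}$ and no other, then $Zero(F_{\mathrm{red}}) = Zero(F_{j})$ on $\mathcal{V}_{y}$, so that $LeftMult_{y}(l,F) = LeftMult_{y}(l,F_{j})$.

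The second ingredient is a left-multiplicity Bezout identity: for \emph{any} line $l$ with $l\cap (F=0)$ finite,
\[ \sum_{y\in l\cap F_{\mathrm{red}}} LeftMult_{y}(l,F_{\mathrm{red}}) = d. \]
To see this, note that a generic infinitesimal variation $l'$ of $l$ is, by the properties of the universal specialisation, generic over the field $L$ defining $F$; so by Lemma 1.6 applied to each component $F_{j}$, together with the fact that a generic line avoids the codimension $\geq 2$ loci $F_{i}\cap F_{j}$, it meets $Zero(F_{\mathrm{red}})$ transversely in exactly $d$ distinct points. Since $\pi(l') = l$ and $Zero(F_{\mathrm{red}})$ is defined over $L$, each such point $p$ satisfies $\pi(p)\in l\cap Zero(F_{\mathrm{red}})$, hence lies in $\mathcal{V}_{y}$ for a unique $y\in l\cap F_{\mathrm{red}}$; grouping the $d$ points of $l'\cap F_{\mathrm{red}}$ by their specialisation yields the identity. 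The same argument applied to a single component gives $\sum_{y\in l\cap F_{j}} LeftMult_{y}(l,F_{j}) = m_{j}$, and in particular $Card(l\cap F_{j})\leq m_{j}$.

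For the forward implication, suppose $l$ is transverse to $F$, i.e.\ satisfies the conclusion of Lemma 1.9. Then $l\cap(F=0) = \bigcup_{j}(l\cap F_{j})$ is finite, and by disjointness each $y\in l\cap(F=0)$ lies on a unique component $F_{j}$, with $Card(l\cap F_{j}) = m_{j}$. The per-component identity then forces $LeftMult_{y}(l,F_{j}) = 1$ at each of these $m_{j}$ points, whence $LeftMult_{y}(l,F) = 1$ by the first observation. Conversely, suppose $l$ has finite intersection with $F$ and $LeftMult_{y}(l,F) = 1$ for all $y$. If some $y$ lay on two components $F_{i},F_{j}$, then $l'$ would meet $F_{i}$ and $F_{j}$ at distinct points of $\mathcal{V}_{y}$, giving $LeftMult_{y}(l,F)\geq 2$; hence the sets $l\cap F_{j}$ are pairwise disjoint. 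The global identity gives $Card(l\cap F_{\mathrm{red}}) = d$, while $Card(l\cap F_{j})\leq m_{j}$; summing forces $Card(l\cap F_{j}) = m_{j}$ for every $j$. Finally, the Hyperspatial Bezout Theorem applied to $F_{j}^{n_{j}}$ yields $\sum_{y\in l\cap F_{j}} RightMult_{y}(l,F_{j}^{n_{j}}) = m_{j}n_{j}$, and since each of the $m_{j}$ terms is $\geq n_{j}$ by the Remark following Lemma 1.7, every term equals $n_{j}$. Thus $l$ is transverse to $F$.

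For the definability claim, the equivalence just proved expresses $Transverse_{k}(\lambda,l)$ as the conjunction of ``$l\cap F_{\lambda}$ is finite'' and ``$\forall y\,(y\in l\cap F_{\lambda}\to LeftMult_{y}(l,F_{\lambda}) = 1)$''; both finiteness and the predicate $LeftMult_{y}(l,F_{\lambda}) = 1$ are expressible in $\mathcal{L}_{spec}$ by definability of multiplicity in Zariski structures, so $Transverse_{k}$ defines a subset of $Par_{k}\times Par_{l}$. I expect the main obstacle to be the careful justification of the left-multiplicity Bezout identity, specifically verifying that the perturbed line $l'$ is genuinely generic over $L$ and that every one of its $d$ intersection points with $F_{\mathrm{red}}$ specialises into $l\cap F_{\mathrm{red}}$, so that no intersection mass is lost under $\pi$. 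Once this is in hand, the remaining steps are routine bookkeeping with the two Bezout bounds.
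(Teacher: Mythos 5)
Your argument is correct and follows exactly the route the paper intends: the paper's own proof of this lemma is only the remark that "the first part is clear using previous results of this section" (with a citation to \cite{depiro3} for definability), and your write-up supplies precisely those details — the observation that $LeftMult$ sees only the reduced hypersurface, the left-multiplicity count via a generic infinitesimal variation together with Lemmas 1.5 and 1.6, the disjointness of the $l\cap F_{j}$ from $LeftMult_{y}=1$, and the Hyperspatial Bezout plus Remark 1.7 argument to recover the multiplicities $n_{j}$. The definability claim via definability of Zariski multiplicities is also consistent with how the paper argues in Lemma 1.8.
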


\begin{proof}
The first part of the proof is clear using previous results of
this section. For the second part, use the results in Section 3 of
\cite{depiro3}.
\end{proof}

We now have;\\

\begin{lemma}{A Nullstellensatz for Non-Reduced Hypersurfaces}\\

Let $F=0$ define a hypersurface of degree $k$ and let
$Zero(F)=Zero(F_{\lambda_{1}})\cup\ldots\cup
Zero(F_{\lambda_{r}})$ be its geometric factorisation into
irreducibles, using the Zariski topology. Let
$\sigma(\lambda_{j},m_{j},n_{j})\subset Par_{m_{j}}$, for $1\leq
j\leq r$, be the predicates defined in ${\mathcal L}_{spec}$ by;\\

A transverse line to $F_{\lambda_{j}}$ intersects
$F_{\lambda_{j}}$ in
exactly $m_{j}$ points, each counted $n_{j}$ times.\\

Then the original homogeneous polynomial $F$ is characterised, up
to scalars, by the sequence;\\

$(Zero(F_{\lambda_{1}}),\ldots,Zero(F_{\lambda_{r}}),\sigma(\lambda_{1},m_{1},n_{1}),\ldots,\sigma(\lambda_{r},m_{r},n_{r}))$

\end{lemma}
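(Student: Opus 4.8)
The plan is to split the data in the sequence into a geometric part, the reduced zero sets $Zero(F_{\lambda_j})$, and a multiplicity part, the predicates $\sigma(\lambda_j,m_j,n_j)$, and to show that the former recovers each reduced component up to a scalar while the latter recovers its exponent. To begin, each $Zero(F_{\lambda_j})$ is by hypothesis an irreducible closed hypersurface, so its homogeneous ideal is prime; by the projective Nullstellensatz (\cite{H}) it is radical and equals $\langle F_{\lambda_j}\rangle$, since $F_{\lambda_j}$ is irreducible. Hence the geometric datum $Zero(F_{\lambda_j})$ determines $F_{\lambda_j}$ up to a scalar and in particular fixes its degree $m_j$, exactly as in the interdefinability argument already used in the proof of Lemma 1.6. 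Thus the first $r$ entries of the sequence pin down the reduced polynomial $\prod_j F_{\lambda_j}$ up to scalar, and the real content of the lemma is that the remaining entries recover the non-reduced structure.

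Next I would recover the exponents. Fixing $j$, take a line $l$ transverse to $F$ in the sense of the Definition following Lemma 1.8, so that $l$ meets the reduced components in pairwise disjoint finite sets; in particular, at a point $y\in l\cap Zero(F_{\lambda_j})$ the remaining factors $F_{\lambda_k}$ with $k\neq j$ are non-vanishing throughout the infinitesimal neighborhood $\mathcal V_y$. Since on $\mathcal V_y$ the form $F$ then agrees with $F_{\lambda_j}^{n_j}$ up to a unit, the locality of the non-standard multiplicity gives $RightMult_y(l,F)=RightMult_y(l,F_{\lambda_j}^{n_j})$. By Lemma 1.6 together with the Remark following it, a transverse line meets $F_{\lambda_j}^{n_j}$ in exactly $m_j$ points, each counted exactly $n_j$ times, so the predicate $\sigma(\lambda_j,m_j,n_j)$ holds for precisely one value of $n_j$, namely the multiplicity with which $F_{\lambda_j}$ occurs in $F$.

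Finally I would assemble the pieces: given the full sequence, the geometric entries determine each $F_{\lambda_j}$ up to scalar, and the unique $n_j$ for which $\sigma(\lambda_j,m_j,n_j)$ is true recovers the corresponding exponent, so any two homogeneous polynomials giving rise to the same sequence have identical irreducible factorisations up to scalar and therefore agree up to a single global scalar. I expect the main obstacle to be the locality step --- showing rigorously that $RightMult_y(l,F)$ depends only on the factor supported at $y$ and is insensitive to the units $F_{\lambda_k}$ with $k\neq j$ near $y$. This rests on the disjointness provided by Lemma 1.8 together with a careful reading of the definition of $RightMult$ through a generic infinitesimal variation of the parameter in $Par_k$, and it is here, rather than in the Nullstellensatz or in the counting, that the argument must be made genuinely precise.
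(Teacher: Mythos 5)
Your argument is correct and follows essentially the same route as the paper, whose own proof is just the one-line observation that the predicates $\sigma(\lambda_{j},m_{j},n_{j})$ determine the multiplicity of each component (by the preceding Lemmas on transverse lines and powers $G^{s}$) and that uniqueness of factorisation then finishes the job. You have simply filled in the details the paper cites implicitly --- the projective Nullstellensatz recovering each reduced factor from its zero set, and the transverse-line counting from Lemma 1.6, its Remark and Lemma 1.8 recovering each exponent --- so no further comment is needed.
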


\begin{proof}
By the proof of previous results from this section, the formulae
$\sigma(\lambda_{j},m_{j},n_{j})$, for $1\leq j\leq r$, determine
the multiplicity of each component $F_{\lambda_{j}}$. The result
then follows by uniqueness of factorisation.
\end{proof}

We will refer to a hypersurface as generic if the parameter
defining it is generic in the parameter space of all hypersurfaces
of the same degree.

\begin{defn}

The degree of a projective algebraic curve $C$ is the number of
intersections with a generic hyperplane.

\end{defn}

We need to check this is a good definition. Let $(P^{n})^{*}$ be
the dual space of $P^{n}$. $(P^{n})^{*}$ is the parameter
space for all hyperplanes $H$ in $P^{n}$. We have that;\\

 $\{a\in (P^{n})^{*}:dim(C\cap H_{a})\geq 1\}$\\

  is closed in $(P^{n})^{*}$, hence, for generic $a\in (P^{n})^{*}$,
  $C\cap H_{a}$ is finite (and non-empty). Choosing \emph{some} generic $a$ in $(P^{n})^{*}$,
  let $m$ be the number of intersections of $H_{a}$ with $C$. Let $\theta(x)\subset
(P^{n})^{*}$ be the statement;\\

$\exists_{x_{1}\neq\ldots\neq x_{m}}(\bigwedge_{1\leq i\leq
m}x_{i}\in C\cap H_{x}\wedge \forall y(y\in C\cap H_{x}\rightarrow
\bigvee_{1\leq i\leq m}x_{i}=y))$\\

$\theta(x)$ is algebraic and defined over $\emptyset$, hence, as
it contains $a$, must be Zariski dense in $(P^{n})^{*}$. In
particular, it contains \emph{any} generic $a$ in $(P^{n})^{*}$.\\

\begin{defn}

The degree of a hypersurface $F$ is the degree of the homogenous
polynomial defining it. (See the above remarks)

\end{defn}

In the following paper, the notion of \emph{birationality} between
projective algebraic curves, will be central.

\begin{defn}

We define a linear system $\Sigma$ on $P^{r}$ to be the collection
of algebraic forms of degree $k$, for some $k\geq 1$,
corresponding to a plane, which we will denote by $Par_{\Sigma}$,
contained in $Par_{k}$, the parameter space of homogeneous
polynomials of degree $k$. If $Par_{\Sigma}$ has dimension $n$, we
define a basis of $\Sigma$ to be an ordered set of $n+1$ forms
corresponding to a maximally independent set of parameters in
$Par_{\Sigma}$. Equivalently, a basis of $\Sigma$ is an ordered
system;\\

$\{\phi_{0}(X_{0},\ldots,X_{r}),\ldots,\phi_{n}(X_{0},\ldots,X_{r})\}$\\

of homogeneous polynomials of degree $k$ belonging to $\Sigma$
which are independent, that is there do not exist
parameters $\{\lambda_{0},\ldots,\lambda_{n}\}$ such that;\\

$\lambda_{0}\phi_{0}+\ldots +\lambda_{n}\phi_{n}\equiv 0$\\

\end{defn}

\begin{defn}

Given a linear system $\Sigma$ of dimension $n$ on $P^{r}$, we define the base locus of the system $\Sigma$ by;\\

 $Base(\Sigma)=\{\bar
x\in P^{r}: \phi_{0}(\bar x)=\ldots=\phi_{n}(\bar x)=0\}$\\

for any basis of $\Sigma$. Given any $2$ bases
$\{\phi_{0},\ldots,\phi_{j},\ldots,\phi_{n}\}$ and\\
$\{\psi_{0},\ldots,\psi_{i},\ldots,\psi_{n}\}$ of $\Sigma$, we can
find an invertible matrix of scalars
$(\lambda_{ij})_{0\leq i,j\leq n}$ such that;\\

$\psi_{i}=\sum_{j=0}^{n} \lambda_{ij}\phi_{j}$\\

Hence, the base locus of $\Sigma$ is well defined. As a basis
corresponds to a maximally independent sequence in $Par_{\Sigma}$,
we could equivalently define;\\

$Base(\Sigma)=\{\bar x\in P^{r}: \phi(\bar x)=0\}$\\

for \emph{every} algebraic form $\phi$ belonging to $\Sigma$.

\end{defn}

\begin{lemma}

Let $\Sigma$ be a linear system of dimension $n$ and degree $k$ on
$P^{r}$. Then, a choice of basis $B$ for $\Sigma$ defines a
morphism $\Phi_{\Sigma,B}:{P^{r}\setminus Base(\Sigma)}\rightarrow
P^{n}$ with the property that $Image(\phi_{\Sigma,B})$ is not
contained in any hyperplane section of $P^{n}$. Moreover, given
any $2$ bases $\{B,B'\}$ of $\Sigma$, there exists a homography
$\theta_{B,B'}:P^{n}\rightarrow P^{n}$ such that;\\

$\Phi_{\Sigma,B'}=\theta_{B,B'}\circ\Phi_{\Sigma,B}$\\

\end{lemma}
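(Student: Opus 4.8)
The plan is to take the evident candidate for $\Phi_{\Sigma,B}$ and verify the three assertions in turn, reading most of the work off the definitions of a basis and of the base locus. Fixing a basis $B=\{\phi_{0},\ldots,\phi_{n}\}$, I set $\Phi_{\Sigma,B}(\bar x)=[\phi_{0}(\bar x):\ldots:\phi_{n}(\bar x)]$. Since the $\phi_{j}$ are homogeneous of the common degree $k$, rescaling a representative of $\bar x$ by $\mu$ multiplies every coordinate by $\mu^{k}$, so the resulting point of $P^{n}$ is independent of the chosen representative. For $\bar x\notin Base(\Sigma)$ at least one $\phi_{j}(\bar x)\neq 0$, so the image is a genuine point of $P^{n}$, and on each standard chart $\{\phi_{j}\neq 0\}$ the map is given by the regular functions $\phi_{i}/\phi_{j}$; hence $\Phi_{\Sigma,B}$ is a morphism on $P^{r}\setminus Base(\Sigma)$. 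I note here that, each $\phi_{j}$ being a nonzero form, $Base(\Sigma)\subseteq\{\phi_{0}=0\}$ is a proper closed subset of the irreducible variety $P^{r}$, so its complement is dense; this fact is used below.

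For the non-degeneracy, I argue by contradiction: suppose $Image(\Phi_{\Sigma,B})$ lies in the hyperplane $\sum_{i=0}^{n}a_{i}Y_{i}=0$ for some $[a_{0}:\ldots:a_{n}]\in P^{n}$. Then the degree-$k$ form $\sum_{i=0}^{n}a_{i}\phi_{i}$ vanishes on the dense open set $P^{r}\setminus Base(\Sigma)$, hence vanishes identically on $P^{r}$. By the independence clause in the definition of a basis (Definition 1.14) this forces $a_{0}=\ldots=a_{n}=0$, contradicting $[a_{0}:\ldots:a_{n}]\in P^{n}$. Thus $Image(\Phi_{\Sigma,B})$ is contained in no hyperplane section of $P^{n}$.

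For the change-of-basis statement I would invoke the matrix relation already recorded in the definition of the base locus (Definition 1.15): for any two bases $B=\{\phi_{j}\}$ and $B'=\{\psi_{i}\}$ there is an invertible scalar matrix $(\lambda_{ij})$ with $\psi_{i}=\sum_{j=0}^{n}\lambda_{ij}\phi_{j}$. In particular the two bases define the same base locus, so $\Phi_{\Sigma,B}$ and $\Phi_{\Sigma,B'}$ share the domain $P^{r}\setminus Base(\Sigma)$. Let $\theta_{B,B'}:P^{n}\rightarrow P^{n}$ be the homography induced by $(\lambda_{ij})$, which is well defined and invertible precisely because the matrix is invertible. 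Then the $i$-th coordinate of $\theta_{B,B'}(\Phi_{\Sigma,B}(\bar x))$ is $\sum_{j}\lambda_{ij}\phi_{j}(\bar x)=\psi_{i}(\bar x)$, so $\theta_{B,B'}\circ\Phi_{\Sigma,B}=\Phi_{\Sigma,B'}$, as required.

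No single step is genuinely hard; the only points needing care are the passage from ``vanishes off $Base(\Sigma)$'' to ``vanishes identically'', which rests solely on the irreducibility of $P^{r}$ together with the base locus being a proper closed subset, and the verification that the ratios $\phi_{i}/\phi_{j}$ define an honest morphism rather than a mere set map. Both are dispatched by the density observation and the common homogeneity of the $\phi_{j}$ noted in the first paragraph, so I expect the write-up to be short.
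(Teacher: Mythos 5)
Your proposal is correct and takes essentially the same route as the paper: the paper simply writes down the map $[\phi_{0}(\bar x):\ldots:\phi_{n}(\bar x)]$, asserts "one checks" the required properties, and then exhibits the homography via the invertible matrix $(\lambda_{ij})$ of Definition 1.15, exactly as you do. Your write-up merely fills in the verifications the paper leaves to the reader (well-definedness from common homogeneity, the morphism property on the charts $\{\phi_{j}\neq 0\}$, and non-degeneracy from density of $P^{r}\setminus Base(\Sigma)$ plus the independence of the basis), all of which are sound.
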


\begin{proof}
Given a choice of basis $B=\{\phi_{0},\ldots,\phi_{n}\}$ for $\Sigma$, one checks that the map defined by;\\

$\Phi_{\Sigma,B}([X_{0}:\ldots:X_{r}])=[\phi_{0}(X_{0},\ldots,X_{r}):\ldots:\phi_{n}(X_{0},\ldots,X_{r})]$\\

is a morphism with the required properties. Now suppose that
$\{\phi_{0},\ldots,\phi_{j},\ldots,\phi_{n}\}$ and
$\{\psi_{0},\ldots,\psi_{i},\ldots,\psi_{n}\}$ are $2$ bases $B$
and $B'$ for $\Sigma$. Let $(\lambda_{ij})_{0\leq i,j\leq n}$ be
the matrix of scalars as given in Definition 1.15. Then one can
define a homography $\theta_{B,B'}$ by;\\

$\theta_{B,B'}([Y_{0}:\ldots:Y_{n}])=[\sum_{j=0}^{n}\lambda_{0j}Y_{j}:\ldots:\sum_{j=0}^{n}\lambda_{ij}Y_{j}:\ldots:\sum_{j=0}^{n}\lambda_{nj}Y_{j}]$\\

It is clear that this homography has the required property of the
lemma.

\end{proof}

\begin{defn}
We define a rational map from $P^{r}$ to $P^{n}$ to be a morphism
defined by a choice of basis for a linear system $\Sigma$.
\end{defn}

\begin{rmk}
Given a linear system $\Sigma$, we will generally refer to a
morphism given by Lemma 1.16 as simply $\Phi_{\Sigma}$, leaving
the reader to remember that a choice of basis is involved. As any
$2$ such choices differ by a homography, any properties of one
morphism transfer directly to the other, so one hopes that this
terminology will not cause confusion. More geometrically, observe
that, if $x\in P^{r}\setminus Base(\Sigma)$, then the set of
algebraic forms in $\Sigma$, vanishing at $x$, defines a
hyperplane $H_{x}\subset\Sigma$. A choice of basis
$\{\phi_{0},\ldots,\phi_{n}\}$ for $\Sigma$ identifies this
hyperplane $H_{x}$ with a point $[\phi_{0}(x),\ldots,\phi_{n}(x)]$
of the dual space $P^{n*}$.

\end{rmk}

\begin{defn}
We say that two projective algebraic curves $C_{1}$ and $C_{2}$
are birational if there exists $U\subset C_{1}$ and $V\subset
C_{2}$, with $U$ and $V$ open in $C_{1}$ and $C_{2}$ respectively,
such that $U$ and $V$ are isomorphic as algebraic varieties. We
will use the notation $\Phi:C_{1}\leftrightsquigarrow C_{2}$ for a
birational map.
\end{defn}

We will require the following presentation of birational maps;\\

\begin{lemma}
Let $C_{1}\subset P^{r}$ and $C_{2}\subset P^{n}$ be birational
projective algebraic curves, as in Definition 1.16, with the
property that no hyperplane section of $P^{r}$ or $P^{n}$ contains
$C_{1}$ or $C_{2}$ respectively. Then we can find linear systems
$\{\Sigma,\Sigma'\}$, rational maps;\\

$\ \ \phi_{\Sigma}:{P^{r}\setminus Base(\Sigma)}\rightarrow P^{n}\
\ \ \ \ \ \ \ \ \ \ \ \phi_{\Sigma'}:{P^{n}\setminus
Base(\Sigma')}\rightarrow
P^{r}$\\

and open subsets $\{U',V'\}$ of $\{C_{1},C_{2}\}$, which are
disjoint from\\
 $\{Base(\Sigma),Base(\Sigma')\}$, such that the
restrictions $\phi_{\Sigma}:U'\rightarrow V'$ and
$\phi_{\Sigma'}:V'\rightarrow U'$ are (inverse) isomorphisms.

\end{lemma}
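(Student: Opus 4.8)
The plan is to start from the birational map $\Phi:C_1\leftrightsquigarrow C_2$ guaranteed by Definition 1.16, which gives open subsets $U\subset C_1$ and $V\subset C_2$ together with an isomorphism $\phi:U\to V$ and its inverse $\phi^{-1}:V\to U$. The goal is to realise $\phi$ and $\phi^{-1}$ by rational maps coming from linear systems, so the essential task is to convert the abstract isomorphism of quasi-projective varieties into maps presented by homogeneous forms, and then to arrange that each such map genuinely arises as a $\Phi_{\Sigma,B}$ for a suitable linear system $\Sigma$ in the sense of Lemma 1.16.

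First I would work with the embedding $C_2\subset P^n$ and pull back the coordinate functions. Writing the inclusion $C_2\hookrightarrow P^n$ as $y\mapsto[y_0:\ldots:y_n]$, the composite $\phi:U\to V\subset C_2\subset P^n$ is a morphism from the quasi-projective variety $U$ to $P^n$, and I would represent it locally by homogeneous polynomials. Concretely, on an affine chart of $C_1$ the components of $\phi$ are regular functions, hence quotients of polynomials restricted to $C_1$; clearing denominators and homogenising, one obtains homogeneous forms $\psi_0,\ldots,\psi_n$ of a common degree $k$ on $P^r$ such that $\phi([x])=[\psi_0(x):\ldots:\psi_n(x)]$ for all $x\in U$ outside the common zero locus. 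The key point, which must be checked, is that the $\psi_i$ can be chosen so that they do not all vanish identically on $C_1$ and that their common zero set meets $C_1$ in a proper closed subset; this uses that $\phi$ is a genuine morphism on the open set $U$, so the indeterminacy locus of the forms intersected with $C_1$ is a proper (hence finite) closed subset, and we may shrink $U$ to its complement to obtain $U'$.

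Next I would assemble these forms into a linear system. The span of $\{\psi_0,\ldots,\psi_n\}$ inside $Par_k$ is a linear subspace, and after discarding dependencies one obtains an independent subset defining a linear system $\Sigma$ in the sense of Definition 1.14, with $\{\psi_0,\ldots,\psi_n\}$ (reindexed) serving as a basis $B$; then $\phi=\Phi_{\Sigma,B}$ on $P^r\setminus Base(\Sigma)$ by Lemma 1.16. Crucially, $Base(\Sigma)\cap C_1$ is contained in the indeterminacy locus just discussed, so $U'=U\setminus Base(\Sigma)$ is still open and dense in $C_1$ and disjoint from $Base(\Sigma)$. Running the identical construction on $\phi^{-1}:V\to U\subset C_1\subset P^r$ produces a second linear system $\Sigma'$ on $P^n$ with basis $B'$ and $\phi_{\Sigma'}=\Phi_{\Sigma',B'}$, together with an open dense $V'\subset C_2$ disjoint from $Base(\Sigma')$. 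Finally I would shrink simultaneously, replacing $U'$ by $U'\cap\phi^{-1}(V')$ and $V'$ by $\phi(U')\cap V'$, so that $\phi_\Sigma$ and $\phi_{\Sigma'}$ restrict to mutually inverse isomorphisms $U'\to V'$ and $V'\to U'$, as required; the inverse relation holds on the dense open sets because $\phi$ and $\phi^{-1}$ were inverse to begin with.

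The hard part will be the bookkeeping around the base loci and indeterminacy. The subtlety is that the homogeneous forms $\psi_i$ chosen to represent $\phi$ may acquire a common factor that vanishes on part of $C_1$, so that the naive map is undefined at points where $\phi$ is in fact regular; one must argue that after removing such common factors (or, equivalently, after passing to forms of possibly higher degree that resolve the indeterminacy along $C_1$) the base locus of the resulting system meets $C_1$ only in the genuinely bad points, which are finite in number since $C_1$ is a curve. This is where I would be most careful: I would use that $\phi$ is a morphism on all of $U$, so that by the valuative or local-ring criterion the map extends over each point of $U$, forcing the indeterminacy locus of any representing system to intersect $U$ trivially once the common zero locus is cut down. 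Everything else---the passage from regular functions to homogeneous forms, the extraction of an independent basis, and the identification with $\Phi_{\Sigma,B}$---is routine given Lemma 1.16 and the definitions of this section.
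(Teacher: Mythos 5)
Your overall strategy is the same as the paper's: start from the isomorphism $\phi:U\to V$, write its components as quotients of polynomials on an affine chart, clear denominators, homogenise to forms $\psi_{0},\ldots,\psi_{n}$ of a common degree $k$, take the linear system they span, and repeat for $\phi^{-1}$. The bookkeeping you describe around the base locus is fine (and slightly more careful than the paper, which dismisses it with ``the rest follows from the construction''): since $C_{1}$ is an irreducible curve, the common zero locus of the forms meets $C_{1}$ in a finite set once they do not all vanish on $C_{1}$, and shrinking $U'$ handles it.

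There is, however, one genuine gap, and it sits exactly where you wave your hands: the phrase ``after discarding dependencies one obtains an independent subset defining a linear system $\Sigma$ \ldots\ with $\{\psi_{0},\ldots,\psi_{n}\}$ (reindexed) serving as a basis $B$'' is internally inconsistent. If the $n+1$ forms are linearly dependent, you cannot discard dependencies and still retain all of them as a basis; and if you genuinely discard some, the resulting system has dimension $m<n$ and $\Phi_{\Sigma,B}$ maps to $P^{m}$, not to $P^{n}$ as the lemma requires. So you must actually prove the $\psi_{i}$ are independent, and you never do. This is precisely what the hypothesis ``no hyperplane section of $P^{n}$ contains $C_{2}$'' is for, and your proposal never invokes it: a nontrivial relation $\lambda_{0}\psi_{0}+\ldots+\lambda_{n}\psi_{n}\equiv 0$ would force every image point $[\psi_{0}(x):\ldots:\psi_{n}(x)]$ onto the hyperplane $\lambda_{0}Y_{0}+\ldots+\lambda_{n}Y_{n}=0$, hence $V'$ and therefore its closure $C_{2}$ would lie in a hyperplane section, a contradiction. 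The symmetric argument, using that $C_{1}$ is in no hyperplane section, gives independence of the forms presenting $\phi^{-1}$. With that one line added, your argument closes and coincides with the paper's proof.
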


\begin{proof}

As usual, let $[X_{0}:\ldots:X_{r}]$ and $[Y_{0}:\ldots:Y_{n}]$ be
homogeneous coordinates for $P^{r}$ and $P^{n}$ respectively.
Taking the hyperplanes $X_{0}=0$ and $Y_{0}=0$, we can find affine
presentations;\\

${L[x_{1},\ldots,x_{r}]\over J_{1}}=R({C_{1}\setminus C_{1}\cap
(X_{0}=0)})=R(U'')$\\

${L[y_{1},\ldots,y_{n}]\over J_{2}}=R({C_{2}\setminus C_{2}\cap
(Y_{0}=0)})=R(V'')$\\

where $\{U'',V''\}$ are open subsets of $\{C_{1},C_{2}\}$,
$\{J_{1},J_{2}\}$ are prime ideals.\\

We can then find $U'\subset U''\cap U$ and $V'\subset V''\cap V$
such that $U'$ and $V'$ are isomorphic as algebraic subvarieties
of $C_{1}$ and $C_{2}$ (consider the elements of $U$ which are
mapped to $V''$ by the original isomorphism). Now choose
polynomials
$F(\bar x)$ and $G(\bar y)$ such that;\\

${L[x_{1},\ldots,x_{r}]_{F}\over J_{1}'}=R(U')\ \ \ \ \ \ \ \ {L[y_{1},\ldots,y_{n}]_{G}\over J_{2}'}=R(V')$\\

As $R(U')\cong R(V')$, we can find rational functions
$\{\phi_{1}(\bar x),\ldots,\phi_{n}(\bar x)\}$ and
$\{\psi_{1}(\bar y),\ldots,\psi_{r}(\bar y)\}$ (with denominators
powers of $F$ and $G$ respectively) defining morphisms;\\

$\Phi:{A^{r}\setminus \{F=0\}}\rightarrow A^{n}\ \ \ \ \ \ \Psi:{A^{n}\setminus \{G=0\}}\rightarrow A^{r}$\\

and representing the isomorphism $U'\cong V'$. We now show how to
convert $\Phi$ into $\Phi_{\Sigma}$. By equating denominators, we
are able to write\\
 $\{\phi_{1}(\bar x),\ldots,\phi_{n}(\bar x)\}$ as
$\{{p_{1}(\bar x)\over q(\bar x)},\ldots,{p_{n}(\bar x)\over
q(\bar x)}\}$. Now make the substitutions $x_{i}={X_{i}\over
X_{0}}$ in $\{p_{1}(\bar x),\ldots,p_{n}(\bar x),q(\bar x)\}$ and
multiply
 through by the highest power of $X_{0}$ to obtain homogeneous
 polynomials of the same degree\\
  $\{P_{1}(\bar X),\ldots,P_{n}(\bar
 X),Q(\bar X)\}$. Let $\Sigma$ be the linear system defined by the plane spanned by
these homogeneous polynomials and define $\Phi_{\Sigma}$ by;\\

 $\rho Y_{0}=Q(X_{0},\ldots,X_{r}),\rho Y_{1}=P_{1}(X_{0},\ldots,X_{r}),\ldots,\rho Y_{n}=P_{n}(X_{0},\ldots,X_{r})$\\

where $\rho$ is a constant of proportionality. (This is an
alternative notation for a map between projective spaces, used
frequently in papers by the Italian geometers Castelnouvo,
Enriques and Severi). By the assumption on $\{C_{1},C_{2}\}$
concerning hyperplane sections, the homogeneous polynomials
$\{Q,P_{1},\ldots,P_{n}\}$ form a basis for $\Sigma$, hence this
defines a rational map. Similarily, one can find a linear system
$\Sigma'$ and convert $\Psi$ into a rational map $\phi_{\Sigma'}$.
The rest of the properties of the lemma follow immediately from
the construction.
\end{proof}

We should also note the following equivalent criteria for
birationality of projective algebraic curves;\\

\begin{lemma}
Let $C_{1}$ and $C_{2}$ be projective algebraic curves. Then
$C_{1}$ and $C_{2}$ are birational iff;\\

(i). There is an isomorphism of function fields $L(C_{1})\cong L(C_{2})$.\\
(ii). (In characteristic $0$) There exist $a_{1}$ generic in
$C_{1}$, $a_{2}$ generic in $C_{2}$ and an algebraic relation
$Rational(x,y)$ such that $a_{1}\in dcl_{Rational}(a_{2})$ and
$a_{2}\in dcl_{Rational}(a_{1})$.\\
.\ \ \ \ (In charateristic $p$) One can use the same criteria but
must pay attention to the presence of the Frobenius map, see the
     paper \cite{depiro1} for details on how to resolve this issue.\\

\end{lemma}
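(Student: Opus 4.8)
The plan is to show that the three conditions---birationality in the sense of Definition 1.17, the function field isomorphism (i), and the generic interdefinability (ii)---are mutually equivalent; the substance is the classical correspondence between dominant rational maps and maps of function fields, which must then be matched with the model-theoretic formulation in (ii). I would begin with birationality $\Rightarrow$ (i). If $U\subseteq C_{1}$ and $V\subseteq C_{2}$ are nonempty open subsets with an isomorphism $U\cong V$ of algebraic varieties, then, since $C_{1}$ and $C_{2}$ are irreducible, $U$ and $V$ are dense; and the function field of an irreducible variety is the direct limit of the coordinate rings of its nonempty affine open subsets (equivalently, the local ring at the generic point). Hence $L(U)=L(C_{1})$ and $L(V)=L(C_{2})$, and the isomorphism $U\cong V$ yields $L(C_{1})\cong L(C_{2})$ over $L$.

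For the converse, (i) $\Rightarrow$ birationality, I would reverse the construction already carried out in Lemma 1.18. Given an isomorphism $\theta:L(C_{1})\to L(C_{2})$ fixing $L$, pass to affine presentations by removing suitable hyperplane sections, exactly as in the proof of Lemma 1.18, so that the coordinate functions $x_{1},\ldots,x_{r}$ of $C_{1}$ are sent by $\theta$ to rational functions $\psi_{1},\ldots,\psi_{r}$ on $C_{2}$. These $\psi_{i}$ are defined on a common open subset $V'\subseteq C_{2}$ (the complement of their poles) and define a morphism $V'\to C_{1}$; applying the same argument to $\theta^{-1}$ produces $U'\subseteq C_{1}$ and a morphism $U'\to C_{2}$. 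Shrinking $U'$ and $V'$ to the loci where both morphisms are defined and where their composites are the identity---a nonempty open condition, since the composites agree with the identity at the generic point---gives isomorphic open subsets, hence birationality.

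Finally I would establish the equivalence of (i) and (ii). For a curve $C_{i}$ with field of definition $L_{0}$ of finite transcendence degree, a generic point $a_{i}$ over $L_{0}$ satisfies $L_{0}(a_{i})\cong L(C_{i})$, the isomorphism carrying the coordinates of $a_{i}$ to the classes of the coordinate functions. In characteristic $0$ the theory ACF is that of a perfect field, so $dcl$ coincides with field-theoretic generation; thus $a_{1}\in dcl_{Rational}(a_{2})$ asserts exactly that each coordinate of $a_{1}$ is a rational function of $a_{2}$, and symmetrically for $a_{2}\in dcl_{Rational}(a_{1})$. Mutual interdefinability therefore says precisely that $L(a_{1})=L(a_{2})$ inside a common model, which, transported through the two isomorphisms with $L(C_{1})$ and $L(C_{2})$, is the same as the function field isomorphism (i); the relation $Rational(x,y)$ is the Zariski closure of the graph of the resulting rational map. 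The genericity of $a_{1}$ and $a_{2}$ is essential, since only for a generic point is the generated field the whole function field rather than a proper residue field.

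The main obstacle is the characteristic $p$ case of (ii): there $dcl$ in ACF includes the inseparable closure, so $a_{1}\in dcl(a_{2})$ only forces $a_{1}$ to be a rational function of a $p$-power root of $a_{2}$, and the induced field extension may be purely inseparable rather than an isomorphism. Reconciling this with genuine birationality requires the careful treatment of the Frobenius map carried out in \cite{depiro1}, which is exactly why the statement defers the positive-characteristic case to that paper. A secondary point, already essentially handled by Lemma 1.18, is ensuring in the (i)$\Rightarrow$birational step that the two open subsets can be shrunk compatibly, so that the two morphisms restrict to genuine inverse isomorphisms on matching opens.
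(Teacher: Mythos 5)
The paper states this lemma without any proof at all (it is recorded as a standard fact and the text moves directly on to Definition 1.22), so there is no argument of the author's to compare yours against. Your proposal is the correct classical argument: birationality gives (i) because the function field is a birational invariant; (i) gives birationality by expressing the coordinate functions under the field isomorphism as rational functions and shrinking to opens where the two resulting morphisms are mutually inverse, exactly as in the machinery of Lemma 1.20; and (i) is equivalent to (ii) because, for $a_{i}$ generic over the field of definition, the field generated by $a_{i}$ realises $L(C_{i})$, and in characteristic $0$ definable closure in ACF coincides with field generation, so mutual $dcl$-interdefinability via $Rational$ is precisely mutual containment, hence equality, of the generated fields. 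Your identification of the characteristic $p$ obstruction (that $dcl$ absorbs the perfect closure, so interdefinability only gives an isomorphism up to a power of Frobenius) is exactly the issue the lemma defers to \cite{depiro1} and is consistent with the discussion in the final section of the paper. The only point worth flagging is that in (i) the isomorphism must be taken over $L$ for the equivalence to hold; you assume this explicitly, and it is the intended reading.
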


\begin{defn}
Let $\Phi:C_{1}\leftrightsquigarrow C_{2}$ be a birational map, as
in Definition 1.19. We define the correspondence
$\Gamma_{\Phi}\subset C_{1}\times C_{2}$
associated to $\Phi$ to be;\\

$\overline{(Graph(\Phi)\subset U\times V)}$\\

where, for $W$ an algebraic subset of $C_{1}\times C_{2}$, we let
$\bar W$ denote its Zariski closure.

\end{defn}

\begin{defn}
Let $C_{1}$ and $C_{2}$ be projective algebraic curves. We will
say that $2$ birational maps $\Phi_{1}:C_{1}\leftrightsquigarrow
C_{2}$ and $\Phi_{2}:C_{1}\leftrightsquigarrow C_{2}$ are
equivalent if there exists $U\subset C_{1}$ such that $\Phi_{1}$
and $\Phi_{2}$ are both defined and agree on $U$. Clearly,
equivalence of birational maps is an equivalence relation.
\end{defn}

\begin{lemma}
Let $\Phi_{1}:C_{1}\leftrightsquigarrow C_{2}$ and
$\Phi_{2}:C_{1}\leftrightsquigarrow C_{2}$ be equivalent
birational maps, then $\Gamma_{\Phi_{1}}=\Gamma_{\Phi_{2}}$.
\end{lemma}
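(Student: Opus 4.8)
The plan is to exploit the one-dimensionality and irreducibility of the curves, together with the fact that a nonempty open subset of an irreducible variety is dense, so that the two graphs share a common dense open subset and therefore have the same Zariski closure.

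First I would unpack the data. Let $\Phi_i:U_i\to V_i$ (for $i=1,2$) be the isomorphisms of open subsets $U_i\subseteq C_1$ and $V_i\subseteq C_2$ presenting the two birational maps, so that by the definition of the correspondence $\Gamma_{\Phi_i}=\overline{Graph(\Phi_i)}$, the closure being taken in $C_1\times C_2$. Since $C_1$ is by definition an irreducible projective algebraic curve, each nonempty open $U_i$ is irreducible of dimension $1$, and projection onto the first factor identifies $Graph(\Phi_i)$ with $U_i$; hence each $Graph(\Phi_i)$ is itself irreducible of dimension $1$, and $\Gamma_{\Phi_i}$ is an irreducible closed curve in $C_1\times C_2$.

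Next I would propagate the agreement from the set $U$ furnished by the definition of equivalence to the entire common domain $U_1\cap U_2$. Because $U_1$ and $U_2$ are nonempty open subsets of the irreducible space $C_1$, their intersection $U_1\cap U_2$ is again nonempty, open, and dense, and we may take $U\subseteq U_1\cap U_2$ nonempty and open. The equalizer $E=\{x\in U_1\cap U_2 : \Phi_1(x)=\Phi_2(x)\}$ is closed in $U_1\cap U_2$, since two morphisms into the separated variety $C_2$ agree on a closed set, and by hypothesis $E\supseteq U$. As $U_1\cap U_2$ is irreducible of dimension $1$, any proper closed subset is finite; since $E$ contains the nonempty open, hence infinite, set $U$, we conclude $E=U_1\cap U_2$, so that $\Phi_1$ and $\Phi_2$ coincide on all of $U_1\cap U_2$. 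Setting $W=\{(x,\Phi_1(x)):x\in U_1\cap U_2\}=\{(x,\Phi_2(x)):x\in U_1\cap U_2\}$, the set $W$ is open and dense in each irreducible curve $Graph(\Phi_i)$, whence $\overline{W}=\overline{Graph(\Phi_i)}=\Gamma_{\Phi_i}$ for both $i=1,2$, giving $\Gamma_{\Phi_1}=\overline{W}=\Gamma_{\Phi_2}$.

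The main obstacle is the propagation step: one must know that agreement of the two maps on the given set $U$ already forces agreement on the whole common domain. This is precisely where irreducibility and one-dimensionality of the curve are essential—together with separatedness, which guarantees the equalizer is closed—so that a set on which the maps agree cannot be a proper closed, hence finite, subset without being the entire curve. Once this is in hand, the remaining passage to closures is routine density bookkeeping.
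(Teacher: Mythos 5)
Your argument is correct; the paper itself dismisses this lemma with ``immediate from the definitions,'' and your write-up is simply the natural expansion of that remark: both graphs contain the common dense open piece over $U$, and since each $\Gamma_{\Phi_i}$ is the closure of an irreducible one-dimensional graph, both closures equal $\overline{W}$. The equalizer step is harmless but not actually needed --- once you read the definition of equivalence as furnishing a nonempty open $U$ on which the maps agree, $Graph(\Phi_i|_U)$ is already dense in $Graph(\Phi_i)$ for each $i$, so you may pass directly to closures without first propagating agreement to all of $U_1\cap U_2$.
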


\begin{proof}
Immediate from the definitions.

\end{proof}

\begin{defn}
We will denote the equivalence class of a birational map $\Phi$ by
$[\Phi]$. By the above lemma, we can associate a correspondence
$\Gamma_{[\Phi]}$ to an equivalence class of birational maps
\end{defn}

\begin{lemma}{Obstruction to Birationality at Singular Points}\\

Let $\Gamma_{[\Phi]}$ be a birational correspondence between
$C_{1}$ and $C_{2}$. If $x$ is a non-singular point of $C_{1}$,
there exists a unique corresponding point $y$ of $C_{2}$ and
vice-versa.

\end{lemma}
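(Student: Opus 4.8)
The plan is to exploit the fact that the correspondence $\Gamma_{[\Phi]}$ is the Zariski closure of the graph of an isomorphism $\Phi:U'\to V'$ between open subsets $U'\subset C_1$ and $V'\subset C_2$, as furnished by Lemma 1.21. Since $C_1$ and $C_2$ are irreducible curves, the two projections $\pi_1:\Gamma_{[\Phi]}\to C_1$ and $\pi_2:\Gamma_{[\Phi]}\to C_2$ are finite surjective morphisms (each fibre is finite by Lemma 1.4 applied to the generic situation, and surjectivity follows from irreducibility and the fact that the projections are dominant on $U'$, $V'$). The content of the lemma is that over a \emph{non-singular} point $x\in C_1$ the fibre $\pi_1^{-1}(x)$ is a single reduced point.

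First I would reduce to a local statement. Choose $x$ a non-singular point of $C_1$ and let $\{y_1,\ldots,y_k\}=\pi_1^{-1}(x)\subset C_2$ be the points of $\Gamma_{[\Phi]}$ lying over $x$; I must show $k=1$. The key tool is the power-series parametrisation of branches discussed in item (ii) of $(\dag\dag)$: since $x$ is non-singular, $C_1$ has a \emph{unique} branch at $x$, and by item (ii) this branch is parametrised by a single algebraic power series $t\mapsto \gamma(t)$ with $\gamma(0)=x$. For $t$ an infinitesimal (working in the non-standard model $P(K)$, as the excerpt permits for refined infinitesimal arguments), the point $\gamma(t)$ lies in the open set $U'$ where $\Phi$ is a genuine isomorphism, so $\Phi(\gamma(t))$ is a well-defined point of $V'\subset C_2$ specialising under $\pi$ to some $y_j$. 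The essential point is that the image of this single branch of $C_1$ can approach only one of the $y_j$: because $\Phi$ is a morphism (hence continuous in the Zariski, and in particular in the infinitesimal, sense) on $U'$, the specialisation $\pi(\Phi(\gamma(t)))$ is uniquely determined as $t\to 0$, pinning down a single limit point $y\in C_2$.

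The remaining step is to verify that this uniquely determined $y$ is in fact the \emph{entire} fibre $\pi_1^{-1}(x)$, i.e.\ that no spurious points were added when passing to the Zariski closure. Here I would argue that every point of $\pi_1^{-1}(x)$ arises as a limit $\lim_{t\to 0}\Phi(\gamma(t))$ along \emph{some} approach to $x$ within $C_1$; but since $x$ is non-singular there is only one branch, hence only one such approach up to infinitesimal reparametrisation, forcing $k=1$. The ``vice-versa'' assertion follows by the symmetric argument applied to $\pi_2$ using the inverse rational map $\phi_{\Sigma'}$ of Lemma 1.21, together with the fact that a non-singular point of $C_2$ likewise carries a unique branch.

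The main obstacle will be the final step: controlling the fibre over the closure. A priori, the Zariski closure $\overline{Graph(\Phi)}$ could acquire extra points over $x$ coming from sequences in $U'$ that approach $x$ but whose images do not converge to the branch limit. Ruling this out rigorously requires that the single branch at the non-singular point $x$ accounts for \emph{all} infinitesimal approaches to $x$ inside $C_1$, which is precisely the local uniformisation / unibranch property of smooth points; making this precise in the language ${\mathcal L}_{spec}$, rather than invoking it as folklore, is where the genuine work lies. The symmetric treatment of the two projections and the compatibility of the infinitesimal specialisation $\pi$ with the morphism $\Phi$ are the load-bearing ingredients.
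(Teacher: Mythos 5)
Your proposal is correct in outline but takes a genuinely different route from the paper. The paper's proof is global and almost entirely combinatorial: it regards $\Gamma_{\Phi}$ as a finite cover of the smooth locus $U\subset C_{1}$, observes that by birationality the generic fibre is a single point $y$ with $Mult(y/x)=1$, and then invokes the Zariski-structure principle that total multiplicity in a finite cover is preserved over a smooth irreducible base --- so \emph{every} fibre over $U$ has total multiplicity $1$ and hence is a single point. You instead argue locally: you parametrise the unique Newtonian branch at the non-singular point $x$ by an algebraic power series, push it forward by $\Phi$, and read off the unique specialisation from the constant term; your reduction of the fibre of the Zariski closure to specialisations of nearby graph points is sound. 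The trade-off is that the step you yourself flag as ``where the genuine work lies'' --- that the single power-series branch exhausts \emph{all} of ${\mathcal V}_{x}\cap C_{1}$, so that no second limit point can appear --- is not a side issue but is essentially equivalent to the multiplicity-preservation fact the paper cites: if some $x'\in{\mathcal V}_{x}\cap U'$ escaped the branch, the fibre over $x$ would acquire total multiplicity at least $2$, which is exactly what conservation of multiplicity over a smooth base forbids. So your argument, once completed, would either re-derive or have to cite the same Zariski-structure input; what your route buys in exchange is an explicit local picture (and a description of \emph{which} point $y$ corresponds to $x$, namely the specialisation of $\Phi$ along the branch), which is closer in spirit to the branch theory of Section 5. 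Your treatment of the ``vice-versa'' clause by symmetry, and of non-emptiness of the fibre via completeness, is fine.
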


\begin{proof}
Let $U\subset C_{1}$ be the set of non-singular points of $C_{1}$.
We can consider $\Gamma_{\Phi}$ as a cover of $U$. As $U$ is
smooth, we may apply the technique of Zariski structures for this
cover. By birationality, if $x\in U$ is generic, there exists a
unique $(xy)\in \Gamma_{\Phi}$ and, moreover, $Mult(y/x)=1$. This
last fact was shown, for example, in the paper \cite{depiro1} and
given originally in \cite{Z}. By further properties of Zariski
structures, again see either of the above, the total multiplicity
of points in the cover over $U$ is preserved, in particular, for
\emph{any} $x\in U$, there exists a unique corresponding
$(xy)\in\Gamma_{\Phi}$.
\end{proof}

\begin{rmk}
Note that non-singularity is not necessarily preserved when
associating $y$ to $x$ in the above lemma. This motivates the
following definition.
\end{rmk}

\begin{defn}
Given a birational correspondence $\Gamma_{[\Phi]}$, we define the
canonical sets $U_{[\Phi]}\subset
\Gamma_{[\Phi]}$,$V_{[\Phi]}\subset
C_{1}$ and $W_{[\Phi]}\subset C_{2}$ to be the sets;\\

$U_{[\Phi]}=\{(x,y)\in\Gamma_{[\Phi]}:NonSing(x),NonSing(y)\}$\\

$V_{[\Phi]}=\pi_{1}(U_{[\Phi]})$\\

$W_{[\Phi]}=\pi_{2}(U_{[\Phi]})$\\

\end{defn}

\begin{lemma}
Given a birational correspondence $\Gamma_{[\Phi]}$, there exists
an isomorphism $\Phi_{1}:V_{[\Phi]}\rightarrow W_{[\Phi]}$ such
that $\Gamma_{[\Phi]}=\Gamma_{[\Phi_{1}]}$
\end{lemma}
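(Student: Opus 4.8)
The plan is to build $\Phi_{1}$ directly out of the two coordinate projections $\pi_{1},\pi_{2}$ of $\Gamma_{[\Phi]}$, using Lemma 1.25 to see that these projections become bijective once restricted to the non-singular loci, and then to upgrade those bijections to isomorphisms by invoking the multiplicity-$1$ computation already carried out in the proof of Lemma 1.25.

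First I would let $U_{1}\subset C_{1}$ and $U_{2}\subset C_{2}$ be the open sets of non-singular points. By Lemma 1.25, for every $x\in U_{1}$ there is a unique $y$ with $(x,y)\in\Gamma_{[\Phi]}$, so $\pi_{1}$ restricts to a bijection $\pi_{1}^{-1}(U_{1})\rightarrow U_{1}$, and symmetrically $\pi_{2}$ restricts to a bijection $\pi_{2}^{-1}(U_{2})\rightarrow U_{2}$. The key upgrade is that these bijections are in fact isomorphisms of algebraic varieties: the proof of Lemma 1.25 shows $Mult(y/x)=1$ at every point of the cover $\Gamma_{[\Phi]}$ lying over $U_{1}$, and a multiplicity-$1$ cover over a smooth base is, in characteristic $0$, a local isomorphism in the sense of Zariski structures (see \cite{depiro1} and \cite{Z}). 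Hence $\pi_{1}|_{\pi_{1}^{-1}(U_{1})}$ is an isomorphism onto $U_{1}$, yielding a morphism $g=\pi_{2}\circ(\pi_{1}|_{\pi_{1}^{-1}(U_{1})})^{-1}:U_{1}\rightarrow C_{2}$, and symmetrically a morphism $h:U_{2}\rightarrow C_{1}$.

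Next I would identify the canonical sets in these terms. Since $U_{[\Phi]}$ is exactly the set of $(x,y)\in\Gamma_{[\Phi]}$ with $x\in U_{1}$ and $y\in U_{2}$, we have $V_{[\Phi]}=g^{-1}(U_{2})$ and $W_{[\Phi]}=h^{-1}(U_{1})$, both open. I would then set $\Phi_{1}=g|_{V_{[\Phi]}}$ and check that $h|_{W_{[\Phi]}}$ is its two-sided inverse: for $x\in V_{[\Phi]}$ with $y=g(x)$, the pair $(x,y)$ lies in $\Gamma_{[\Phi]}$ with both coordinates non-singular, so the uniqueness clause of Lemma 1.25 applied over $C_{2}$ forces $h(y)=x$, and symmetrically $g(h(y))=y$. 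Thus $\Phi_{1}:V_{[\Phi]}\rightarrow W_{[\Phi]}$ is an isomorphism.

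Finally, to see $\Gamma_{[\Phi]}=\Gamma_{[\Phi_{1}]}$, I would observe that the graph of $\Phi_{1}$ is precisely $U_{[\Phi]}$, and that $U_{[\Phi]}$ is obtained from the irreducible curve $\Gamma_{[\Phi]}$ by deleting the finitely many points whose projection to $C_{1}$ or $C_{2}$ is singular; hence $U_{[\Phi]}$ is dense in $\Gamma_{[\Phi]}$ and $\Gamma_{[\Phi_{1}]}=\overline{U_{[\Phi]}}=\Gamma_{[\Phi]}$. Equivalently, since $\Phi_{1}$ agrees with $\Phi$ on a dense open subset of $C_{1}$, the two maps are equivalent and Lemma 1.24 yields the same correspondence. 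The main obstacle is the single passage from bijectivity to isomorphism in the second step: a bijective morphism need not be an isomorphism in general, and it is exactly the smoothness of the base together with the multiplicity-$1$ computation, and the characteristic-$0$ hypothesis that rules out the Frobenius-type counterexamples, which licenses this conclusion.
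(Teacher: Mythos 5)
Your argument is correct, but it reaches the conclusion by a different mechanism than the paper. The paper's proof is two lines: it invokes the standard fact that a morphism from an open subset of a curve into $P^{n}$ extends uniquely across the non-singular points of the curve (citing \cite{H}), and combines this with Lemma 1.26 to extend $\Phi$ and $\Phi^{-1}$ to $V_{[\Phi]}$ and $W_{[\Phi]}$ respectively. You instead never extend the rational map itself: you work with the closed correspondence $\Gamma_{[\Phi]}$ and invert its first projection over the non-singular locus, using the $Mult(y/x)=1$ computation from the proof of the unique-correspondence lemma (which, note, is Lemma 1.26 in the paper's numbering, not 1.25 --- 1.25 is a definition) together with the char-$0$ unramified-implies-\'etale principle to promote the bijection $\pi_{1}^{-1}(U_{1})\rightarrow U_{1}$ to an isomorphism. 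That promotion is the one step you rightly flag as non-trivial --- bijectivity alone would not suffice --- and it is legitimate here: it is exactly the Theorem 6.10/6.11 machinery from \cite{depiro1} that the paper itself deploys in Lemma 2.10, or equivalently Zariski's main theorem for a finite birational map onto a normal curve. What your route buys is self-containment within the Zariski-structures multiplicity calculus already set up in Section 1, at the cost of carrying the \'etale/normality input explicitly; what the paper's route buys is brevity, at the cost of outsourcing the real content to the curve-extension theorem (which is ultimately the same DVR fact in disguise). Your identification of $V_{[\Phi]}=g^{-1}(U_{2})$, the mutual-inverse check via the uniqueness clause over $C_{2}$, and the closure argument $\Gamma_{[\Phi_{1}]}=\overline{U_{[\Phi]}}=\Gamma_{[\Phi]}$ are all sound.
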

\begin{proof}
By an elementary result in algebraic geometry, see for example
\cite{H}, a morphism $\Phi:U\subset C_{1}\rightarrow P^{n}$, where
$U$ is an open subset of $C_{1}$, extends uniquely to the
non-singular points of $C_{1}$. Combining this with Lemma 1.26, we
obtain immediately the result.
\end{proof}

We now need to relate the canonical sets of a birational
correspondence $\Gamma_{[\Phi]}$ with a \emph{particular}
presentation of $[\Phi]$ given by Lemma 1.20;\\

\begin{defn}
Let $\phi_{\Sigma}$ be as in Lemma 1.20, we define the canonical
open sets associated to $\phi_{\Sigma}$ to be;\\

$V_{\phi_{\Sigma}}={V_{[\Phi]}\setminus Base(\Sigma)}$\\

$W_{\phi_{\Sigma}}=\phi_{\Sigma}(V_{\phi_{\Sigma}})$\\

\end{defn}

\begin{lemma}
We have the following relations between canonical sets;\\

$V_{\phi_{\Sigma}}\subset V_{[\Phi]}\subset NonSing(C_{1})$\\

$W_{\phi_{\Sigma}}\subset W_{[\Phi]}\subset NonSing(C_{2})$\\

$\phi_{\Sigma}:V_{\phi_{\Sigma}}\rightarrow W_{\phi_{\Sigma}}$ is
an isomorphism.\\

$V_{\phi_{\Sigma}}$ and $Base(\Sigma)$ are disjoint.

\end{lemma}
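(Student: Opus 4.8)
The plan is to read off the two chains of inclusions and the disjointness clause directly from the definitions, reserving the genuine work for the isomorphism statement. First I would observe that $V_{\phi_{\Sigma}}=V_{[\Phi]}\setminus Base(\Sigma)$ is, by its very construction in Definition 1.29, a subset of $V_{[\Phi]}$ that is disjoint from $Base(\Sigma)$; this settles the first inclusion of the opening line together with the final clause in one stroke. The inclusion $V_{[\Phi]}\subset NonSing(C_{1})$ is immediate from Definition 1.27: any $x\in V_{[\Phi]}=\pi_{1}(U_{[\Phi]})$ admits some $y$ with $(x,y)\in U_{[\Phi]}$, and the defining condition on $U_{[\Phi]}$ forces $NonSing(x)$. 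The symmetric argument with $\pi_{2}$ gives $W_{[\Phi]}\subset NonSing(C_{2})$.

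The only substantive step is to identify $\phi_{\Sigma}$, on the open set $V_{\phi_{\Sigma}}$, with the isomorphism $\Phi_{1}:V_{[\Phi]}\rightarrow W_{[\Phi]}$ furnished by Lemma 1.28. The two points to check are that $\phi_{\Sigma}$ is a genuine morphism on all of $V_{\phi_{\Sigma}}$, which holds because this set is disjoint from $Base(\Sigma)$, the only locus where $\phi_{\Sigma}$ can fail to be defined; and that both $\phi_{\Sigma}$ and $\Phi_{1}$ are representatives of the single equivalence class $[\Phi]$. The former represents $[\Phi]$ because its construction in Lemma 1.20 restricts to the birational isomorphism $U'\cong V'$, while the latter does so because $\Gamma_{[\Phi]}=\Gamma_{[\Phi_{1}]}$. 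By Definition 1.22 two such representatives agree on some common open subset of $C_{1}$; since $C_{1}$ is irreducible, two morphisms that agree on a dense open set must agree on the whole of their common domain. As $V_{\phi_{\Sigma}}\subset V_{[\Phi]}$ lies in the domain of $\Phi_{1}$ and is contained in the domain of $\phi_{\Sigma}$, the two maps coincide on $V_{\phi_{\Sigma}}$.

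With this identification in hand the remaining assertions follow formally. The restriction of the isomorphism $\Phi_{1}$ to the open subvariety $V_{\phi_{\Sigma}}$ is again an isomorphism onto its image, and that image is precisely $W_{\phi_{\Sigma}}=\phi_{\Sigma}(V_{\phi_{\Sigma}})$ by Definition 1.29; hence $\phi_{\Sigma}:V_{\phi_{\Sigma}}\rightarrow W_{\phi_{\Sigma}}$ is an isomorphism. Finally $W_{\phi_{\Sigma}}=\Phi_{1}(V_{\phi_{\Sigma}})\subset\Phi_{1}(V_{[\Phi]})=W_{[\Phi]}$, which completes the second chain of inclusions.

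I expect the main obstacle to be the verification that $\phi_{\Sigma}$ and $\Phi_{1}$ genuinely represent the same equivalence class, and therefore agree wherever both are morphisms. This rests on tracing through the constructions of Lemmas 1.20 and 1.28 to confirm that neither has silently altered the birational correspondence $\Gamma_{[\Phi]}$, together with the standard fact that a rational map out of an irreducible curve is determined by its restriction to any dense open set. Once that agreement is secured, every clause of the lemma is a direct consequence.
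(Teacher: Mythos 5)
Your proposal is correct, and since the paper dismisses this lemma with ``The proof is an easy exercise,'' your write-up is essentially the intended argument filled in: the inclusions and the disjointness clause are read off Definitions 1.28 and 1.30, and the isomorphism clause comes from identifying $\phi_{\Sigma}$ on $V_{\phi_{\Sigma}}$ with the restriction of the isomorphism $\Phi_{1}:V_{[\Phi]}\rightarrow W_{[\Phi]}$ of Lemma 1.29, using the fact that two morphisms out of an irreducible curve representing the same birational class agree wherever both are defined. (Your internal references are shifted by one relative to the paper's numbering, but the cited content is the right one.)
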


\begin{proof}
The proof is an easy exercise.
\end{proof}

\begin{rmk}
It would be desirable to find a particular presentation
$\Phi_{\Sigma}$ of a birational class $[\Phi]$, for which
$Base(\Sigma)$ is disjoint from the canonical set $V_{[\Phi]}$. In
general, one can easily prove the weaker result;\\

There exist $2$ presentations $\Phi_{\Sigma_{1}}$ and
$\Phi_{\Sigma_{2}}$ of a birational class $[\Phi]$, such that;\\

$V_{[\Phi]}=V_{\Phi_{\Sigma_{1}}}\cup V_{\Phi_{\Sigma_{2}}}$\\

$W_{[\Phi]}=W_{\Phi_{\Sigma_{1}}}\cup W_{\Phi_{\Sigma_{2}}}$\\

We also note that the choice of $\Sigma$ presenting a birational
class $[\Phi]$ is far from unique. For example, let
$Id:P^{1}\rightarrow P^{1}$ be the identity map. This isomorphism
can be represented by any of the birational maps;\\

$\phi_{n}[X_{0}:X_{1}]=[X_{0}^{n}:X_{0}^{n-1}X_{1}]$ $(n\geq 1)$\\

Let $\Sigma_{n}$ be the linear system of dimension $1$ and degree
$n$ defined by the pair of homogeneous polynomials
$\{X_{0}^{n},X_{0}^{n-1}X_{1}\}$. Then
$\phi_{n}=\Phi_{\Sigma_{n}}$ (with this choice of basis).

\end{rmk}

We finally note the following well known theorem, see for example \cite{H};\\

\begin{theorem}
Let $C$ be a projective algebraic curve, then $C$ is birational to
a plane projective algebraic curve.
\end{theorem}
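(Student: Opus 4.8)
The plan is to argue by induction on the dimension $n$ of the ambient space $P^{n}\supseteq C$, using generic projection from a point to decrease $n$ until $C$ is birational to a curve in $P^{2}$. First I would dispose of the trivial cases: if $n\leq 2$ there is nothing to prove, and if $C$ is a projective line then $C\cong P^{1}$ is isomorphic to a line in $P^{2}$, hence birational to a plane curve. So assume $n\geq 3$ and that $C$ is not a line; in particular, since $C$ is irreducible of dimension $1$, no line is contained in $C$.

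Fix $O\in P^{n}$ generic over a field of definition $L_{0}$ of $C$, and let $\pi_{O}:P^{n}\setminus\{O\}\rightarrow P^{n-1}$ be projection from $O$. Since $\dim C=1<n$ we have $O\notin C$, so $\pi_{O}$ restricts to a morphism on $C$. For $y\in P^{n-1}$ the fibre $\pi_{O}^{-1}(y)\cap C$ is $C\cap l$ for the line $l$ through $O$ lying over $y$; as $l\not\subseteq C$ this intersection is finite by elementary dimension theory (compare Lemma 1.5). Hence $\pi_{O}|_{C}$ is a finite morphism, and its image $C':=\pi_{O}(C)\subset P^{n-1}$ is an irreducible projective curve, the image being closed by completeness exactly as in the construction of $Tang(C)$ above.

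The heart of the argument, and the step I expect to be the main obstacle, is to show that for generic $O$ the morphism $\pi_{O}|_{C}$ is birational onto $C'$. I would verify this through criterion (ii) of Lemma 1.21, by showing that a point $a$ generic in $C$ over $L_{0}$ together with $O$, and its image $a'=\pi_{O}(a)$, are interdefinable; note $a'$ is generic in $C'$ since $\pi_{O}|_{C}$ is dominant onto $C'$. That $a'\in dcl(a)$ is immediate. For the converse it suffices that for generic $a$ the line $\overline{Oa}$ meets $C$ only at $a$, for then $a$ is recovered from $(O,a')$ as the unique point of $C\cap\overline{Oa}$. To control this, I would form, using the incidence variety $I$ of lines, the secant incidence variety
\[ W=\overline{\{(a,b,O'):a,b\in C,\ a\neq b,\ O'\in l_{ab}\}}\subseteq C\times C\times P^{n}, \]
whose dimension is at most $3$ (two parameters for $(a,b)$ and one for a point of $l_{ab}$). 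Projecting to the last factor and applying the fibre-dimension theorem for the model-theoretic dimension of (i): if $n\geq 4$ the secant locus has dimension $<n$, so generic $O$ lies on \emph{no} secant of $C$; if $n=3$ the generic fibre of $W\rightarrow P^{3}$ has dimension $0$, so only finitely many secants pass through generic $O$. In either case the set $B=\{a\in C:\exists\,b\in C,\ b\neq a,\ O\in l_{ab}\}$ is finite, so a generic $a\in C$ (a $1$-dimensional family) avoids $B$ and $\overline{Oa}\cap C=\{a\}$, as required.

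Combining these, Lemma 1.21(ii) yields that $C$ is birational to $C'\subset P^{n-1}$. Since $n$ strictly decreases, the induction reduces to the case $n=2$, in which $C$ is already a plane projective curve, completing the argument. The only genuinely delicate point is the dimension estimate for $W$ together with the fibre-dimension statement in the model-theoretic setting; the remaining assertions (that $\pi_{O}|_{C}$ is a morphism, that $C'$ is a projective curve, and the $dcl$-bookkeeping) are routine given the preliminaries of this section.
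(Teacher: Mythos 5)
Your argument is correct, and it is a genuinely different route from the one the paper actually carries out. In the text, Theorem 1.33 is merely cited to \cite{H}; the paper's own self-contained proof appears only later, as Theorem 4.8, and runs through the machinery of conic projections from points \emph{on} the curve: Lemma 4.2 (the trisecant statement that a chord $l_{AB}$ through two independent generic points of $C$ meets $C$ nowhere else, proved by a fairly long infinitesimal argument about tangent lines and the property $(*)$ of hyperplanes through them), then Lemma 4.5 and Lemma 4.6 to conclude that projection from a span $\langle P_{1},\ldots,P_{k}\rangle$ of independent generic points of $C$ onto a plane of dimension $2$ is generally biunivocal, whence birational in characteristic $0$. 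You instead project from a generic point $O$ of the ambient $P^{n}$ \emph{off} the curve and control injectivity by the dimension count $\dim W\leq 3$ on the secant incidence variety, descending one ambient dimension at a time. Your route is more elementary: it needs only the fibre-dimension theorem and the $dcl$ criterion of Lemma 1.21(ii), and entirely avoids the delicate Lemma 4.2. What the paper's choice buys in exchange is that projection from points of the curve drops the degree of the image by one at each step (this is what feeds the $g_{n}^{r}$ bookkeeping and Lemma 2.32), and the trisecant lemma is needed independently elsewhere (e.g.\ in Theorem 4.16), so the paper gets Theorem 1.33 as a by-product of machinery it must build anyway. Two minor points to keep in mind if you wanted your version to slot into this paper: the interdefinability of $a$ and $a'=\pi_{O}(a)$ is over the extended parameter set $L_{0}(O)$, which is harmless for birationality but worth saying; and the whole reduction to $dcl$ leans on the standing characteristic-$0$ hypothesis, exactly as the paper's own proof of Theorem 4.8 does (in characteristic $p$ one would only get a generally biunivocal, possibly inseparable, map, as discussed in the final section).
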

\end{section}
\begin{section}{A Basic Theory of $g_{n}^{r}$}

We begin this section with the definition of intersection
multiplicity used by the original Italian school of algebraic
geometry.\\

\begin{defn}

Let $C\subset P^{w}$ be a projective algebraic curve. Let
$Par_{F}$ be the projective parameter space for \emph{all}
hypersurfaces of a given degree $e$ and let $U=\{\lambda\in
Par_{F}:|C\cap F_{\lambda}|<\infty\}$ be the open subvariety of
$Par_{F}$ corresponding to hypersurfaces of degree $e$ having
finite intersection with $C$. For $\lambda\in U$, $p\in C\cap
F_{\lambda}$, we define;\\

$I_{italian}(p,C,F_{\lambda})=Card(C\cap F_{\lambda'}\cap
{\mathcal V}_{p})$\
for $\lambda'\in {\mathcal V}_{\lambda}$ generic in $U$.\\

\end{defn}

\begin{rmk}
That this \emph{is} a rigorous definition follows from general
properties of Zariski structures. The definition is the same as
$RightMult_{p}(C,F_{\lambda})$ which we considered in the previous
section. We will often abbreviate the notation
$I_{italian}(p,C,F_{\lambda})=s$ by saying that $p$ is
\emph{counted} $s$ times for the intersection of $C$ with
$F_{\lambda}$.\\
\end{rmk}

The basic theory of $g_{n}^{r}$ relies principally on the following result;\\

\begin{theorem}{Hyperspatial Bezout}\\

Let $C\subset P^{w}$ be a projective algebraic curve of degree $d$
and $F_{\lambda}$ a hypersurface of degree $e$ having finite
intersection with $C$. Then;\\

$\sum_{p\in C\cap F_{\lambda}}I_{italian}(p,C,F_{\lambda})=de$\\

\end{theorem}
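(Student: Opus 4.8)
The plan is to realise the left-hand side as the total multiplicity of a finite cover and then to evaluate that total by specialising $F_{\lambda}$ to a product of generic hyperplanes. First I would form the incidence variety
\[ W=\{(\lambda,p)\in U\times P^{w}: p\in C\cap F_{\lambda}\}, \]
where $U=\{\lambda\in Par_{F}:|C\cap F_{\lambda}|<\infty\}$ is the open subvariety of the parameter space of degree-$e$ hypersurfaces introduced in Definition 2.1. Since $Par_{F}\cong P^{N}$ is irreducible and smooth, so is its dense open subset $U$, and $W$ is a finite cover of $U$. I may therefore apply the Zariski structure machinery of \cite{depiro2} and \cite{Z}: the sum $\sum_{p\in C\cap F_{\lambda}}Mult_{(\lambda,p)}(W/U)$ is constant on $U$; call this constant $n$. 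By the definition of $RightMult$ recalled in the previous section, $Mult_{(\lambda,p)}(W/U)=RightMult_{p}(C,F_{\lambda})=I_{italian}(p,C,F_{\lambda})$, so that for \emph{every} $\lambda\in U$ one has $\sum_{p\in C\cap F_{\lambda}}I_{italian}(p,C,F_{\lambda})=n$. It then suffices to evaluate $n$ at a single convenient point of $U$.

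Next I would compute $n$ by degeneration. Choose $e$ generic hyperplanes $\{H_{1},\ldots,H_{e}\}$ and let $F_{0}=H_{1}\cdots H_{e}$, a point $\lambda_{0}$ of $Par_{F}$. Each $H_{i}$ meets $C$ in exactly $d$ points, by Definition 1.12 of the degree of $C$, and for generic $H_{i}$ the reducedness of a generic fibre forces $RightMult_{p}(C,H_{i})=1$ at each such point. By genericity of the configuration the resulting $de$ points are pairwise distinct, and no point of $C\cap H_{i}$ lies on any $H_{j}$ with $j\neq i$; in particular $C\not\subset F_{0}$, so $F_{0}\in U$. At a point $p\in H_{i}\cap C$ the remaining factors $\prod_{j\neq i}H_{j}$ form a unit in the local ring at $p$, so the infinitesimal computation defining $RightMult$ depends only on the factor $H_{i}$, giving $I_{italian}(p,C,F_{0})=I_{italian}(p,C,H_{i})=1$. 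Summing over the $de$ distinct points yields $n=\sum_{p\in C\cap F_{0}}I_{italian}(p,C,F_{0})=de$, the desired value.

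The main obstacle is the localisation step of the previous paragraph: justifying that $RightMult_{p}(C,F_{0})$ is computed from the single local factor $H_{i}$. A generic infinitesimal deformation $F_{0}'$ of $F_{0}$ in $Par_{F}$ varies among \emph{all} degree-$e$ forms, not merely among products, so I must argue that near $p$ the deformed zero set agrees, as an infinitesimal neighbourhood of $C$, with a deformation of the smooth branch $\{H_{i}=0\}$; this is where the algebraic power series methods of \cite{depiro1} and \cite{depiro2} enter, the point being that a nonvanishing unit factor does not affect the local intersection count. A subsidiary point to check is the generic-fibre reducedness invoked above, namely that for generic $\lambda$ every $RightMult_{p}(C,F_{\lambda})=1$; in characteristic $0$ this is the standard unramifiedness of a generic fibre of a finite cover over a smooth base, again available from \cite{Z} and \cite{depiro1}. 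Once these local facts are in place, the invariance of total multiplicity carries the value $de$ from $\lambda_{0}$ to every $\lambda\in U$, completing the proof.
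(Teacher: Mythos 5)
Your global strategy coincides with the paper's: both realise the left-hand side as the total multiplicity of the finite cover $W\subset U\times P^{w}$, invoke conservation of total multiplicity over the smooth irreducible base, and evaluate the constant at the split form $F_{0}=H_{1}\cdots H_{e}$ built from $e$ independent generic hyperplanes, using Lemma 2.4 to obtain $de$ pairwise distinct transverse intersection points.

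The gap sits exactly where you flag it, but the repair you propose does not meet the difficulty. The ``unit factor'' observation shows that $V(F_{0})$ and $V(H_{i})$ agree as schemes near $p$, which would settle the matter if $I_{italian}$ were the scheme-theoretic length $\dim\mathcal{O}_{C,p}/(F_{0})$. But $I_{italian}(p,C,F_{0})=RightMult_{p}(C,F_{0})$ is defined by counting $C\cap F_{\lambda'}\cap{\mathcal V}_{p}$ for $\lambda'$ a \emph{generic} infinitesimal variation in the full space $Par_{e}$ of degree-$e$ forms; the deformed form no longer factors, so there is no unit to strip off. What you actually need is the comparison $I_{italian}(p,C,F_{0})=I^{\Sigma}_{italian}(p,C,F_{0})$ for the linear subsystem $\Sigma=\{H\cdot\prod_{j\neq i}H_{j}:H\in Par_{1}\}$, i.e.\ the absence of coincident mobile points when the parameter is allowed to roam over all of $Par_{e}$ rather than over $\Sigma$; and the inequality available for free, $I^{\Sigma}_{italian}\leq I_{italian}$, points the wrong way --- the subsystem count being $1$ only yields $I_{italian}\geq 1$. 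This is precisely the content of Lemma 2.10, and the paper closes the gap by passing to a plane birational model $C_{1}\subset P^{2}$ via $\Phi_{\Sigma}$, proving $I_{italian}(p,C,F_{\mu})\leq I_{italian}(p',C_{1},\psi_{\mu})$ for the induced curve $\psi_{\mu}$ of degree $ke$, and then applying Lemma 2.10 together with the plane-curve results of \cite{depiro2} to the factorisation $\psi_{\lambda}=\phi_{\lambda_{1}}\cdots\phi_{\lambda_{e}}$. A direct power-series argument in $P^{w}$ could in principle replace this reduction, but it would have to reprove that comparison of multiplicities relative to the two parameter spaces, not merely observe that a nonvanishing factor is a unit in the local ring.
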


We first require the following lemma, preserving
the notation from Definition 2.1;\\

\begin{lemma}
Let $H_{\lambda}$ be a generic hyperplane, then;\\

$I_{italian}(p,C,H_{\lambda})=1$ for \emph{all} $p\in C\cap H_{\lambda}$\\

and each $p\in C\cap H_{\lambda}$ is non-singular.\\

 In the Italian terminology, each point $p$ of intersection is counted
 \emph{once} or the intersection is transverse. Using the methods developed
 in Section 1, it is not difficult to prove that each point of intersection is transverse
 (using the scheme theoretic definition).

\end{lemma}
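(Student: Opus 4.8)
The plan is to realise $I_{italian}(p,C,H_\lambda)$ as a cover multiplicity and then exploit the genericity of $\lambda$, dealing with non-singularity separately by a dimension count. First I would introduce the open set $U=\{\lambda\in (P^w)^*:|C\cap H_\lambda|<\infty\}$, which is dense: by the finiteness lemma of Section 1 a hyperplane not containing $C$ meets it in finitely many points, so the complement of $U$ lies in the proper closed locus of hyperplanes containing $C$. I would then form the incidence variety $\Gamma=\{(\lambda,p)\in U\times C: p\in H_\lambda\}$. The projection $\pi_1:\Gamma\to U$ is a finite surjective morphism onto the smooth variety $U$, so, exactly as in the proof of the generic-line lemma of Section 1, the cover is equidimensional and the Zariski structure multiplicity $Mult_{(\lambda,p)}(\Gamma/U)$ is defined. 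Unwinding Definition 2.1, $I_{italian}(p,C,H_\lambda)=RightMult_p(C,H_\lambda)=Card(\mathcal V_p\cap C\cap H_{\lambda'})$ for $\lambda'\in\mathcal V_\lambda$ generic, and the right-hand side is precisely the number of points of the fibre $\pi_1^{-1}(\lambda')$ specialising to $(\lambda,p)$, that is, $Mult_{(\lambda,p)}(\Gamma/U)$.

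Granting this identification, the multiplicity claim is immediate. For $\lambda$ generic over the parameters defining $C$, the points of $\pi_1^{-1}(\lambda)$ lie over the generic point of $U$, and, by the elementary properties of Zariski structures invoked in the generic-line lemma (in characteristic $0$ a finite cover is generically separable, hence unramified over its generic fibre), every such point has multiplicity $1$. Thus $I_{italian}(p,C,H_\lambda)=1$ for all $p\in C\cap H_\lambda$. I would stress that this argument is designed \emph{not} to invoke the Hyperspatial Bezout Theorem, which this lemma is meant to precede.

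For non-singularity I would argue by dimension. Since $C$ is a curve, its singular locus $Sing(C)$ is a proper closed subvariety, hence a finite set $\{q_1,\ldots,q_t\}$. For each $q_i$ the hyperplanes through $q_i$ form a hyperplane of the dual space $(P^w)^*$, so the locus $\{\lambda: H_\lambda\cap Sing(C)\neq\emptyset\}$ is a finite union of proper closed subsets of $(P^w)^*$, defined over the parameters of $C$, and is therefore itself proper and closed. A generic $\lambda$ avoids it, so every $p\in C\cap H_\lambda$ is non-singular, as required.

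Finally, for the scheme-theoretic refinement I would pass to a local parametrisation of $C$ at the now smooth point $p$: writing $\gamma(t)$ for a power-series parametrisation with $\gamma(0)=p$ and $\gamma'(0)$ spanning the tangent line $l_p$ of Section 1, the restriction to $\gamma$ of the linear form cutting out $H_\lambda$ is a power series whose order at $t=0$ equals $I_{italian}(p,C,H_\lambda)$; since this order is $1$ the zero is simple, the local scheme is $L[t]/(t)$, and the intersection is reduced and transverse in the sense of Section 1. The main obstacle is the identification $I_{italian}(p,C,H_\lambda)=Mult_{(\lambda,p)}(\Gamma/U)$ together with the unramifiedness of the generic fibre; once the Zariski structure machinery for the cover $\Gamma/U$ is in place this is the same argument already used for generic lines, and everything else is a routine dimension count or power-series computation.
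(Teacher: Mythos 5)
Your proof is correct, but the route to the multiplicity statement is genuinely different from the paper's. You identify $I_{italian}(p,C,H_{\lambda})$ with the Zariski-structure multiplicity $Mult_{(\lambda,p)}(\Gamma/U)$ of the incidence cover $\Gamma\subset U\times C$ over the space of hyperplanes, observe that for $\lambda$ generic every point $(\lambda,p)$ of the fibre is a generic point of the irreducible cover $\Gamma$, and invoke the fact that a generic point of a finite equidimensional cover has multiplicity $1$ in characteristic $0$ -- exactly the fact the paper itself uses in the proof of Lemma 1.6 for generic lines, so this is legitimate within the paper's toolkit and involves no circularity with the Hyperspatial Bezout Theorem. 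The paper instead argues by contradiction through Definition 1.12: it fixes the degree $d$ as the number of intersections with a generic hyperplane, notes that a generic infinitesimal variation $\lambda'\in{\mathcal V}_{\lambda}$ is still generic so that $|C\cap H_{\lambda'}|=d$, uses the lifting property of finite covers to produce at least one point of $C\cap H_{\lambda'}$ in each ${\mathcal V}_{p_{j}}$, and derives $d+1>d$ from the assumed double point. Your argument is more direct and dispenses with the well-definedness of the degree entirely; the paper's is a pigeonhole argument resting on slightly weaker Zariski-structure inputs (well-definedness of the generic fibre cardinality plus the ``multiplicity at least one'' lifting property, rather than generic unramifiedness). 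The non-singularity step is the same dimension count in both, and your power-series sketch of the scheme-theoretic transversality matches what the statement leaves to the reader.
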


\begin{proof}

Suppose, for contradiction, that
$I_{italian}(p_{1},C,H_{\lambda})\geq 2$ for some $p_{1}\in C\cap
H_{\lambda}$. Let $\{p_{1},\ldots,p_{d}\}$ be the total set of
intersections, where $d$ is the degree of $C$, see Definition
1.12. Then we can find $\lambda'\in {\mathcal V}_{\lambda}$
generic in $U$ and a distinct pair $\{p_{1}^{1},p_{1}^{2}\}$ in
${\mathcal V}_{p_{1}}\cap C\cap H_{\lambda'}$. By properties of
Zariski structures, see \cite{depiro1} or \cite{Z}, we can also
find $\{p_{2}^{1},\ldots,p_{d}^{1}\}$ such that $p_{j}^{1}\in
C\cap H_{\lambda'}\cap {\mathcal V}_{p_{j}}$ for $2\leq j\leq d$.
It follows from the definition of an infinitesimal neighborhood
that, for $p\neq q$ with $\{p,q\}\subset P^{w}$, ${\mathcal
V}_{p}$ and ${\mathcal V}_{q}$ are disjoint. Hence,
$\{p_{1}^{1},p_{1}^{2},p_{2}^{1},\ldots,p_{d}^{1}\}$ defines a
distinct set of intersections of $C$ with $H_{\lambda'}$. It
follows that $C$ and $H_{\lambda'}$ have at least $d+1$
intersections, contradicting the facts that $\lambda'$ is generic
in $U$ and the degree of $C$ is equal to $d$. For the second part
of the lemma, observe that the set of nonsingular points
$NonSing(C)$ is a dense open subset of $C$, defined over the field
of definition of $C$. The condition that a hyperplane passes
through at least one point of $({C\setminus NonSing(C)})$ is
therefore a union of finitely many proper hyperplanes
$P_{1}\cup\ldots\cup P_{m}$ contained in $Par_{H}$, also defined
over the field of definition of $C$. As $H_{\lambda}$ was chosen
to be generic, its parameter $\lambda$ cannot lie inside
$P_{1}\cup\ldots\cup P_{m}$. Hence, the result is shown.

\end{proof}

We now complete the proof of Theorem 2.3;\\

\begin{proof}

Choose $\{\lambda_{1},\ldots,\lambda_{e}\}$ independent generic
tuples in $P^{w*}=Par_{H}$, the parameter space for hyperplanes on
$P^{w}$. Let $F_{e}$ be the form of degree $e$ defined
by;\\

$F_{e}=H_{\lambda_{1}}\centerdot\ldots\centerdot
H_{\lambda_{e}}=\Sigma_{i_{0}+\ldots+i_{w}=e}\lambda_{i_{0}\ldots i_{n}}Y_{0}^{i_{0}}\ldots Y_{w}^{i_{w}}=0$\\

We first claim that the intersections $C\cap H_{\lambda_{j}}$ are
pairwise disjoint sets for $1\leq j\leq e$. The condition that a
hyperplane $H_{\mu}$ passes through at least one point of the
intersection $\{(C\cap H_{\lambda_{1}})\cup\ldots\cup (C\cap
H_{\lambda_{j}})\}$ is a union of finitely many proper closed
hyperplane conditions on $Par_{H}$, defined over the parameters of
$C$ and the tuple $\{\lambda_{1},\ldots,\lambda_{j}\}$, for $1\leq
j\leq e-1$. As the tuples $\{\lambda_{1},\ldots,\lambda_{e}\}$
were chosen to be generically independent in $Par_{H}$, the result
follows. Now, by Lemma 2.4 and the definition of $F_{e}$, we
obtain a total number $de$ of intersections between $C$ and
$F_{e}$. We claim that for each point $p$ of intersection;\\

$I_{italian}(p,C,F_{e})=1$ $(*)$\\

This does not follow immediately from Lemma 2.4 as the parameter
$\{\lambda_{i_{0}\ldots i_{n}}\}$ defining $F_{e}$ is allowed to
vary in the parameter space $Par_{e}$ of \emph{all} forms of
degree $e$. We prove the claim by reducing the problem to one
about plane projective curves.\\

We use Lemma 1.20 and Theorem 1.33 to find a plane projective
algebraic curve $C_{1}\subset P^{2}$ and a linear system $\Sigma$
such that $\Phi_{\Sigma}:C_{1}\leftrightsquigarrow C$. Let
$\{\phi_{0},\ldots,\phi_{w}\}$ be a basis for $\Sigma$, defining
the birational map $\Phi_{\Sigma}$. We may suppose that each
$\phi_{i}$ is homogenous of degree $k$ in the variables
$\{X_{0},X_{1},X_{2}\}$ for $P^{2}$. Let
$\{V_{[\Phi]},V_{\Phi_{\Sigma}},W_{[\Phi]},W_{\Phi_{\Sigma}}\}$ be
the canonical sets associated to $\Gamma_{\Phi_{\Sigma}}$, see
Definitions 1.28 and 1.30. Note that the canonical sets are all
definable over the data of $\Phi_{\Sigma}$. Hence, we may, without
loss of generality, assume that the point $p$ given in $(*)$ above
lies in $W_{\Phi_{\Sigma}}$ and its corresponding $p'\in C_{1}$
lies in $V_{\Phi_{\Sigma}}$. In particular, $p'$ defines a
non-singular point of the curve $C_{1}$. Now, given an algebraic
form $F_{\mu}$ of degree $e$;\\

$F_{\mu}=\Sigma_{i_{0}+\ldots+i_{w}=e}\mu_{i_{0}\ldots
i_{n}}Y_{0}^{i_{0}}\ldots Y_{w}^{i_{w}}=0$\\

we obtain a corresponding algebraic curve $\psi_{\mu}$ of degree
$ke$ on $P^{2}$ given by the equation;\\

$\psi_{\mu}=\Sigma_{i_{0}+\ldots+i_{w}=e}\mu_{i_{0}\ldots
i_{n}}\phi_{0}^{i_{0}}\ldots \phi_{w}^{i_{w}}=0$ $(\dag)$\\

We claim that;\\

$I_{italian}(p,C,F_{\mu})\leq I_{italian}(p',C_{1},\psi_{\mu})$ $(**)$\\

For suppose that $p$ is counted $s$-times for the intersection of
$C$ with $F_{\mu}$, then we can find $\mu'\in{\mathcal V}_{\mu}$
generic in $U$ such that $C\cap F_{\mu'}\cap{\mathcal V}_{p}$
consists of the distinct points $\{p_{1},\ldots,p_{s}\}$. By
elementary properties of infinitesimals,
$\{p_{1},\ldots,p_{s}\}\subset W_{\Phi_{\Sigma}}$, hence we can
find a corresponding distinct set $\{p_{1}',\ldots,p_{s}'\}$ in
$V_{\Phi_{\Sigma}}$. By the defining property of a specialisation
,the fact that the correspondence $\Gamma_{[\Phi_{\Sigma}]}$ is
closed and the definition of $\psi_{\mu'}$ we must have that
$\{p_{1}'\ldots,p_{s}'\}\subset C_{1}\cap{\mathcal
V_{p}}\cap\psi_{\mu'}$. As the map $\theta:Par_{e}\rightarrow
Par_{ke}$, defined by $(\dag)$, is algebraic, it follows that
$\psi_{\mu'}$ defines an infinitesimal variation of $\psi_{\mu}$
in the space of all algebraic curves of degree $ke$ on $P^{2}$.
Hence, it follows that $p'$ is counted at least $s$-times for the
intersection of $C_{1}$ with $\psi_{\mu}$. Therefore, $(**)$ is
shown. Now, given a hyperplane $H_{\mu}$;\\

$H_{\mu}=\mu_{0}Y_{0}+\ldots+\mu_{w}Y_{w}=0$\\

we obtain a corresponding algebraic curve $\phi_{\mu}$ of degree
$k$ on $P^{2}$, defined by the equation;\\

$\phi_{\mu}=\mu_{0}\phi_{0}+\ldots+\mu_{w}\phi_{w}=0$\\

 Corresponding to the factorisation $F_{\lambda}=F_{e}=H_{\lambda_{1}}\centerdot\ldots\centerdot H_{\lambda_{e}}$,
we obtain the factorisation
$\psi_{\lambda}=\phi_{\lambda_{1}}\centerdot\ldots\centerdot
\phi_{\lambda_{e}}$. Therefore, in order to show $(*)$,
it will be sufficient to prove that;\\

$I_{italian}(p',C_{1},\phi_{\lambda_{1}}\centerdot\ldots\centerdot
\phi_{\lambda_{e}})=1$ $(***)$\\

Let $p$ belong (uniquely) to the intersection $C\cap H_{\lambda_{j}}$. We claim first that;\\

$I_{italian}(p',C_{1},\phi_{\lambda_{j}})=1$\\

We clearly have that the linear system $\Sigma$ consists of
$\{\phi_{\lambda}:\lambda\in P^{w*}\}$. Hence, as by construction
$p'$ does not belong to $Base(\Sigma)$ and is non-singular, the
result in fact follows from Lemma 2.4 and a local result given
later in this section, Lemma 2.10, which is independent of this
theorem. By results of \cite{depiro2} on plane projective curves,
$(***)$ follows. Hence, $(*)$ is shown as
well.\\

We have now proved that there exists a form $F_{e}$ of degree $e$
which intersects $C$ in exactly $de$ points with multiplicity. The
theorem now follows immediately from the corresponding result in
Zariski structures that, for a finite equidimensional cover
 $G\subset Par_{e}\times P^{w}$;\\

$\Sigma_{x\in G(\lambda)}Mult_{(\lambda,x)}(G/Par_{F})$ is
preserved.\\

\end{proof}

Using this theorem, we develop a basic theory of $g_{n}^{r}$ on
$C$, a projective algebraic curve in $P^{w}$. Suppose that we are
given a linear system $\Sigma$ of dimension $r$, consisting of
algebraic forms $\phi_{\lambda}$, parametrised by $Par_{\Sigma}$.
We will assume that $C\cap\phi_{\lambda}$ has finite intersection
for each $\lambda\in Par_{\Sigma}$, which we will abbreviate by
saying that $\Sigma$ has finite intersection with $C$.
Then, for $\lambda\in Par_{\Sigma}$, we obtain the weighted set of points;\\

$W_{\lambda}=\{n_{p_{1}},\ldots,n_{p_{m}}\}$\\

where\\

 $\{p_{1},\ldots,p_{m}\}=C\cap \phi_{\lambda}$\\

 and\\

 $I_{italian}(p_{j},C,\phi_{\lambda})=n_{p_{j}}$ for $1\leq j\leq
 m$.\\

 By Theorem 2.3, the total weight $n_{p_{1}}+\ldots +n_{p_{m}}$ of these
 points is always equal to $de$.\\

It follows that, as $\lambda$ varies in $Par_{\Sigma}$, we obtain
a series of weighted sets $Series(\Sigma)=\{W_{\lambda}:\lambda\in
Par(\Sigma)\}$. We now make the following definition;\\

\begin{defn}

We define $order(Series(\Sigma))$ to be the total weight of any of
the sets in $Series(\Sigma)$. We define $dim(Series(\Sigma))$ to
be $dim(\Sigma)$. We define $g_{n}^{r}(\Sigma)$ to be the series
of weighted sets parametrised by $Par_{\Sigma}$ where;\\

 $n=order(Series(\Sigma))$ and
$r=dim(Series(\Sigma))$.\\

\end{defn}

We now make the following local analysis of $g_{n}^{r}(\Sigma)$.\\

\begin{defn}

Let $\Sigma$ be a linear system having finite intersection with
$C\subset P^{w}$. If $\phi_{\lambda}$ belongs to $\Sigma$ and
$p\in C\cap\phi_{\lambda}$, we define;\\

$I_{italian}^{\Sigma}(p,C,\phi_{\lambda})=Card(C\cap\phi_{\lambda'}\cap{\mathcal
V}_{p})$ for $\lambda'\in{\mathcal V}_{\lambda}$ generic in $Par_{\Sigma}$.\\

\end{defn}

\begin{rmk}
This is a good definition as $Par_{\Sigma}$ is smooth. The
difference between $I_{italian}$ and $I_{italian}^{\Sigma}$ is
that in the first case we can vary the parameter $\lambda$ over
$\emph{all}\ $ forms of degree $e$, while, in the second case, we
restrict the parameter to forms of the linear system defined by
$\Sigma$.
\end{rmk}

\begin{defn}

We will refer to ${C\setminus Base(\Sigma)}$ as the set of mobile
points for the system $\Sigma$.

\end{defn}

We now make the following preliminary definition;\\

\begin{defn}

We will define a coincident mobile point for $\Sigma$ to be a
point $p\in C\cap\phi_{\lambda}$, for some $\lambda\in
Par_{\Sigma}$, such that $p$ lies outside $Base(\Sigma)$ and with
the further property that;\\

$I_{italian}^{\Sigma}(p,C,\phi_{\lambda})\lvertneqq
I_{italian}(p,C,\phi_{\lambda})$\\

\end{defn}

It is an important property of \emph{linear} systems in
characteristic $0$ that there do not exist coincident mobile
points. (See the final section for the corresponding result in
arbitrary characteristic.) We will prove this in the following
lemmas, the notation of Definition 2.6 will be maintained until
Lemma 2.17.\\

\begin{lemma}{Non-Existence of Coincident Mobile Points}\\

Let $p\in ({C\setminus Base(\Sigma)})\cap\phi_{\lambda}$ be a
non-singular point. Then;\\

$I_{italian}(p,C,\phi_{\lambda})=I_{italian}^{\Sigma}(p,C,\phi_{\lambda})$\\

\end{lemma}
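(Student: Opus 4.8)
The plan is to make everything local at $p$, using the Newtonian power series parametrisation of the branch of $C$ through $p$, and to keep the argument \emph{independent} of the Hyperspatial Bezout Theorem 2.3, since that theorem already appeals to this lemma. As $p$ is a non-singular point of $C$, I would first fix an algebraic power series parametrisation $\gamma:{\mathcal V}_{0}\rightarrow C\cap{\mathcal V}_{p}$ with $\gamma(0)=p$, $t$ a uniformiser, as recalled in (ii) of the Introduction; smoothness guarantees that $\gamma$ is a bijection of ${\mathcal V}_{0}$ onto $C\cap{\mathcal V}_{p}$. Writing $f_{\mu}(t)=\phi_{\mu}(\gamma(t))$ for $\mu\in Par_{\Sigma}$, a point of $C\cap\phi_{\mu}\cap{\mathcal V}_{p}$ is the same thing as a root of $f_{\mu}$ in ${\mathcal V}_{0}$. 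Set $m=ord_{t}f_{\lambda}$, noting that $f_{\lambda}\not\equiv 0$ because $\Sigma$ has finite intersection with $C$. By Definitions 2.1 and 2.6, both $I_{italian}(p,C,\phi_{\lambda})$ and $I_{italian}^{\Sigma}(p,C,\phi_{\lambda})$ are the number of roots of $f_{\lambda'}$ in ${\mathcal V}_{0}$ for $\lambda'$ an infinitesimal variation of $\lambda$, taken generic in $U$ and in $Par_{\Sigma}$ respectively, so the whole lemma reduces to counting these roots.

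Next I would prove the uniform upper bound $I_{italian}^{\Sigma}(p,C,\phi_{\lambda})\leq m$ and $I_{italian}(p,C,\phi_{\lambda})\leq m$. For \emph{any} $\lambda'\in{\mathcal V}_{\lambda}$ the series $f_{\lambda'}$ is an infinitesimal deformation of $f_{\lambda}=t^{m}u(t)$ with $u$ a unit, so by Weierstrass preparation (equivalently, continuity of roots) exactly $m$ roots of $f_{\lambda'}$, counted with multiplicity, lie in ${\mathcal V}_{0}$; in particular at most $m$ distinct ones do. Specialising $\lambda'$ to be generic in $U$, resp. in $Par_{\Sigma}$, gives the two inequalities.

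The crux is the matching lower bound $I_{italian}^{\Sigma}(p,C,\phi_{\lambda})\geq m$, achieved using variations \emph{inside} the linear system. Here the hypothesis $p\notin Base(\Sigma)$ is exactly what is needed: by the definition of the base locus there is a form $\phi_{0}\in\Sigma$ with $\phi_{0}(p)\neq0$, so $f_{0}$ is a unit in $L[[t]]$. Because $\Sigma$ is linear, the parameter $\lambda'=[\lambda+\epsilon\mu_{0}]$, with $\mu_{0}$ the parameter of $\phi_{0}$ and $\epsilon$ infinitesimal, lies in $Par_{\Sigma}\cap{\mathcal V}_{\lambda}$ and gives $f_{\lambda'}=f_{\lambda}+\epsilon f_{0}$. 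The Newton polygon of $f_{\lambda'}$ in $(t,\epsilon)$ has a single lower edge joining the terms $\epsilon f_{0}(0)$ and $c_{m}t^{m}$, where $c_{m}\neq0$ is the leading coefficient of $f_{\lambda}$; its associated equation is, up to a unit, $t^{m}=-\epsilon f_{0}(0)/c_{m}$, which in characteristic $0$ has $m$ \emph{distinct} infinitesimal roots $t\sim\epsilon^{1/m}$. Thus this one specific variation inside $\Sigma$ already splits $p$ into $m$ distinct points of ${\mathcal V}_{p}$. Since the number of such split points is maximised on generic members of the cover (special parameters can only merge points, never create them), the generic $\lambda'\in Par_{\Sigma}$ used in Definition 2.6 splits $p$ into at least $m$ points, giving $I_{italian}^{\Sigma}(p,C,\phi_{\lambda})\geq m$.

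Combining the three steps yields $I_{italian}^{\Sigma}(p,C,\phi_{\lambda})=m=I_{italian}(p,C,\phi_{\lambda})$, where the second equality also follows because the variation $\phi_{\lambda}+\epsilon\phi_{0}$ lives in $U$ as well, forcing $I_{italian}(p,C,\phi_{\lambda})\geq m$. I expect Step three to be the main obstacle: the real content is that the restricted, essentially one-parameter, variation produces $m$ \emph{distinct} roots rather than one root of multiplicity $m$. This is precisely where characteristic $0$ is used, via the separability of $t^{m}+c$; when $p\mid m$ the infinitesimal roots can coalesce, which is the mechanism behind the coincident mobile points to be treated in the final section. A secondary point, handled by the machinery of \cite{depiro1} and \cite{depiro2}, is to justify rigorously the semicontinuity statement that a generic parameter splits $p$ into at least as many points as any special one, i.e. the definability and upper semicontinuity of multiplicity in a Zariski cover.
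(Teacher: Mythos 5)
Your argument is correct, but it takes a genuinely different route from the paper's. The paper proves the lemma by induction on $m=I_{italian}(p,C,\phi_{\lambda})$: if a generic variation inside $\Sigma$ splits $p$ at all, summability of specialisation plus the induction hypothesis finishes; if it never splits, the paper shows that ${\phi_{\lambda}\over\phi_{\mu}}$ (with $\phi_{\mu}\in\Sigma$, $\phi_{\mu}(p)\neq 0$) restricted to the branch is Zariski unramified, hence \'etale in characteristic $0$, reparametrises the branch by the pencil parameter $t$, and then differentiates $g(x_{1}(t),y_{1}(t);t)=0$ to get ${\partial g\over\partial t}(0,0)=0$, i.e.\ $\phi_{\mu}(p)=0$, a contradiction; for $w>2$ it additionally reduces to a plane model via Lemma 1.20. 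You instead compute both multiplicities directly as $m=ord_{t}f_{\lambda}$ along the power series parametrisation and exhibit the splitting explicitly: the one-parameter variation $\phi_{\lambda}+\epsilon\phi_{0}$ inside $\Sigma$ has Newton polygon with a single edge from $(0,1)$ to $(m,0)$, whose residual equation $c_{m}T^{m}+\epsilon f_{0}(0)=0$ has $m$ distinct infinitesimal roots in characteristic $0$, and semicontinuity (again summability of specialisation from \cite{depiro2}) transfers this to the generic member of $Par_{\Sigma}$. Both proofs turn on the same two inputs --- a form of $\Sigma$ not vanishing at $p$, and characteristic $0$ --- but yours is non-inductive, works in $P^{w}$ at once without the reduction to a plane model, and localises the characteristic-$0$ hypothesis very transparently in the separability of $T^{m}-c$, which matches exactly the Section 7 counterexample ($y=x^{2}+t$ in characteristic $2$, where $m=p=2$ and the two roots coalesce); what the paper's route buys is avoidance of the explicit Newton--Puiseux computation. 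Two steps you should make explicit to reach the paper's level of rigour (neither is a gap, both sit at the level of informality the paper itself concedes for its own power series calculation): the bijection of ${\mathcal V}_{0}$ with $C\cap{\mathcal V}_{p}$ at a non-singular point must come from the infinitesimal implicit function theorem of \cite{depiro1} and \cite{depiro2}, not from Theorem 6.1, whose proof depends on this lemma and would be circular; and the formal Newton polygon roots must be realised as actual distinct points of $C\cap\phi_{\lambda'}\cap{\mathcal V}_{p}$ on the relevant \'etale cover.
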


\begin{proof}
We prove this by induction on $m=I_{italian}(p,C,\phi_{\lambda})$.
The case $m=1$ is clear
as we \emph{always} have that;\\

$I_{italian}^{\Sigma}(p,C,\phi_{\lambda})\leq
I_{italian}(p,C,\phi_{\lambda})$\\

Suppose that $I_{italian}(p,C,\phi_{\lambda})=m+1$. Let
$\lambda'\in Par_{\Sigma}\cap {\mathcal V}_{\lambda}$ be generic
and let $\{p_{1},\ldots,p_{r}\}$ enumerate ${\mathcal V}_{p}\cap
C\cap\phi_{\lambda'}$. Suppose $r\geq 2$, then, by results of
\cite{depiro2}, (summability of specialisation), we have that;\\

$I_{italian}(p_{j},C,\phi_{\lambda'})\leq m$ for each $1\leq j\leq
r$.\\

 By properties of infinitesimal neighborhoods, each $p_{j}$
is non-singular and lies in ${C\setminus Base(\Sigma)}$. Hence, by
the induction hypothesis, it follows that;\\

$I_{italian}^{\Sigma}(p_{j},C,\phi_{\lambda'})=
I_{italian}(p_{j},C,\phi_{\lambda'})$ for each $1\leq j\leq r$\\

 Again, using the same result from \cite{depiro2}, (summability of specialisation),
  we have that;\\

$I_{italian}^{(\Sigma)}(p,C,\phi_{\lambda})=\sum_{1\leq j\leq
r}I_{italian}^{(\Sigma)}(p_{j},C,\phi_{\lambda'})$\\

where the $(\Sigma)$ notation is used to show that the result
holds for either of the above defined multiplicities. Hence;\\

$I_{italian}(p,C,\phi_{\lambda})=I_{italian}^{\Sigma}(p,C,\phi_{\lambda})$\\

We may, therefore, assume that;\\

For \emph{any} generic $\lambda'\in {\mathcal V}_{\lambda}\cap
Par_{\Sigma}$, there exists a \emph{unique} $p'\in {\mathcal
V}_{p}\cap C\cap\phi_{\lambda'}$, with
$I_{italian}(p',C,\phi_{\lambda'})=I_{italian}(p,C,\phi_{\lambda})$
$(*)$\\

 Now, as $p\notin Base(\Sigma)$, we can find $\phi_{\mu}$
with $p\notin C\cap\phi_{\mu}$. We consider the pencil of
algebraic forms defined by $\{\phi_{\lambda},\phi_{\mu}\}$. We may
assume $p$ is in finite position on the curve $C$. For ease of
notation, we will assume that $w=2$ and make the generalisation to
arbitrary dimension $w$ at the end of the lemma. Let $f(X,Y)=0$
define the curve $C$ in affine coordinates such that $p$
corresponds to the point $(0,0)$. We rewrite the pencil of curves
$(\phi_{\lambda},\phi_{\mu})$ in affine coordinates, which gives
the $1$-parameter family;\\

$g(X,Y;t)=\sum_{i+j\leq deg(\Sigma)} (\lambda_{ij}+t\mu_{ij})X^{i}Y^{j}=0$\\

The following calculation is somewhat informal, see remark $(ii)$
of Section 1;\\

As $p\notin Base(\Sigma)$, the function
${\phi_{\lambda}\over\phi_{\mu}}$ is defined at $p=(0,0)$ and
gives an
algebraic morphism;\\

${\phi_{\lambda}\over\phi_{\mu}}:W\subset C\rightarrow
Par_{t}$\\

on some open $W\subset C$, with ${\phi_{\lambda}\over\phi_{\mu}}(0,0)=0$.\\

The cover $graph({\phi_{\lambda}\over\phi_{\mu}})\subset W\times
Par_{t}$ of $Par_{t}$ is Zariski unramified at $(0,0,0)$ as, given
generic $t\in {\mathcal V}_{0}$, there exists a unique $(x,y)$
such that $(x,y)\in {\mathcal V}_{(0,0)}\cap C\cap
(\phi_{\lambda}+t\phi_{\mu})$, by $(*)$ and the fact that
$Par_{t}$ is smooth. By results of \cite{depiro1}, (Theorem 6.10),
if we assume the ground field $L$ has characteristic $0$, the
cover is etale at $(0,0,0)$. We will make the modification for
non-zero
characteristic in the final section of this paper. $(1)$\\

 Now, as $f(X,Y)=0$ is non-singular at $p=(0,0)$, we can apply the
implicit function theorem to obtain a parametrisation in algebraic
power series $(x(t),y(t))$ of the branch at $(0,0)$ such that
$f(x(t),y(t))=0$. (See the remark $(ii)$ in Section 1 and
\cite{depiro2} for the correct interpretation of these power
series on appropriate etale covers and the corresponding
definition of a branch.) We then obtain a map defined by algebraic
power series $\theta:A^{1,et}\rightarrow Par_{t}$ given by;\\

$\theta(t)={\phi_{\lambda}\over\phi_{\mu}}(x(t),y(t))$\\

where $A^{1,et}$ is an etale cover of $A^{1}$ over the
distinguished point $(0)$.
 We have
that $\theta$ is etale at $(0^{lift},0)$, as the composition of
etale maps is etale. By the inverse function theorem, we may find
an etale isomorphism $\rho:A^{1,et}\rightarrow A^{1,et}$ such that
$\theta(\rho(t))=t$ and
$(x_{1}(t),y_{1}(t))=(x(\rho(t)),y(\rho(t)))$ also parametrises the branch at $(0,0)$.\\

We therefore have that;\\

$g(x_{1}(t),y_{1}(t);t)=0$ (**) (2)\\

Now, by assumption, $I_{italian}(p,C,\phi_{\lambda})\geq 2$. Hence, by results of \cite{depiro2};\\

 $g(X,Y;0)$ is algebraically tangent to $f(X,Y)=0$ at $(0,0)$ (3).\\

By the chain rule and $(**)$, we have that;\\

${\partial g_{t}\over\partial
X}_{(x_{1}(t),y_{1}(t))}x_{1}'(t)+{\partial g_{t}\over\partial
Y}_{(x_{1}(t),y_{1}(t))}y_{1}'(t)+{\partial g\over\partial
t}_{(x_{1}(t),y_{1}(t))}=0$ $(***)$ (4)\\

Hence, at $t=0$, we have that ${\partial g\over\partial
t}_{(0,0)}=0$, that is $p=(0,0)$ belongs to $\phi_{\mu}=0$, which
is a contradiction. The calculation $(***)$ holds for formal power
series in $L[[t]]$. In particular, it holds for algebraic power
series. We now give a brief justification for this
calculation;\\

Let $v=ord_{t}$ be the standard valuation on the power series ring
$L[[t]]$. Given a power series $f\in L[[t]]$ and a sequence of
power series $\{f_{n}:n\in{\mathcal Z}_{\geq 0}\}$, we will say
that $\{f_{n}\}$ converges to $f$, abbreviated by
$\{f_{n}\}\rightarrow f$, if;\\

$(\forall m\in{\mathcal Z}_{\geq 0})(\exists n(m)\in{\mathcal
Z}_{\geq 0})(\forall k\geq
n(m))[v(f-f_{k})\geq m]$\\

Now choose sequences $\{x_{1}^{n}(t),y_{1}^{n}(t)\}$ of
\emph{polynomials} in $L[t]$ such that
$\{x_{1}^{n}(t)\}\rightarrow x_{1}(t)$ and
$\{y_{1}^{n}(t)\}\rightarrow y_{1}(t)$. We claim
that;\\

$\{g_{n}(t)=g(x_{1}^{n}(t),y_{1}^{n}(t);t)\}\rightarrow
g(x_{1}(t),y_{1}(t);t)$\\

This follows by standard continuity arguments for polynomials in
the non-archimidean topology induced on $L[[t]]$ by $v$. We then
have that;\\

$g_{n}'(t)={\partial g\over\partial
X}_{(x_{1}^{n}(t),y_{1}^{n}(t),t)}x_{1}^{n}(t)'+{\partial
g\over\partial
Y}_{(x_{1}^{n}(t),y_{1}^{n}(t),t)}y_{1}^{n}(t)'+{\partial
g\over\partial t}_{(x_{1}^{n}(t),y_{1}^{n}(t),t)}$\\

This follows from the fact that the chain rule and product rule
hold in the polynomial ring $L[t]$, even in non-zero
characteristic. We now claim that $\{x_{1}^{n}(t)'\}\rightarrow
x_{1}(t)'$ and $\{y_{1}^{n}(t)'\}\rightarrow y_{1}(t)'$. This
holds by the definition of convergence and the fact that, for a
power series $f\in L[[t]]$, if $ord_{t}(f)=r$, then
$ord_{t}(f')\geq r-1$. Using standard continuity arguments and
uniqueness of limits, one obtains the result $(4)$. One can also
give a geometric interpretation of the calculation $(4)$ using
duality arguments. We will
discuss this problem on another occasion.\\

In order to finish the argument, we claim that;\\

${\partial g_{0}\over\partial X}_{(0,0)}x_{1}'(0)+{\partial
g_{0}\over\partial Y}_{(0,0)}x_{2}'(0)=0$ $(5)$\\

This follows from $(3)$ and the fact that algebraic tangency can
be characterised by the property that $Dg_{0}$ at $(0,0)$ contains
the tangent line $l_{p}$ of $C$. This is clear if $g_{0}$ is
non-singular at $p$, in particular if $g_{0}$ has a non-reduced
component at $p$. Otherwise, it follows easily from \cite{depiro2}
or \cite{H}. Hence, at $t=0$, we have that ${\partial
g\over\partial t}_{(0,0)}=0$, that is $p=(0,0)$ belongs to
$\phi_{\mu}=0$, which is a contradiction.\\

We now consider the case for arbitrary dimension $w$. We will use
Theorem 1.33 to find a plane projective curve $C_{1}\subset P^{2}$
birational to $C$. Using Lemma 1.20, we can find a linearly
independent system $\Sigma'$ and a birational presentation
$\Psi_{\Sigma'}:C_{1}\leftrightsquigarrow C$. We will assume that
the point $p$ under consideration lies inside the canonical set
$W_{\Psi_{\Sigma'}}$ with corresponding $p'\in
V_{\Psi_{\Sigma'}}$. This can in fact always be arranged, see the
section on Conic Projections. However, for the moment, we can, if
necessary, replace the set $NonSing(C)$ by $W_{\Psi_{\Sigma'}}$.
Now, we follow through the calculation given above for $w=2$. The
argument up to $(1)$ is unaffected.  We first justify the
calculation $(2)$. Let $f(X,Y)=0$ be an affine representation of
$C_{1}$, such that the point $p'$ corresponds to $(0,0)$. Then, we
may obtain a local power series representation $(x(t),y(t))$ of
$f(X,Y)$ at $(0,0)$ and, applying $\Psi_{\Sigma'}$, a local power
series representation
$\Psi_{\Sigma'}(x(t),y(t))=(x_{1}(t),\ldots,x_{w}(t))$ of the
corresponding $p\in C$. We may then, applying the same argument,
obtain the relation $g(x_{1}(t),\ldots,x_{w}(t);t)=0$, where;\\

$g(X_{1},\ldots,X_{w};t)=\Sigma_{i_{1}+\ldots+i_{w}\leq
deg(\Sigma)}\lambda_{i_{1}\ldots i_{w}}X_{1}^{i_{1}}\ldots
X_{w}^{i_{w}}+t\mu_{i_{1}\ldots i_{w}}X_{1}^{i_{1}}\ldots
X_{w}^{i_{w}}$\\

is an affine representation of the pencil
$\phi_{\lambda}+t\phi_{\mu}$.\\

We now need to justify the calculation in $(3)$. Write
$\phi_{\lambda}$ in the form;\\

$\phi_{\lambda}=\Sigma_{i_{0}+\ldots
i_{w}=deg(\Sigma)}\lambda_{i_{0}\ldots
i_{w}}X_{0}^{i_{0}}\ldots X_{n}^{i_{w}}=0$\\

Let $\Sigma'=\{\psi_{0},\ldots,\psi_{w}\}$, then the assumption
that $\phi_{\lambda}$ passes through $p$ implies that the curve;\\

$D=\Sigma_{i_{0}+\ldots+i_{w}=deg(\Sigma)}\lambda_{i_{0}\ldots
i_{w}}\psi_{0}^{i_{0}}\ldots\psi_{i_{w}}^{i_{w}}=0$\\

passes through the corresponding $p'$ of $C_{1}$. By the fact
that\\
 $I_{italian}(p,C,\phi_{\lambda})\geq 2$, we can vary the
coefficients $\{\lambda_{i_{0}\ldots i_{w}}\}$ of $\phi_{\lambda}$
to obtain distinct intersections $\{x'',x'''\}$ in
$C\cap\phi_{\lambda'}\cap {\mathcal V}_{p}$. By properties of
infinitesimals, these intersections lie in the fundamental set
$W_{\Psi_{\Sigma'}}$. Hence, we can find corresponding
intersections $\{x'''',x'''''\}$ in ${\mathcal V}_{p'}\cap
V_{\psi_{\Sigma'}}$ with the corresponding variation of $D$. This
implies that;\\

$I_{italian}(p',C_{1},D)\geq 2$\\

By results of the paper \cite{depiro2}, $D$ must be algebraically
tangent to the curve $C_{1}$ at $p'$. Hence, by the chain rule,
and the characterisation of algebraic tangency given above,
$\phi_{\lambda}$ is algebraically tangent to the curve $C$ at $p$,
in the sense that its differential $D\phi_{\lambda}$ at $p$,
contains the tangent line $l_{p}$ of $C$ $(*)$. The reader should
also look at the proof of Theorem 2.3, where a similar calculation
was carried out. Finally, we need to justify $(4)$. This is clear
from the calculation done above. The final step $(5)$ is also
clear from the corresponding calculation and $(*)$.

\end{proof}

\begin{rmk}
The lemma fails for \emph{non-linear} systems. Let $C$ be defined
in affine coordinates $(x,y)$ by $y=0$ and let $\{\phi_{t}\}$ be
the pencil of curves, defined in characteristic $0$, by
$y=(x-t)^{2}=x^{2}-2tx+t^{2}$. By construction, each $\phi_{t}$ is
tangent to $y=0$ at $(t,0)$. It follows that each $(t,0)\in C$ is
a coincident mobile point for $\phi_{t}$.
\end{rmk}

\begin{lemma}
Suppose that $p\in
{C\setminus Base(\Sigma)}\cap\phi_{\lambda}$ is a singular point, then;\\

$I_{italian}(p,C,\phi_{\lambda})=I_{italian}^{\Sigma}(p,C,\phi_{\lambda})$.

\end{lemma}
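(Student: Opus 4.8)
The plan is to reduce the singular case to the non-singular case already established in Lemma 2.10, using crucially that $p\notin Base(\Sigma)$. First I would pick $\lambda'\in{\mathcal V}_{\lambda}\cap Par_{\Sigma}$ generic and enumerate the infinitesimally nearby intersections
\[
\{p_{1},\ldots,p_{r}\}={\mathcal V}_{p}\cap C\cap\phi_{\lambda'},\qquad r=I_{italian}^{\Sigma}(p,C,\phi_{\lambda}).
\]
Since $p\notin Base(\Sigma)$, the forms of $\Sigma$ vanishing at $p$ cut out a proper hyperplane of $Par_{\Sigma}$ through $\lambda$, so a generic $\lambda'\in{\mathcal V}_{\lambda}\cap Par_{\Sigma}$ satisfies $p\notin\phi_{\lambda'}$. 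In particular each $p_{j}$ is distinct from $p$, while still satisfying $\pi(p_{j})=p$.

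The key point is then that every $p_{j}$ is a \emph{non-singular} point of $C$ lying outside $Base(\Sigma)$. Indeed, the singular set $C\setminus NonSing(C)$ is finite and defined over the field of definition of $C$, exactly as used in the proof of Lemma 2.4; hence it consists of standard ($L$-rational) points. But $p_{j}$ is a genuine non-standard point, since $\pi(p_{j})=p$ together with $p_{j}\neq p$ forbids $p_{j}\in P^{w}(L)$. Therefore $p_{j}\in NonSing(C)$. Likewise $Base(\Sigma)$ is closed and standard, and $p\notin Base(\Sigma)$ gives ${\mathcal V}_{p}\cap Base(\Sigma)=\emptyset$, so $p_{j}\notin Base(\Sigma)$. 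Now I would invoke summability of specialisation, in precisely the form used inside the proof of Lemma 2.10 (see \cite{depiro2}), to obtain, for either multiplicity,
\[
I_{italian}^{(\Sigma)}(p,C,\phi_{\lambda})=\sum_{1\leq j\leq r}I_{italian}^{(\Sigma)}(p_{j},C,\phi_{\lambda'}).
\]
Because each $p_{j}$ is non-singular, avoids $Base(\Sigma)$, and lies on $\phi_{\lambda'}$ belonging to $\Sigma$, Lemma 2.10 applies at $p_{j}$ and yields $I_{italian}(p_{j},C,\phi_{\lambda'})=I_{italian}^{\Sigma}(p_{j},C,\phi_{\lambda'})$ for each $j$. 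Summing the two instances of the displayed identity then gives $I_{italian}(p,C,\phi_{\lambda})=I_{italian}^{\Sigma}(p,C,\phi_{\lambda})$. Note that, unlike in Lemma 2.10, no induction or tangency computation is needed here: the points to which the non-singular result is applied are genuinely non-singular, so there is no circularity.

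The hard part is the bookkeeping in the nested infinitesimal framework for the two summability identities. For the $I_{italian}^{\Sigma}$ identity the outer variation stays inside $Par_{\Sigma}$, which is exactly the setting of Lemma 2.10; but for the $I_{italian}$ identity the relevant outer variation of $\phi_{\lambda}$ ranges over \emph{all} forms of degree $e$, whereas the intermediate points $p_{j}$ were produced by a variation confined to $Par_{\Sigma}$. I would therefore have to check that conservation of total multiplicity under specialisation holds along the two-step tower ${\mathcal V}_{p}\supseteq\bigsqcup_{j}{\mathcal V}_{p_{j}}$ regardless of the direction of the first variation, so that every intersection of $C$ with a generic degree-$e$ deformation of $\phi_{\lambda}$ inside ${\mathcal V}_{p}$ is accounted for among the ${\mathcal V}_{p_{j}}$. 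This is the transitivity of the summability principle of \cite{depiro2}, and verifying that it applies verbatim in the present mixed situation is the one step genuinely requiring care.
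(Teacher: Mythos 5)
Your proposal is correct and follows essentially the same route as the paper: vary $\lambda$ generically inside $Par_{\Sigma}$ using $p\notin Base(\Sigma)$ to push the intersection off $p$, observe that the nearby points are non-singular and non-base because those exceptional sets are finite and $L$-definable, apply Lemma 2.10 at each of them, and conclude by summability of specialisation. The paper phrases the last step by noting directly that $I_{italian}(p_{j},C,\phi_{\lambda'})=1$ for generic $\lambda'$ and summing, and it invokes summability for the full $I_{italian}$ (with the variation taken in $Par_{\Sigma}$) without the extra verification you flag — so your cautionary remark identifies a point the paper itself treats as immediate rather than a genuine divergence.
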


\begin{proof}
Suppose that $I_{italian}^{\Sigma}(p,C,\phi_{\lambda})=m$. As
$p\notin Base(\Sigma)$, the condition that $\phi_{\lambda}$ passes
through $p$ defines a proper closed subset of the parameter space
$Par_{\Sigma}$. Hence, we can find $\lambda'\in{\mathcal
V}_{\lambda}$ generic in $Par_{\Sigma}$ and
$\{p_{1},\ldots,p_{m}\}=C\cap\phi_{\lambda'}\cap{\mathcal V}_{p}$,
distinct from $p$, witnessing this multiplicity. As both
$NonSing(C)$ and ${C\setminus Base(\Sigma)}$ are open and defined
over $L$, we have that $\{p_{1},\ldots,p_{m}\}$ must lie in the
intersection of these sets. Applying the result of the previous
lemma, we must have that;\\

$I_{italian}(p_{j},C,\phi_{\lambda'})=1$ for $1\leq j\leq m$\\

Hence, by summability of specialisation, again see the paper
\cite{depiro2}, we must have that
$I_{italian}(p,C,\phi_{\lambda})=m$ as required.
\end{proof}

In the following Lemma 2.13, Lemma 2.16 and Lemma 2.17, by a
canonical set on $C$, we will mean either a set of the form
$V_{\phi_{\Sigma_{1}}}$, for the domain of a birational map
$\phi_{\Sigma_{1}}$, or a set of the form $W_{\psi_{\Sigma_{2}}}$,
for the image of a birational map
 $\psi_{\Sigma_{2}}$, see also Definition 1.30. For ease of
 notation, we will abbreviate either of these sets by $W$. In
 particular, $W$ may include the canonical set $V_{\Phi_{\Sigma}}$
 defined by (any) choice of basis for the linear system $\Sigma$.

\begin{lemma}{Multiplicity at non-base points witnessed by
transverse intersections in the canonical sets.}\\

Let $p\in C\setminus Base(\Sigma)$, then, if
$m=I_{italian}(p,C,\phi_{\lambda})$, we can find
$\lambda'\in{\mathcal V}_{\lambda}$, generic in $Par_{\Sigma}$,
and distinct
$\{p_{1},\ldots,p_{m}\}=C\cap\phi_{\lambda'}\cap{\mathcal V}_{p}$
such that $\{p_{1},\ldots,p_{m}\}$ lies in the canonical set $W$
,with the point $p$ removed, ${W\setminus \{p\}}$, and the
intersection of $C$ with $\phi_{\lambda'}$ at each $p_{j}$ is
transverse for $1\leq j\leq m$.
\end{lemma}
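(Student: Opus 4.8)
The plan is to combine the multiplicity agreement of Lemmas 2.11 and 2.12 with the defining property of $I_{italian}^{\Sigma}$ and elementary facts about infinitesimal neighborhoods, taking $W$ to be a canonical set with $p\in W$ (which may always be arranged as in the proofs of Theorem 2.3 and Lemma 2.11). First, since $p\in C\setminus Base(\Sigma)$, Lemmas 2.11 and 2.12, covering the non-singular and singular cases respectively, give
\[
m=I_{italian}(p,C,\phi_{\lambda})=I_{italian}^{\Sigma}(p,C,\phi_{\lambda}).
\]
By Definition 2.7 this says precisely that for $\lambda'\in {\mathcal V}_{\lambda}$ generic in $Par_{\Sigma}$, the set $C\cap\phi_{\lambda'}\cap{\mathcal V}_{p}$ consists of exactly $m$ distinct points $\{p_{1},\ldots,p_{m}\}$. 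This immediately supplies the required $\lambda'$ and the $m$ witnessing points, so the first two assertions of the lemma are essentially a restatement of the earlier multiplicity results.

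Next I would check that the $p_{j}$ avoid $p$ and lie in $W$. As $p\notin Base(\Sigma)$, the condition that $\phi_{\lambda'}$ pass through $p$ defines a proper closed subset of $Par_{\Sigma}$, exactly as used in the proof of Lemma 2.12; genericity of $\lambda'$ therefore forces $p\notin\phi_{\lambda'}$, so each $p_{j}\neq p$. For membership in $W$, recall that $W$ is open and defined over $L$, so its complement is closed over $L$. If some $p_{j}$ lay in this complement then, since closed conditions are preserved under the specialisation $\pi$, the point $p=\pi(p_{j})$ would lie in the complement as well, contradicting $p\in W$. Hence ${\mathcal V}_{p}\subseteq W$ and $\{p_{1},\ldots,p_{m}\}\subseteq W\setminus\{p\}$.

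For transversality, observe first that $Base(\Sigma)$ is closed over $L$ and $p\notin Base(\Sigma)$, so the whole neighborhood ${\mathcal V}_{p}$ is disjoint from $Base(\Sigma)$; in particular each $p_{j}\in (C\setminus Base(\Sigma))\cap\phi_{\lambda'}$. The key point is that $\lambda'$, being generic in ${\mathcal V}_{\lambda}$, avoids the ramification locus $R\subseteq Par_{\Sigma}$ of the restricted cover $W\to U\cap Par_{\Sigma}$: $R$ is a proper closed subvariety defined over $L$, so any point of its non-standard locus has transcendence degree at most $\dim R<\dim Par_{\Sigma}$ over $L$, whereas a generic point of ${\mathcal V}_{\lambda}=\pi^{-1}(\lambda)$ has transcendence degree $\dim Par_{\Sigma}$ over $L$. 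Thus $\lambda'$ is a regular value, the fibre over it is unramified, and each $p_{j}$ is a simple point, giving $I_{italian}^{\Sigma}(p_{j},C,\phi_{\lambda'})=1$. Applying Lemmas 2.11 and 2.12 once more at $p_{j}$ then yields $I_{italian}(p_{j},C,\phi_{\lambda'})=1$, i.e. the intersection is transverse in the Italian sense of Lemma 2.4.

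The main obstacle is this last step. One must justify that genericity of $\lambda'$ \emph{within} the infinitesimal neighborhood ${\mathcal V}_{\lambda}$ is strong enough to place $\lambda'$ outside the branch locus and hence split the multiplicity-$m$ point $p$ into exactly $m$ simple points; the transcendence-degree count above is the natural way to secure this, together with the Zariski-structure fact that the cardinality $I_{italian}^{\Sigma}$ coincides with the corresponding right-multiplicity. A secondary subtlety is that Lemmas 2.11 and 2.12 must be invoked at the \emph{non-standard} points $p_{j}$ and parameter $\lambda'$; this is legitimate because the equality $I_{italian}=I_{italian}^{\Sigma}$ at non-base points is expressible by a predicate in ${\mathcal L}_{spec}$ and therefore transfers to the non-standard model.
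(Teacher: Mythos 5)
Your overall skeleton matches the paper's: identify $I_{italian}$ with $I_{italian}^{\Sigma}$ at the non-base point $p$ via Lemmas 2.10 and 2.12 (note: the non-singular case is Lemma 2.10, not 2.11, and the definition of $I_{italian}^{\Sigma}$ is Definition 2.6; 2.11 and 2.7 are remarks), read off the $m$ distinct witnessing points from that definition, exclude $p$ itself by openness of the condition that $\phi_{\lambda}$ miss $p$, and then argue membership in $W$ and transversality. The transversality step is where you genuinely diverge: the paper simply observes that, by summability of specialisation, $\sum_{j}I_{italian}(p_{j},C,\phi_{\lambda'})=I_{italian}(p,C,\phi_{\lambda})=m$, and since there are $m$ points each contributing at least $1$, each contributes exactly $1$. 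Your route via the branch locus of the incidence cover and a transcendence-degree count for $\lambda'$ is heavier machinery but also valid in characteristic $0$, and your remark that Lemmas 2.10/2.12 must be transferred to the non-standard points $p_{j}$ and parameter $\lambda'$ is a fair point that the paper leaves implicit.

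The genuine gap is in the step placing $\{p_{1},\ldots,p_{m}\}$ inside $W$. You assume $p\in W$ ``which may always be arranged'' and deduce ${\mathcal V}_{p}\subseteq W$ from the fact that the closed complement of $W$ is preserved under specialisation. But $p\in W$ cannot always be arranged: by Lemma 1.31 every canonical set is contained in $NonSing(C)$, so if $p$ is a singular point --- a case explicitly allowed by the hypotheses (Lemma 2.12 handles it) and needed in later applications such as Theorem 5.13, where this lemma is invoked at an arbitrary, possibly singular, $p$ with $W=W_{\Phi_{\Sigma'}}$ --- then $p$ lies in no canonical set and your argument yields nothing. The paper's argument avoids this: $C\setminus W$ is a \emph{finite} set defined over $L$, hence consists of standard points; a standard point lying in ${\mathcal V}_{p}$ must equal its own specialisation, namely $p$; and since each $p_{j}\neq p$ has already been established, no $p_{j}$ can lie in $C\setminus W$. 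You should replace your open/closed specialisation argument by this finiteness argument to obtain the lemma in its stated generality.
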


\begin{proof}
 As $p\notin Base(\Sigma)$, the condition that $\phi_{\lambda}$
does not pass through $p$ defines an open subset of
$Par_{\Sigma}$. By the previous lemma, taking generic (over $L$),
${\lambda'\in{\mathcal V}_{\lambda}}$, we can find
$\{p_{1},\ldots,p_{m}\}=C\cap\phi_{\lambda'}\cap{\mathcal V}_{p}$,
distinct from $p$. Finally, ${C\setminus W}$ defines a finite
subset of $C$ (over $L$). Clearly, $\{p_{1},\ldots,p_{m}\}$ avoid
this set, otherwise, by properties of specialisations, some
$p_{j}$ would equal $p$ for $1\leq j\leq m$. Finally, the
transversality result follows from the fact that
$I_{italian}(p_{j},C,\phi_{\lambda'})=1$ for $1\leq j\leq m$,
(using Lemmas 2.10 and 2.12 again).
\end{proof}

We have analogous results to Lemmas 2.10, 2.12 and 2.13 for points
in $Base(\Sigma)$;\\

We first require the following;\\

\begin{lemma}

Let $p\in C\cap Base(\Sigma)$, then there exists an open subset
$U_{p}\subset Par(\Sigma)$ and an integer $I_{p}\geq 1$
such that;\\

$I_{italian}(p,C,\phi_{\lambda})=I_{p}$ for $\lambda\in
U_{p}$.\\

and\\

 $I_{italian}(p,C,\phi_{\lambda})\geq I_{p}$ for
$\lambda\in Par_{\Sigma}$.\\

\end{lemma}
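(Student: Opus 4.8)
The plan is to realise $\lambda\mapsto I_{italian}(p,C,\phi_{\lambda})$ as the local multiplicity of a single fixed sheet of a finite cover and then to read off $I_{p}$ as its generic value, using that such multiplicities are definable and upper semicontinuous. First I would form the full incidence cover over the space of all forms of degree $e$. Let $U=\{\mu\in Par_{e}:|C\cap\phi_{\mu}|<\infty\}$, which is open in $Par_{e}$ exactly as in the proof of Lemma 1.6, and set $W=\{(\mu,y):y\in C\cap\phi_{\mu}\}\subset U\times P^{w}$. By Lemma 1.5 this is a finite cover of $U$; since $Par_{e}$ is smooth the multiplicity formalism for Zariski structures applies, and for every $(\mu,y)\in W$ we have $I_{italian}(y,C,\phi_{\mu})=Mult_{(\mu,y)}(W/U)=RightMult_{y}(C,\phi_{\mu})$, just as in Definition 2.1 and the remark following it.

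The decisive point is that, because $p\in Base(\Sigma)$ and $p\in C$, we have $p\in C\cap\phi_{\lambda}$ for \emph{every} $\lambda\in Par_{\Sigma}$, and moreover $Par_{\Sigma}\subset U$ since $\Sigma$ has finite intersection with $C$. Hence $\sigma:Par_{\Sigma}\to W$, $\sigma(\lambda)=(\lambda,p)$, is a regular section and $I_{italian}(p,C,\phi_{\lambda})=Mult_{\sigma(\lambda)}(W/U)$. I would then invoke two properties of the cover multiplicity, both already used in this paper: it is definable in ${\mathcal L}_{spec}$ (so $(\mu,y)\mapsto Mult_{(\mu,y)}(W/U)$ is a constructible integer-valued function on $W$, see \cite{depiro1} and \cite{Z}), and it is upper semicontinuous. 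The latter is summability of specialisation from \cite{depiro2}: if $\mu'\in{\mathcal V}_{\mu}$ is generic in $U$ and $(\mu',y_{1}'),\ldots,(\mu',y_{s}')$ are the points of $W$ in ${\mathcal V}_{(\mu,y)}$, then $Mult_{(\mu,y)}(W/U)=\sum_{i}Mult_{(\mu',y_{i}')}(W/U)$, so the multiplicity can only increase under specialisation.

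Composing the constructible, upper semicontinuous function $Mult_{(-)}(W/U)$ with the regular (hence continuous) section $\sigma$, I obtain that $\lambda\mapsto I_{italian}(p,C,\phi_{\lambda})$ is constructible and upper semicontinuous on $Par_{\Sigma}$. Since $Par_{\Sigma}$ is an irreducible projective linear space, a constructible integer-valued function on it is constant on a dense open subset; I take $I_{p}$ to be that generic value and $U_{p}=Par_{\Sigma}\setminus\{\lambda:I_{italian}(p,C,\phi_{\lambda})\geq I_{p}+1\}$, which is open by upper semicontinuity and is exactly the locus where the value $I_{p}$ is attained. Upper semicontinuity then forces $I_{p}$ to be the minimal value, giving $I_{italian}(p,C,\phi_{\lambda})\geq I_{p}$ for all $\lambda\in Par_{\Sigma}$; and $I_{p}\geq 1$ because $p$ lies in every $C\cap\phi_{\lambda}$. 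The hard part will be the careful verification of upper semicontinuity along the fixed sheet $\sigma$: one must confirm that $W/U$ really is equidimensional over the relevant open set, so that the Zariski-structure multiplicity theory and summability of specialisation apply at the \emph{base} point $p$, rather than only at the mobile intersection points treated in Lemmas 2.10--2.13.
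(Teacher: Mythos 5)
Your argument is correct and is essentially the paper's own proof in expanded form: the paper likewise observes that the sets $W_{k}=\{\lambda\in Par_{\Sigma}:I_{italian}(p,C,\phi_{\lambda})\geq k\}$ are definable and Zariski closed (your upper semicontinuity via summability of specialisation), takes $I_{p}$ to be the minimal value, and uses irreducibility of $Par_{\Sigma}$ to get the dense open set $U_{p}$. Your closing worry about equidimensionality is handled exactly as in Definition 2.1 and Remarks 2.2 -- the cover is restricted to the open locus of finite intersection over the smooth parameter space -- so no further verification is needed.
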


\begin{proof}

By properties of Zariski structures, we have that;\\

$W_{k}=\{\lambda\in Par(\Sigma):
I_{italian}(p,C,\phi_{\lambda})\geq k\}$\\

are definable and Zariski closed in $Par_{\Sigma}$. The result
then follows by taking $I_{p}=min_{\lambda\in
Par_{\Sigma}}I_{italian}(p,C,\phi_{\lambda})$ and the fact that
$Par_{\Sigma}$ is irreducible.

\end{proof}

We can now formulate the corresponding version of Lemmas 2.12 and 2.13 for base points;\\

\begin{lemma}
Let $p\in C\cap Base(\Sigma)\cap\phi_{\lambda}$. Then;\\

$I_{italian}(p,C,\phi_{\lambda})=I_{p}+I_{italian}^{\Sigma}(p,C,\phi_{\lambda})-1$

\end{lemma}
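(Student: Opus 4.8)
The plan is to prove the formula by a single infinitesimal deformation of $\phi_{\lambda}$ \emph{within} the system $\Sigma$, combined with summability of specialisation and the base-point analysis of the preceding lemma. Write $m=I_{italian}(p,C,\phi_{\lambda})$ and $M=I_{italian}^{\Sigma}(p,C,\phi_{\lambda})$; the goal is to show $m=I_{p}+M-1$. First I would choose $\lambda'\in{\mathcal V}_{\lambda}$ generic in $Par_{\Sigma}$ and enumerate the distinct points of $C\cap\phi_{\lambda'}\cap{\mathcal V}_{p}$. Since $p\in Base(\Sigma)$, \emph{every} member of $\Sigma$ passes through $p$, so $p$ itself always lies in this set; writing the set as $\{p,q_{1},\ldots,q_{k}\}$, the definition of $I_{italian}^{\Sigma}$ gives immediately $M=k+1$, that is $k=M-1$. (This is exactly the feature that distinguishes the present case from the non-base case of Lemmas 2.12 and 2.13, where the generic member misses $p$ and all split-off points are transverse.)

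Next I would identify the local multiplicity at each of these points. At $p$, since $\lambda'$ is generic in $Par_{\Sigma}$, the preceding lemma gives $I_{italian}(p,C,\phi_{\lambda'})=I_{p}$. For the split-off points $q_{j}$ I would first observe that each is a non-base, non-singular point of $C$: both $C\cap Base(\Sigma)$ and the singular locus $C\setminus NonSing(C)$ are finite and defined over $L$, hence consist of $L$-rational points, so the only standard point lying in ${\mathcal V}_{p}$ is $p$ itself; as each $q_{j}$ is distinct from $p$ but infinitely close to it, no $q_{j}$ can belong to $Base(\Sigma)$ or to $C\setminus NonSing(C)$. The key claim is then that each $q_{j}$ is a \emph{transverse} intersection, so that $I_{italian}(q_{j},C,\phi_{\lambda'})=1$.

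With these multiplicities in hand I would close the argument by summability of specialisation, the result of \cite{depiro2} already used in Lemmas 2.10 and 2.12, applied to the infinitesimal variation deforming $\phi_{\lambda}$ to $\phi_{\lambda'}$ inside $\Sigma$. Since $I_{italian}$ is the total local intersection multiplicity, conserved as the fibre splits, this yields
\[
m=I_{italian}(p,C,\phi_{\lambda'})+\sum_{j=1}^{k}I_{italian}(q_{j},C,\phi_{\lambda'})=I_{p}+k=I_{p}+(M-1),
\]
which is the assertion of the lemma.

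The hard part is the transversality claim of the second step, namely that the generic member $\phi_{\lambda'}$ meets $C$ transversally at every intersection point lying outside $Base(\Sigma)\cup(C\setminus NonSing(C))$. I would establish this by a Bertini-type genericity argument: the set of $\mu\in Par_{\Sigma}$ for which $\phi_{\mu}$ is tangent to $C$ at some non-base, non-singular point is the image of a tangency incidence variety and is therefore a proper Zariski-closed subset of $Par_{\Sigma}$, so the generic $\lambda'$ avoids it. Concretely, at a non-base non-singular $q_{j}$ the pencil analysis of Lemma 2.10 shows that $I_{italian}(q_{j},C,\phi_{\lambda'})\geq 2$ forces the differential $D\phi_{\lambda'}$ to contain the tangent line $l_{q_{j}}$ of $C$, a closed condition incompatible with genericity of $\lambda'$; combined with Lemma 2.13 this gives $I_{italian}(q_{j},C,\phi_{\lambda'})=1$ for each $j$. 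Once this transversality is secured, the remaining steps are purely the bookkeeping displayed above.
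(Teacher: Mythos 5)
Your overall skeleton is exactly the paper's: take $\lambda'\in{\mathcal V}_{\lambda}$ generic in $Par_{\Sigma}$, observe that $p$ itself persists in $C\cap\phi_{\lambda'}\cap{\mathcal V}_{p}$ because $p\in Base(\Sigma)$, so the fibre is $\{p,q_{1},\ldots,q_{M-1}\}$ with $M=I_{italian}^{\Sigma}(p,C,\phi_{\lambda})$; use Lemma 2.14 to get $I_{italian}(p,C,\phi_{\lambda'})=I_{p}$; argue the $q_{j}$ lie outside $Base(\Sigma)$ (and outside the singular locus) because those sets are finite and defined over $L$; show each $q_{j}$ is counted once; and close with summability of specialisation. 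All of that matches the paper's proof step for step.

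The one place you diverge is the transversality of the split-off points, and it is the weak link. You propose a Bertini-type argument: the locus of $\mu\in Par_{\Sigma}$ whose member is tangent to $C$ at some non-base non-singular point is ``the image of a tangency incidence variety and is therefore a proper Zariski-closed subset.'' The ``therefore'' is not justified. The image is constructible, but its properness is a genuine dimension count: you must know that at a (generic) non-base non-singular point $q$ the sub-system of $\Sigma$ tangent to $C$ at $q$ has codimension $2$ rather than $1$ in $Par_{\Sigma}$, i.e.\ that $\Sigma$ separates tangent vectors there. This is false for non-linear systems (Remark 2.11) and in characteristic $p$ (Section 7), and in characteristic $0$ it is essentially equivalent to Lemma 2.10 plus the unramifiedness of generic fibres --- indeed the statement you want is Lemma 2.17, which the paper proves \emph{after} the present lemma, using Lemmas 2.13 and 2.16. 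Also, the condition $D\phi_{\lambda'}\supset l_{q_{j}}$ is closed on the pair $(q_{j},\lambda')$, and $q_{j}$ is a nonstandard point depending on $\lambda'$, so you cannot conclude directly that it contradicts genericity of $\lambda'$ without first projecting the incidence variety to $Par_{\Sigma}$ and bounding the dimension of the image. The paper's route avoids all of this: since the fibre over the generic $\lambda'$ consists of exactly $M$ points each carrying $\Sigma$-multiplicity at least $1$, summability for the cover over $Par_{\Sigma}$ forces $I_{italian}^{\Sigma}(q_{j},C,\phi_{\lambda'})=1$ for each $j$, and then Lemmas 2.10 and 2.12 (non-existence of coincident mobile points at non-base points) upgrade this to $I_{italian}(q_{j},C,\phi_{\lambda'})=1$. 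If you replace your Bertini step by that counting argument, your proof coincides with the paper's and is complete.
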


\begin{proof}
Let $m=I_{italian}^{\Sigma}(p,C,\phi_{\lambda})$. Choosing
$\lambda'\in {\mathcal V}_{\lambda}$ generic in $Par_{\Sigma}$, we
can find $\{p_{1},\ldots,p_{m-1}\}=C\cap{\mathcal
V}_{p}\cap\phi_{\lambda'}$, distinct from $p$, witnessing this
multiplicity. Therefore, for $1\leq j\leq m-1$, $p_{j}\notin
Base(\Sigma)$, by properties of specialisations and the fact that
$Base(\Sigma)$ is finite and defined over $L$. Hence, we can apply
the results of Lemmas 2.10 and 2.12 to conclude that
$I_{italian}(p_{j},C,\phi_{\lambda'})=1$ for $1\leq j\leq m-1$. As
$\lambda'$ was generic in $Par_{\Sigma}$, using the previous Lemma
2.14, we have that $I_{italian}(p,C,\phi_{\lambda'})=I_{p}$. Now,
it follows easily, using summability of specialisation (see the
paper \cite{depiro2}), that
$I_{italian}(p,C,\phi_{\lambda})=I_{p}+(m-1)$. The lemma is
proved.
\end{proof}

\begin{lemma}
Let $p\in C\cap Base(\Sigma)\cap\phi_{\lambda}$, then, if
$m=I_{italian}^{\Sigma}(p,C,\phi_{\lambda})$, we can find
$\lambda'\in{\mathcal V}_{\lambda}$, generic in $Par_{\Sigma}$,
and $\{p,p_{1},\ldots,p_{m-1}\}=C\cap\phi_{\lambda'}\cap {\mathcal
V}_{p}$ witnessing this multiplicity such that
$\{p_{1},\ldots,p_{m-1}\}$ lie in the canonical set $W$, see the
explanation before Lemma 2.13, and the intersections
$C\cap\phi_{\lambda'}$ at $p_{j}$ are transverse for $1\leq j\leq
m-1$.
\end{lemma}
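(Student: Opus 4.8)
The plan is to mimic the proof of Lemma 2.13, replacing the non-base-point bookkeeping by its base-point counterpart from Lemma 2.16. First I would unwind the definition of $I_{italian}^{\Sigma}(p,C,\phi_{\lambda})=m$: since $Par_{\Sigma}$ is smooth and irreducible, and since $p\in Base(\Sigma)$ forces $p\in\phi_{\lambda'}$ for \emph{every} $\lambda'\in Par_{\Sigma}$, a generic $\lambda'\in{\mathcal V}_{\lambda}\cap Par_{\Sigma}$ (generic over $L$) realizes $C\cap\phi_{\lambda'}\cap{\mathcal V}_{p}$ as a set of exactly $m$ points, one of which is $p$ itself. Writing the remaining points as $\{p_{1},\ldots,p_{m-1}\}$, these are automatically distinct from $p$, so the required decomposition $C\cap\phi_{\lambda'}\cap{\mathcal V}_{p}=\{p,p_{1},\ldots,p_{m-1}\}$ is immediate from Definition 2.7, exactly as in the opening step of the proof of Lemma 2.16.

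Next I would locate the moving points inside the canonical set $W$ and away from $Base(\Sigma)$. Both $C\setminus W$ and $Base(\Sigma)$ are \emph{finite} subsets of $C$ defined over $L$, so each of their points is its own specialisation and in particular is an $L$-point. Since the only $L$-point lying in the infinitesimal neighborhood ${\mathcal V}_{p}$ is $p$ itself, and each $p_{j}\in{\mathcal V}_{p}$ is distinct from $p$, no $p_{j}$ can lie in $C\setminus W$ or in $Base(\Sigma)$; otherwise the defining property of the specialisation $\pi$ would force $p_{j}=p$. Hence $\{p_{1},\ldots,p_{m-1}\}\subset W\setminus\{p\}$ and each $p_{j}\notin Base(\Sigma)$, which is the base-point analogue of the corresponding step of Lemma 2.13.

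With the $p_{j}$ now outside the base locus, the transversality claim reduces to a multiplicity computation exactly as in Lemma 2.13. Since each $p_{j}$ is a mobile point lying in the canonical set and $\lambda'$ is generic in $Par_{\Sigma}$, Lemmas 2.10 and 2.12 give $I_{italian}(p_{j},C,\phi_{\lambda'})=1$ for $1\leq j\leq m-1$ (this is precisely the intermediate conclusion already used inside the proof of Lemma 2.16). A point counted once for the intersection of $C$ with $\phi_{\lambda'}$ is transverse in the scheme-theoretic sense, as recorded in Lemma 2.4 and invoked in Lemma 2.13, and this completes the argument.

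The step I expect to require the most care is the second one: ensuring that the excess multiplicity $m-1$ genuinely disperses into $m-1$ \emph{distinct} simple points lying in the canonical set, rather than partially reabsorbing into the fixed base point $p$ or escaping $W$. The argument hinges on the two finiteness facts, that $C\setminus W$ and $Base(\Sigma)$ are finite and $L$-definable, together with the rigidity of infinitesimal neighborhoods, namely that ${\mathcal V}_{p}$ contains no $L$-point other than $p$. Once these are in place the remaining bookkeeping is routine and parallels Lemma 2.13 line for line, the only genuine difference being that $p$ persists in the intersection as a base point while the simple points move off.
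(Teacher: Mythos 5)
Your proposal is correct and follows exactly the route the paper intends: its own proof of this lemma is the one-line instruction ``use the proof of Lemma 2.13, basic properties of infinitesimals and the fact that $W$ is open in $C$ and definable over $L$,'' and your three steps (unwinding $I_{italian}^{\Sigma}$ with $p$ itself counted as a base point, excluding the finite $L$-definable sets $C\setminus W$ and $Base(\Sigma)$ from ${\mathcal V}_{p}\setminus\{p\}$ via the specialisation, then getting transversality from Lemmas 2.10 and 2.12) are precisely that expansion, matching the arguments already written out in Lemmas 2.13 and 2.15. The only blemishes are citation slips (the ``opening step'' you invoke is from Lemma 2.15, not 2.16, and the definition of $I_{italian}^{\Sigma}$ is Definition 2.6), which do not affect the mathematics.
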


\begin{proof}
Use the proof of Lemma 2.13, basic properties of infinitesimals
and the fact that $W$ is open in $C$ and definable over $L$.
\end{proof}

\begin{lemma}{Generic Intersections}\\

Fix a canonical set $W$ and let $\phi_{\lambda}$ be generic in
$\Sigma$, then each point of intersection of $C$ with
$\phi_{\lambda}$ outside $Base(\Sigma)$ lies inside the canonical
set $W$ and is transverse.

\end{lemma}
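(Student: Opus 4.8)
The plan is to split the statement into two assertions for a generic member $\phi_{\lambda}$ of $\Sigma$: first, that every point of $C\cap\phi_{\lambda}$ lying outside $Base(\Sigma)$ in fact lies in the fixed canonical set $W$; and second, that the intersection of $C$ with $\phi_{\lambda}$ is transverse, i.e. $I_{italian}(p,C,\phi_{\lambda})=1$, at each such point. Both will follow from the principle used repeatedly above, that a $\lambda$ generic over the field of definition of $C$, $\Sigma$ and $W$ avoids any proper Zariski-closed subset of $Par_{\Sigma}$ defined over that field.

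For the containment assertion, recall from the discussion preceding Lemma 2.13 that $W$ is open in $C$ and defined over $L$, so ${C\setminus W}$ is a finite set of points defined over $L$. Let $\{q_{1},\ldots,q_{s}\}$ enumerate those points of ${C\setminus W}$ not lying in $Base(\Sigma)$. For each such $q_{i}$, since $q_{i}\notin Base(\Sigma)$ some form of $\Sigma$ does not vanish at $q_{i}$, so $\{\lambda\in Par_{\Sigma}:\phi_{\lambda}(q_{i})=0\}$ is a proper hyperplane in $Par_{\Sigma}$. Their union is a proper closed subset defined over $L$, and a generic $\phi_{\lambda}$ therefore passes through no $q_{i}$; hence every point of $C\cap\phi_{\lambda}$ outside $Base(\Sigma)$ lies in $W$.

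For transversality, I would first reduce to the relative multiplicity: by Lemmas 2.10 and 2.12, for any $p\in({C\setminus Base(\Sigma)})\cap\phi_{\lambda}$ we have $I_{italian}(p,C,\phi_{\lambda})=I_{italian}^{\Sigma}(p,C,\phi_{\lambda})$, so it suffices to show the latter equals $1$ for generic $\lambda$ at every non-base point of the fibre. To this end I would form the incidence cover $\Gamma=\{(\lambda,p)\in U\times({C\setminus Base(\Sigma)}):p\in\phi_{\lambda}\}$ over the open set $U\subset Par_{\Sigma}$ of forms meeting $C$ finitely; by Theorem 2.3, restricted to the non-base part and using Lemma 2.14 to account for the fixed contribution of the base locus, this is a finite equidimensional cover of the smooth irreducible base $U$, and $I_{italian}^{\Sigma}(p,C,\phi_{\lambda})$ is precisely its fibre multiplicity $Mult_{(\lambda,p)}(\Gamma/U)$. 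Since we work in characteristic $0$, the cover is generically \'etale, by Theorem 6.10 of \cite{depiro1} as invoked in Lemma 2.10, so its ramification locus is closed and projects to a proper closed subset $R\subsetneq U$ defined over $L$. A generic $\lambda$ avoids $R$, whence every non-base point $p$ of the fibre satisfies $I_{italian}^{\Sigma}(p,C,\phi_{\lambda})=1$, so the intersection there is transverse.

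Finally I would combine the two steps: taking $\lambda$ generic over the field of definition of $C$, $\Sigma$ and $W$, it lies outside the union of the proper closed set of the second paragraph and the set $R$ of the third, which is again proper closed over $L$. Such a $\phi_{\lambda}$ then has all its non-base intersection points in $W$ and transverse, as required. The main obstacle is the transversality step, specifically verifying that the incidence cover really is an equidimensional finite cover to which the characteristic-$0$ \'etale machinery applies; one must check that restricting to non-base incidences and discarding the fixed base-point contributions via Lemma 2.14 does not destroy equidimensionality, so that generic unramifiedness genuinely forces multiplicity one at \emph{every} mobile point of the generic fibre.
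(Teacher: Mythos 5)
Your proof is correct, and the two halves fare differently against the paper. The containment half is exactly the paper's argument: $C\setminus W$ is finite and defined over the data of $\{C,W\}$, so the condition that $\phi_{\lambda}$ pass through one of its non-base points is a finite union of proper hyperplanes in $Par_{\Sigma}$, which a generic $\lambda$ avoids. For transversality you take a genuinely different route. The paper merely notes that transversality inside $W$ is a constructible condition $\theta(\lambda)$ on $Par_{\Sigma}$ and cites Lemmas 2.13 and 2.16 to see that $\theta$ is Zariski dense (every $\lambda$ has a generic infinitesimal neighbour satisfying it), hence holds at all generic parameters. You instead reduce $I_{italian}$ to $I_{italian}^{\Sigma}$ via Lemmas 2.10 and 2.12 and argue generic unramifiedness of the incidence cover $\Gamma\subset Par_{\Sigma}\times({C\setminus Base(\Sigma)})$. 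This works, and the equidimensionality worry you flag at the end dissolves on inspection: since $\Sigma$ is assumed to have finite intersection with $C$, the cover is finite over all of $Par_{\Sigma}$, and $\Gamma$ is irreducible of dimension $r$ because it fibres over the irreducible curve ${C\setminus Base(\Sigma)}$ with fibres the proper hyperplanes $H_{p}\subset Par_{\Sigma}$ (proper precisely because $p\notin Base(\Sigma)$); neither Theorem 2.3 nor Lemma 2.14 is actually needed for this. Note also that ${\mathcal V}_{p}\cap Base(\Sigma)=\emptyset$ for $p\notin Base(\Sigma)$, as base points lie in the standard model, so $I_{italian}^{\Sigma}(p,C,\phi_{\lambda})$ is indeed the Zariski multiplicity of $\Gamma/Par_{\Sigma}$ at $(\lambda,p)$, and any $p$ in the fibre over a generic $\lambda$ gives a generic point of $\Gamma$, forcing multiplicity one in characteristic $0$. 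Your route is more self-contained, bypassing the perturbation Lemmas 2.13 and 2.16 at the cost of redoing in one cover what those lemmas already package; the paper's route is shorter given the preceding material. Finally, the paper's $\theta(\lambda)$ also demands $NonSing(y)$, but your first half already delivers this since $W\subset NonSing(C)$ by Lemma 1.31.
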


\begin{proof}

The finitely many points of $({C\setminus W})$ are defined over
the data of $\{W,C\}$. Hence, the condition on $Par_{\Sigma}$ that
$\phi_{\lambda}$ intersects a point of $({C\setminus W})$ outside
$Base(\Sigma)$ consists of a finite union of proper hyperplanes
defined over the data of $\{W,C\}$. Therefore, for generic
$\phi_{\lambda}$, each point of intersection of $\phi_{\lambda}$
with $C$, outside $Base(\Sigma)$, lies inside $W$. Now observe
that the condition of transversality between $C$ and
$\phi_{\lambda}$, inside $W$, defines a constructible
condition on $Par_{\Sigma}$, over the data of $\{C,W\}$. Namely;\\

$\theta(\lambda)\equiv\forall y[(y\in\phi_{\lambda}\cap
W)\rightarrow NonSing(y)\wedge
RightMult_{y}(C,\phi_{\lambda})=1]$\\

By Lemmas 2.13 and 2.16, the condition is Zariski dense in
$Par_{\Sigma}$. Hence, the result
follows.\\

\end{proof}

We now make the following definitions;\\

\begin{defn}

Let $\Sigma$ be a linear system defining a $g_{n}^{r}(\Sigma)$ on
a projective algebraic curve $C\subset P^{w}$. Let
$\{W_{\lambda}\}=Series(\Sigma)$. If $p\in W_{\lambda}$, we say that;\\

$p$ is $s$-fold ($s$-plo in Italian) for the $g_{n}^{r}(\Sigma)$ if $I_{italian}(p,C,\phi_{\lambda})\geq s$.\\

$p$ is counted (contato) $s$-times for the $g_{n}^{r}(\Sigma)$ if
$p$ has multiplicity $s$ in $W_{\lambda}$, equivalently
$I_{italian}(p,C,\phi_{\lambda})=s$.\\

$p$ has $s$-fold contact (contatto) with $\phi_{\lambda}$ if
$I_{italian}^{\Sigma}(p,C,\phi_{\lambda})=s$.\\

\end{defn}

\begin{rmk}
The Italian terminology is generally quite confusing. The above
lemmas show that the discrepancy between contatto and contato
occurs only at fixed points of the system $\Sigma$. The philosophy
behind their approach is that algebraic calculations may be
reduced to visual arguments using the ideas that a $s$-fold
contact at $p$ is a limit of $s$-points converging along the curve
from intersections with forms in the system $\Sigma$ and that
these points are preserved by birationality. In the case of a
fixed point, $p$ may be counted \emph{more} times than its actual
contact with $\phi_{\lambda}$ and this excess intersection is
never actually visually manifested by a variation. The Italian
approach is to ignore this excess intersection. This motivates the
following definition;
\end{rmk}

\begin{defn}
Let a $g_{n}^{r}(\Sigma)$ be given on $C$. For $p\in
C\cap\phi_{\lambda}$, we define;\\

$I_{italian}^{mobile}(p,C,\phi_{\lambda})=Card(C\cap\phi_{\lambda'}\cap{\{\mathcal
V_{p}\setminus p}\})$, $\lambda'\in {\mathcal V}_{\lambda}$
generic in $Par_{\Sigma}$.\\

\end{defn}

\begin{rmk}
One needs to check, as usual, that this is a good definition. This
follows, for example, by Lemma 2.14, Remarks 2.7 and Lemma 2.15.
\end{rmk}

\begin{lemma}

For $p\notin Base(\Sigma)$,
$I_{italian}^{mobile}(p,C,\phi_{\lambda})=I_{italian}(p,C,\phi_{\lambda})$\\

For $p\in Base(\Sigma)$, we have that;\\

$I_{p}+I_{italian}^{mobile}(p,C,\phi_{\lambda})=I_{italian}(p,C,\phi_{\lambda})$\\

\end{lemma}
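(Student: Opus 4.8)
The plan is to derive both identities by comparing the mobile count of Definition 2.20 with the two multiplicities $I_{italian}^{\Sigma}$ and $I_{italian}$ already analysed, the only genuinely new ingredient being a careful accounting of whether the centre $p$ itself belongs to the generic infinitesimal translate $\phi_{\lambda'}$ used in the count.

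First I would treat the case $p\notin Base(\Sigma)$. Since $p$ lies off the base locus, the set of parameters $\mu\in Par_{\Sigma}$ with $p\in\phi_{\mu}$ is a proper closed (in fact hyperplane) subset of $Par_{\Sigma}$, defined over the data of $p$ and $\Sigma$. Although the original $\lambda$ lies in this set, a parameter $\lambda'\in{\mathcal V}_{\lambda}$ chosen generic in $Par_{\Sigma}$ avoids any such proper closed set defined over those parameters; hence for generic $\lambda'$ we have $p\notin\phi_{\lambda'}$ and therefore $p\notin C\cap\phi_{\lambda'}\cap{\mathcal V}_{p}$. Consequently deleting $p$ from the set counted in Definition 2.20 changes nothing, so $I_{italian}^{mobile}(p,C,\phi_{\lambda})=I_{italian}^{\Sigma}(p,C,\phi_{\lambda})$. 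Combining this with Lemma 2.10 (when $p$ is non-singular) and Lemma 2.12 (when $p$ is singular), which together give $I_{italian}^{\Sigma}(p,C,\phi_{\lambda})=I_{italian}(p,C,\phi_{\lambda})$ for every $p\notin Base(\Sigma)$, yields the first identity.

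Next I would treat $p\in Base(\Sigma)$, where the situation is reversed. By definition of the base locus every form $\phi_{\mu}$ of the system passes through $p$, so for every generic $\lambda'\in{\mathcal V}_{\lambda}$ one has $p\in C\cap\phi_{\lambda'}\cap{\mathcal V}_{p}$. Thus $p$ contributes exactly one to the count defining $I_{italian}^{\Sigma}$ and is precisely the point removed in the mobile count, giving $I_{italian}^{\Sigma}(p,C,\phi_{\lambda})=I_{italian}^{mobile}(p,C,\phi_{\lambda})+1$. Substituting this into Lemma 2.15, which asserts $I_{italian}(p,C,\phi_{\lambda})=I_{p}+I_{italian}^{\Sigma}(p,C,\phi_{\lambda})-1$, the two corrective terms cancel and I obtain $I_{p}+I_{italian}^{mobile}(p,C,\phi_{\lambda})=I_{italian}(p,C,\phi_{\lambda})$, as required.

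The argument is almost entirely bookkeeping assembled from earlier results, so no single step is genuinely difficult. The one point deserving care — which I would flag as the main subtlety — is the genericity claim in the first case: one must confirm that a generic infinitesimal variation $\lambda'$ of $\lambda$ within $Par_{\Sigma}$ really does escape the hyperplane $\{\mu:p\in\phi_{\mu}\}$ despite $\lambda$ itself lying on it. This is exactly the phenomenon already exploited in Lemma 2.13 and rests on $p$ being a mobile (non-base) point together with the smoothness and irreducibility of $Par_{\Sigma}$; once this is granted, both identities follow immediately.
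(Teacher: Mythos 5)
Your proposal is correct and follows essentially the route the paper intends: the paper's proof is just a citation of the surrounding results (Lemmas 2.10, 2.12, 2.14, 2.15 and the genericity argument of Lemma 2.13), and you have simply written out the bookkeeping — noting that a generic infinitesimal translate $\phi_{\lambda'}$ misses $p$ exactly when $p\notin Base(\Sigma)$ and contains $p$ exactly when $p\in Base(\Sigma)$, so that $I_{italian}^{mobile}$ equals $I_{italian}^{\Sigma}$ or $I_{italian}^{\Sigma}-1$ accordingly, after which Lemmas 2.10/2.12 and Lemma 2.15 finish the two cases. No gaps.
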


\begin{proof}
The proof follows immediately from the same results cited in the
previous remark.
\end{proof}

We now make the following definition;\\

\begin{defn}
By a $g_{n}^{r}$, we mean the series obtained from a given
$g_{n'}^{r}(\Sigma)$ by removing some (possibly all) of the fixed
point contributions $I_{x}$ in $Base(\Sigma)$. That is, we
subtract some part of $I_{x}$ from each weighted set
$W_{\lambda}$. We define $n$ to be the total multiplicity of each
$W_{\lambda}$ after subtracting some of the fixed point
contribution, so $n\leq n'$. We say that the $g_{n}^{r}$ has no
fixed points if all the fixed point contributions are removed. We
refine the Italian terminology for a $g_{n}^{r}$ by saying that
$x$ is $s$-fold for $W_{\lambda}$ if it appears in the weighted
set with multiplicity at least $s$ and $x$ is counted $s$-times
for $W_{\lambda}$ if it appears with multiplicity exactly $s$. We
define $Base(g_{n}^{r})$ to be $\{x\in C:\forall\lambda(x\in
W_{\lambda})\}$.

\end{defn}
We now have the following;\\

\begin{lemma}
For a given $g_{n}^{r}$, we always have that $r\leq n$.
\end{lemma}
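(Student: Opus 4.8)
The plan is to establish the bound by ruling out $r \geq n+1$ through an imposed-conditions argument. Recall from Definition 1.14 that $Par_{\Sigma}$ is a projective space of dimension $r$, equipped with a basis $\{\phi_{0},\ldots,\phi_{r}\}$, so that a general member of $\Sigma$ is $\phi_{\lambda}=\sum_{i=0}^{r}\lambda_{i}\phi_{i}$ with $[\lambda_{0}:\cdots:\lambda_{r}]\in Par_{\Sigma}\cong P^{r}$. Since by standing assumption $\Sigma$ has finite intersection with $C$, no member $\phi_{\lambda}$ contains $C$; in particular $C\not\subseteq Base(\Sigma)$, so $C\cap Base(\Sigma)$ is a proper closed, hence finite, subset of the curve $C$. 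The first step is to record the linear condition cut out on $Par_{\Sigma}$ by passing through a point: for $p\in C$, the locus $\{\lambda:\phi_{\lambda}(p)=0\}$ is $\{\lambda:\sum_{i}\lambda_{i}\phi_{i}(p)=0\}$, which is a genuine hyperplane in $P^{r}$ whenever $p\notin Base(\Sigma)$, since in that case some $\phi_{i}(p)\neq 0$ and the condition is nontrivial.

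Next, suppose for contradiction that $r\geq n+1$. Because $C\cap Base(\Sigma)$ is finite, I can choose $n+1$ distinct points $p_{1},\ldots,p_{n+1}$ in $C\setminus Base(\Sigma)$. Each imposes one hyperplane condition on $Par_{\Sigma}\cong P^{r}$, and the intersection of $n+1\leq r$ hyperplanes in $P^{r}$ has dimension at least $r-(n+1)\geq 0$, hence is nonempty. Choosing $\lambda$ in this common intersection yields a nonzero member $\phi_{\lambda}\in\Sigma$ passing through all of $p_{1},\ldots,p_{n+1}$, and by the finite-intersection hypothesis $\phi_{\lambda}$ meets $C$ in finitely many points. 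Thus $p_{1},\ldots,p_{n+1}$ are $n+1$ distinct points of $C\cap\phi_{\lambda}$, none lying in $Base(\Sigma)$, so $I_{italian}(p_{j},C,\phi_{\lambda})\geq 1$ for each $j$.

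It remains to derive a contradiction with the order $n$. By the Hyperspatial Bezout Theorem (Theorem 2.3), the total $I_{italian}$-weight of $C\cap\phi_{\lambda}$ is the constant $de$, and by Definition 2.22 the order $n$ of the $g_{n}^{r}$ is obtained from this total by subtracting only (part of) the fixed-point contributions $I_{x}$ at base points $x\in Base(\Sigma)$. Consequently the weight at non-base points is retained in full, while base contributions are only diminished, so for the member $\phi_{\lambda}$ constructed above the weight of $W_{\lambda}$ in the $g_{n}^{r}$ satisfies $n\geq\sum_{p\in C\cap\phi_{\lambda},\,p\notin Base(\Sigma)}I_{italian}(p,C,\phi_{\lambda})$; this is precisely the content of Lemma 2.21, that $I_{italian}^{mobile}=I_{italian}$ off the base locus. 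Since the $n+1$ chosen points each contribute at least $1$ to the right-hand side, this gives $n\geq n+1$, a contradiction, and therefore $r\leq n$. The main obstacle is the bookkeeping in this last paragraph: one must argue carefully, from Definition 2.22 together with Lemma 2.14 (which guarantees $I_{italian}(x,\phi_{\lambda})\geq I_{x}$, so no removed part exceeds the actual multiplicity) and Lemma 2.21, that $n$ genuinely bounds the number of distinct mobile intersection points of \emph{any} member of $\Sigma$, no part of a non-base intersection being subtractible as a fixed-point contribution. Everything else---the hyperplane condition, the dimension count in $P^{r}$, and the finiteness of $C\cap Base(\Sigma)$---is routine linear algebra and the standing finite-intersection hypothesis.
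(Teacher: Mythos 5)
Your argument is correct and is essentially the paper's own proof run in contrapositive form: both impose point conditions as hyperplanes on $Par_{\Sigma}\cong P^{r}$, produce a form of the system through enough non-base points of $C$, and use the fact that intersections off $Base(\Sigma)$ contribute fully to $n$ (Lemmas 2.14 and 2.21). The only difference is cosmetic but mildly cleaner: by choosing $n+1$ arbitrary distinct non-base points and needing only nonemptiness of the intersection of hyperplanes, you avoid the paper's use of independent generic points and its inductive check that each successive point cuts the dimension by exactly one.
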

\begin{proof}
Let the $g_{n}^{r}$ be defined by $\Sigma$, a linear system of
dimension $r$, having finite intersection with $C$. Pick
$\{p_{1},\ldots,p_{r}\}$ independent generic points of $C$, not
contained in $Base(\Sigma)$. The condition that $\phi_{\lambda}$
passes through $p_{j}$ defines a proper hyperplane condition
$H_{p_{j}}$ on $Par_{\Sigma}$. The base points of the subsystem
defined by $H_{p_{1}}\cap\ldots\cap H_{p_{j}}$ are defined over
${p_{1},\ldots,p_{j}}$ and finite, for $1\leq j\leq r-1$, as, by
assumption, no form in $\Sigma$ contains $C$. We must, therefore,
have that $dim(H_{p_{1}}\cap\ldots \cap H_{p_{r}})=0$. That is
there exists a unique $\phi_{\lambda}$ in $\Sigma$ passing through
$\{p_{1},\ldots,p_{r}\}$. We must have that the total intersection
multiplicity of $C$ with $\phi_{\lambda}$ outside the fixed
contribution is at least $r$, by construction, hence $r\leq n$ as
required.

\end{proof}

\begin{defn}{Subordinate Systems}\\

We say that\\

 $g_{n_{1}}^{r_{1}}\subseteq g_{n_{2}}^{r_{2}}$\\

if there exist linear systems $\Sigma'\subseteq\Sigma$ (having
finite intersection with $C$) of dimension $r_{1}$ and dimension
$r_{2}$ respectively such that $g_{n_{1}}^{r_{1}}$ is obtained
from $\Sigma'$, $g_{n_{2}}^{r_{2}}$ is obtained from $\Sigma$ and,
for each $\lambda\in Par_{\Sigma'}$, $W_{\lambda}\subseteq
V_{\lambda}$. Here, by $\{W_{\lambda},V_{\lambda}\}$, we mean the
weighted sets parametrised by
$\{g_{n_{1}}^{r_{1}},g_{n_{2}}^{r_{2}}\}$ and, by
$W_{\lambda}\subseteq V_{\lambda}$, we mean that $n_{p_{i}}\leq
m_{p_{i}}$ where $W_{\lambda}=\{n_{p_{1}},\ldots,n_{p_{r}}\}$ and
$V_{\lambda}=\{m_{p_{1}},\ldots,m_{p_{r}}\}$.
\end{defn}
\begin{rmk}
Note that the relationship of subordination is clearly transitive.
That is, if $g_{n_{1}}^{r_{1}}\subseteq g_{n_{2}}^{r_{2}}\subseteq
g_{n_{3}}^{r_{3}}$, then $g_{n_{1}}^{r_{1}}\subseteq
g_{n_{3}}^{r_{3}}$.
\end{rmk}
\begin{defn}{Composite Systems}\\

We say that a $g_{n}^{r}$, defined by $\Sigma$, is composite, if,
for generic $p\in C$, every weighted set $W_{\lambda}$ containing
$p$ also contains a distinct $p'(p)$ with $p'\notin Base(\Sigma)$.
\end{defn}

\begin{rmk}
Note that the definition of composite is well defined, for, if the
given $g_{n}^{r}$ is defined by $\Sigma$, the statement;\\

$\theta(x)\equiv \exists x'\notin Base(\Sigma)\forall\lambda\in
Par_{\Sigma}(x\in C\cap\phi_{\lambda}\rightarrow x'\in
C\cap\phi_{\lambda})$\\

defines a constructible subset of $C$. Hence, if it holds for some
generic $p$, it holds for \emph{any} generic $p$ in $C$. In modern
terminology, we would say that the given $g_{n}^{r}$ seperates
points (generically), see Proposition 7.3 of \cite{H}.
\end{rmk}
\begin{defn}{Simple Systems}\\

We say that a $g_{n}^{r}$ is simple if it is not composite.
\end{defn}

The importance of simple $g_{n}^{r}$ is due to the following;\\

\begin{lemma}{Construction of a Birational Model}\\

A simple $g_{n}^{r}$ on $C$ defines a projective image $C'\subset
P^{r}$, birational to $C$.
\end{lemma}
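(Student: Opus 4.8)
The plan is to realise the simple $g_n^r$ as the image of the explicit rational map attached to $\Sigma$, and then to read the hypothesis of simplicity directly off the fibres of that map. Fix a basis $B=\{\phi_0,\ldots,\phi_r\}$ for the linear system $\Sigma$ of dimension $r$ defining the $g_n^r$. By Lemma 1.16 this determines a morphism $\Phi_\Sigma:P^{w}\setminus Base(\Sigma)\to P^{r}$, $x\mapsto[\phi_0(x):\cdots:\phi_r(x)]$, and I set $C'=\overline{\Phi_\Sigma(C\setminus Base(\Sigma))}\subset P^{r}$, the Zariski closure of the image of the curve. Since $C$ is irreducible and $\Phi_\Sigma$ restricts to a morphism on the dense open set $C\setminus Base(\Sigma)$ (dense because $\Sigma$ has finite intersection with $C$, so $C\cap Base(\Sigma)$ is finite), $C'$ is an irreducible projective variety of dimension at most $1$; it is well defined up to a homography of $P^{r}$ by the second clause of Lemma 1.16. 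I would next check $\dim C'=1$, i.e. that $\Phi_\Sigma$ is non-constant on $C$: if it were constant, then every $\phi_\lambda$ vanishing at one generic point of $C$ would vanish at all of them, so the predicate $\theta$ defining a composite system would hold trivially, contradicting simplicity. Hence $C'$ is a genuine projective algebraic curve.

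The crux is to convert simplicity into generic injectivity of $\Phi_\Sigma|_C$, and the key tool is the identification recorded in Remark 1.18: for $x\notin Base(\Sigma)$ the forms of $\Sigma$ vanishing at $x$ cut out a hyperplane $H_x\subset\Sigma$, and in the chosen basis $\Phi_\Sigma(x)$ is exactly the point of $P^{r}$ dual to $H_x$. Consequently, for $p,p'\in C\setminus Base(\Sigma)$ one has $\Phi_\Sigma(p)=\Phi_\Sigma(p')$ if and only if $H_p=H_{p'}$, that is, if and only if $\phi_\lambda$ passes through $p$ exactly when it passes through $p'$, for every $\lambda\in Par_{\Sigma}$. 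Read for a generic $p$ together with a distinct $p'\notin Base(\Sigma)$, this last statement is precisely the predicate $\theta(p)$ defining a composite $g_n^r$; note that the one-sided inclusion $H_p\subseteq H_{p'}$ appearing in $\theta$ already forces $H_p=H_{p'}$, both being hyperplanes, so the relation is symmetric. Thus, were $\Phi_\Sigma|_C$ not generically injective, the system would be composite, contrary to hypothesis. Since for generic $p\in C$ we may assume $p\notin Base(\Sigma)$ and $p$ nonsingular (both cofinite conditions on $C$), the map $\Phi_\Sigma$ is defined there and the fibre $\Phi_\Sigma^{-1}(\Phi_\Sigma(p))\cap C$ equals $\{p\}$.

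To conclude, I would invoke the equivalent criteria for birationality established earlier (criterion (ii), the $dcl_{Rational}$ condition, valid in characteristic $0$). Taking $a_1$ generic in $C$ and $a_2=\Phi_\Sigma(a_1)$, which is then generic in $C'$, the relation $a_2\in dcl(a_1)$ is immediate since $a_2$ is a morphic image of $a_1$, while the generic injectivity just established shows $a_1$ is the unique point of $C$ lying over $a_2$, giving $a_1\in dcl(a_2)$; this yields a birational map $\Phi_\Sigma:C\leftrightsquigarrow C'$ (cf. Definition 1.19). I expect the main obstacle to lie in the bookkeeping of the second paragraph: making the dictionary between the fibres of $\Phi_\Sigma$ and the combinatorial predicate $\theta$ fully precise, in particular verifying that the hyperplane inclusion in $\theta$ upgrades to equality and that all the relevant genericity—avoiding $Base(\Sigma)$ and the singular locus, so that $\Phi_\Sigma$ is genuinely a morphism and the canonical sets of the generic–intersection analysis apply—can be arranged simultaneously for the same generic $p$.
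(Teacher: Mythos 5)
Your proposal is correct and follows essentially the same route as the paper: both take $C'$ to be the closure of $\Phi_{\Sigma}(C\setminus Base(\Sigma))$, rule out the degenerate case where the image is a point, and then use the identification of $\Phi_{\Sigma}(x)$ with the hyperplane $H_{x}\subset\Sigma$ of forms through $x$ to show that a generic fibre with two points would make the system composite, concluding by the characteristic-$0$ criterion for birationality. The only cosmetic differences are that the paper excludes the constant case via proportionality of two basis forms rather than via simplicity, and phrases the injectivity step as a contrapositive.
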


\begin{proof}
Let the $g_{n}^{r}$ be defined by a linear system $\Sigma$, with a
choice of basis $\{\phi_{0},\ldots,\phi_{r}\}$, (having finite
intersection with $C$), possibly after removing some fixed point
contribution. Let $\Phi_{\Sigma}$ be the morphism defined as in
Lemma 1.16. This morphism is defined on an open subset
$U={C\setminus Base(\Sigma)}$ of $C$. By continuity, the image of
$\Phi_{\Sigma}$ on $U$ is irreducible, hence either defines a
constructible $V\subset P^{r}$ of dimension $1$ or the image is a
point. We can clearly exclude the second case, otherwise we can
find $\{\phi_{\lambda},\phi_{\mu}\}$ differing by a constant of
proportionality $\rho$ on $U$, therefore $\phi_{\lambda}-\rho
\phi_{\mu}$ contains $C$. Let $C'=\overline{V}$, then $C'$ is an
irreducible projective algebraic curve. We claim that $C'$ is
birational to $C$. If not, then, using Lemma 1.21 or just the
definition of birationality, in characteristic $0$, for generic
$x\in C$, we can find a distinct $x'\in U$ such that
$\Phi_{\Sigma}(x)=\Phi_{\Sigma}(x')$. The choice of basis for
$\Sigma$ determines an isomorphism of $Par_{\Sigma}$ with $P^{r}$.
Using the parametrisation of $Par_{\Sigma}$ given by this
isomorphism, if $\phi_{\lambda}$ passes through $x$, the
corresponding hyperplane $H_{\lambda}\subset P^{r}$ would pass
through $\Phi_{\Sigma}(x)$ and, therefore, $\phi_{\lambda}$ would
pass through $x'$ as well. This contradicts simplicity. The lemma
may, of course, fail in non-zero characteristic. We refer the
reader to the final section for the problems associated to
Frobenius.
\end{proof}

We also have the following transfer results;\\

\begin{lemma}
Let a simple $g_{n}^{r}$ on $C$ be given, as in the previous
lemma, defining a birational projective image $C'\subset P^{r}$.
Let $V_{\phi_{\Sigma}}$ and $W_{\phi_{\Sigma}}$ denote the
canonical sets associated to this birational map. Then, given a
$g_{m}^{d}$ on $C'$, without fixed points, there exists a
corresponding $g_{m}^{d}$ on $C$, without fixed points, and, for
any corresponding pair $\{x,x'\}$ in
$\{V_{\phi_{\Sigma}},W_{\phi_{\Sigma}}\}$, $x$ is counted $s$
times in $V_{\lambda}$ iff $x'$ is counted $s$ times in
$W_{\lambda}$, where $\{V_{\lambda},W_{\lambda}\}$ are the
weighted sets parametrised by the $g_{m}^{d}$, on $\{C,C'\}$
respectively.
\end{lemma}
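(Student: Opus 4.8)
We have a simple $g_n^r$ on $C$ giving a birational map $\phi_\Sigma:C\leftrightsquigarrow C'$ with canonical sets $V_{\phi_\Sigma}\subset C$ and $W_{\phi_\Sigma}\subset C'$, on which $\phi_\Sigma$ restricts to an isomorphism. Given a $g_m^d$ on $C'$ without fixed points, we must produce a corresponding $g_m^d$ on $C$ without fixed points, preserving the counting multiplicities $s$ at corresponding points.

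Let me think about how to prove this.

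The $g_m^d$ on $C'$ is defined by a linear system $\Sigma'$ of forms $\phi_\mu$ on the ambient $P^r$ containing $C'$. The natural idea: pull back each form $\phi_\mu$ via the birational map to get a form (or linear system of forms) on the ambient space of $C$. The map $\phi_\Sigma$ is given by a basis $\{\phi_0,\dots,\phi_r\}$ of forms of degree $k$ on $C\subset P^w$. A hyperplane-type form $\phi_\mu = \sum \mu_i Y_i$ on $P^r$ pulls back to $\sum_i \mu_i \phi_i$ on $P^w$; more generally a form of degree $e$ pulls back to a form of degree $ke$. So we get a map $\Sigma' \to \tilde\Sigma$ on $P^w$, and I would set $g_m^d$ on $C$ to be the system defined by $\tilde\Sigma$, possibly after removing fixed-point contributions so that it genuinely has no fixed points.

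The heart of the matter is multiplicity preservation. On the isomorphism $\phi_\Sigma:V_{\phi_\Sigma}\xrightarrow{\sim}W_{\phi_\Sigma}$, intersections are preserved: for corresponding $x\leftrightarrow x'$, I want $I_{italian}(x,C,\tilde\phi_\mu)=I_{italian}(x',C',\phi_\mu)$. This is exactly the kind of transfer carried out in the proof of Theorem 2.3, where $(**)$ established such an inequality by moving distinct infinitesimal intersection points across a birational correspondence using that the correspondence is closed and that specialisation preserves distinctness. The plan is to run this argument in both directions. The dimension count $d$ is forced because a basis of $\Sigma'$ pulls back to an independent system (independence survives because $\phi_\Sigma$ is dominant, so a nontrivial relation among pullbacks would descend to a relation on $C'$), giving $\dim\tilde\Sigma=d$; and the total order $m$ is preserved by Theorem 2.3 (Hyperspatial Bezout) once fixed contributions are stripped, since the genuine, mobile intersection count is carried faithfully across the isomorphism on the canonical sets.

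The main obstacle, as always in this circle of ideas, is \emph{base points} and points lying outside the canonical sets. The pullback system $\tilde\Sigma$ may acquire spurious fixed-point contributions at $\mathrm{Base}(\Sigma)$ and at the finitely many points of $C\setminus V_{\phi_\Sigma}$ where the birational map is not an isomorphism; these are precisely the places where $I_{italian}$ and $I_{italian}^{mobile}$ diverge, as governed by Lemma 2.15 and Lemma 2.22. So the careful step is to check that, after removing fixed contributions (using that $g_m^d$ on $C'$ has none), every genuine mobile intersection point of $C$ with a generic $\tilde\phi_\mu$ lands in $V_{\phi_\Sigma}$ and corresponds to a mobile intersection on $C'$ inside $W_{\phi_\Sigma}$. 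For this I would invoke Lemma 2.18 (Generic Intersections), which guarantees that for generic $\mu$ all mobile intersections lie in the canonical set and are transverse; the counting multiplicity $s$ at a corresponding pair $\{x,x'\}$ is then read off from distinct infinitesimal witnesses that the isomorphism matches bijectively. Thus the equality ``$x$ counted $s$ times iff $x'$ counted $s$ times'' reduces to the bijection of infinitesimal intersection points under the local isomorphism $\phi_\Sigma$ on ${\mathcal V}_x\leftrightarrow{\mathcal V}_{x'}$, exactly as in the specialisation arguments already used above.
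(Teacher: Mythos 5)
Your proposal is correct and follows essentially the same route as the paper: pull the system back through the basis $\{\phi_{0},\ldots,\phi_{r}\}$ (the paper's $\theta_{\lambda}=\sum_{i}\lambda_{i}\psi_{i}(\phi_{0},\ldots,\phi_{r})$), strip the fixed-point contribution, transfer multiplicities by matching distinct infinitesimal intersection witnesses across the isomorphism on the canonical sets in both directions, and recover $m=m'$ from the fact that a generic weighted set consists of points each counted once inside the canonical set (the paper cites Lemmas 2.16 and 2.17 for this last step; your "Generic Intersections" reference is Lemma 2.17, not 2.18). No substantive difference.
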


\begin{proof}
Let the $g_{m}^{d}$ on $C'$ be defined by a linear system
$\Sigma'$ of dimension $d$, with basis
$\{\psi_{0},\ldots,\psi_{d}\}$, after removing all the fixed point
contribution. We then obtain a corresponding linear system $\Sigma''$ defined by;\\

$\{\theta_{\lambda}=\lambda_{0}\psi_{0}(\phi_{0},\ldots,\phi_{r})+\ldots+\lambda_{d}\psi_{d}(\phi_{0},\ldots,\phi_{r})=0\}$\\

Clearly, $\Sigma''$ has finite intersection with $C$, hence it
defines a $g_{m'}^{d}$ after removing all fixed point
contributions. Now, let $\{x,x'\}$ be a corresponding pair in
$\{V_{\phi_{\Sigma}},W_{\phi_{\Sigma}}\}$. Suppose that $x'$ is
counted $s$ times in $W_{\lambda}$, then, as the $g_{m}^{d}$ on
$C'$ has no fixed points, we have that;\\

$I_{italian}^{mobile}(x',C',\psi_{\lambda})=s$\\

Therefore, we can find $\lambda'\in{\mathcal V}_{\lambda}$ generic
in $Par_{\Sigma'}$ and\\

$\{x_{1}',\ldots,x_{s}'\}=C'\cap\psi_{\lambda'}\cap{\mathcal
V}_{x'}\setminus \{x'\}$\\

By properties of infinitesimals, $\{x_{1}',\ldots,x_{s}'\}\subset
W_{\phi_{\Sigma}}$, hence, we can find corresponding
$\{x_{1},\ldots,x_{s}\}\subset V_{\phi_{\Sigma}}\cap
\theta_{\lambda'}\cap{{\mathcal V}_{x}\setminus \{x\}}$. It
follows that $I_{italian}^{mobile}(x,C,\theta_{\lambda})\geq s$,
and equality follows from the converse argument. Therefore, as the
corresponding $g_{m'}^{d}$ has no fixed points, $x$ is counted $s$
times in $V_{\lambda}$. The converse uses the same argument. It
remains to show that $m=m'$. By Lemmas 2.16 and 2.17 and the fact
that the $g_{m'}^{d}$ has no fixed points, a generic $V_{\lambda}$
consists of $m'$ points each counted once inside
$V_{\phi_{\Sigma}}$. Therefore, by the above argument, these
points are each counted once inside the corresponding
$W_{\lambda}$. Hence, $m'\leq m$. We obtain $m\leq m'$ by the
reversal of this argument.

\end{proof}

\begin{lemma}
Let a simple $g_{n}^{r}$ on $C$ be given, defining a birational
map $\Phi_{\Sigma}:C\leftrightsquigarrow C'\subset P^{r}$. Then,
if $d$ is the degree of $C'$, we have that $d'=d+I$ for the
$g_{d'}^{r}(\Sigma)$ defined by $\Sigma$, where $I$ is the total
fixed point contribution from $Base(\Sigma)$.

\end{lemma}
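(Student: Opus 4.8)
The plan is to compute the order $d'$ of the full series $g_{d'}^{r}(\Sigma)$ as the total weight of a single weighted set $W_{\lambda}$ and to split that weight into a fixed and a mobile part. By Definition 2.5 the order $d'$ is the total weight of \emph{any} $W_{\lambda}$, and this weight is independent of $\lambda$ by Theorem 2.3; I may therefore fix $\lambda\in Par_{\Sigma}$ generic over the algebraic closure of all the data of $\{C,\Sigma,V_{\phi_{\Sigma}},W_{\phi_{\Sigma}}\}$ and write
\[
d'=\sum_{p\in C\cap\phi_{\lambda}}I_{italian}(p,C,\phi_{\lambda}).
\]
Applying Lemma 2.22 termwise --- it gives $I_{italian}=I_{italian}^{mobile}$ off $Base(\Sigma)$ and $I_{italian}=I_{p}+I_{italian}^{mobile}$ on it --- this sum splits as
\[
d'=\sum_{p\in C\cap Base(\Sigma)}I_{p}+\sum_{p\in C\cap\phi_{\lambda}}I_{italian}^{mobile}(p,C,\phi_{\lambda})=I+\sum_{p}I_{italian}^{mobile}(p,C,\phi_{\lambda}),
\]
where the first summand is precisely the total fixed point contribution $I$. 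It then remains to identify the mobile sum with $d$.

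For the mobile sum I would first argue that it counts only transverse, non-base intersections lying in the canonical set. By Lemma 2.14, for our generic $\lambda$ each base point satisfies $I_{italian}(p,C,\phi_{\lambda})=I_{p}$, so $I_{italian}^{mobile}(p,C,\phi_{\lambda})=0$ there and the base points drop out of the mobile sum. By Lemma 2.17 (Generic Intersections), every intersection point of $C$ with $\phi_{\lambda}$ lying outside $Base(\Sigma)$ falls inside the canonical set $W=V_{\phi_{\Sigma}}$ and is transverse, so $I_{italian}^{mobile}(p,C,\phi_{\lambda})=1$ at each such point. Hence the mobile sum equals the number $N$ of non-base intersection points of $C\cap\phi_{\lambda}$, all of which lie in $V_{\phi_{\Sigma}}$.

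The crux is the dictionary between these $N$ points and the hyperplane section $C'\cap H_{\lambda}$. With the chosen basis $\{\phi_{0},\ldots,\phi_{r}\}$, the form $\phi_{\lambda}=\sum_{i}\lambda_{i}\phi_{i}$ corresponds to the hyperplane $H_{\lambda}=\{\sum_{i}\lambda_{i}Y_{i}=0\}$ of $P^{r}$ as in Remark 1.18, and for $p\notin Base(\Sigma)$ one has $\phi_{\lambda}(p)=0$ iff $\Phi_{\Sigma}(p)\in H_{\lambda}$. Since $\Phi_{\Sigma}$ restricts to an isomorphism $V_{\phi_{\Sigma}}\cong W_{\phi_{\Sigma}}$ by Lemma 1.31, the $N$ non-base intersection points in $V_{\phi_{\Sigma}}$ are carried bijectively onto the points of $C'\cap H_{\lambda}$ lying in $W_{\phi_{\Sigma}}$. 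The basis induces a homography $Par_{\Sigma}\cong (P^{r})^{*}$, so a generic $\lambda$ yields a generic hyperplane $H_{\lambda}$; by Definition 1.12 together with Lemma 2.4 applied to $C'$, the section $C'\cap H_{\lambda}$ consists of exactly $d$ transverse points. Finally, since $C'\setminus W_{\phi_{\Sigma}}$ is a finite set defined over the data, the proper closed conditions that $H_{\lambda}$ meet any of its points are avoided for generic $\lambda$, so all $d$ points of $C'\cap H_{\lambda}$ lie in $W_{\phi_{\Sigma}}$. Thus $N=d$, and $d'=d+I$ as claimed.

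The step I expect to demand the most care is ensuring the genericity hypotheses are mutually compatible: Lemma 2.17 requires $\lambda$ generic so that the mobile intersections are transverse and inside $V_{\phi_{\Sigma}}$, while realising $H_{\lambda}$ as a \emph{generic} hyperplane for the degree of $C'$ and forcing $C'\cap H_{\lambda}\subset W_{\phi_{\Sigma}}$ imposes further proper closed conditions. This is handled by choosing $\lambda$ generic over the algebraic closure of all the relevant parameters simultaneously, after which each of the finitely many open dense conditions holds at once.
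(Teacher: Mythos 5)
Your proof is correct and follows essentially the same route as the paper's: both rest on Lemma 2.4 for the $d$ transverse points of a generic hyperplane section of $C'$, the isomorphism $V_{\phi_{\Sigma}}\cong W_{\phi_{\Sigma}}$ to transfer these to non-base intersections of $C$ with $\phi_{\lambda}$, Lemmas 2.14 and 2.17 to account for the fixed contribution $I$ and the transversality of the mobile intersections, and Theorem 2.3 to identify the total with $d'$. The only difference is organisational --- you decompose a generic weighted set on the $C$ side into fixed and mobile parts and then match the mobile part with $d$, whereas the paper starts from the hyperplane section of $C'$ and counts upward to $d+I=kv=d'$; the content is the same.
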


\begin{proof}

Let $k$ be the degree of $\phi_{\Sigma}$ and $v$ the degree of
$C$,
then we claim that;\\

$d+I=kv$\\

By Lemma 2.4, if $H_{\lambda}$ is a generic hyperplane, it cuts
$C'$ transversely in $d$ distinct points. We may also assume that
these points lie inside the canonical set $W_{\Phi_{\Sigma}}$ as
this is defined over the data of $\Phi_{\Sigma}$. Let
$\{p_{1},\ldots,p_{d}\}$ be the corresponding points of
$V_{\Phi_{\Sigma}}$ and $\phi_{\lambda}$ the corresponding form in
$\Sigma$. By, for example, Lemma 2.17;\\

$I_{italian}(p_{j},C,\phi_{\lambda})=1$ for $1\leq j\leq d$\\

There can be no more intersections of $\phi_{\lambda}$ with $C$
outside $Base(\Sigma)$, otherwise one would obtain a corresponding
intersection of $H_{\lambda}$ with $C'$ outside
$W_{\Phi_{\Sigma}}$. Hence, the total multiplicity of intersection
$I_{italian}(C,\phi_{\lambda})$ between $C$ and $\phi_{\lambda}$
is exactly $I+d$, using Lemma 2.14. By Theorem 2.3 and the fact
that $\phi_{\lambda}$ is a form of degree $k$, we also have that
the total multiplicity $I_{italian}(C,\phi_{\lambda})$ is $kv$.
As, by definition, $d'=kv$, The result follows.

\end{proof}

\end{section}

\begin{section}{The Construction of a Birational Model of a
Plane Projective Algebraic Curve without Multiple Points}

We first make the following definition.\\

\begin{defn}

 Let $C\subset P^{w}$ be a projective algebraic curve, not contained in any hyperplane of $P^{w}$.
  We define a point $p\in C$ to be $s$-fold on $C$ if, for every
hyperplane $H$ passing through $p$;\\

$I_{italian}(p,C,H)\geq s$.\\

and equality is attained.We define $p$ to be a multiple point if
it is $s$-fold for some $s\geq 2$. We define $p$ to be simple if
it is not multiple.

\end{defn}

We have the following lemma;\\

\begin{lemma}
Let $C\subset P^{2}$ be a projective algebraic curve and $p$ a
point on $C$. Let $F(X,Y)=0$ be an affine representation of $C$
such that the point $p$ corresponds to $(0,0)$. Then $p$ is
$s$-fold on $C$ iff we can write $F$ in the form;\\

$F(X,Y)=\Sigma_{(i+j)\geq s}a_{ij}X^{i}Y^{j}$\\

In particular, $p$ is non-singular iff it is not multiple.

\end{lemma}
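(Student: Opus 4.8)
The plan is to translate the geometric condition of Definition 3.1 into the classical statement that the multiplicity of $C$ at $p$ equals the order of vanishing of $F$ at the origin, and then to read off both directions of the equivalence from the behaviour of the leading form of $F$. The one genuinely non-elementary ingredient is the identification of $I_{italian}(p,C,H)$ with a classical intersection number; once that is in place the argument is a short computation with homogeneous polynomials in two variables.

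First I would record the translation. For a line $H$ through $p=(0,0)$ I would parametrise $H$ by $(X,Y)=(at,bt)$, where $[a:b]\in P^{1}(L)$ is its direction, and invoke the results of \cite{depiro2} together with Remark 2.2 (that $I_{italian}$ coincides with $RightMult$ and with the scheme-theoretic local intersection multiplicity for a plane curve and a line) to conclude that $I_{italian}(p,C,H)$ equals the $t$-adic order $\mathrm{ord}_{t}\,F(at,bt)$. This is the step I expect to be the main obstacle, since it requires passing between the non-standard, Zariski-structure definition of multiplicity, framed in ${\mathcal L}_{spec}$, and the ordinary order of vanishing; I would justify it by the power-series parametrisation of the branch at $p$ exactly as in remark (ii) of Section 1 and in the proof of Lemma 2.10, where the same dictionary is used.

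Then I would carry out the computation. Writing $F=\sum_{d\ge e}F_{d}$ with each $F_{d}$ homogeneous of degree $d$ and $F_{e}\not\equiv 0$, so that $e$ is the order of $F$ at the origin, substitution gives $F(at,bt)=\sum_{d\ge e}t^{d}F_{d}(a,b)$, whence $\mathrm{ord}_{t}\,F(at,bt)=e$ whenever $F_{e}(a,b)\ne 0$ and is strictly greater otherwise. Since $F_{e}$ is a nonzero homogeneous form of degree $e$ in two variables it has only finitely many roots in $P^{1}(L)$, so $F_{e}(a,b)\ne 0$ for a generic direction; hence $I_{italian}(p,C,H)\ge e$ for \emph{every} line $H$ through $p$, with equality attained by a generic such line. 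By Definition 3.1 this says exactly that $p$ is $e$-fold. Matching $e=s$ yields both implications: if $p$ is $s$-fold then the minimum over lines through $p$ is $s$, forcing $e=s$ and hence $F=\sum_{i+j\ge s}a_{ij}X^{i}Y^{j}$ with $F_{s}\not\equiv 0$; conversely, if $F$ has this form with $F_{s}\not\equiv 0$ then $e=s$ and the displayed computation makes $p$ $s$-fold.

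Finally, for the last sentence I would specialise to $s=1$. Because $p\in C$ gives $F(0,0)=0$, the order $e$ is always at least $1$, so $p$ is multiple (that is, $s$-fold for some $s\ge 2$) precisely when $e\ge 2$, i.e. precisely when $p$ is not simple. On the other hand $p$ is non-singular in the sense of \cite{H} exactly when the linear part $F_{1}$ is nonzero, i.e. when $e=1$; thus $p$ is non-singular iff $e=1$ iff $p$ is $1$-fold iff $p$ is not multiple, which is the asserted equivalence.
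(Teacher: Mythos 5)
Your proposal is correct and follows essentially the same route as the paper: parametrise a line through $p$ as $(at,bt)$ (the paper writes $(bt,at)$ for the line $aX-bY=0$), substitute into $F$, and observe that for a generic direction the order of $F(at,bt)$ equals the order $e$ of $F$ at the origin while it is at least $e$ for every direction, so that $p$ is $e$-fold. The one place where the paper is more explicit than you is precisely the dictionary $I_{italian}(p,C,H)=\mathrm{ord}_{t}F(at,bt)$ that you flag as the main obstacle: it first invokes the symmetry $I_{italian}(p,C,l)=I_{italian}(p,l,C)$ from \cite{depiro2} and then applies Lemma 2.10 to the pencil $\{F+t=0\}$ with the \emph{line} playing the role of the curve, so that the non-singularity hypothesis of Lemma 2.10 is satisfied automatically even when $p$ is a singular point of $C$.
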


\begin{proof}
Suppose that $p$ is $s$-fold on $C$, then, for a generic line $l$
defined by $aX-bY=0$, we have that;\\

$I_{italian}(p,C,l)\geq s$\\

It follows, by the results of \cite{depiro2}, that we also have;\\

$I_{italian}(p,l,C)=I_{italian}(p,C,l)\geq s$ $(*)$\\

Now, suppose that $F(X,Y)$ has the expansion;\\

$F(X,Y)=\Sigma_{i+j\leq deg(F)} a_{ij}X^{i}Y^{j}$.\\

and that $a_{ij}\neq 0$ for some $(i,j)$ with $i+j<s$ (**). We can
parametrise the branch at $(0,0)$ of $l$ algebraically by;\\

$(x(t),y(t))=(bt,at)$\\

We make the substitution of this parametrisation into $F(X,Y)$ and
obtain;\\

$F(x(t),y(t))=\Sigma a_{ij}x(t)^{i}y(t)^{j}=\Sigma a_{ij}a^{j}b^{i}t^{i+j}$\\

By $(**)$ and generic choice of $\{a,b\}$, this expansion has
order $s_{1}<s$. In characteristic $0$, it then follows, by the
method of \cite{depiro2}, see also Theorem 6.1 of this paper,
that, for the pencil $\Sigma_{1}$ defined by $\{F_{t}\equiv F(X,Y)+t=0\}$;\\

$I_{italian}^{\Sigma_{1}}(p,l,F)=s_{1}$.\\

In particular, as $p\notin Base(\Sigma_{1})$, we have, by Lemma
2.10, that;\\

$I_{italian}(p,l,F)=s_{1}$\\

and therefore, by $(*)$, that;\\

$I_{italian}(p,C,l)=s_{1}$\\

contradicting the fact that $p$ was $s$-fold on $C$. It follows
that $(**)$ doesn't hold, as required. See, however, the final
section for the problem in non-zero characteristic. For the
converse direction, suppose that $F(X,Y)$ has the expansion;\\

$F(X,Y)=\Sigma_{i+j\geq s}a_{ij}X^{i}Y^{j}$\\

Then, by a direct calculation and using the argument above, one
has that, for $\emph{any}$
line;\\

$I_{italian}(p,l,F)\geq s$.\\

Hence, by $(*)$ again, $p$ must be $s$-fold on $C$.\\

Using this result, it follows immediately, by considering the
vector $({\partial F\over \partial X},{\partial F\over \partial
Y})$ evaluated at $(0,0)$, that $p$ is non-singular on $C$ iff $p$
is not multiple.\\

\end{proof}

Our main result in this section will be the following;\\

\begin{theorem}
Let $C\subset P^{2}$ be a projective algebraic curve, then there
exists $C_{1}\subset P^{w}$ such that $C$ and $C_{1}$ are
birational and $C_{1}$ has no multiple points.
\end{theorem}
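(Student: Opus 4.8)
The plan is to resolve the multiple points of $C$ by a finite sequence of birational maps, each built from a linear system via the Construction of a Birational Model, terminating in a smooth model. By Lemma 3.2 the multiple points of a plane curve are exactly its singular points, of which there are finitely many, say $p_1,\ldots,p_k$ with $p_i$ being $s_i$-fold; at each $p_i$ the curve carries finitely many Newtonian branches, parametrised by algebraic power series as in the proof of Lemma 2.10, whose orders sum to $s_i$. The target $C_1\subset P^w$ is one at every point of which a generic hyperplane cuts with $I_{italian}=1$, i.e. every point is $1$-fold in the sense of Definition 3.1.

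I would proceed in two stages. In the first stage I reduce to the case in which all singularities are \emph{ordinary} (the branches through each $p_i$ are smooth with pairwise distinct tangents). This is achieved by successive quadratic transformations: a quadratic transformation centred at a multiple point is the Cremona map defined by the linear system of conics through suitable (infinitely near) points, which is itself a simple linear system and so, by the Construction of a Birational Model, produces a birational projective image; after each such step Theorem 1.33 returns us to a plane model. The induction is on the nonnegative integer $\sum_{p}\binom{m_p}{2}$, summed over the multiple points of the current model together with their infinitely near points, which the local power-series analysis of Lemma 2.10 forces to drop strictly at each transformation; since it is a nonnegative integer, the process halts at a plane curve with only ordinary singularities.

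In the second stage, with ordinary multiple points $p_1,\ldots,p_k$ of multiplicities $s_1,\ldots,s_k$, I take $\Sigma$ to be the linear system of plane curves of large degree $n$ vanishing at each $p_i$ to order $s_i-1$ (the adjoint system). For $n\gg 0$ a generic such $\Sigma$ has finite intersection with $C$, no form containing $C$, and separates generic points, so it defines a simple $g^r_{n'}(\Sigma)$ and hence, by the Construction of a Birational Model, a birational $C_1\subset P^w$ with canonical sets $V_{\phi_\Sigma},W_{\phi_\Sigma}$; the transfer lemmas match multiplicities across the correspondence. The order-$(s_i-1)$ condition is exactly what causes the $s_i$ transverse branches through $p_i$ to be carried to $s_i$ \emph{distinct} smooth points of $C_1$: computing the ratios of the defining sections on the branch parametrisations $(x(t),y(t))$, as in Lemma 2.10, shows the induced map is immersive along each branch and separates distinct branches. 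Away from the $p_i$ the map $\Phi_\Sigma$ is a local isomorphism at nonsingular points of $C$, so no new multiple points are created. Translating ``immersive and branch-separating'' into the statement $I_{italian}(q,C_1,H)=1$ for generic $H$ through each $q\in C_1$ yields, via Lemma 3.2 (and its evident analogue for the hyperplane definition in $P^w$), that $C_1$ has no multiple points.

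The hard part will be the local computations underpinning both stages: proving via the power-series and etale-cover arguments of Sections 1 and 2 that a quadratic transformation strictly lowers $\sum_{p}\binom{m_p}{2}$ even at tangentially coincident branches (where separation is achieved only after passing to infinitely near points), and, in the second stage, that the adjoint conditions really do make the induced map immersive along each branch and injective on the branch set over each $p_i$. The termination of the first stage — the classically delicate point — and the bookkeeping that the quadratic transformations can be performed without introducing worse singularities elsewhere are where the genuine difficulty lies; once these local and inductive facts are in place, assembling the global birational model without multiple points is routine.
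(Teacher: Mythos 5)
Your route is genuinely different from the paper's, and it is not complete as it stands. The paper never performs quadratic transformations or reduces to ordinary singularities at all: it defines a $g_{n}^{r}$ to be \emph{transverse} when it admits no subordinate $g_{n'}^{r-1}$ with $n'\leq n-2$ (Definition 3.4), proves that a transverse system is simple and that its birational image has no multiple points (Lemmas 3.5 and 3.6, the latter by pulling back the hyperplane system through the transfer Lemma 2.31 and deriving a contradiction with transversality), and then produces a transverse system by pure combinatorics: start from \emph{all} curves of degree $n-1$, which give $r_{1}={(n-1)(n+2)\over 2}$ and $n_{1}=n(n-1)$, and descend through subordinate systems losing one dimension and at least two in order at each step. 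The inequality $r\leq n$ (Lemma 2.24) forces this descent to stop after at most ${(n-1)(n-2)\over 2}+1$ steps, and the terminal system is transverse by construction. No local analysis of the singularities, no adjoint conditions, and no control of infinitely near points is needed; the entire resolution is absorbed into the numerical bound.

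Within your own strategy there are two genuine gaps, both of which you flag but neither of which you close, and they are precisely where the content of the theorem lives. First, the termination of the quadratic-transformation stage: you assert that $\sum_{p}\binom{m_{p}}{2}$ (summed over multiple points and their infinitely near points) drops strictly at each step, but this requires choosing the fundamental points of each Cremona transformation in general position so that no worse singularities are created on the exceptional lines, and a careful accounting of how the multiplicities of infinitely near points redistribute; nothing in Sections 1--2 of the paper supplies this, and Lemma 2.10 (non-existence of coincident mobile points) does not by itself yield the inequality. Second, the claim that the adjoint system (vanishing to order $s_{i}-1$ at each ordinary $s_{i}$-fold point) is immersive along each branch and separates the $s_{i}$ branches over $p_{i}$ is exactly the classical completeness statement for adjoint series; it is not a consequence of the ratio computation you gesture at, and without it the second stage does not produce a curve all of whose points are $1$-fold in the sense of Definition 3.1. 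As written, the proposal is a correct \emph{plan} for the classical proof, but the two lemmas it defers are harder than the theorem is under the paper's approach.
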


The proof will proceed using the basic theory of $g_{n}^{r}$ that
we developed in the previous section. We require the following definition;\\

\begin{defn}
Let a $g_{n}^{r}$ without fixed points be given on $C$. We will
call the $g_{n}^{r}$ transverse if the following property holds;\\

There does not exist a subordinate $g_{n'}^{r-1}\subset
g_{n}^{r}$ such that $n'\leq n-2$.\\

\end{defn}
We then have;\\

\begin{lemma}
Let a $g_{n}^{r}$ on $C$ be given without fixed points. Then, if
the $g_{n}^{r}$ is transverse, it must be simple.
\end{lemma}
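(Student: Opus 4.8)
The plan is to establish the contrapositive: if the $g_{n}^{r}$ is composite (hence not simple, by Definition 2.29), then it fails to be transverse, by producing a subordinate $g_{n'}^{r-1}\subseteq g_{n}^{r}$ with $n'\leq n-2$, which is exactly what Definition 3.4 forbids. Let $\Sigma$, of dimension $r$ and consisting of forms of some degree $k$, define the given $g_{n}^{r}$, which has no fixed points. By the composite hypothesis together with Remark 2.28, there is, for a generic $p\in C$, a \emph{single} point $p'\notin Base(\Sigma)$, distinct from $p$, such that every form of $\Sigma$ through $p$ also passes through $p'$.

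First I would pass to the subsystem $\Sigma'\subseteq\Sigma$ of forms vanishing at $p$. Since $p$ is generic it lies off $Base(\Sigma)$, so $\phi(p)=0$ is a proper hyperplane condition on $Par_{\Sigma}$ and hence $\dim\Sigma'=r-1$; moreover every form of $\Sigma'$, being a form of $\Sigma$, has finite intersection with $C$. By construction $p\in Base(\Sigma')\cap C$, and the composite property forces every form of $\Sigma'$ to pass through $p'$, so $p'\in Base(\Sigma')\cap C$ as well; thus $I_{p}^{\Sigma'}\geq 1$ and $I_{p'}^{\Sigma'}\geq 1$ by Lemma 2.14. Let $g_{n'}^{r-1}$ be the series obtained from $\Sigma'$ by deleting \emph{all} of its fixed point contributions.

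Next I would verify subordination in the precise sense of Definition 2.25. Adopting the convention that the fixed contribution $I_{q}$ is $0$ at points $q$ off the base locus, one has $I_{q}^{\Sigma'}\geq I_{q}^{\Sigma}$ for every $q$, since by Lemma 2.14 each side is a minimum of $I_{italian}(q,C,\phi_{\lambda})$ and the minimum over the smaller set $Par_{\Sigma'}\subseteq Par_{\Sigma}$ can only be larger; note also $Base(\Sigma)\cap C\subseteq Base(\Sigma')\cap C$. Using Lemma 2.22, the weight of $q$ in the weighted set of either system equals $I_{italian}(q,C,\phi_{\lambda})$ minus its fixed contribution, so for each $\lambda\in Par_{\Sigma'}$ the $\Sigma'$-weight $I_{italian}(q,C,\phi_{\lambda})-I_{q}^{\Sigma'}$ is at most the $\Sigma$-weight $I_{italian}(q,C,\phi_{\lambda})-I_{q}^{\Sigma}$. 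This gives $W_{\lambda}\subseteq V_{\lambda}$ for all $\lambda\in Par_{\Sigma'}$, so indeed $g_{n'}^{r-1}\subseteq g_{n}^{r}$.

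Finally I would bound the order. By the Hyperspatial Bezout Theorem 2.3, every form of degree $k$ meets $C$ in total weight $kv$, where $v=\deg C$; subtracting the fixed contributions gives $n=kv-I$ and $n'=kv-I'$, with $I=\sum_{q\in Base(\Sigma)\cap C}I_{q}^{\Sigma}$ and $I'=\sum_{q\in Base(\Sigma')\cap C}I_{q}^{\Sigma'}$. Combining $I_{q}^{\Sigma'}\geq I_{q}^{\Sigma}$ on $Base(\Sigma)\cap C$ with the two genuinely new base points $p,p'$ lying off $Base(\Sigma)$, each contributing at least $1$, yields $I'\geq I+2$ and hence $n'\leq n-2$. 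This exhibits the subordinate system prohibited by transversality, completing the contrapositive. I expect the delicate point to be precisely this last bookkeeping: one must combine the monotonicity of the fixed contributions under restriction to a subsystem with the fact that $p$ and $p'$ are new base points, so that the dimension of the system drops by exactly one while its order drops by at least two.
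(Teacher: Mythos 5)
Your proof is correct and follows essentially the same route as the paper's: restrict to the subsystem $\Sigma'$ of forms of $\Sigma$ through a generic point $p$, observe that compositeness forces the companion point $p'$ into $Base(\Sigma')$ as well, and combine Hyperspatial Bezout with monotonicity of the fixed-point contributions to get $n'\leq n-2$ for the subordinate $g_{n'}^{r-1}$, contradicting transversality. Your explicit verification of $W_{\lambda}\subseteq V_{\lambda}$ via $I_{q}^{\Sigma'}\geq I_{q}^{\Sigma}$ (from Lemmas 2.14 and 2.22) is a careful, and in fact slightly more accurate, filling-in of the step the paper dismisses with ``it follows easily from the above lemmas.''
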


\begin{proof}
Suppose that the $g_{n}^{r}$, defined by $\Sigma$, is not simple,
then it must be composite. Therefore, for generic $x\in C$, there
exists an $x'(x)\notin Base(\Sigma)$ such that every weighted set
$W_{\lambda}$ containing $x$ also contains $x'(x)$. As $x$ is
generic, $x\notin Base(\Sigma)$, hence the subsystem
$\Sigma_{x}\subset\Sigma$, consisting of forms in $\Sigma$ passing
through $x$, has dimension $r-1$. Define a $g_{n'}^{r-1}$ from
$\Sigma_{x}$ by removing the fixed point contribution of
$g_{n''}^{r-1}(\Sigma_{x})$. We claim that $g_{n'}^{r-1}\subseteq
g_{n}^{r}$ $(*)$. We clearly have that $\Sigma_{x}\subset\Sigma$.
Suppose that $p$ appears in a $W_{\lambda}$ defined by
$g_{n'}^{r-1}$ with multiplicity $s$. Then, by definition,
$I_{italian}^{mobile}(p,C,\phi_{\lambda})=s$, calculated with
respect to $\Sigma_{x}$. It follows easily from the above lemmas
that $I_{italian}^{mobile}(p,C,\phi_{\lambda})=s$, calculated with
respect to $\Sigma$, as well. Hence, $p$ also appears in the
corresponding $V_{\lambda}$ with multiplicity $s$. This gives the
claim $(*)$. We show that $n'\leq n-2$. Let
$\{p_{1},\ldots,p_{r}\}$ be the base points of $\Sigma$. Then
$\Sigma_{x}$ has base points containing the set
$\{p_{1},\ldots,p_{r},x,x'(x)\}$. Clearly, the fixed point
contribution of $\Sigma_{x}$ at $\{p_{1},\ldots,p_{r}\}$ can only
increase the fixed point contribution of $\Sigma$ and, moreover,
the fixed point contribution of $\Sigma_{x}$ at $\{x,x'\}$ must be
at least $2$. Hence, using the Hyperspatial Bezout Theorem and the
definition of $\{g_{n}^{r},g_{n'}^{r-1}\}$, we have that $n'\leq
n-2$. This contradicts the fact that the original $g_{n}^{r}$ was
transverse. Hence, the $g_{n}^{r}$ must be simple.
\end{proof}

Following from this, we have the following;\\

\begin{lemma}
Let a $g_{n}^{r}$ be given on $C$ without fixed points such that
$g_{n}^{r}$ is transverse. Then the $g_{n}^{r}$ defines a
birational map $\Phi:C\leftrightsquigarrow C_{1}\subset P^{r}$
and, moreover, $C_{1}$ has no multiple points.
\end{lemma}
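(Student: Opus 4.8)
The first assertion is essentially immediate from the machinery already in place. Since the $g_{n}^{r}$ is transverse, the preceding lemma (Lemma 3.5) shows it is simple, and the Construction of a Birational Model (Lemma 2.30) then produces a birational map $\Phi:C\leftrightsquigarrow C_{1}\subset P^{r}$, where $C_{1}$ is the projective image. The plan for the substantive part is to identify the given series with the series cut on $C_{1}$ by the hyperplanes of $P^{r}$. A choice of basis $\{\phi_{0},\ldots,\phi_{r}\}$ for the defining system $\Sigma$ sends the hyperplane $\sum a_{i}Y_{i}=0$ to the form $\sum a_{i}\phi_{i}$, so the hyperplane series on $C_{1}$ corresponds, via the transfer Lemma 2.31, to the $g_{n}^{r}$ on $C$; since the $g_{n}^{r}$ has no fixed points, all fixed contributions have been removed and Lemma 2.32 gives $n=d$, where $d=\deg C_{1}$. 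By the construction in Lemma 2.30 (and Lemma 1.16), $C_{1}$ lies in no hyperplane of $P^{r}$, so Definition 3.1 applies to it.

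I would then argue by contradiction. Suppose $C_{1}$ has a multiple point $q$, say $q$ is $s$-fold with $s\geq 2$ in the sense of Definition 3.1. Consider the subsystem $\Sigma_{q}$ of those hyperplanes of $P^{r}$ passing through $q$; this is a linear subsystem of codimension one, hence of dimension $r-1$. The defining property of an $s$-fold point is precisely that $I_{italian}(q,C_{1},H)\geq s$ for every $H$ through $q$, with equality attained. Applying Lemma 2.14 to $\Sigma_{q}$, this means $q\in Base(\Sigma_{q})$ with fixed-point contribution $I_{q}=s$.

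Next I would form the subordinate series $g_{n'}^{r-1}$ obtained from $\Sigma_{q}$ by removing all of its fixed-point contributions (Definition 2.23), so that the resulting series is itself free of fixed points. By the Hyperspatial Bezout Theorem (Theorem 2.3), a generic weighted set of the full hyperplane series has total weight $d=n$; subtracting the contribution $I_{q}=s$ at $q$ (together with any other base contributions, which only lower the count further) yields a series with $n'\leq d-s\leq d-2=n-2$. Because $\Sigma_{q}$ is contained in the full hyperplane system and we have only deleted points from the weighted sets, the inclusion $W_{\lambda}\subseteq V_{\lambda}$ of Definition 2.25 holds, so $g_{n'}^{r-1}\subseteq g_{n}^{r}$ is genuinely subordinate. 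Transferring this subordinate series back to $C$ through Lemma 2.31 preserves both its dimension and its order, producing a subordinate $g_{n'}^{r-1}\subseteq g_{n}^{r}$ on $C$ with $n'\leq n-2$. This contradicts the transversality of the $g_{n}^{r}$ (Definition 3.4), and hence $C_{1}$ can have no multiple point.

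The main obstacle I anticipate lies in the order computation of the preceding paragraph: one must verify that $I_{q}=s$ exactly, not merely $I_{q}\geq s$, so that removing the fixed contribution at $q$ genuinely drops the order by $s$, and that no excess or hidden fixed contribution inflates $n'$ above $n-2$. This is where Lemma 2.14 together with the ``equality is attained'' clause of Definition 3.1 does the real work. A secondary point requiring care is that subordination and order are faithfully preserved under the identification of the abstract $g_{n}^{r}$ on $C$ with the hyperplane series on $C_{1}$; this should follow from the multiplicity-preserving correspondence on the canonical sets supplied by Lemma 2.31, but it must be checked that the subsystem $\Sigma_{q}$ on $C_{1}$ pulls back to an honest subsystem of $\Sigma$ on $C$.
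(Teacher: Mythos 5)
Your proposal is correct and follows essentially the same route as the paper: transversality gives simplicity via Lemma 3.5, Lemma 2.30 gives the birational model, and the hyperplane series on $C_{1}$ is identified with the original $g_{n}^{r}$ via Lemma 2.31, after which a multiple point $q$ yields, from the subsystem $\Sigma_{q}$ of hyperplanes through $q$, a subordinate $g_{n'}^{r-1}$ with $n'\leq n-2$ contradicting transversality. Your additional care over $I_{q}=s$ (via Lemma 2.14 and the ``equality is attained'' clause) and over the faithfulness of the transfer of subordination back to $C$ merely makes explicit what the paper leaves as ``follows easily.''
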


\begin{proof}
The first part of the lemma follows from the previous Lemma 3.5
and Lemma 2.30. See the final section, though, for the problem in
non-zero characteristic. It remains to prove that $C_{1}$ has no
multiple points. Consider the $g_{n'}^{r}$ without fixed points on
$C_{1}$, defined by the linear system $\Sigma$ of hyperplane
sections. By Lemma 2.31, there exists a corresponding $g_{n'}^{r}$
on $C$ without fixed points. By its construction in Lemma 2.31, it
equals the original $g_{n}^{r}$. Now suppose that there exists a
multiple point $p$ of $C_{1}$. We consider the subsystem
$\Sigma_{p}$ of hyperplane sections passing through $p$. This
defines a $g_{n}^{r-1}\subset g_{n}^{r}$ and, as $p$ is multiple,
after removing the fixed point contribution, we obtain a
$g_{n'}^{r-1}\subset g_{n}^{r}$ with $n'\leq n-2$. Using Lemma
2.31 again, we obtain a corresponding $g_{n'}^{r-1}$ on $C$. We
claim that $g_{n'}^{r-1}\subset g_{n}^{r}$ on $C$. This follows
easily from the fact that $\Sigma_{p}\subset\Sigma$, the
$g_{n'}^{r-1}$ has no fixed points and the argument of the
previous lemma. This contradicts the fact that the original
$g_{n}^{r}$ on $C$ was transverse. Hence, $C_{1}$ has no multiple
points.

\end{proof}

We now give the proof of Theorem 3.3. We find a transverse
$g_{n}^{r}$ on $C$ using a combinatorial argument;\\

\begin{proof}{(Theorem 3.3)}\\

Suppose that $C$ has order $n\geq 2$ (the case when $C$ has order
$n=1$ is obvious.) Consider the independent system $\Sigma$
consisting of all plane algebraic curves defined by homogeneous
forms of order $n-1$. Clearly, no form in $\Sigma$ may contain
$C$, and the system has no fixed points on $C$, hence the system
defines a $g_{n_{1}}^{r_{1}}$, without fixed points, of dimension
$r_{1}={(n-1)(n+2)\over 2}$ and order $n_{1}=n(n-1)$. If this
$g_{n_{1}}^{r_{1}}$ is transverse, the proof is complete. Hence,
we may suppose that there exists a $g_{n_{2}}^{r_{2}}\subset
g_{n_{1}}^{r_{1}}$, without fixed points, such that
$r_{2}=r_{1}-1$ and $n_{2}\leq n_{1}-2$. Again, if this
$g_{n_{2}}^{r_{2}}$ is transverse, the proof is complete.
Hence, we may suppose that there exists a sequence;\\

$g_{n_{i}}^{r_{i}}\subset
g_{n_{i-1}}^{r_{i-1}}\subset\ldots\subset g_{n_{1}}^{r_{1}}$\\

with $r_{i}=r_{1}-(i-1)$ and $n_{i}\leq n_{1}-2(i-1)$. $(*)$\\

We need to show that this sequence terminates. By Lemma 2.24, we
always have that $r_{i}\leq n_{i}$. Combining this with $(*)$, we
have that;\\

$r_{1}-(i-1)\leq n_{1}-2(i-1)$\\

Therefore;\\

$i\leq n_{1}-r_{1}+1=n(n-1)-{(n-1)(n+2)\over
2}+1={(n-1)(n-2)\over 2}+1$ $(**)$\\

Now suppose that equality is attained in $(**)$, then we would
have that;\\

$r_{i}=r_{1}-(i-1)={(n-1)(n+2)\over
2}-{(n-1)(n-2)\over 2}=2(n-1)\geq 2$\\

$n_{i}\leq n_{1}-2(i-1)=n(n-1)-(n-1)(n-2)=2(n-1)$ $(***)$\\

This implies that $n_{i}\leq r_{i}$ and hence $n_{i}=r_{i}$.
Therefore, we must have that there exists a $g_{m}^{m}\subset
g_{n_{i}}^{r_{i}}$ for each $i$ with $m\geq 2$ and the sequence
terminates in fewer than ${(n-1)(n-2)\over 2}+1$ steps. The final
$g_{n_{i}}^{r_{i}}$ in the sequence then defines a birational map
from $C$ to $C_{1}\subset P^{w}$ without multiple points ($w\geq
2$), as required.

\end{proof}

\begin{rmk}
The terminology of transverse $g_{n}^{r}$ is partly motivated by
the following fact. Suppose that a transverse $g_{n}^{r}$ on $C$
is defined by a linear system $\Sigma$, possibly after removing
some fixed point contribution, then, if $x\in C\setminus
Base(\Sigma)$ is non-singular, there exists an algebraic form
$\phi_{\lambda}$ from $\Sigma$ such that $\phi_{\lambda}(x)=0$ but
$\phi_{\lambda}$ is not algebraically tangent to $C$ at $x$. The
proof of this follows straightforwardly from the definition of
transversality and the fact that, if $\phi_{\lambda}$ is
algebraically tangent to $C$ at $x$, then
$I_{italian}(x,C,\phi_{\lambda})\geq 2$ (see the proof of Lemma
4.2). In modern terminology, one calls this property separating
tangent vectors. See, for example Proposition 7.3 of \cite{H}. The
full motivation, however, comes from Theorem 6.10.
\end{rmk}

\end{section}
\begin{section}{The Method of Conic Projections}

The purpose of this section is to explore the Italian technique of
projecting a curve onto a plane. The method of conic projections
is extremely old and can be found in \cite{Piero}, where
projective notions are explicitly incorporated in the discussion
of perspective. Severi himself also wrote an article on the
subject of \cite{Piero} in \cite{Sev1}. We assume that we are
given a projective algebraic curve $C\subset P^{w}$ for some $w>2$
and that $C$ is not contained in any
hyperplane section (otherwise reduce to this lower dimension).\\

The construction;\\

Let $\Omega\subset P^{w}$ be a plane of dimension $w-k-1$ and
$\omega\subset P^{w}$ a plane of dimension $2\leq k<w$ such that
$\Omega\cap\omega=\emptyset$. We define the projection of $C$
from $\Omega$ to $\omega$ as follows;\\

Let $P\in C$. We may assume that $P$ does not lie on $\Omega$. Let
$<\Omega,P>$ be the intersection of all hyperplanes containing
$\Omega$ and $P$. It is a plane of dimension $w-k$. Now, by
elementary dimension theory, we must have that;\\

$dim(<\Omega,P>\cap \omega)\geq k+(w-k)-w=0$\\

We may exclude the case that $<\Omega,P>$ and $\omega$ intersect
in a line $l$, as then $\Omega$ and $l$ would intersect in a point
$Q$, contradicting the fact that $\Omega\cap\omega=\emptyset$.
Hence, $<\Omega,P>\cap\omega$ defines a point $pr(P)$. We may
repeat this construction for the cofinitely many points $U\subset
C$
which do not lie on $\Omega$. Now, consider the statement;\\

$\phi(y)\equiv [y\in\omega\wedge\exists x\exists
w(x\in\Omega\wedge w\in U\wedge y\in l_{xw})]$\\

By elimination of quantifiers for algebraically closed fields,
this clearly defines an algebraic set consisting of $\{pr(w):w\in
U\}$. We call this the projection $pr(U)$ of $U$. As $U$ is
irreducible, it follows that $pr(U)$ is irreducible. Moreover,
$pr(U)$ has dimension $1$, otherwise $pr(U)$ would consist of a
single point $Q$, in which case $U$ and therefore $C$ would be
contained in the plane $<\Omega,Q>$, contradicting the assumption.
We define the projection $proj(C)$ of $C$ from $\Omega$ to
$\omega$ to be the closure $\overline{pr(U)}$ of $pr(U)$ in
$\omega$. We can define a correspondence
$\Gamma\subset U\times pr(U)$ by;\\

$\Gamma(w,y)\equiv [y\in pr(U)\wedge w\in U\wedge\exists
x(x\in\Omega\wedge y\in l_{xw})]$\\

We define the associated correspondence $\Gamma_{pr}\subset
C\times proj(C)$ to be the Zariski closure $\overline{\Gamma}$.
Note that, in the case when $\Omega\cap C=\emptyset$, the
correspondence $\Gamma$ defines an algebraic \emph{function} (in
the sense of model theory) $pr$ from $C$ to $pr(C)$. By the model
theoretic description of definable closure in algebraically closed
fields of characteristic $0$, $pr$ defines a morphism from $C$ to
$pr(C)$. See the final section for the problem in non-zero
characteristic.

\begin{rmk}
Note that when $\Omega$ is a point $P$ and $\omega$ is a
hyperplane, the construction is equivalent to forming the cone;\\

$Cone(C)=\bigcup_{x\in C}l_{xP}$\\

and taking the intersection $\omega\cap Cone(C)$. This is the
reason for the terminology of "conic projections". The case when
$w=3$ is explicitly discussed in \cite{Piero}, $P$ represents the
eye of the observer wishing to obtain a representation of a curve
on a plane.

\end{rmk}

We now prove the following general lemma on conic projections;\\

\begin{lemma}
Let $w\geq 3$ and suppose that $\{A,B\}$ are independent generic
points of $C$. Then the line $l_{AB}$ does not otherwise meet the
curve.
\end{lemma}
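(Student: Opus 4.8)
The plan is to fix an algebraically closed field $L_0$ of finite transcendence degree over which $C$ and the ambient coordinates are defined, to take $A$ generic in $C$ over $L_0$ and $B$ generic over $L_0(A)$, and to argue by contradiction: suppose there is a point $P\in l_{AB}\cap C$ with $P\notin\{A,B\}$. First I would record the genericity bookkeeping. Since $C$ spans $P^w$ with $w\ge 3$ it is not a line, so $C\cap l_{AB}$ is a proper closed subset of the irreducible curve $C$, hence finite; as $l_{AB}$ is defined over $L_0(A,B)$ this forces $P\in\mathrm{acl}(L_0,A,B)$, i.e. $\mathrm{trdeg}(P/L_0AB)=0$ and $\mathrm{trdeg}(ABP/L_0)=2$. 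Additivity of transcendence degree, together with $A\in l_{BP}\cap C\subseteq\mathrm{acl}(L_0BP)$ and the symmetric statements, then shows that the three points are completely symmetric: each of $A,B,P$ is generic in $C$ over $L_0$, each pair is independent generic, and each point is algebraic over the other two. In particular, the existence of such a $P$ for a generic pair is equivalent to the assertion that the constructible set $\mathcal P=\{(a,b)\in C\times C: l_{ab}\cap C\supsetneq\{a,b\}\}$, which is defined over $L_0$, is dense in $C\times C$.

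The geometric reason this should fail for $w\ge 3$ is a codimension count: three points of $P^w$ are collinear only on a subvariety of $(P^w)^3$ of codimension $w-1$ (choose the line, $2w-2$ parameters, then three points on it, three more), whereas a triple of points of $C$ carries only three parameters, so collinear triples on $C$ ought to form a family of dimension at most $3-(w-1)=4-w\le 1$. This would make $\mathcal P$, the image of that family under the finite projection to the first two points, of dimension $\le 1$ and hence not dense. The heuristic already isolates the hypothesis: at $w=2$ collinearity is only codimension $1$ and the bound degenerates to $2$, matching the fact that a generic secant of a plane curve of degree $\ge 3$ does meet it again.

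To make this rigorous I would not count dimensions directly but instead cut with a \emph{generic} hyperplane. Let $H$ be generic over $L_0$; by Lemma 2.4 and Definition 1.12 the intersection $H\cap C$ consists of exactly $d=\deg(C)$ distinct points $Q_1,\dots,Q_d$, and (by uniform position) any two of them form a generic pair of $C$, hence lie in the dense set $\mathcal P$. Then $l_{Q_iQ_j}$ meets $C$ in some third point $R\ne Q_i,Q_j$; but $Q_i,Q_j\in H$ and $H$ is linear, so $l_{Q_iQ_j}\subset H$ and therefore $R\in H\cap C=\{Q_1,\dots,Q_d\}$, forcing $R=Q_k$ for some $k\neq i,j$. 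Thus three of the section points are collinear. This contradicts the fact that, for $w\ge 3$, the generic hyperplane section of an irreducible nondegenerate curve consists of points in linearly general position, so that no three are collinear. Hence $\mathcal P$ is not dense and $l_{AB}\cap C=\{A,B\}$, as required.

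The main obstacle is precisely the input invoked at the last step: that the generic hyperplane section of $C$ is in linearly general position (no three points collinear) for $w\ge 3$, together with the subsidiary claim that a pair of generic section points is a generic pair of $C$. This is the classical \emph{uniform position} principle, and I expect its proof --- via the monodromy of the family of hyperplane sections, i.e. showing that the associated Galois group acts as the full symmetric group on the $d$ section points, which is exactly where irreducibility, nondegeneracy and $w\ge 3$ enter --- to be the part requiring genuine work; everything else is genericity bookkeeping. An alternative route, closer to the construction opening this section, is to reformulate the claim as the birationality of the projection $pr_A$ of $C$ from the generic point $A$ onto a hyperplane (the case $\Omega=\{A\}$): the existence of $P$ says exactly that $pr_A$ identifies the distinct points $B$ and $P$, so non-birationality of $pr_A$ for generic $A$ is equivalent to density of $\mathcal P$. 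One would then exclude this by the same general-position analysis applied to $C_A=pr_A(C)\subset P^{w-1}$, inducting downward on $w$ with the space-curve case $w=3$ handled directly by uniform position.
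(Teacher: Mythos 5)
Your reduction is clean as far as it goes: the bookkeeping showing that a single bad independent generic pair forces the constructible set $\mathcal{P}$ to be dense in $C\times C$, the dimension count showing that two points of a generic hyperplane section form an independent generic pair, and the observation that a trisecant through two section points must close up inside $H\cap C$ are all correct and in the spirit of the paper's genericity arguments. But the argument then terminates in an appeal to the General Position Theorem --- that for $w\geq 3$ no three points of a generic hyperplane section of an irreducible nondegenerate curve are collinear --- and that is exactly where all of the content of Lemma 4.2 lives. That theorem is not proved in your proposal, is not available anywhere in the paper, and is not easier than the lemma itself: the monodromy proof you gesture at needs not only $2$-transitivity (which is the easy irreducibility count) but also that the monodromy group contains a transposition, which requires producing a hyperplane simply tangent at exactly one point and transverse elsewhere --- a statement about tangency and the dual variety whose rigorous proof in this framework would itself take real work, and which is precisely the sort of thing that degenerates in positive characteristic (compare the final section on Frobenius). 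Even granting full symmetric monodromy, one must still rule out the degenerate outcome that the entire section lies on a line. So as written the proposal has a genuine gap: the hard step is correctly identified but deferred rather than carried out.

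There is also a circularity risk worth flagging. Standard derivations of the General Position Theorem proceed by induction on $w$, projecting from a general point of the curve, and the induction step needs that projection to be generically injective --- which is essentially Lemma 4.2/Lemma 4.5, the very statement at issue. Your closing alternative ("exclude this by the same general-position analysis applied to $C_{A}=pr_{A}(C)$, inducting downward on $w$") runs straight into this. The paper avoids all of it by a different mechanism: assuming a third intersection $P$, it projects from $P$, uses the criterion $(*)$ (a hyperplane through $A$ containing $l_{A}$ meets $C$ at $A$ with multiplicity at least $2$) to force the projections of $l_{A}$ and $l_{B}$ to coincide with the unique tangent line at the nonsingular point $D=pr_{P}(A)=pr_{P}(B)$, propagates this to an open set of points, and concludes that all tangent lines of a projection of $C$ pass through a common point --- impossible in characteristic $0$ unless $C$ is a line (the statement $(\dag)$). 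If you want to complete your route you must supply a self-contained proof of the simple-tangency/transposition step and of the degenerate collinear-section case; otherwise the tangent-line argument is the more economical path given the tools already developed in Sections 2 and 3.
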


\begin{proof}
Suppose, for contradiction, that we can find independent generic
points $\{A,B\}$ on $C$ such that $l_{AB}$ intersects $C$ in a new
point $P$. As $A$ and $B$ are generic, they are non-singular
points on the curve $C$, hence we can define the tangent lines
$l_{A}$ and $l_{B}$ at these points. We claim the
following;\\

For any hyperplane $H_{\lambda}$ containing $l_{A}$ and passing through $A$;\\

$I_{italian}(A,C,H_{\lambda})\geq 2$ (*)\\

This is the converse to a result already proved in Lemma 2.10
above. It can be proved in a similar way, namely fix a birational
map $\Phi_{\Sigma}$ between $C$ and $C_{1}\subset P^{2}$ such that
$A$ and its corresponding non-singular point $A'$ on $C_{1}$ lie
in the fundamental sets $V_{\Phi_{\Sigma}}$ and
$W_{\Phi_{\Sigma}}$ (use the fact that $A$ is generic). By the
chain rule, the corresponding $\phi_{\lambda}$ passes through $A'$
and is tangent to the curve $C_{1}$. Hence, by results of
\cite{depiro2}, we have that;\\

$I_{italian}(A',C_{1},\phi_{\lambda})\geq 2$\\

Using the technique of Section 2, it follows easily that $(*)$
holds as required.\\

Now choose a generic hyperplane $H$ in $P^{w}$. Let $pr$ be the
projection of $C$ from $P$ to $H$. Let $D=pr(A)=pr(B)$. We have
that $pr(C)$ is defined over $P$ and, moreover, that $D$ is
generic in $pr(C)$. This follows as, otherwise, $dim(D/P)=0$,
therefore, as $\{A,B\}\subset l_{PD}$, $dim(A,B/P)=0$, which
implies $dim(A,B)=1$, contradicting the fact that $\{A,B\}$ were
independent generic. Hence, $D$ defines a non-singular point on
the curve $pr(C)$. Now, let $l_{1}$ and $l_{2}$ be the projections
of the lines $l_{A}$ and $l_{B}$. (We will deal with the
degenerate case when one of these projections is a point below
$(\dag)$). Clearly $l_{1}$ and $l_{2}$ pass through $D$. We claim
that they have the property $(*)$. Let $H_{\lambda}$ be a
hyperplane of $H$ passing through $l_{1}$. Then the hyperplane
$<H_{\lambda},P>$ of $P^{w}$ passes through $l_{A}$. Let
$\lambda'\in{\mathcal V}_{\lambda}$ be generic in $Par_{H}$, then,
we may assume that $D\notin pr(C)\cap H_{\lambda'}$. Therefore,
the corresponding hyperplane $<H_{\lambda'},P>$ does not pass
through $A$. Now, using the property $(*)$ of $l_{A}$, Lemma 2.10
and the fact that $<H_{\lambda'},P>$ is an infinitesimal variation
of $<H_{\lambda},P>$, we can find distinct
$\{A_{1},A_{2}\}\subset{\mathcal V}_{A}\cap C\cap
<H_{\lambda'},P>$. It follows that
$\{pr(A_{1}),pr(A_{2})\}\subset{\mathcal V}_{D}\cap pr(C)\cap
H_{\lambda'}$. We claim that $pr(A_{1})$ and $pr(A_{2})$ are
distinct. Suppose not, then $pr(A_{1})=pr(A_{2})=D'\in{\mathcal
V}_{D}\cap pr(C)$. Consider the following finite cover $F\subset pr(C)\times P^{w}$;\\

$F(y,x)\equiv y\in pr(C)\wedge x\in C\cap l_{yP}$\\

As we have shown, $(DA)$ is generic for this cover, hence, by
properties of Zariski structures, $mult_{(DA)}(F/pr(C))=1$.
However, this contradicts the fact that we can find
$D'\in{\mathcal V}_{D}\cap pr(C)$ and distinct
$\{A_{1},A_{2}\}\subset {\mathcal V}_{A}$ such that $F(D'A_{1})$
and $F(D'A_{2})$. Therefore, we have shown that
$I_{italian}(D,pr(C),H_{\lambda})\geq 2$. Hence, $l_{1}$ and
$l_{2}$ have the property $(*)$. We now claim that $l_{1}=l_{2}$
(**). Suppose not, then, if $l_{D}$ is \emph{the} tangent line to
$pr(C)$ at $D$, it must be distinct from say $l_{1}$. Choose a
hyperplane $H_{\lambda}$ of $\omega$ passing passing through
$l_{1}$ but not through $l_{D}$. We have that
$I_{italian}(D,pr(C),H_{\lambda})\geq 2$, but, using part of the
proof of Lemma 2.10, $H_{\lambda}$ must then pass through $l_{D}$.
Hence,
$(**)$ is shown.\\

Now, by $(**)$, we have that the tangent lines $l_{A}$ and $l_{B}$
both lie on the plane spanned by $l_{B}$ and $l_{AB}$. Consider
the statement;\\

$x\in NonSing(C)$ and the tangent line $l_{x}$ lies on the plane
spanned by $l_{B}$ and $l_{xB}$.\\

It defines an algebraic subset of $NonSing(C)$ over $B$, and,
moreover, as it holds for $A$, which was assumed to be generic in
$C$ and independent from $B$, it defines an open subset $V$ of $C$. (***)\\

Now choose a generic hyperplane $H$ and let $pr_{B}(C)$ be the
projection of $C$ from $B$ to $H$. By the proof (below) that the
degenerate case $(\dag)$ cannot occur, and the argument above, we
can find an open $W\subset V$, such that, for each $x\in W$,
$l_{x}$ is projected to $l_{pr_{B}(x)}$. We claim that;\\

For $y\in pr_{B}(W)$, the $l_{y}$ intersect in a point $Q$. $(****)$\\

Let $y_{1}$ and $y_{2}$ be independent generic points in $pr_{B}(W)$, then we can find
corresponding $x_{1}$ and $x_{2}$ in $W$ such that
$<l_{By_{1}},l_{y_{1}}>=<l_{Bx_{1}},l_{x_{1}}>$ and
$<l_{By_{2}},l_{y_{2}}>=<l_{Bx_{2}},l_{x_{2}}>$, $(*****)$. As $x_{1}$ and
$x_{2}$ lie in $V$, by $(***)$, we have that;\\

$<l_{Bx_{1}},l_{x_{1}}>\cap <l_{Bx_{2}},l_{x_{2}}>=l_{B}$ $(******)$\\

We can assume that $l_{y_{1}}\neq l_{y_{2}}$, $(\dag\dag)$, as if $l_{y_{1}}=l_{y_{2}}$, then, considering $\{y\in pr_{B}(C):l_{y}=l_{y_{1}}\}$, we have all the tangent lines $l_{y}$ of $C$ equal $l_{y_{1}}$. Now, if $pr_{B}(C)$ is not a line $l$, by Lemma 2.17, we can find a hyperplane $H$, with $I_{italian}(p,pr_{B}(C),H)=1$, for $p\in (pr_{B}(C)\cap H)$, and $Card(pr_{B}(C)\cap H)\geq 2$. Picking $p,q\in (pr_{B}(C)\cap H)$,  we have that $l_{p}=l_{q}$, therefore, $l_{pq}=l_{p}\subset H$. Then, by $(*)$, we have that $I_{italian}(p,C,H)\geq 2$, a contradiction. Then $pr_{B}(C)$ is a line, which, we may assume, is not the case. Now, by $(*****),(******)$, we have that $<l_{y_{1}},l_{By_{1}}>\cap <l_{y_{2}},l_{By_{2}}>=l_{B}$, $(*******)$. Then, using $(\dag\dag)$, and applying $pr_{B}$ to both sides of $(*******)$, we have that $l_{y_{1}}\cap l_{y_{2}}=pr_{B}(l_{B})=Q$, as required. Now, to obtain the final contradiction, we need to show that $(****)$ cannot occur. This also covers the degenerate case $(\dag)$. We show;\\

If $C$ is a projective algebraic curve in $P^{w}$, with the
property that there exists an open $W\subset Nonsing(C)$ such that
each $l_{x}$ for $x\in W$ intersects in a point $Q$, then $C$ is a
line $l$. $(\dag)$\\

Choose a generic hyperplane $H$ and consider the projection
$pr_{Q}(C)$ of $C$ from $Q$ to $H$. If this projection is a point,
then $C$ is a line $l$. Hence, we may assume that $pr_{Q}(C)$ is a
projective algebraic curve in  $H$. Suppose that $x$ is generic in
$pr_{Q}(C)$, with $pr_{Q}(y)=x$ and $y$ generic in $C$. Then $x$
and $y$ are nonsingular and, moreover, if $H_{\lambda}\subset H$
is \emph{any} hyperplane passing through $x$, the corresponding
hyperplane $<H_{\lambda},Q>$ passes through $y$. By the
assumption, $<H_{\lambda},Q>$ contains the tangent line $l_{y}$.
Hence, by the proof given above,\\

$min\{I_{italian}(y,C,<H_{\lambda},Q>),I_{italian}(x,C,H_{\lambda})\}\geq
2$.\\

Now, as $x$ is nonsingular, we can find a hyperplane $H_{\lambda}$
passing through $x$, not containing $l_{x}$. By the usual
argument, given above, we obtain a contradiction. Therefore,
$(\dag)$ is shown.\\

As we assumed that our original $C$ was not contained in a
$P^{2}$, the proof of the lemma is shown. The lemma also holds in
non-zero characteristic even though, surprisingly, the result
$(\dag)$ turns out to be false. We will deal with these problems
in the final section.

\end{proof}

\begin{rmk}
The proof of the above lemma is attributed to Castelnuovo in
\cite{Sev}, but I have been unable to find a convenient reference.
\end{rmk}

\begin{defn}
We will define a correspondence $\Gamma\subset C_{1}\times C_{2}$,
where $C_{1}$ and $C_{2}$ are projective algebraic curves, to be
generally biunivocal, if, for generic $x\in C_{1}$ there exists a
unique generic $y\in C_{2}$ such that $\Gamma(x,y)$ and
$\Gamma(x',y)$ implies that $x=x'$ (*). We will say that $\Gamma$
is biunivocal at $x$ if $(*)$ holds.

\end{defn}

We then have as an immediate consequence of the above that;\\

\begin{lemma}
Let $C\subset P^{w}$, $(w\geq 3)$, be a projective algebraic
curve, not contained in any hyperplane section. Fix some
hyperplane $H$. Then, if $P$ is a generic point of the curve, the
projection $pr_{P}$ from $P$ to $H$ is generally biunivocal on
$C$.
\end{lemma}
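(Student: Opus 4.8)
The plan is to reduce the statement directly to Lemma 4.2 (that the line joining two independent generic points of $C$ meets $C$ nowhere else), after unwinding what biunivocality of $pr_{P}$ means in terms of the fibres of the projection. Since here the centre $\Omega$ is the single point $P$, the projection sends $x \in C \setminus \{P\}$ to $pr_{P}(x) = l_{Px} \cap H$, and the key observation is that two points $x, x' \in C \setminus \{P\}$ satisfy $pr_{P}(x) = pr_{P}(x')$ precisely when $P$, $x$, $x'$ are collinear. Indeed, $P$ is generic and hence does not lie on $H$, so two lines through $P$ meeting $H$ in a common point must coincide; thus $pr_{P}(x) = pr_{P}(x')$ forces $l_{Px} = l_{Px'}$, i.e. $x' \in l_{Px}$, and the converse is immediate.

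First I would choose $x$ generic in $C$ over the field of definition of $\{C, H, P\}$. Since $P$ is by hypothesis a generic point of $C$, the pair $\{P, x\}$ is then a pair of independent generic points of $C$, so Lemma 4.2 applies with $A = P$ and $B = x$ and yields $l_{Px} \cap C = \{P, x\}$.

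Next I would read off injectivity. Setting $y = pr_{P}(x)$, any $x' \in C \setminus \{P\}$ with $\Gamma_{pr}(x', y)$ satisfies $pr_{P}(x') = y = pr_{P}(x)$, whence by the collinearity characterisation $x' \in l_{Px} \cap C = \{P, x\}$; as $x'$ lies in the domain of $pr_{P}$ we have $x' \neq P$, and therefore $x' = x$. This is exactly condition $(*)$ of Definition 4.4, so $pr_{P}$ is biunivocal at the generic $x$. It remains to record that $y$ is the unique generic image: by the construction of $proj(C)$ the assignment $x \mapsto pr_{P}(x)$ is an algebraic function on the cofinite set $C \setminus \{P\}$, so $y$ is uniquely determined by $x$; and since $proj(C)$ is an irreducible curve defined over $P$ we have $\dim(y/P) = 1$, for otherwise $\dim(y/P) = 0$ would force $proj(C)$ to consist of a single point, contrary to the construction. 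Hence $y$ is generic in $proj(C)$.

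Both clauses of Definition 4.4 therefore hold for the generic $x$, so $pr_{P}$ is generally biunivocal on $C$. The only point requiring care is the bookkeeping at the centre $P$: the line $l_{Px}$ necessarily passes through $P \in C$, but $P$ is excluded from the domain of $pr_{P}$, so its presence in $l_{Px} \cap C$ does not obstruct injectivity. I do not expect any genuine difficulty here beyond verifying that $\{P, x\}$ is an independent generic pair, which is what licenses the application of Lemma 4.2.
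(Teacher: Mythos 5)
Your proof is correct and follows exactly the route the paper intends: the paper states this lemma as an immediate consequence of Lemma 4.2, and your argument is precisely the natural unwinding of that reduction, taking $A=P$ and $B=x$ generic over $P$ so that $l_{Px}\cap C=\{P,x\}$ forces injectivity of $pr_{P}$ at the generic point. The additional bookkeeping you supply (the collinearity characterisation of the fibres, the exclusion of the centre $P$ from the domain, and the dimension count showing $pr_{P}(x)$ is generic) is exactly what the paper leaves implicit.
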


We now note the following;\\

\begin{lemma}
Let $C\subset P^{w}$ be as in the above lemma. Fix a plane
$\omega$ of dimension $w-k\geq 2$. Then, if
$\{P_{1},\ldots,P_{k}\}$ are independent generic points of $C$,
the projection from $\Omega=<P_{1},\ldots,P_{k}>$ to $\omega$ is
generally biunivocal.
\end{lemma}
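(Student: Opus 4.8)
The plan is to argue by induction on $k$, peeling off one center point at a time and invoking the preceding lemma (projection from a single generic point to a hyperplane is generally biunivocal) at each stage. The base case $k=1$ is precisely that lemma; note that since $w-k\geq 2$ we have ambient dimension $\geq 3$ at every recursive level, so its hypothesis $w\geq 3$ is always met. For the inductive step I would first project $C$ from $P_1$ alone onto a hyperplane $H_1\cong P^{w-1}$ chosen not to contain $P_1$, obtaining a curve $C_1=pr_{P_1}(C)\subset H_1$. By the preceding lemma this projection is generally biunivocal, hence birational onto $C_1$. Moreover $C_1$ is not contained in any hyperplane of $H_1$: if it lay in a $(w-2)$-plane $H'$, then every $Q\in C$ would satisfy $Q\in l_{P_1\,pr_{P_1}(Q)}$ with $pr_{P_1}(Q)\in H'$, so $C$ would lie in the cone $\langle P_1,H'\rangle$, a hyperplane of $P^{w}$, contradicting the non-degeneracy of $C$.

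Next I would transport the remaining centers. The map $pr_{P_1}$ is defined over $L(P_1)$ and is generically biunivocal, hence birational; since $P_2,\ldots,P_k$ are independent generic over $L(P_1)$ by the independence hypothesis, their images $P_j'=pr_{P_1}(P_j)$ for $2\leq j\leq k$ form an independent generic $(k-1)$-tuple of points on the non-degenerate curve $C_1\subset P^{w-1}$. The inductive hypothesis now applies to $C_1$: the projection from $\Omega_1=\langle P_2',\ldots,P_k'\rangle$ onto a plane of dimension $(w-1)-(k-1)=w-k$ inside $H_1$ is generally biunivocal.

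Finally I would identify the iterated projection with $pr_\Omega$. The geometric identity I rely on is that $\Omega_1=\Omega\cap H_1$ and that projecting from $P_1$ and then from $\Omega\cap H_1$ inside $H_1$ agrees with projecting from $\langle P_1,\Omega_1\rangle=\langle P_1,\ldots,P_k\rangle=\Omega$ in $P^{w}$. Since general biunivocity is insensitive to the choice of complementary target plane (any two differ by a projectivity), I may take the target to be the prescribed $\omega$. A composite of two generically biunivocal correspondences is again generically biunivocal: for generic $x\in C$ the point $pr_{P_1}(x)$ is generic on $C_1$ with unique preimage $x$, and its further image under $pr_{\Omega_1}$ has unique preimage $pr_{P_1}(x)$, so the composite is generically injective. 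Thus $pr_\Omega$ from $\Omega$ to $\omega$ is generally biunivocal, completing the induction.

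The main obstacle will be the genericity bookkeeping in the second step: one must verify carefully that a birational (generically biunivocal) projection transports an \emph{independent} generic tuple to an independent generic tuple on the image curve, so that the inductive hypothesis genuinely applies. Alongside this, the non-degeneracy of $C_1$ and the composition-of-projections identity $pr_\Omega=pr_{\Omega\cap H_1}\circ pr_{P_1}$ need to be checked with some care; by contrast the dimension counts $\dim\Omega_1=k-2$ and $\dim\omega=w-k$ are routine.
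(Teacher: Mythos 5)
Your proposal is correct and follows essentially the same route as the paper: iterate single-point projections, invoke the preceding lemma at each stage, check that the remaining centres project to independent generic points on the image curve, and identify the composite with $pr_{\Omega}$. The paper carries out that last identification, $pr_{\langle P_{1},\ldots,P_{k}\rangle}=pr_{P_{k}}\circ\cdots\circ pr_{P_{1}}$, by an explicit incidence computation along a flag of target planes $\omega=\omega_{0}\subset\cdots\subset\omega_{k-1}$ --- exactly the step you flag as needing care --- while your explicit check that the image curve remains non-degenerate is a detail the paper leaves implicit.
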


\begin{proof}
Choose a sequence of planes
$\omega=\omega_{0}\subset\omega_{1}\subset\ldots\subset\omega_{k-1}$
such that $dim(\omega_{i})=dim(\omega)+i$ for $i\leq k-1$, with
field of definition equal to that of $\omega$. The projection
$pr_{P_{1}}$ from $P_{1}$ to $\omega_{k-1}$ is generally
biunivocal, and $\{pr_{P_{1}}(P_{2}),\ldots,pr_{P_{1}}(P_{k})\}$
define independent generic points on $pr_{P_{1}}(C)$. Repeating
the argument $k$ times, we obtain that the composition
$pr_{P_{k}}\circ\ldots\circ pr_{P_{1}}$ is biunivocal as a
projection to $\omega$. We claim that
$pr_{<P_{1},\ldots,P_{k}>}=pr_{P_{k}}\circ\ldots\circ pr_{P_{1}}$
(*). This can be checked for a generic point $x$ of $C$. Suppose,
inductively, that
$pr_{<P_{1},\ldots,P_{i}>}(x)=pr_{P_{i}}\circ\ldots\circ
pr_{P_{1}}(x)=z$. Let $u=pr_{P_{i+1}}(z)$, then there exists
$z'=<P_{1},\ldots,P_{i+1}>\cap\omega_{k-i}$ such that
$l_{zz'}\cap\omega_{k-i-1}=u$. We have that
$<P_{1},\ldots,P_{i},x>\cap\omega_{k-i}=z$, hence
$<P_{1},\ldots,P_{i},P_{i+1},x>\cap \omega_{k-i}=l_{zz'}$,
therefore $<P_{1},\ldots,P_{i+1},x>\cap\omega_{k-i-1}=u$. This
shows that $pr_{<P_{1},\ldots,P_{i+1}>}(x)=u$ and, therefore, by
induction, that
$pr_{<P_{1},\ldots,P_{k}>}(x)=pr_{P_{k}}\circ\ldots\circ
pr_{P_{1}}(x)$. Hence $(*)$ is shown. It follows immediately that
the projection from $\Omega=<P_{1},\ldots,P_{k}>$ to $\omega$ is
biunivocal on $C$ as required.
\end{proof}

As a consequence, we note the following, which is of independent
interest;\\

\begin{lemma}
Let $C\subset P^{w}$ be as in the previous lemma. Let
$\{P_{1},\ldots,P_{k}\}$ be independent generic points on $C$ for
$k\leq w-1$. Then the hyperplane $<P_{1},\ldots,P_{k}>$ does not
otherwise encounter $C$.

\end{lemma}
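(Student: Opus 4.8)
The plan is to argue by induction on $k$, reducing the ambient dimension $w$ by one at each step via a single point projection, so that the base case $k=2$ is precisely Lemma 4.2. Throughout, write $\Omega=<P_{1},\ldots,P_{k}>$ for the $(k-1)$-plane spanned by the points; the claim is that $\Omega\cap C=\{P_{1},\ldots,P_{k}\}$. For $k=2$ the plane $\Omega$ is the secant line $l_{P_{1}P_{2}}$ and the assertion is exactly Lemma 4.2 (valid since the reductions keep the ambient dimension at least $3$, because $k\leq w-1$). So one may assume $k\geq 3$ and take as inductive hypothesis the full statement for any $k-1$ independent generic points of a curve in any $P^{w'}$ with $k-1\leq w'-1$ satisfying the standing hypotheses; note that $k-1\leq(w-1)-1$, so the constraint is preserved under the reduction below.

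For the inductive step I would fix a generic hyperplane $H\cong P^{w-1}$ and let $pr=pr_{P_{k}}$ be the projection of $C$ from the point $P_{k}$ to $H$, with image curve $C'=pr(C)$. Two facts must be checked first: that $C'$ is not contained in any hyperplane section of $H$, and that the images $P_{i}'=pr(P_{i})$ for $1\leq i\leq k-1$ are independent generic points of $C'$. The former holds because, if $C'$ lay in a $(w-2)$-plane $H'\subset H$, then $C$ would lie in the hyperplane $<H',P_{k}>$ of $P^{w}$, contradicting the hypothesis on $C$; the latter is exactly the genericity bookkeeping already carried out in the proof of Lemma 4.6, using that $pr$ is defined over $P_{k}$ and is generically finite, so that transcendence degrees over $P_{k}$ are preserved. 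Since $\Omega$ passes through the centre $P_{k}$, its image under $pr$ is the $(k-2)$-plane $\Omega'=<P_{1}',\ldots,P_{k-1}'>$, to which the inductive hypothesis applies.

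Now suppose, for contradiction, that there is a point $Q\in\Omega\cap C$ with $Q\notin\{P_{1},\ldots,P_{k}\}$. In particular $Q\neq P_{k}$, so $Q'=pr(Q)$ is defined and lies in $\Omega'\cap C'$. By the inductive hypothesis $\Omega'\cap C'=\{P_{1}',\ldots,P_{k-1}'\}$, whence $Q'=P_{j}'$ for some $1\leq j\leq k-1$. But $pr(Q)=pr(P_{j})$ forces $Q$ and $P_{j}$ to lie on a common line through the centre $P_{k}$, that is $Q\in l_{P_{k}P_{j}}$. Since $P_{k}$ and $P_{j}$ are themselves independent generic points of $C$, Lemma 4.2 gives $l_{P_{k}P_{j}}\cap C=\{P_{k},P_{j}\}$, so $Q\in\{P_{k},P_{j}\}$, contradicting the choice of $Q$. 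Hence $\Omega\cap C=\{P_{1},\ldots,P_{k}\}$, completing the induction.

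The main obstacle is not the collapsing argument itself, which reduces cleanly to Lemma 4.2, but the verification that the projected data inherit the hypotheses: that $C'$ remains non-degenerate in $H$, and above all that the projected points $P_{1}',\ldots,P_{k-1}'$ stay independent and generic on $C'$, so that $\Omega'$ genuinely is a $(k-2)$-plane spanned by generic points and the inductive hypothesis is applicable. This is precisely where the transcendence-degree computation from Lemma 4.6 must be invoked, and where the characteristic-zero hypothesis (definable closure yielding an honest morphism $pr$, as in the construction) enters; the non-zero characteristic modifications would be deferred to the final section, as elsewhere in the paper.
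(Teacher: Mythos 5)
Your proof is correct, but it takes a genuinely different route from the paper's. The paper disposes of the lemma in two lines: by Lemma 4.6 the projection $pr_{<P_{1},\ldots,P_{k-1}>}$ from the $(k-2)$-plane spanned by the first $k-1$ points is generally biunivocal, and any extra intersection $Q$ of $<P_{1},\ldots,P_{k}>$ with $C$ would satisfy $pr_{<P_{1},\ldots,P_{k-1}>}(Q)=pr_{<P_{1},\ldots,P_{k-1}>}(P_{k})$, contradicting biunivocity at the generic point $P_{k}$. You instead run an induction on $k$ using only single-point projections, reducing to Lemma 4.2 at the bottom, and never invoke the conclusion of Lemma 4.6 (though you correctly borrow from its proof the bookkeeping that $pr_{P_{k}}$ preserves independence and genericity of the remaining points, and you check non-degeneracy of the image curve). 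The trade-off: the paper's argument is shorter because Lemma 4.6 has already packaged the composition of single-point projections, whereas yours re-derives the relevant piece of that composition inline; on the other hand, your version is slightly more careful at one point, since in the paper's argument the possibility that the extra point $Q$ lies inside the centre $<P_{1},\ldots,P_{k-1}>$ itself (where the projection is undefined) is passed over silently, while in your setup the centre is the single point $P_{k}$ and the hypothesis $Q\notin\{P_{1},\ldots,P_{k}\}$ immediately guarantees that $pr_{P_{k}}(Q)$ is defined. Both arguments rest ultimately on Lemma 4.2 and on the same genericity-preservation facts, so neither is more general than the other.
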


\begin{proof}
By the previous lemma, the projection
$pr_{<P_{1},\ldots,P_{k-1}>}$ is generally biunivocal. Suppose
there existed another intersection $Q$ of $<P_{1},\ldots,P_{k}>$
with $C$. Then
$pr_{<P_{1},\ldots,P_{k-1}>}(Q)=pr_{<P_{1},\ldots,P_{k-1}>}(P_{k})$,
contradicting the definition of generally biunivocal.
\end{proof}

Using Lemma 4.6, we obtain an alternative proof of Theorem 1.33;\\

\begin{theorem}
Let $C\subset P^{w}$ be a projective algebraic curve, then $C$ is
birational to a plane projective curve.
\end{theorem}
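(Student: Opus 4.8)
Theorem 4.11 asserts that any projective algebraic curve $C \subset P^w$ is birational to a plane projective curve. This is a restatement of Theorem 1.33, but now the author wants a proof via the conic projection machinery just developed, rather than whatever abstract argument underlay the earlier statement. So let me sketch how the projection apparatus forces the conclusion.

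Let me start by understanding the picture. We have a curve $C \subset P^w$ which, after reducing to the smallest ambient space, we may assume is not contained in any hyperplane. I want to show it is birational to a curve in $P^2$. The natural idea is to repeatedly project from generic points until the image lands in a plane, while keeping the map birational (generically biunivocal) at each stage.

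**The Plan.** I would proceed by induction on $w$, with the base case $w = 2$ being trivial (the curve is already plane). For $w > 2$, the strategy is to apply Lemma 4.6 (or its consequence Lemma 4.8). Here is the structure I would lay out.

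\begin{proof}
We may assume $C$ is not contained in any hyperplane of $P^w$, else we replace $P^w$ by the smallest linear subvariety containing $C$ and induct. If $w = 2$ there is nothing to prove. Suppose $w \geq 3$. Fix a hyperplane $H \cong P^{w-1}$ in $P^w$ and choose a generic point $P \in C$. By Lemma 4.8, the projection $pr_P$ from $P$ to $H$ is generally biunivocal on $C$. Hence, for generic $x \in C$ there is a unique $y = pr_P(x) \in pr_P(C)$ with $x$ recoverable from $y$, so the correspondence $\Gamma_{pr}$ restricts to an isomorphism of open subsets; that is, $C$ and $pr_P(C)$ are birational in the sense of Definition 1.19. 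Since $pr_P(C)$ is an irreducible projective algebraic curve lying in $H \cong P^{w-1}$, the inductive hypothesis applies to it, yielding a plane projective curve birational to $pr_P(C)$, and hence, by transitivity of birationality, birational to $C$. This completes the induction and the proof.
\end{proof}

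**The Main Obstacle.** The genuinely substantive content is entirely imported from Lemma 4.8 (general biunivocity of a single generic projection), which itself rests on the hard Lemma 4.2 asserting that a line through two independent generic points of $C$ meets the curve nowhere else. With that lemma in hand, the present theorem is essentially a packaging argument: the only things I must be careful about are (i) verifying that ``generally biunivocal'' really delivers \emph{birational} in the precise sense of Definition 1.19 — i.e.\ extracting honest isomorphic open subsets $U' \cong V'$ from the generic-biunivocity statement — and (ii) confirming that the projected image $pr_P(C)$ is still a projective algebraic curve not contained in a hyperplane of $H$ (the non-degeneracy was checked inside the construction preceding Remark 4.1). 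Point (i) is where I would spend my care: general biunivocity gives uniqueness on a generic point, and I would pass to the canonical open sets where the correspondence is single-valued both ways to produce the required isomorphism. Once that is granted, the induction on $w$ runs mechanically, so I do not anticipate any deeper difficulty here.
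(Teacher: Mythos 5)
Your proposal is correct and takes essentially the same route as the paper: the paper invokes Lemma 4.6 (projection from the span of $w-2$ independent generic points of $C$ directly down to a plane of dimension $2$), but that lemma is itself proved by exactly the iteration of single generic-point projections that you unfold explicitly in your induction on $w$. The passage from ``generally biunivocal'' to birational in characteristic $0$ is handled in the paper just as you indicate, by extracting isomorphic open subsets $U\cong V$ from the biunivocal correspondence.
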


\begin{proof}
By Lemma 4.6, we can find a generally biunivocal correspondence
between $C$ and $C_{1}\subset P^{2}$. In characteristic $0$, we
can therefore find $U\subset C$ and $V\subset C_{1}$ such that
$U\cong V$. This defines a birational map
$\Phi_{\Sigma}:C\leftrightsquigarrow C_{1}$.
\end{proof}

We now use the techniques of this section to prove some further
results which will be required later. We have first;\\

\begin{lemma}
Let $C\subset P^{w}$ be a projective algebraic curve, not
contained in any hyperplane section, and $x\in C$. Then there
exists a plane $\Omega$ of dimension $w-3$ and a plane $\omega$ of
dimension $2$ such that the projection $pr$ from $\Omega$ to
$\omega$ is generally biunivocal and, moreover, biunivocal at $x$.
\end{lemma}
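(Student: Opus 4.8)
The plan is to take $\Omega$ to be a \emph{generic} plane of dimension $w-3$ in $P^{w}$ -- not one spanned by points of $C$ as in Lemma 4.6 -- and to use this extra freedom to force biunivocality at the fixed point $x$. (The case $w=2$ is trivial, $\Omega$ being empty and $pr$ the identity, so I assume $w\geq 3$.) The starting observation is the elementary projective fact that, for $x'\neq x$, one has $x'\in\langle\Omega,x\rangle$ if and only if the line $l_{xx'}$ meets $\Omega$. Hence, writing $Cone_{x}(C)=\bigcup_{y\in C}l_{xy}$ for the cone over $C$ with vertex $x$ (an irreducible surface, of dimension $2$, as in Remark 4.1), the projection $pr$ from $\Omega$ is biunivocal at $x$ precisely when $\Omega$ avoids $Cone_{x}(C)$. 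Since $\dim\Omega+\dim Cone_{x}(C)=(w-3)+2=w-1<w$, a generic $(w-3)$-plane $\Omega$ satisfies $\Omega\cap Cone_{x}(C)=\emptyset$; in particular $\Omega\cap C=\emptyset$, so $pr$ is a morphism by the discussion at the beginning of this section, and $x\notin\Omega$. For such $\Omega$ any $x'\in C\cap\langle\Omega,x\rangle$ with $x'\neq x$ would force $l_{xx'}\cap\Omega\neq\emptyset$, contradicting $\Omega\cap Cone_{x}(C)=\emptyset$; thus $\langle\Omega,x\rangle\cap C=\{x\}$, which is exactly biunivocality at $x$ in the sense of Definition 4.4.

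Next I would secure general biunivocality by a second genericity condition on $\Omega$. Two distinct points $y_{1},y_{2}\in C$ share their image under $pr$ exactly when the secant $l_{y_{1}y_{2}}$ meets $\Omega$. I would form the incidence variety $J=\{(y_{1},y_{2},\Omega):y_{1}\neq y_{2}\in C,\ l_{y_{1}y_{2}}\cap\Omega\neq\emptyset\}$ inside $C\times C\times G$, where $G$ parametrises $(w-3)$-planes. The secant lines are parametrised by $C\times C$, of dimension $2$, and for a fixed line $l$ the condition that a $(w-3)$-plane meet $l$ has codimension $2$ in $G$: since $\dim\Omega+\dim l=w-2<w$ a generic $\Omega$ misses $l$, and the locus of those meeting it is the expected Schubert condition of codimension $2$. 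Therefore $\dim J=2+(\dim G-2)=\dim G$, so the projection $J\to G$ is generically finite, and a generic $\Omega$ lies on only finitely many secants of $C$. Consequently only finitely many pairs $\{y_{1},y_{2}\}$ collide under $pr$, the colliding locus in $C$ is finite, and for generic $x\in C$ the fibre of $pr$ through $x$ reduces to a point; this is general biunivocality in the sense of Definition 4.4.

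Finally, since $G$ is irreducible and each of the two conditions above -- avoidance of $Cone_{x}(C)$ and finiteness of the colliding secants -- cuts out a dense open subset of $G$, their intersection is a nonempty open set, and I would choose $\Omega$ there (generic over the field of definition of $C$ and $x$). As $\dim\Omega+2=w-1<w$, a generic $2$-plane $\omega$ is disjoint from $\Omega$, and I take it as the target. The resulting $pr:C\to\omega$ is then a morphism to a plane which is both generally biunivocal and biunivocal at $x$; that its image is a curve and not a point follows, exactly as in the construction, from the nondegeneracy of $C$ (otherwise $C$ would lie in a $(w-2)$-plane).

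I expect the genuine obstacle to be the part underlying general biunivocality, namely the dimension count for $J$ showing that a generic centre lies on only finitely many secants -- equivalently, that the generic projection of $C$ to a plane is birational onto its image. The cone computation giving biunivocality at $x$ is comparatively soft, resting only on $\dim Cone_{x}(C)=2$ together with the join characterisation of $\langle\Omega,x\rangle$; the real work is in controlling the secant family uniformly as $\Omega$ varies, which is where the Schubert codimension estimate must be justified with care.
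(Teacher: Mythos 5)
Your proof is correct, but it takes a genuinely different route from the paper's. The paper builds the centre iteratively as a span $\langle P_{1},\ldots,P_{w-2}\rangle$ of single points: each $P_{j}$ is chosen generically on a hyperplane $H$ through (the image of) $x$, so that the line $l_{P_{j}x}$ lies inside $H$ and can therefore only meet $C$ in the finite set $H\cap C$ --- that gives biunivocality at $x$ --- while general biunivocality of each $pr_{P_{j}}$ is obtained by re-running the proof of Lemma 4.2 (the generic-chord lemma) on the pair $\{Q,R\}$ that would witness a failure; the composite is then identified with the projection from the span, as in Lemma 4.6. You instead pick $\Omega$ generically in the Grassmannian in a single step and replace both appeals by dimension counts: avoidance of the $2$-dimensional cone $Cone_{x}(C)$ for biunivocality at $x$, and the codimension-$2$ incidence condition over the $2$-dimensional secant family for general biunivocality. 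Your route is more self-contained --- it never touches Lemma 4.2, whose proof is the hardest part of this section --- and it gives $\Omega\cap C=\emptyset$ for free, so $pr$ is a total morphism. What the paper's construction buys is the flexibility exploited immediately afterwards in Lemma 4.12: by forcing the whole axis $\langle P_{1},\ldots,P_{w-2},x\rangle$ into a prescribed hyperplane $H$ with $I_{italian}(x,C,H)=s$, the axis is automatically not in special position; your generic $\Omega$ would need one further (easy) genericity condition to recover that refinement. The two counts you flag as the crux are both sound: $\dim Cone_{x}(C)=2$ is proved in the paper itself inside Theorem 4.16, and the codimension-$2$ estimate is the standard fact that the $k$-planes meeting a disjoint $m$-plane in $P^{n}$ form a locus of codimension $n-k-m$ in the Grassmannian.
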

\begin{proof}
Let $H$ be a hyperplane containing $x$, then, by the assumption on
$C$, it intersects $C$ in a finite number $r$ of points. It
follows that we can find $P$ generic in $H$ such that $l_{Px}$
does not otherwise encounter the curve $C$. Now choose a further
hyperplane $H'$ not containing $P$. Let $Q$ be generic on $C$ with
$Q$ independent from $P$. Suppose that $l_{PQ}$ intersects $C$ in
a new point $R$, $(*)$. Then $\{Q,R\}$ must form a generic
independent pair. Otherwise, as $P\in l_{QR}$, we would have that
$dim(P/Q)=0$ and therefore $dim(P)=0$ which is a contradiction.
Now, we can imitate the proof of Lemma 4.2 for the independent
pair $\{Q,R\}$ and the projection $pr_{P}$ to obtain a
contradiction. It follows that $(*)$ cannot occur, hence the
projection $pr_{P}$ is generally biunivocal and, moreover, by
construction, biunivocal at $x$. Now, repeat this argument $w-2$
times, to find a sequence of points $\{P_{1},\ldots,P_{w-2}\}$ and
planes
$\{\omega=\omega_{1}\subset\omega_{2}\subset\ldots\subset\omega_{w-1}=P^{w}\}$
such that the projection $pr_{P_{j}}$ from $\omega_{j+1}$ to
$\omega_{j}$ is generally biunivocal on $C$ and, moreover,
biunivocal at $x$, for $1\leq j\leq w-2$. It follows that the
projection $pr_{<P_{1},\ldots,P_{w-2}>}$ from
$<P_{1},\ldots,P_{w-2}>$ to $\omega$ has the required property.
\end{proof}

\begin{defn}
Let $C\subset P^{w}$ be a projective algebraic curve. Let $pr$ be
a projection from $\Omega$ to $\omega$ such that $pr$ is
biunivocal at $x$. We call $<\Omega,x>$ the axis of the projection
at $x$.
\end{defn}

We now refine Lemma 4.9 to ensure that the axis of projection is
not in "special position" with respect to $x$.\\

\begin{defn}{Special Position}\\

Let $C\subset P^{w}$ be a projective algebraic curve and $x$ an
$s$-fold point on $C$, (see Definition 3.1). Let $\Omega$ be a
plane passing through $x$. We say that $\Omega$ is in special
position if for every hyperplane $H$ containing $\Omega$;\\

$I_{italian}(x,C,H)\geq s+1$.\\

\end{defn}

\begin{lemma}
Let hypotheses be as in Lemma 4.9. Let $x$ be an $s$-fold point of
$C$. Then we can obtain the conclusion of Lemma 4.9, with the
extra requirement that the axis of projection $<\Omega,x>$ is
\emph{not} in special position.

\end{lemma}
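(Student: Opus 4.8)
The plan is to run the iterated projection of Lemma 4.9 while forcing every projection centre to lie inside a single fixed hyperplane at which $x$ has intersection multiplicity exactly $s$, so that the resulting axis is contained in that hyperplane and therefore escapes special position. Two preliminary observations organise the argument. First, since $x$ is $s$-fold, the clause of Definition 3.1 that equality is attained furnishes a hyperplane $H_{0}\ni x$ with $I_{italian}(x,C,H_{0})=s$; as $C$ lies in no hyperplane, $C\cap H_{0}$ is a finite set containing $x$. Second, for a projection with centre $\Omega$ the property of being biunivocal at $x$ is exactly the condition $<\Omega,x>\cap C=\{x\}$, that is, the axis meets $C$ only at $x$; and, by Definition 4.11, the axis $<\Omega,x>$ \emph{fails} to be in special position precisely when some hyperplane containing it has intersection multiplicity $s$ (not $\geq s+1$) with $C$ at $x$. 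Hence it suffices to produce a good projection (generally biunivocal and biunivocal at $x$) whose axis is contained in $H_{0}$: then $H_{0}$ itself witnesses that the axis is not in special position.

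To build such a projection I would imitate the flag construction underlying Lemmas 4.6 and 4.9, choosing the centres inside the successive images of $H_{0}$. Write $P^{w}=V_{0}\supset V_{1}\supset\cdots\supset V_{w-2}=\omega$ for the flag of planes of the composite projection, with $\dim V_{i}=w-i$, and set $H_{0}^{(i)}=H_{0}\cap V_{i}$. At the $i$-th step I would apply the single-projection argument of Lemma 4.9 with the \emph{prescribed} starting hyperplane $H_{0}^{(i-1)}$, which is legitimate since that argument works for an arbitrary hyperplane through the relevant point; the centre $Q_{i}$ is taken generic in $H_{0}^{(i-1)}$ subject to $l_{Q_{i}x^{(i-1)}}$ meeting the current image curve $C^{(i-1)}$ only at the image point $x^{(i-1)}$. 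The essential bookkeeping is that projection from $Q_{i}\in H_{0}^{(i-1)}$ carries the hyperplane $H_{0}^{(i-1)}$ onto $H_{0}^{(i-1)}\cap V_{i}=H_{0}^{(i)}$, which is again a hyperplane of $V_{i}$ through $x^{(i)}$ and is contained in $H_{0}$; consequently every $Q_{i}$ lies in $H_{0}$, and since $x\in H_{0}$ the centre $\Omega=<Q_{1},\ldots,Q_{w-2}>$ and the axis $A=<\Omega,x>$ both lie in $H_{0}$. By Lemma 4.6 the composite $pr_{\Omega}=pr_{Q_{w-2}}\circ\cdots\circ pr_{Q_{1}}$ is generally biunivocal, and tracing a hypothetical second preimage of $pr_{\Omega}(x)$ through the flag and invoking the step-wise biunivocality at each image point $x^{(i-1)}$ shows that $pr_{\Omega}$ is biunivocal at $x$. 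Thus $\Omega$ and $\omega$ satisfy the conclusion of Lemma 4.9, and because $A\subseteq H_{0}$ with $I_{italian}(x,C,H_{0})=s$, the axis $A$ is not in special position, which is the extra requirement.

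The main obstacle is confirming that confining each centre to $H_{0}^{(i-1)}$ does not destroy biunivocality, i.e. that Lemma 4.9's step still succeeds with this smaller starting hyperplane. This reduces to a dimension count: $\dim H_{0}^{(i-1)}=w-i\geq 2$ throughout, so generic centres are available, and the only delicate case is the final step, where $H_{0}^{(w-3)}$ is a plane that must not be engulfed by the cone $\bigcup_{y\in C^{(w-3)}}l_{x^{(w-3)}y}$ from the image point. This is excluded because $C^{(w-3)}\not\subseteq H_{0}^{(w-3)}$ (the image of $C$ is not contained in the image of $H_{0}$, since $C\cap H_{0}$ is finite), so the irreducible cone of dimension at most $2$ cannot coincide with the plane $H_{0}^{(w-3)}$, and a generic $Q_{w-2}\in H_{0}^{(w-3)}$ avoids it. Note that no tracking of multiplicities through the intermediate projections is needed: non-specialness enters only once, through $H_{0}$ itself at the end. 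As throughout the paper, the deduction of biunivocality (and of the induced morphism) in Lemma 4.9 uses characteristic $0$, the non-zero characteristic case being deferred to the final section.
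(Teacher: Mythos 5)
Your proposal is correct and takes essentially the same approach as the paper: pick a hyperplane $H_{0}$ through $x$ with $I_{italian}(x,C,H_{0})=s$, rerun the iterated projection of Lemma 4.9 with each centre chosen generically inside the successive images of $H_{0}$, and conclude that the axis $<\Omega,x>$ lies in $H_{0}$, which then witnesses that it is not in special position. Your extra checks (the dimension count and ruling out the cone at the final step) simply fill in the details the paper compresses into the phrase ``imitate the proof of Lemma 4.9''.
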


\begin{proof}
As $x$ is $s$-fold on $C$, we can find a hyperplane $H$ passing
through $x$ such that $I_{italian}(x,C,H)=s$. Now imitate the
proof of Lemma 4.9 by choosing a point $P_{1}$, generic on $H$
such that $l_{P_{1}x}$ does not otherwise encounter the curve $C$.
Repeating the argument for the projected hyperplane
$pr_{P_{1}}(H)$ in $P^{w-1}$, we obtain a series
$\{P_{1},\ldots,P_{w-2}\}$ as in the conclusion of Lemma 4.9 and,
by construction, the axis of projection
$<P_{1},\ldots,P_{w-2},x>\subset H$. Hence, $<\Omega,x>$ is not in
special position.

\end{proof}

We can now prove;\\

\begin{lemma}
Let $C\subset P^{w}$ be a projective algebraic curve. Suppose that
$x\in C$ and $pr$ is a projection from $\Omega$ to $\omega=P^{2}$
satisfying the conclusion of Lemma 4.12. Then $x$ is $s$-fold on
$C$ iff $pr(x)$ is $s$-fold on $pr(C)$.
\end{lemma}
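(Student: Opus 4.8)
The plan is to prove the sharper numerical statement that the multiplicity of $x$ on $C$ equals the multiplicity of $pr(x)$ on $pr(C)$; writing $m_{C}(x)$ for the unique integer $s$ with $x$ being $s$-fold (i.e. $m_{C}(x)=\min_{H\ni x}I_{italian}(x,C,H)$ over hyperplanes $H$ through $x$, by Definition 3.1), and $m_{pr(C)}(pr(x))$ for the analogous minimum over lines $\ell\subset\omega$ through $pr(x)$, the claimed equivalence for every $s$ then follows at once. Write $A=<\Omega,x>$ for the axis, a plane of dimension $w-2$, so the hyperplanes $H$ containing $A$ form a pencil. The geometric backbone is the correspondence $H\leftrightarrow\ell$, $H=<\ell,\Omega>$, between hyperplanes of $P^{w}$ containing $\Omega$ and lines $\ell$ of $\omega$: since $\Omega\cap\omega=\emptyset$ one checks $H\cap\omega$ is exactly a line and $<\ell,\Omega>\cap\omega=\ell$, and $x\in H$ iff $pr(x)\in\ell$, so the subpencil through $A$ matches the lines through $pr(x)$. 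The incidence I will use is that for $P\notin\Omega$ one has $P\in H$ iff $pr(P)\in\ell$, because $<\Omega,pr(P)>=<\Omega,P>$.

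The heart of the argument is the local multiplicity identity
\[
I_{italian}(x,C,H)=I_{italian}(pr(x),pr(C),\ell)\qquad(H=<\ell,\Omega>,\ pr(x)\in\ell).
\]
First I would introduce the linear system $\Sigma_{\Omega}$ on $P^{w}$ of all hyperplanes containing $\Omega$; its base locus is exactly $\Omega$, and since $pr(x)$ is defined we have $x\notin\Omega$. Dually let $\Sigma_{2}$ be the complete system of lines on $\omega$, with empty base locus. Because $x\notin Base(\Sigma_{\Omega})$ and $pr(x)\notin Base(\Sigma_{2})$, the non-existence of coincident mobile points (Lemmas 2.10 and 2.12) lets me replace $I_{italian}$ by the restricted multiplicities $I_{italian}^{\Sigma_{\Omega}}$ and $I_{italian}^{\Sigma_{2}}$ on the two sides. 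These I then compare infinitesimally: a generic infinitesimal variation $H'=<\ell',\Omega>$ of $H$ inside $\Sigma_{\Omega}$ corresponds, under the parameter isomorphism $\ell\leftrightarrow<\ell,\Omega>$, to a generic infinitesimal variation $\ell'$ of $\ell$ inside $\Sigma_{2}$, and by the incidence above $C\cap H'\cap{\mathcal V}_{x}=\{P\in{\mathcal V}_{x}\cap C:pr(P)\in\ell'\}$. Biunivocality of $pr$ \emph{at} $x$, together with the characteristic-$0$ Zariski-structure arguments of Section 2, gives a bijection ${\mathcal V}_{x}\cap C\to{\mathcal V}_{pr(x)}\cap pr(C)$, under which this set maps bijectively onto $pr(C)\cap\ell'\cap{\mathcal V}_{pr(x)}$. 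Counting cardinalities yields $I_{italian}^{\Sigma_{\Omega}}(x,C,H)=I_{italian}^{\Sigma_{2}}(pr(x),pr(C),\ell)$, hence the displayed identity.

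With the identity in hand the remainder is bookkeeping with minima. Letting $\ell$ range over all lines through $pr(x)$, equivalently $H$ over all hyperplanes through $A$, the identity gives
\[
\min_{pr(x)\in\ell}I_{italian}(pr(x),pr(C),\ell)=\min_{A\subset H}I_{italian}(x,C,H).
\]
Put $s=m_{C}(x)$, so $x$ is $s$-fold. Every $H\supset A$ passes through $x$, whence $I_{italian}(x,C,H)\geq s$; and because, by Lemma 4.12, the axis $A=<\Omega,x>$ is \emph{not} in special position, some hyperplane $H_{0}\supset A$ satisfies $I_{italian}(x,C,H_{0})\leq s$, forcing equality there. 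Thus the right-hand minimum equals $s$, so $m_{pr(C)}(pr(x))=s=m_{C}(x)$, and the biconditional of the lemma follows.

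The main obstacle is this local multiplicity identity, and within it the fact that $I_{italian}$ is defined by varying over \emph{all} hyperplanes whereas the projection transparently controls only hyperplanes through $\Omega$; the device that bridges this gap is precisely the non-existence of coincident mobile points, which applies only because $x\notin\Omega=Base(\Sigma_{\Omega})$. One must also verify with care that biunivocality at the \emph{single} point $x$, rather than merely generic biunivocality, delivers the bijection of infinitesimal neighbourhoods. Finally, the non-special-position hypothesis does essential work: without it one obtains only $m_{pr(C)}(pr(x))\geq m_{C}(x)$, reflecting that projecting along a tangentially special axis could artificially raise the multiplicity.
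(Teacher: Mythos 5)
Your argument is correct, but it reaches the conclusion by a genuinely different route from the paper's. Both proofs begin the same way: the bijection between lines $\ell\subset\omega$ through $pr(x)$ and hyperplanes $H=<\Omega,\ell>$ through the axis $A=<\Omega,x>$, the observation that the relevant systems have no base points on the curves, and the reduction of $I_{italian}$ to the restricted multiplicities via Lemmas 2.10 and 2.12. They diverge at the comparison step. The paper never invokes the multiplicity-transfer identity at the point $x$ itself: it proves the transfer identity $(*)$ only where it is deployed at \emph{transverse mobile} intersection points inside the canonical sets $V$ and $W$, establishes $deg(C)=deg(pr(C))=d$, and then compares the two pencils as weighted series --- the $g_{d}^{r}$ cut out by lines through $pr(x)$ has fixed-point contribution $s_{1}$ at $pr(x)$, the $g_{d}^{r}$ cut out by hyperplanes through $A$ has fixed-point contribution $s_{2}$ at $x$ (this is where not-in-special-position enters), and matching the mobile parts $d-s_{1}=d-s_{2}$ via Lemma 2.17 gives $s_{1}=s_{2}$. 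You instead prove the transfer identity directly at $x$ for every $H\supset A$ and take minima over the pencil, using not-in-special-position to identify $\min_{H\supset A}I_{italian}(x,C,H)$ with $s$. Your route is shorter and dispenses with the degree computation and the Bezout bookkeeping, but it leans entirely on the step you yourself flag: the bijection ${\mathcal V}_{x}\cap C\to{\mathcal V}_{pr(x)}\cap pr(C)$ at the single, possibly singular, point $x$. That bijection does hold --- injectivity because the locus of points of $pr(C)$ with two or more preimages is finite and defined over $L$, hence meets ${\mathcal V}_{pr(x)}$ only in $pr(x)$ itself, where biunivocality at $x$ and the finiteness of $A\cap C$ finish the job; surjectivity because a nonstandard point of $pr(C)\cap{\mathcal V}_{pr(x)}$ lifts to $U^{*}$ and its lift specialises into the fibre $\Gamma_{pr}(y,pr(x))=\{x\}$ --- so there is no gap, but you should spell this out, since the presmoothness-based multiplicity arguments of Section 2 are not available at a singular $x$ and this elementary specialisation argument is doing the real work. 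The paper's heavier machinery is precisely a device for avoiding any infinitesimal analysis at the singular point.
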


\begin{proof}
Let $V=\{x\in C:pr\ is\ biunivocal\ at\ x\}$ and $W=pr(V)$. $V$
and $W$ are open subsets of $C$ and $pr(C)$ respectively and are
in bijective correspondence. Let $\{H_{\lambda}:\lambda\in
Par_{H}\}$ be the independent system $\Sigma$ of lines in $P^{2}$.
We then obtain a corresponding independent system $\Sigma'$ on
$P^{w}$ defined by $\{<\Omega,H_{\lambda}>:\lambda\in Par_{H}\}$.
Clearly, $\Sigma$ has no base points on $pr(C)$. We claim that
$\Sigma'$ has no base points on $C$. Suppose that there existed a
base point $y\in C$ for $\Sigma'$, then clearly $pr(y)$ would be a
base point for $\Sigma$ on $pr(C)$, which is a contradiction. We
now claim that for $y\in V$
and corresponding $pr(y)\in W$, that;\\

$I_{italian}(y,C,<\Omega,H_{\lambda}>)=I_{italian}(pr(y),pr(C),H_{\lambda})$ $(*)$\\

By the fact that $\Sigma'$ has no base points, and results of
Section 2, we have that;\\

$I_{italian}(y,C,<\Omega,H_{\lambda}>)=I_{italian}^{\Sigma'}(y,C,<\Omega,H_{\lambda}>)$\\

Therefore, the result follows immediately from the definition of
$I_{italian}^{\Sigma'}$ and the fact that $V$ and $W$ are in
bijective correspondence.\\

We now claim that $deg(C)=deg(pr(C))$. Suppose that
$deg(pr(C))=d$, then, by the proof of Lemma 2.4, we can find a
generic plane $H_{\lambda}$ intersecting $pr(C)$ transversely at
$d$ distinct points inside $W$. By $(*)$, the corresponding
hyperplane $<\Omega,H_{\lambda}>$ intersects $C$ transversely at
$d$ distinct points inside $V$. Hence, by general properties of
infinitesimals, $deg(C)=d$ as well.\\

Now, let $\Sigma''$ be the independent system defined by the lines
in $P^{2}$ passing through $pr(x)$. By Bezout, it defines a
$g_{d}^{r}(\Sigma'')$ on $pr(C)$. Suppose that $pr(x)$ is
$s_{1}$-fold on $pr(C)$, then, after removing the fixed point
contribution at $pr(x)$, we obtain a $g_{d-s_{1}}^{r}$ on $pr(C)$
without fixed points. Let $\Sigma'''$ be the independent system
defined by $<\Omega,\Sigma''>$. It consists exactly of the
hyperplanes passing through the axis of projection $<\Omega,x>$.
Again, by hyperspatial Bezout, it defines a $g_{d}^{r}(\Sigma''')$
on $C$. As $pr$ was assumed to be biunivocal at $x$, this
$g_{d}^{r}(\Sigma''')$ has a unique fixed point at $x$. Moreover,
by the assumption on $pr$ that the axis of projection is not in
special position, if $x$ is $s_{2}$-fold on $C$, then its fixed
point contribution at $x$ is $s_{2}$. Hence, after removing this
contribution, we obtain a $g_{d-s_{2}}^{r}$ on $C$ without fixed
points. Now, using Lemma 2.17, for generic $\lambda$, the weighted
set $W_{\lambda}$ for $g_{d-s_{2}}^{r}$ consists of $d-s_{2}$
points, each counted once, lying inside $V$. Using $(*)$, the
corresponding $V_{\lambda}$ for $g_{d-s_{1}}^{r}$ consists of
$d-s_{2}$ points, each counted once, lying inside $W$. By Lemma
2.17 again, we must have that $d-s_{1}=d-s_{2}$, hence
$s_{1}=s_{2}$ as required.
\end{proof}

As an easy consequence of the above lemma, we have;\\

\begin{lemma}
Let $C\subset P^{w}$ be a projective algebraic curve and suppose
that $x\in C$, then $x$ is non-singular iff $x$ is not multiple.
\end{lemma}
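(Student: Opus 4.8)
The plan is to reduce the statement to the plane case already settled in Lemma 3.2 by means of a conic projection, and then to transfer both multiplicity and non-singularity across that projection. If $w=2$ there is nothing to do: this is precisely Lemma 3.2. So assume $w\geq 3$ and, applying Lemma 4.12 to the point $x$ (which is $s$-fold for some $s\geq 1$, namely the minimal intersection value of Definition 3.1), fix a projection $pr$ from a plane $\Omega$ of dimension $w-3$ onto $\omega=P^2$ which is generally biunivocal, biunivocal at $x$, and whose axis $\langle\Omega,x\rangle$ is not in special position, so that the hypotheses of Lemma 4.14 are met. By Lemma 4.14, $x$ is $s$-fold on $C$ if and only if $pr(x)$ is $s$-fold on the plane curve $pr(C)$; in particular $x$ is multiple on $C$ exactly when $pr(x)$ is multiple on $pr(C)$.

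For the implication that non-singular implies not multiple I would argue directly, bypassing the projection. Suppose $x$ is non-singular on $C$; then the tangent line $l_x$ is defined as in Section 1. Choose a hyperplane $H$ through $x$ with $l_x\not\subseteq H$, which is possible since $l_x$ is a single line and $w\geq 2$. Because $H$ is linear its differential at $x$ is $H$ itself, so $H$ is algebraically tangent to $C$ at $x$ precisely when $l_x\subseteq H$; as this fails, the tangency characterisation used in Lemma 2.10 and Lemma 4.2 forces $I_{italian}(x,C,H)=1$. Hence $x$ is $1$-fold in the sense of Definition 3.1, and is therefore not multiple.

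For the converse I would route through the plane curve. Assume $x$ is not multiple on $C$. By Lemma 4.14 the point $pr(x)$ is not multiple on $pr(C)$, and since $pr(C)\subset P^2$ is a plane curve, Lemma 3.2 shows $pr(x)$ is non-singular on $pr(C)$. It remains to pull this non-singularity back to $x$. The projection $pr$ is birational and biunivocal at $x$, so the inverse rational map $pr^{-1}:pr(C)\leftrightsquigarrow C$ is defined generically; by Lemma 1.27 it extends to a morphism at the non-singular point $pr(x)$, and biunivocality forces its value there to be $x$. This morphism is inverse to $pr$ on a neighbourhood of $x$, so $pr$ is a local isomorphism at $x$ and $x$ is non-singular on $C$.

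The main obstacle is this last transfer of non-singularity. Unlike multiplicity, which Lemma 4.14 moves across the projection automatically, non-singularity is a local condition that a biunivocal birational projection need not respect in general, since a non-singular space curve can acquire a cusp on projection. The resolution is that ``not multiple'' on $C$ has already been converted, through Lemmas 4.14 and 3.2, into the statement that $pr(x)$ is a non-singular, hence normal, point of $pr(C)$; it is exactly normality of the target together with biunivocality at $x$ (Zariski's Main Theorem, in the guise of the extension result Lemma 1.27) that upgrades the biunivocal birational projection to a genuine local isomorphism near $x$. The two points I would verify carefully are that $pr^{-1}$ is single-valued at $pr(x)$, which is where biunivocality at $x$ guaranteed by Lemma 4.12 enters, and that Lemma 1.27 indeed applies to the curve $pr(C)$ at its non-singular point $pr(x)$.
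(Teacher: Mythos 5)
Your proposal is correct, and the load-bearing direction coincides with the paper's own proof: project via a conic projection satisfying the conclusion of Lemma 4.12, transfer $s$-foldness to the plane curve by Lemma 4.13 (note this is 4.13, not 4.14 --- 4.14 is the statement you are proving), apply Lemma 3.2 to conclude $pr(x)$ is non-singular, and then extend the inverse rational map to a morphism at the non-singular point $pr(x)$ so that biunivocality at $x$ yields a local isomorphism and pulls non-singularity back to $x$. Where you genuinely diverge is the easy direction: the paper closes with a chain of equivalences ``$pr(x)$ non-singular iff $x$ non-singular,'' but the isomorphism of neighbourhoods it constructs is predicated on $pr(x)$ already being non-singular, so the implication ``$x$ non-singular $\Rightarrow pr(x)$ non-singular'' (needed to run the chain backwards) is not really delivered by that construction. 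Your direct argument --- choose a hyperplane $H$ through $x$ with $l_x\not\subseteq H$, so that the tangency characterisation from the proof of Lemma 2.10 forces $I_{italian}(x,C,H)=1$ and hence $x$ is $1$-fold --- bypasses the projection entirely for that direction and is the cleaner route; it only requires observing that $C\not\subseteq H$ (else $l_x\subseteq H$), so the intersection is finite and the multiplicity is at least $1$. One shared implicit hypothesis worth flagging: Definition 3.1 only defines $s$-fold for curves not contained in a hyperplane, so both proofs tacitly reduce to that case before invoking Lemmas 4.12 and 4.13.
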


\begin{proof}
Choose a projection $pr$ as in Lemma 4.13. Then, $x$ is not
multiple iff $pr(x)$ is not multiple. By Lemma 3.2, $pr(x)$ is not
multiple iff $pr(x)$ is non-singular. In characteristic $0$, using
the fact that $pr$ is generally biunivocal and an elementary model
theoretic argument, we can find an inverse morphism
$\phi:W'\rightarrow V'$, where $\{W',V'\}$ are open subsets of
$\{W,V\}$ given in the previous lemma. It follows that $V'\cong
W'$. As $pr(x)$ is non singular, we can extend the morphism $\phi$
to include $pr(x)$ in its domain, by biunivocity of $pr$ at $x$,
we must have that $\phi(pr(x))=x$. Hence, we may assume that $x$
lies in $V'$ and $pr(x)$ lies in $W'$. Therefore, $pr(x)$ is
non-singular iff $x$ is non-singular. Hence, the result is shown.

\end{proof}

Combining this result with Theorem 3.3, we then have the
following;\\

\begin{theorem}
Let $C\subset P^{w}$ be a projective algebraic curve, then $C$ is
birational to a non-singular projective algebraic curve
$C_{1}\subset P^{w'}$.
\end{theorem}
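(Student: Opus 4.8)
The plan is to obtain the desired non-singular model as the end of a short chain of birational equivalences, with all the substantive work already carried out in Sections 3 and 4. The only genuinely new observation needed is that ``no multiple points'' is exactly the same as ``non-singular'' for curves in arbitrary projective space, and this is precisely the content of Lemma 4.15. So the theorem should fall out as a corollary of Theorem 3.3 once we have reduced to the planar situation and transported the conclusion back.

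First I would reduce to a plane curve. Since $C\subset P^{w}$ is an arbitrary projective algebraic curve, Theorem 1.33 (equivalently the conic-projection version, Theorem 4.7) gives a plane projective algebraic curve $C_{0}\subset P^{2}$ birational to $C$. This step is needed because Theorem 3.3 takes as its hypothesis a curve lying in $P^{2}$, not in general $P^{w}$. Next I would apply Theorem 3.3 directly to $C_{0}$: it yields a projective algebraic curve $C_{1}\subset P^{w'}$ (for some $w'\geq 2$) which is birational to $C_{0}$ and which has \emph{no multiple points}, in the sense of Definition 3.1.

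It then remains to upgrade the absence of multiple points to genuine non-singularity. Here I would invoke Lemma 4.15, which asserts that for a projective algebraic curve in any $P^{w'}$ a point is non-singular if and only if it is not multiple. Applying this pointwise to $C_{1}$, and using that $C_{1}$ has no multiple points, every point of $C_{1}$ is non-singular, so $C_{1}$ is a non-singular projective algebraic curve. Finally, birationality is transitive (it is an isomorphism of open subvarieties, equivalently an isomorphism of function fields by Lemma 1.21), so from $C\leftrightsquigarrow C_{0}$ and $C_{0}\leftrightsquigarrow C_{1}$ we conclude $C\leftrightsquigarrow C_{1}$, which completes the argument.

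There is no serious obstacle in the proof itself: the heavy lifting resides in Theorem 3.3 and in Lemma 4.15, both of which may be assumed. The only points requiring care are bookkeeping ones --- checking that the input to Theorem 3.3 really is a \emph{plane} curve (hence the preliminary reduction via Theorem 1.33), and checking that Lemma 4.15 is stated for curves in $P^{w'}$ rather than only in $P^{2}$ (it is, being derived from the planar case Lemma 3.2 by the conic-projection transfer of Lemma 4.13). As elsewhere in the paper, the passage from biunivocal correspondences to honest birational maps uses characteristic $0$; the modifications for the presence of Frobenius in positive characteristic are deferred to the final section.
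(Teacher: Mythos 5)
Your proposal is correct and follows exactly the route the paper intends: the paper states this theorem with no proof body beyond the phrase ``Combining this result with Theorem 3.3,'' where ``this result'' is the equivalence of non-singularity and non-multiplicity in $P^{w}$ (Lemma 4.14 in the paper's numbering, which you cite as 4.15), and your explicit reduction to a plane curve via Theorem 1.33 before invoking Theorem 3.3, followed by transitivity of birationality, is just the bookkeeping the paper leaves implicit. The only corrections needed are the labels: the non-singular/not-multiple lemma is Lemma 4.14, and the conic-projection version of Theorem 1.33 is Theorem 4.8.
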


We finish this section with the following application of the
method of Conic Projections, the result will not be required later
in the paper. Some part of the proof will require methods
developed in Sections 5 and 6, we refer the reader to Definition
6.3 for the terminology "node". I have not seen a reasonable
algebraic proof of this result.\\

\begin{theorem}
Let $C\subset P^{w}$ be a projective algebraic curve, then $C$ is
birational to a plane projective algebraic with at most nodes as
singularities.
\end{theorem}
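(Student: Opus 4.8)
The plan is to reduce, via Theorem 4.15, to the case of a \emph{non-singular} curve, and then to reach a plane model by a single generic conic projection from a space curve, controlling the resulting singularities by dimension counts. First I would apply Theorem 4.15 to replace $C$ by a birational non-singular curve $C_{1}\subset P^{w'}$. If $w'\leq 2$ we are finished, since a non-singular plane curve has no singular points at all. If $w'\geq 4$, I would project $C_{1}$ from a generic point onto $P^{w'-1}$: the union of all secant and tangent lines of $C_{1}$ is, by the incidence–variety dimension counts used throughout Sections 2 and 4 (and of the same kind as the construction of $Tang(\cdot)$), a closed subvariety $Sec(C_{1})$ of dimension at most $3$. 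Hence for $w'\geq 4$ a generic centre lies off $Sec(C_{1})$; the projection is then injective and unramified, so it is an isomorphism onto a non-singular image in $P^{w'-1}$. Iterating, I reach a non-singular curve $C_{2}\subset P^{3}$, which I may assume is not contained in any plane, since otherwise $C_{2}$ is already a smooth plane curve and we are done.

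Next I would project $C_{2}$ from a generic point $P\in P^{3}$ onto a plane $\omega=P^{2}$. By Lemma 4.5 this projection $pr$ is generally biunivocal, hence birational, so $C'=\overline{pr(C_{2})}$ is a plane curve birational to $C$. A point $q\in C'$ is singular precisely when the line $l_{Pq}$ either meets $C_{2}$ in two or more points (producing a multiple point of $C'$) or is tangent to $C_{2}$ (producing a cusp). To force only nodes I would impose three genericity conditions on $P$, each cutting out a locus of dimension at most $2$ in $P^{3}$ and so avoidable by a generic centre: (i) $P\notin Tang(C_{2})$, the tangent developable, which has dimension at most $2$; this rules out cusps and ensures $pr$ is immersive, so each branch of $C'$ is smooth with tangent direction $pr(l_{x})$; (ii) $P$ lies on no trisecant of $C_{2}$, so every fibre of $pr$ has at most two points and every singular point of $C'$ is a double point; and (iii) $P$ lies on no \emph{stationary} secant, i.e.\ no secant $l_{ab}$ for which the tangent lines $l_{a}$, $l_{b}$ and the secant $l_{ab}$ are coplanar, so that at each double point $pr(l_{a})\neq pr(l_{b})$ and the two branches cross transversally.

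Granting these conditions, every singular point of $C'$ has exactly two preimages $a,b$ on $C_{2}$, both non-singular, whose images are smooth branches meeting with distinct tangent directions. By the branch analysis of Section 5 and Definition 6.3, such a point is exactly a node, so $C'$ has at most nodes, as required.

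The main obstacle is condition (ii): I must show that the family of trisecant lines of $C_{2}$ has dimension at most $1$, so that its union is a surface that a generic point of $P^{3}$ can avoid. This is the classical trisecant bound, and it is where the non-degeneracy of $C_{2}$ in $P^{3}$ (arranged by the earlier generic projections) is essential — for a planar or otherwise special curve the trisecants could sweep out all of $P^{3}$. I would establish it by a dimension estimate on the incidence variety of collinear triples $(a,b,c)$ of points of $C_{2}$, in the same style as the covers used in Lemmas 1.5 and 1.6 and in Section 4. The secondary subtlety is verifying that a transversal double point is genuinely a node rather than some higher tangential contact; this is precisely what the branch theory of Section 5 and the singularity classification underlying Definition 6.3 supply, which is why the statement is deferred to the methods of Sections 5 and 6.
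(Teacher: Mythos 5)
Your overall route is the same as the paper's: pass to a non-singular model, descend to $P^{3}$ by generic projections avoiding the (three-dimensional) chord variety, and then project from a generic point of $P^{3}$ avoiding the tangent developable, the trisecant surface, and the bitangent-chord locus. Your conditions (i)--(iii) reproduce three of the paper's excluded loci ($Tangent(C')$, $Trisecant(C')$, $Bitangent\ Chord(C')$), and your dimension estimates for them are the ones the paper actually carries out.

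There is, however, a concrete gap: under Definition 6.3 a node is the origin of at most two \emph{ordinary} branches (character $(1,1)$) with distinct tangent lines, and your three conditions only guarantee that the two branches at a double point are \emph{linear} (order $1$) with distinct tangents. Avoiding $Tang(C_{2})$ rules out $P$ lying on a tangent line, hence rules out order $\geq 2$ (cusps), but it does not rule out $P$ lying on the \emph{osculatory plane} of a preimage point: by Theorem 6.4 the projection of a branch of character $(1,1,1)$ from a point on its osculatory plane has character $(1,2)$, i.e.\ is inflectional rather than ordinary, and the image point is then not a node in the sense of Definition 6.3. Nor does your argument exclude the possibility that a preimage point is itself the origin of a non-ordinary branch of $C_{2}$. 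The paper closes both holes by excluding two further two-dimensional loci from the choice of $P$: the cone over the finitely many points of $C_{2}$ carrying non-ordinary branches (its $Singular\ Cone(C')$, finiteness coming from Remark 6.6), and the $Osculatory\ Chord(C')$ variety, which ensures $P$ avoids the osculatory planes of both preimages of any double point so that Theorem 6.4 yields character $(1,1)$ downstairs. Your argument as written proves the theorem only for the weaker notion of node (two linear branches with distinct tangents) that the paper mentions in Section 7; to get the stated result you need to add these two genericity conditions, both of which are again codimension-one avoidances in $P^{3}$.
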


\begin{proof}
We may, by the previous theorem, assume that $C$ is non-singular.
We first reduce to the case $w=3$. That is, we claim that $C$ is
birational to a non-singular projective algebraic curve $C'\subset
P^{3}$. Let $V_{3}$ be the variety of chords on $C$.
That is;\\

$V_{3}=\overline{\{\bigcup l_{ab}:(a,b)\in
{C^{2}\setminus\Delta}\}}$\\

We may assume that $C$ is not contained in any plane of dimension
$2$ ($\dag$) and that $w\geq 4$. We then claim that $V_{3}$
has dimension $3$. Let $a\in C$ and define;\\

$Cone_{a}=\overline{\{\bigcup l_{ab}:b\in({C\setminus a})\}}$\\

As $C$ is irreducible, so is $Cone_{a}$. (use the fact that for
any component $W$ of $Cone_{a}$, $\{b\in ({C\setminus
a}):l_{ab}\subset W\}$ is a closed subset of ${C\setminus a}$).
Suppose that $Cone_{a}$ has dimension $1$. Then there exists
$b$ such that;\\

$Cone_{a}=l_{ab}$\\

It follows immediately that $C$ is contained in $l_{ab}$,
contradicting the hypothesis ($\dag$). Hence, $Cone_{a}$ has
dimension $2$. Now, suppose that $V_{3}$ has dimension $2$. Then
there exist finitely many points
$\{a_{1},\ldots,a_{m}\}$ such that;\\

$V_{3}=Cone_{a_{1}}\cup\ldots\cup Cone_{a_{m}}$\\

In particular, we can find distinct $\{a,b\}\subset C$ such that
$Cone_{a}=Cone_{b}$. Choose $a'\in C$ distinct from $\{a,b\}$.
Then we may assume that $l_{aa'}$ has infinite intersection with
${Cone_{b}\setminus\delta(Cone_{b})}$ and therefore $C\subset
H_{aa'b}$, contradicting the hypothesis ($\dag$). Hence, $V_{3}$
has dimension $3$ as required. Now define the tangent variety on $C$;\\

$Tang(C)=\{\bigcup l_{a}:a\in C, l_{a}\ is\ the\ tangent\ line\
to\ C\ at\ a\}$\\

We claim that $Tang(C)$ is a closed subvariety of $V_{3}$ of
dimension $2$. In order to see that $Tang(C)\subset V_{3}$, it is
sufficient to prove that $l_{a}\subset Cone_{a}$. Consider the
following covers $F,F^{*}\subset (C\setminus \{a\})\times P^{w}$;\\

$F(x,y)\equiv x\in(C\setminus \{a\})\wedge y\in l_{ax}$\\

$F^{*}(x,\lambda)\equiv x\in (C\setminus \{a\})\wedge H_{\lambda}\
contains\ l_{ax}$\\

Let $\bar F$ and $\bar F^{*}$ be the closures of these covers
inside $C\times P^{w}$. We have the incidence relation $I\subset
P^{w}\times P^{w^{*}}$ given by;\\

$I(x,\lambda)\equiv x\in H_{\lambda}$\\

By definition, for $x\in (C\setminus \{a\})$, we have that;\\

$I({\bar F}(x),{\bar F^{*}}(x))$;\\

Hence, this relation holds at $\{a\}$ as well $(*)$. Now, by the
proof of Theorem 6.7, the fibre ${\bar F^{*}}(a)$ consists exactly
of the hyperplanes $H_{\lambda}$ containing $l_{a}$. Hence, by
$(*)$, we must have that the fibre ${\bar F}(a)$ defines $l_{a}$.
In order to complete the proof, observe that the projection
$pr:C\times P^{w}\rightarrow P^{w}$ defines a morphism
from $\bar F$ to $Cone_{a}$. Let;\\

$\Gamma_{pr}(x,y,z)\subset\bar F\times Cone_{a}\subset C\times
P^{w}\times P^{w}$\\

be the graph of this projection. Then, for $x\in
(C\setminus\{a\})$, the fibre $\Gamma_{pr}(x)=l_{ax}\times
l_{ax}$. Hence, the fibre $\Gamma_{pr}(a)=l_{a}\times l_{a}$. This
proves that $l_{a}\subset Cone_{a}$ as required. We clearly have
that $Tang(C)$ has dimension $2$ if $C$ is not contained in a line
$l$, contradicting the hypothesis $(\dag)$. The fact that
$Tang(C)$ is a closed subvariety of $V_{3}$ follows immediately
from the fact that $Tang(C)$ is a closed subvariety of $P^{w}$.\\

Now choose a plane $\Omega$ of dimension $w-4$ such that
$\Omega\cap V_{3}=\emptyset$. Let $pr$ be the projection from
$\Omega$ to $\omega$ where $\omega$ is a plane of dimension $3$
and $\Omega\cap\omega=\emptyset$. Let $pr(C)\subset\omega$ be the
projection of $C$. We claim that $pr(C)$ is birational to $C$ and
non-singular. First, observe that $pr$ is defined everywhere on
$C$ as $C\subset V_{3}$ and $\Omega\cap V_{3}=\emptyset$.
Secondly, we have that $pr$ is biunivocal everywhere on $C$. For
suppose that $pr(x)=pr(x')$, then
$<\Omega,x>=<\Omega,x'>=<\Omega,pr(x)>$, hence $l_{xx'}$
intersects $\Omega$, contradicting the fact that $\Omega\cap
V_{3}=\emptyset$. Finally, we have that for every $x\in C$, the
axis of projection $<\Omega,x>$ is \emph{not} in special position,
see Definition 4.11. This follows from the fact that, as $x$ is
non-singular, $<\Omega,x>$ would only be in special position if it
contained the tangent line $l_{x}$. As $Tang(C)\subset V_{3}$,
this would contradict the fact that $\Omega\cap V_{3}=\emptyset$.
Now, by Lemmas 4.13 and 4.14, we must have that $pr(x)$ is
non-singular for every $x\in C$, that is $pr(C)$ is non-singular.
We may, therefore, using the argument of Lemma 4.14 invert the
morphism $pr$ to obtain an isomorphism between $C$ and $pr(C)$. In
particular, $C$ and
$pr(C)$ are birational.\\

We now consider the curve $C'=pr(C)\subset P^{3}$. In order to
prove the theorem, it will be sufficient to show that $C'$ is
birational to a plane projective curve with at most nodes as
singularities. We now define the following $5$ varieties, we use
the notation $l$
to denote a line and $P$ to define a $2$-dimensional plane;\\

(i). $Tangent(C')=\{\bigcup l_{a}:a\in C,l_{a}\ the\ tangent\
line\
at\ C\}$\\

(ii). $Trisecant(C')=\overline{\{\bigcup l_{abc}:(a,b,c)\in
C^{3}\ distinct,l_{abc}\supset \{a,b,c\}\}}$\\

By a bitangent plane $P_{ab}$, we mean a hyperplane passing
through the tangent lines $l_{a}$ and $l_{b}$ for distinct
$\{a,b\}$ in $C$. By an osculatory plane $P_{a}$, we refer the
reader to Definition 6.3.

We define $Supp(P_{ab})=\{a,b\}$ and $Supp(P_{a})=C'\cap P_{a}$. We then consider;\\

(iii). $Bitangent\ Chord(C')=\overline{\{\bigcup l_{ab}:(a,b)\in
(C^{2}\setminus\Delta), l_{ab}\supset
Supp(P_{ab})\}}$\\

(iv). $Osculatory\ Chord(C')=\overline{\{\bigcup_{a\in C}
l_{ab}:(a,b)\in ({Supp(P_{a})^{2}\setminus\Delta})\}}$.\\

 By Remark 6.6, there exists a finite set $W\subset C'$, consisting
of points which are the origins of non-ordinary branches. We
let;\\

(v). $Singular\ Cone(C')=\bigcup_{y\in W}Cone_{y}$\\

As we have already seen, $Tangent(C')$ defines a $2$-dimensional
algebraic variety, unless $C'$ is contained in a line, which we
may assume is not the case. By Lemma 4.2, there are no trisecant
lines passing through an independent generic pair $(a,b)$ of
$C'^{2}$, unless $C'$ is contained in a plane, which again we may
exclude. The statement $D(x,y)\subset {C'^{2}\setminus\Delta}$, given by;\\

$D(x,y)\equiv \{(x,y)\in C'^{2}:(x\neq y)\wedge \not\exists
w(w\neq
x\wedge w\neq y\wedge w\in C'\cap l_{xy})\}$\\

is clearly constructible and has the property that
$\overline{D}=C'^{2}$. This clearly implies that
$dim(Trisecant(C'))\leq 2$. We now claim that, if $(a,b)$ is an
independent generic pair in $C^{'2}$, there is \emph{no} bitangent
plane $P_{a,b}$. Let $\Sigma=\{H_{\lambda}:\lambda\in P^{3*}\}$ be
the system of hyperplanes in $P^{3}$. $\Sigma$ has no fixed points
on $C'$, hence the linear condition that $H_{\lambda}$ passes
through $l_{a}$ has codimension $2$ and defines a $1$-dimension
family $\Sigma_{1}\subset\Sigma$. As we may assume that $C'$ is
not contained in any hyperplane section,$\Sigma_{1}$ has finite
intersection with $C'$. As $b$ is independent generic from $a$, it
cannot be a base point for the new system $\Sigma_{1}$. Hence,
using the results of Theorems 6.2 and 6.5, the condition that
$H_{\lambda}\in\Sigma_{1}$ passes through $l_{b}$ is also a
codimension $2$ linear condition. As $\Sigma_{1}$ is
$1$-dimensional, this implies the claim. Now consider the
statement $D'(x,y)\subset {C'^{2}\setminus\Delta}$,
given by;\\

$D'(x,y)\equiv \{(x,y)\in C'^{2}:(x\neq
y)\wedge\not\exists\lambda(H_{\lambda}\supset l_{a}\cup
l_{b})\}$\\

Again, this is constructible and has the property that
$\overline{D'}=C'^{2}$. This clearly implies that $dim(Bitangent\
Chord(C'))\leq 2$. For any given $a\in C$, there exists a unique
osculatory plane $P_{a}$ passing through $a$. Hence, there exist
finitely many lines of the form $l_{ab}$ for $b\in
(C^{2}\setminus\Delta)\cap Supp(P_{a})$. This clearly implies that
$dim(Osculatory\ Chord(C'))\leq 2$. Finally, by the above
consideration, we have that $dim(Singular\ Cone(C'))=2$\\

We now choose a point $P$ in $P^{3}$ such that $P$ lies outside
the above defined varieties. This is clearly possible as they all
have dimension at most $2$. It is easily shown that a generic $P$,
lies on finitely many bisecant lines of $C'$, $(\dag)$, see \cite{H}.
Let $\omega$ be a plane of dimension
$2$ not containing $P$ and let $pr_{P}(C')$ be the projection of
$C'$ from $P$ to $\omega$. We claim that $pr_{P}(C')$ is
birational to $C'$ with at most nodes as singularities. Clearly,
the birational property holds by $(\dag)$. Now, first,
suppose that $y\in pr_{P}(C')$ is singular, then, by Lemma 4.14,
it must be multiple. By Lemma 4.13 and the fact that $C'$ is
non-singular, this can only occur if either $|pr_{P}^{-1}(y)|\geq
2$ or there exists a unique $x$ such that $pr_{P}(x)=y$ and the
axis of projection $<P,x>$ is in special position. We may exclude
the second possibility on the grounds that $P$ is disjoint from
$Tang(C')$. We may therefore assume that $|pr_{P}^{-1}(y)|\geq 2$.
Now, we may exclude the possibility that $|pr_{P}^{-1}(y)|\geq 3$,
on the grounds that $P$ is disjoint from $Trisecant(C')$. Hence,
we may assume that $|pr_{P}^{-1}(y)|=2$. Now, as $C'$ is
non-singular, by Definition 5.2, there exist exactly $2$ branches
centred at $y$ on $pr_{P}(C')$. Moreover, we may assume that both
elements of $pr_{P}^{-1}(y)=\{x_{1},x_{2}\}$ are the origins of
ordinary branches, otherwise $P$ would lie inside $Singular\
Cone(C')$. If $P$ is situated on the osculatory plane $P_{x_{1}}$
of say $x_{1}$, then $(x_{1}x_{2})\subset Supp(P_{x_{1}})$ and,
hence, as $P\in l_{x_{1}x_{2}}$, we would have that $P$ lies in
$Osculatory\ Chord(C')$, which is not the case. Hence, we may
apply Theorem 6.4, to obtain that both branches have character
$(1,1)$. Finally, let $l_{a}$ and $l_{b}$ be the tangent lines to
the $2$ branches $\{\gamma_{y}^{1},\gamma_{y}^{2}\}$ at $y$. We
claim that $l_{a}$
and $l_{b}$ are distinct $(*)$. By definition, we have that;\\

$I_{italian}(y,\gamma_{y}^{1},pr_{P}(C'),l_{a})=2$\\

Hence, using Definition 5.15 and Lemma 5.17, we can find an
infinitesimal variation $l_{a}'$ of $l_{a}$ intersecting
$\gamma_{y}^{1}$ in distinct points $\{y',y''\}$. By Definition
5.15, we can find distinct points $\{x_{1}',x_{1}''\}$ in
$C'\cap{\mathcal V_{x_{1}}}$ such that $pr_{P}(x_{1}')=y'$ and
$pr_{P}(x_{1}'')=y''$. We clearly have that that $<l_{a}',P>$ is
an infinitesimal variation of $<l_{a},P>$ which intersects $C'$ in
$\{x_{1}',x_{1}''\}$. Therefore;\\

$I_{italian}(x_{1},C',<l_{a},P>)\geq 2$\\

and hence, by previous arguments, $<l_{a},P>$ contains the tangent
line $l_{x_{1}}$. As the axis of projection $l_{Px_{1}}$ was not
in special position, we must have that $pr_{P}(l_{x_{1}})=l_{a}$.
Similarly, $pr_{P}(l_{x_{2}})=l_{b}$. Now, if $l_{a}=l_{b}$, we
would have that $<l_{a},P>$ is a bitangent plane to $C'$,
$l_{x_{1}x_{2}}$ is a bitangent chord and $P$ would belong to the
variety $Bitangent\ Chord(C')$. As this is not the case, the claim
$(*)$ is proved. This completes the theorem.

\end{proof}

\end{section}
\begin{section}{A Theory of Branches for Algebraic Curves}

We now develop a theory of branches for algebraic curves, using
the techniques of the previous sections.\\

Let $C\subset P^{w}$ be a projective algebraic curve. Then, by
Theorem 4.15, we can find $C^{ns}\subset P^{w_{1}}$ which is
non-singular and birational to $C$. Let
$\Phi^{ns}:C^{ns}\leftrightsquigarrow C$ be the birational map
between $C^{ns}$ and $C$. As $C^{ns}$ is non-singular, $\Phi^{ns}$
extends to a totally defined morphism from $C^{ns}$ to $C$. As
usual, we let $\Gamma_{\Phi^{ns}}$ denote the graph of the
correspondence between $C^{ns}$ and $C$. It has the property that,
given any $x\in C^{ns}$, there exists a unique $y\in C$ such that
$\Gamma_{\Phi^{ns}}(x,y)$. We claim the following;\\

\begin{lemma}
Let $C_{1}\subset P^{w_{1}}$ and $C_{2}\subset P^{w_{2}}$ be
\emph{any} two non-singular birational models of $C$, with
corresponding morphisms $\Phi_{1}^{ns}$ and $\Phi_{2}^{ns}$. Then
there exists a unique \emph{isomorphism}
$\Phi:C_{1}\leftrightarrow C_{2}$ with the property that
$\Phi_{2}^{ns}\circ\Phi=\Phi_{1}^{ns}$ and
$\Phi_{1}^{ns}\circ\Phi^{-1}=\Phi_{2}^{ns}$.
\end{lemma}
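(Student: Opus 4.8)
The plan is to realise the desired $\Phi$ as the extension to all of $C_1$ of the obvious birational map $C_1\leftrightsquigarrow C_2$, and then to check that the two required identities, which hold a priori only on a dense open set, propagate everywhere by separatedness of the target. First I would produce a birational map between $C_1$ and $C_2$: since $\Phi_1^{ns}:C_1\rightarrow C$ and $\Phi_2^{ns}:C_2\rightarrow C$ are both birational morphisms, the composite $\Psi=(\Phi_2^{ns})^{-1}\circ\Phi_1^{ns}$ is a well-defined birational map $C_1\leftrightsquigarrow C_2$; write $[\Psi]$ for its equivalence class and $\Gamma_{[\Psi]}\subset C_1\times C_2$ for the associated correspondence. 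Applying Lemma 1.26 to $\Gamma_{[\Psi]}$ and using that \emph{every} point of $C_1$ is non-singular, I obtain that for each $x\in C_1$ there is a unique $y\in C_2$ with $(x,y)\in\Gamma_{[\Psi]}$; since every point of $C_2$ is also non-singular, the same lemma applied in the other direction gives, for each $y\in C_2$, a unique preimage $x\in C_1$. Thus $\Gamma_{[\Psi]}$ is the graph of a bijection of sets.

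Next I would promote this bijection to an isomorphism of varieties. By the elementary extension principle for curves (a morphism defined on an open subset extends uniquely over the non-singular points, see \cite{H}), the set map $\Phi:C_1\rightarrow C_2$ reading off the unique $y$, and its set-theoretic inverse, are both morphisms; each has graph coinciding with $\Gamma_{[\Psi]}$ on the dense open locus where $\Psi$ is an isomorphism, so $\Phi$ is an isomorphism with inverse $\Phi^{-1}$. To verify $\Phi_2^{ns}\circ\Phi=\Phi_1^{ns}$, I note that both sides are morphisms from the irreducible curve $C_1$ to the projective, hence separated, variety $C$, and that they agree on the dense open set where $\Psi$ represents $(\Phi_2^{ns})^{-1}\circ\Phi_1^{ns}$. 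The locus of agreement of two such morphisms is closed and dense, hence all of $C_1$, giving the first identity; composing with $\Phi^{-1}$ yields $\Phi_1^{ns}\circ\Phi^{-1}=\Phi_2^{ns}$. Uniqueness is the same separatedness argument: any $\Phi'$ satisfying $\Phi_2^{ns}\circ\Phi'=\Phi_1^{ns}$ must coincide with $\Phi$ on the dense open set where $\Phi_2^{ns}$ is invertible, hence everywhere.

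The step I expect to be the main obstacle is the passage from the point-by-point bijection supplied by Lemma 1.26 to a genuine \emph{isomorphism} together with the global validity of the compatibility relations. Lemma 1.26 controls the correspondence only fibrewise, so the real content lies in confirming, through the morphism-extension principle and the separatedness of $C$, that the extended maps are honestly inverse morphisms and that the defining identities, known initially only on a dense open subset, extend to the whole of $C_1$ and $C_2$; everything else is formal.
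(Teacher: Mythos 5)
Your proposal is correct and follows essentially the same route as the paper: form the composite birational map $(\Phi_2^{ns})^{-1}\circ\Phi_1^{ns}$, extend it (and its inverse) to totally defined morphisms using non-singularity, and deduce the compatibility identities and uniqueness from agreement on a dense open set. The only cosmetic difference is that you invoke Lemma 1.26 to get a fibrewise bijection first, whereas the paper extends the birational maps directly and checks $\Psi\circ\Phi=\mathrm{Id}$ by the graph--diagonal argument, which is the same density/closedness idea you phrase via separatedness.
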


\begin{proof}
As $\Phi_{2}^{ns}$ is a birational map, we can invert it to give a
birational map ${\Phi_{2}^{ns}}^{-1}:C\leftrightsquigarrow C_{2}$.
Then the composition
${\Phi_{2}^{ns}}^{-1}\circ\Phi_{1}^{ns}:C_{1}\leftrightsquigarrow
C_{2}$ is birational as well. As $C_{1}$ is non-singular, we can
extend this birational map to a totally defined morphism
$\Phi:C_{1}\rightarrow C_{2}$. By the same argument, we can find a
totally defined morphism $\Psi:C_{2}\rightarrow C_{1}$ with the
property that $\Psi$ inverts $\Phi$ as a birational map. We claim
that $\Psi$ inverts $\Phi$ as a morphism. We have that
$\Psi\circ\Phi:C_{1}\rightarrow C_{1}$ is a morphism with the
property that there exists an open $U\subset C_{1}$ such that
$\Psi\circ\Phi|U=Id_{U}$. Then $Graph(\Psi\circ\Phi)\subset
C_{1}\times C_{1}$ is closed irreducible and intersects the
diagonal $\Delta$ in an open dense subset. Therefore,
$Graph(\Psi\circ\Phi)=\Delta$, hence $\Psi\circ\Phi=Id_{C_{1}}$.
Similarly, one shows that $\Phi\circ\Psi=Id_{C_{2}}$. Therefore,
$\Phi$ is an isomorphism. By construction, $\Phi_{1}^{ns}$ and
$\Phi_{2}^{ns}\circ\Phi$ agree as birational maps and are totally
defined, hence, by a similar argument, they agree as morphisms.
Similarly, $\Phi_{2}^{ns}$ and $\Phi_{1}^{ns}\circ\Phi^{-1}$ agree
as morphisms. For the uniqueness statement, use the fact that any
$2$ isomorphisms $\Phi_{1}$ and $\Phi_{2}$, satisfying the
properties of $\Phi$, would agree on an open subset $U$ of
$C_{1}$, hence must be identical.

\end{proof}

Now fix a point $O$ of $C$. Let $\Gamma_{\Phi^{ns}}\subset
C^{ns}\times C$ be the graph of the correspondence defined above.
We denote by $\{O_{1},\ldots,O_{t}\}$ the fibre
$\Gamma_{\Phi^{ns}}(x,O)$. Note that, by the previous lemma, if we
are given $2$ non-singular models with correspondences
$\Gamma_{\Phi_{1}^{ns}}$ and $\Gamma_{\Phi_{2}^{ns}}$, then we
have a correspondence $O_{1}\sim O_{1}',O_{2}\sim
O_{2}',\ldots,O_{t}\sim O_{t}'$ between the fibres
$\Gamma_{\Phi_{1}^{ns}}(x,O)$ and $\Gamma_{\Phi_{2}^{ns}}(x,O)$,
given by $O_{j}\sim O_{j}'$ iff $\Phi(O_{j})=O_{j}'$, where $\Phi$
is the isomorphism given by the previous lemma. Moreover, as
$\Phi$ is an isomorphism, we also have a correspondence of
infinitesimal neighborhoods ${\mathcal V}_{O_{1}}\sim{\mathcal
V}_{O_{1}'},\ldots,{\mathcal V}_{O_{t}}\sim{\mathcal V}_{O_{t}'}$,
given by ${\mathcal V}_{O_{j}}\sim{\mathcal V}_{O_{j}'}$ iff
$\Phi:{\mathcal V}_{O_{j}}\cong{\mathcal V}_{O_{j}'}$, here we
mean that $\Phi$ is a bijection of sets. We now make
the following definition;\\

\begin{defn}
Let $C\subset P^{w}$ be a projective algebraic curve. Suppose that
$O\in C$. For $1\leq j\leq t$, we define the branches
$\gamma_{O}^{j}$ at $O$ to be the equivalence classes $[{\mathcal
V}_{O_{j}}]$ of the infinitesimal neighborhoods of $O_{j}$ in the
fibre $\Gamma_{\Phi^{ns}}(x,O)$ of \emph{any} non-singular model
$C^{ns}$ of $C$. We define $\gamma_{O}$ to be the union of the
branches at $O$.
\end{defn}

\begin{rmk}
Note that the definition does not depend on the choice of a
non-singular model $C^{ns}$, however, for computational purposes,
it \emph{is} convenient to think of a branch $\gamma$ as the
infinitesimal neighborhood of some $O_{j}$ in a fixed non-singular
model $C^{ns}$.
\end{rmk}

We now have the following lemmas concerning branches;\\

\begin{lemma}
Let $C\subset P^{w}$ and $O\in C$ be an $s$-fold point. Then $O$
is the origin of at most $s$-branches. In particular, a
non-singular point is the origin of a single branch.

\end{lemma}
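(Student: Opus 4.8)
The plan is to relate the branches at $O$ to the fibre $\Gamma_{\Phi^{ns}}(x,O) = \{O_1,\ldots,O_t\}$ of a non-singular model, where by Definition 5.2 the number of branches at $O$ is exactly $t$. So the statement to prove is precisely that $t \leq s$ whenever $O$ is $s$-fold on $C$. The natural strategy is to exhibit, for each $O_j$ in the fibre, a genuine contribution of at least $1$ to the intersection multiplicity $I_{italian}(O,C,H)$ for a suitable hyperplane $H$, and then invoke the bound $I_{italian}(O,C,H) \geq s$ from Definition 3.1, or rather its optimal version where equality is attained for a generic such $H$.

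**The key steps.**

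First I would fix a non-singular model $\Phi^{ns}:C^{ns}\leftrightsquigarrow C$ with graph $\Gamma_{\Phi^{ns}}$, so that $\{O_1,\ldots,O_t\}$ enumerates the fibre over $O$, and each branch $\gamma_O^j$ is represented by the infinitesimal neighborhood ${\mathcal V}_{O_j}$ in $C^{ns}$. Since $O$ is $s$-fold, by Definition 3.1 there exists a hyperplane $H$ through $O$ with $I_{italian}(O,C,H) = s$; choose $H$ generic among hyperplanes through $O$ so that this equality holds and so that $H$ does not contain any tangent direction forced to inflate the count. Next I would compute $I_{italian}(O,C,H) = \mathrm{Card}(C \cap H_{\lambda'} \cap {\mathcal V}_O)$ for a generic infinitesimal variation $\lambda'$ of $\lambda$, and pull this intersection back along $\Phi^{ns}$. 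Because $\Phi^{ns}$ is a totally defined morphism on the non-singular $C^{ns}$ and restricts to a bijection of infinitesimal neighborhoods near each $O_j$ (the correspondence ${\mathcal V}_{O_j}$ being preserved, as noted in the discussion preceding Definition 5.2), each distinct $O_j$ contributes its own points to ${\mathcal V}_O \cap C$. The crucial observation is that the preimages ${\mathcal V}_{O_1},\ldots,{\mathcal V}_{O_t}$ are pairwise disjoint and each maps into ${\mathcal V}_O$, so each branch forces at least one transverse intersection point of $C$ with a generic infinitesimal variation $H_{\lambda'}$ lying in ${\mathcal V}_O$. Summing, the number of such distinct intersection points is at least $t$, giving $s = I_{italian}(O,C,H) \geq t$.

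**The main obstacle.**

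The hard part will be establishing rigorously that \emph{each} branch $\gamma_O^j$ contributes at least one point to the intersection $C \cap H_{\lambda'} \cap {\mathcal V}_O$, and that these contributions from distinct branches are genuinely distinct. The delicate point is that a generic hyperplane through $O$ need not separate the branches cleanly at the level of $C$ itself, since several branches share the common origin $O$. To handle this I would lift the computation to the non-singular model $C^{ns}$, where near each $O_j$ the intersection $\Phi^{ns\,-1}(H) \cap C^{ns} \cap {\mathcal V}_{O_j}$ is governed by the local power-series/transversality machinery of Section 2 (Lemmas 2.10 and 2.13), giving at least one transverse intersection point in ${\mathcal V}_{O_j}$ for a generic infinitesimal variation. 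Since $\Phi^{ns}$ is injective on the disjoint union $\bigsqcup_j {\mathcal V}_{O_j}$ and carries each ${\mathcal V}_{O_j}$ into ${\mathcal V}_O$, the images are distinct points of $C \cap H_{\lambda'} \cap {\mathcal V}_O$, one from each branch. The last clause of the lemma, that a non-singular point is the origin of a single branch, then follows immediately: by Lemma 3.2 (or 4.14) a non-singular point is simple, i.e.\ $1$-fold, so $s = 1$ forces $t \leq 1$, and since a point of $C$ always has at least one branch we conclude $t = 1$.
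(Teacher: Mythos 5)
Your argument is correct, but it takes a genuinely different route from the paper. The paper's proof is a global counting argument in the language of linear series: it takes $\Sigma$ to be the full system of hyperplanes and $\Sigma'\subset\Sigma$ the subsystem through $O$, transfers the associated $g_{d}^{r}$ and $g_{d}^{r-1}$ to the non-singular model via Lemmas 2.31 and 2.32, and observes that each of the $t$ points $O_{1},\ldots,O_{t}$ becomes a base point of the lifted subsystem, so the fixed point contribution on $C^{ns}$ is at least $t+I$ while the mobile order is $d-s$; the inequality $d-s\leq (d+I)-(t+I)$ then gives $t\leq s$ with no local analysis at all. You instead argue infinitesimally: choose a single hyperplane $H$ realizing $I_{italian}(O,C,H)=s$, vary it generically in ${\mathcal V}_{\lambda}$, and show that each of the $t$ pairwise disjoint neighborhoods ${\mathcal V}_{O_{j}}$ upstairs captures at least one of the $s$ resulting intersection points, their images downstairs being distinct because ${\mathcal V}_{O}\setminus\{O\}$ consists of nonstandard points and so avoids the finite $L$-definable locus where $\Phi^{ns}$ fails to be injective. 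Your route is essentially the inequality half of the branched Bezout decomposition (Theorem 5.13) specialized to hyperplanes, and is more elementary in that it needs only the basic multiplicity preservation for Zariski covers rather than the $g_{n}^{r}$ transfer machinery; the paper's route avoids the injectivity discussion entirely and fits its systematic development of linear series. Two points you should make fully explicit to close the argument: first, the lift of $H_{\lambda'}$ is taken with respect to a presentation of $\Phi^{ns}$ whose base locus must be arranged to miss $\{O_{1},\ldots,O_{t}\}$ (as the paper does via a slight extension of Remarks 1.32), since otherwise the lifted hyperplane need not pass through every $O_{j}$; second, the fact that each ${\mathcal V}_{O_{j}}$ contributes at least one point is exactly the statement that every point of a finite equidimensional cover has Zariski multiplicity at least one, which is the correct citation rather than the transversality Lemmas 2.10 and 2.13.
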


\begin{proof}
Suppose that $O$ is the origin of $t$ branches. Then there exists
a non-singular model $C^{ns}$ and a birational map
$\Phi_{\Sigma}:C^{ns}\rightarrow C$ such that
$\Phi_{\Sigma}^{-1}(O)=\{O_{1},\ldots,O_{t}\}$. By a slight
extension to Remarks 1.32, and the fact that
$\{O_{1},\ldots,O_{t}\}$ are non-singular, we may assume that
$Base(\Sigma)$ is disjoint from this set (*). Let $\Sigma$ define
the system of hyperplanes in $P^{w}$. It defines a
$g_{d}^{r}(\Sigma)$ without fixed points on $C$, where $d=deg(C)$.
By Lemma 2.32, there is a corresponding $g_{d+I}^{r}(\Sigma)$ on
$C^{ns}$, where $I$ is the total fixed point contribution at
$Base(\Sigma)$. Now let $\Sigma'\subset\Sigma$ define the system
of hyperplanes passing through $O$. It defines a
$g_{d}^{r-1}(\Sigma')$ on $C$ with corresponding
$g_{d+I}^{r-1}(\Sigma')$ on $C^{ns}$. As $O$ is $s$-fold on $C$,
we can find $g_{d-s}^{r-1}\subset g_{d}^{r-1}(\Sigma')$ on $C$
without fixed points. By Lemma 2.31, we can find a corresponding
$g_{d-s}^{r-1}\subset g_{d+I}^{r-1}(\Sigma')$ on $C^{ns}$, without
fixed points (**). Now each $\phi_{\lambda}$ in $\Sigma'$ must
pass through $\{O_{1},\ldots,O_{t}\}$. Moreover, the fixed point
contribution from $\Sigma'$ at $Base(\Sigma)$ must be at least
$I$. Hence, by the assumption (*), the total fixed point
contribution from $Base(\Sigma')$ must be at least $t+I$. Hence,
by $(**)$, $d-s\leq(d+I)-(t+I)$, therefore $t\leq s$. The lemma is
proved.

\end{proof}

We now make the following definition;\\

\begin{defn}
Let $C_{1}\subset P^{w_{1}},C_{2}\subset P^{w_{2}},C_{3}\subset
P^{w_{3}}$ be birational projective algebraic curve with
correspondences $\Gamma_{\Phi_{1}}\subset C_{1}\times C_{2}$ and
$\Gamma_{\Phi_{2}}\subset C_{2}\times C_{3}$. We define the
composition $\Gamma_{\Phi_{2}}\circ\Gamma_{\Phi_{1}}\subset C_{1}\times C_{3}$ to be;\\

$\Gamma_{\Phi_{2}}\circ\Gamma_{\Phi_{1}}(xz)\equiv\exists y(y\in
C_{2}\wedge\Gamma_{\Phi_{1}}(xy)\wedge\Gamma_{\Phi_{2}}(yz))$\\

\end{defn}

\begin{lemma}
Let hypotheses be as in the previous lemma, then if
$\Phi_{1}:C_{1}\rightarrow C_{2}$ and $\Phi_{2}:C_{2}\rightarrow
C_{3}$ are birational maps;\\

$\Gamma_{\Phi_{2}}\circ\Gamma_{\Phi_{1}}=\Gamma_{\Phi_{2}\circ\Phi_{1}}$\\

\end{lemma}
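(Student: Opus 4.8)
The plan is to treat both $\Gamma_{\Phi_2}\circ\Gamma_{\Phi_1}$ and $\Gamma_{\Phi_2\circ\Phi_1}$ as closed subsets of $C_1\times C_3$, prove the easy inclusion by a closure argument, and prove the hard inclusion by a specialisation argument in the non-standard model. First I would record the relevant closedness. The triple incidence set $Z=\{(x,y,z):(x,y)\in\Gamma_{\Phi_1}\wedge (y,z)\in\Gamma_{\Phi_2}\}$ is closed in $C_1\times C_2\times C_3$, being the intersection of the pullbacks of the closed correspondences $\Gamma_{\Phi_1},\Gamma_{\Phi_2}$. Since $C_2$ is complete, the image of $Z$ under the projection that forgets the $C_2$-coordinate is again closed, and by Definition 5.5 this image is precisely $\Gamma_{\Phi_2}\circ\Gamma_{\Phi_1}$; this is the same ``by completeness'' argument already used in the construction of $Tang(C)$.

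Next I would fix a dense open $U''\subseteq C_1$ on which $\Phi_1$ is an isomorphism onto its image, that image lies in the domain of $\Phi_2$, and hence $\Phi_2\circ\Phi_1$ is defined and is an isomorphism onto its image, so that $\Gamma_{\Phi_2\circ\Phi_1}=\overline{Graph(\Phi_2\circ\Phi_1)\res U''}$. For the inclusion $\Gamma_{\Phi_2\circ\Phi_1}\subseteq\Gamma_{\Phi_2}\circ\Gamma_{\Phi_1}$, note that for $x\in U''$ we have $(x,\Phi_1(x))\in\Gamma_{\Phi_1}$ and $(\Phi_1(x),\Phi_2\Phi_1(x))\in\Gamma_{\Phi_2}$, so with $y=\Phi_1(x)$ the point $(x,\Phi_2\Phi_1(x))$ lies in $\Gamma_{\Phi_2}\circ\Gamma_{\Phi_1}$. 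Thus the dense graph $Graph(\Phi_2\circ\Phi_1)\res U''$ is contained in the closed set $\Gamma_{\Phi_2}\circ\Gamma_{\Phi_1}$, and passing to Zariski closures gives the inclusion.

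The substance of the lemma is the reverse inclusion, and this is the step I expect to be the main obstacle: $\Gamma_{\Phi_2}\circ\Gamma_{\Phi_1}$ can a priori acquire extra points lying over the finitely many singular points of $C_2$, where the finite covers $\pi_2:\Gamma_{\Phi_1}\to C_2$ and $\pi_1:\Gamma_{\Phi_2}\to C_2$ fail to be injective. To handle an arbitrary $(x_0,z_0)\in\Gamma_{\Phi_2}\circ\Gamma_{\Phi_1}$, I would choose $(x_0,y_0,z_0)\in Z$ projecting to it and pick $\bar y\in{\mathcal V}_{y_0}\cap C_2$ generic over the field of definition of all the data, exactly as in the definition of $I_{italian}$ (Definition 2.1). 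Since $\Gamma_{\Phi_1},\Gamma_{\Phi_2}$ are closed over $L$ and $\pi_2,\pi_1$ are finite covers, I can lift the specialisation $\bar y\rightarrow_{\pi}y_0$ through the chosen points, obtaining $\bar x\rightarrow_{\pi}x_0$ with $(\bar x,\bar y)\in\Gamma_{\Phi_1}$ and $\bar z\rightarrow_{\pi}z_0$ with $(\bar y,\bar z)\in\Gamma_{\Phi_2}$; this uses the lifting of specialisations along finite covers (as in \cite{depiro1} and \cite{Z}, already invoked in the proof of Lemma 2.4).

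Finally, because $\bar y$ is generic it is a non-singular point of $C_2$, so by Lemma 1.26 the correspondents are unique and equal the genuine values $\bar x=\Phi_1^{-1}(\bar y)$ and $\bar z=\Phi_2(\bar y)$. Then $\Phi_2\circ\Phi_1(\bar x)=\Phi_2(\bar y)=\bar z$, whence $(\bar x,\bar z)\in Graph(\Phi_2\circ\Phi_1)\subseteq\Gamma_{\Phi_2\circ\Phi_1}$; specialising and using that $\Gamma_{\Phi_2\circ\Phi_1}$ is closed over $L$ forces $(x_0,z_0)\in\Gamma_{\Phi_2\circ\Phi_1}$. As $(x_0,z_0)$ was arbitrary, this gives the reverse inclusion and hence the equality. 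The only two points that genuinely require the infinitesimal framework are the existence of a generic point in ${\mathcal V}_{y_0}$ and the lifting of specialisations along the covers $\Gamma_{\Phi_i}\to C_2$; both are standard in the specialisation set-up of the earlier papers, so the remainder is routine.
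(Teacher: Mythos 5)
Your proof of the inclusion $\Gamma_{\Phi_{2}\circ\Phi_{1}}\subseteq\Gamma_{\Phi_{2}}\circ\Gamma_{\Phi_{1}}$ is correct, and it is in fact all that the paper's own one-line proof (``both correspondences obviously agree on a Zariski open subset'') really delivers: agreement on a dense open set, together with closedness of the composition and the fact that $\Gamma_{\Phi_{2}\circ\Phi_{1}}$ is by definition the closure of its restriction to that open set, gives exactly this one containment. You are also right that the reverse containment is the substantive point, and right to locate the danger in extra points of the composition sitting over the bad locus of $C_{2}$.

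However, the step you use to close that gap fails. When you ``lift the specialisation $\bar y\rightarrow_{\pi}y_{0}$ through the chosen points,'' you are asserting that a single generic $\bar y\in{\mathcal V}_{y_{0}}$ admits lifts $\bar x\in{\mathcal V}_{x_{0}}$ with $(\bar x,\bar y)\in\Gamma_{\Phi_{1}}$ and $\bar z\in{\mathcal V}_{z_{0}}$ with $(\bar y,\bar z)\in\Gamma_{\Phi_{2}}$. Lifting of specialisations along the finite cover $\Gamma_{\Phi_{1}}\rightarrow C_{2}$ only produces a point over $\bar y$ specialising to \emph{some} point of the fibre over $y_{0}$; the Zariski-structure statement that every prescribed point of the fibre receives a lift requires the base to be (pre)smooth at $y_{0}$, which is precisely what fails at the only points where the reverse inclusion is in doubt. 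Indeed the reverse inclusion is false in general: let $C_{2}$ be a nodal plane cubic with node $y_{0}$, let $C_{1}=C_{3}$ be its non-singular model, $\Phi_{1}$ the normalisation morphism and $\Phi_{2}=\Phi_{1}^{-1}$. If $x_{1},x_{2}$ are the two points over the node, then $(x_{1},y_{0}),(x_{2},y_{0})\in\Gamma_{\Phi_{1}}$ and $(y_{0},x_{1}),(y_{0},x_{2})\in\Gamma_{\Phi_{2}}$, so $(x_{1},x_{2})\in\Gamma_{\Phi_{2}}\circ\Gamma_{\Phi_{1}}$ by Definition 5.5, whereas $\Gamma_{\Phi_{2}\circ\Phi_{1}}=\Gamma_{Id}=\Delta$ does not contain it: any $\bar y\in{\mathcal V}_{y_{0}}\cap C_{2}$ lies on one branch of the node, and its unique correspondents under $\Phi_{1}^{-1}$ and under $\Phi_{2}$ both specialise to the \emph{same} $x_{i}$, so no choice of $\bar y$ rescues the pair $(x_{1},x_{2})$. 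The equality asserted by the lemma (and the paper's proof of it) is therefore only safe for the containment you did prove --- which, one can check, is the only direction actually invoked later, in the proof of Lemma 5.7.
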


\begin{proof}
The proof is a straightforward consequence of the fact that both
correspondences obviously agree on a Zariski open subset.
\end{proof}

\begin{lemma}{Birational Invariance of Branches}\\

Let $C_{1}\subset P^{w_{1}}$ and $C_{2}\subset P^{w_{2}}$ be
birational projective algebraic curves with correspondence
$\Gamma_{[\Phi]}\subset C_{1}\times C_{2}$. Fix $O\in C_{2}$ and
let $\{P_{1},\ldots,P_{s}\}=\Gamma_{[\Phi]}(x,O)$. Then,
$[\Phi]$ induces an injective map;\\

$[\Phi]^{*}:\gamma_{O}\rightarrow\bigcup_{1\leq k\leq s}\gamma_{P_{k}}$\\

and, moreover;\\

$[\Phi]^{*}:\bigcup_{O\in C_{2}}\gamma_{O}\rightarrow\bigcup_{O\in
C_{1}}\gamma_{O}$\\

is a bijection, with inverse given by $[\Phi^{-1}]^{*}$.\\
\end{lemma}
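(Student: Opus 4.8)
The plan is to exploit the fact that, since $C_1$ and $C_2$ are birational, they admit a \emph{common} non-singular model, and that by Definition 5.2 branches may be computed on any such model. First I would fix a non-singular projective curve $C^{ns}$ birational to $C_1$, which exists by Theorem 4.15, together with its associated totally defined morphism $\Phi_1^{ns}:C^{ns}\rightarrow C_1$. Composing the birational map $C_1\leftrightsquigarrow C_2$ representing $[\Phi]$ with $\Phi_1^{ns}$ gives a birational map $C^{ns}\leftrightsquigarrow C_2$ which, as $C^{ns}$ is non-singular, extends uniquely to a totally defined morphism $\Phi_2^{ns}:C^{ns}\rightarrow C_2$ (the extension argument used in Lemma 5.1). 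Thus $C^{ns}$ is simultaneously a non-singular model of $C_1$ and of $C_2$, and by Remark 5.3 (justified by the uniqueness statement of Lemma 5.1) the branches of both curves may be read off from $C^{ns}$.

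With this common model in hand, the definition of $[\Phi]^*$ becomes transparent. By Definition 5.2 the set $\bigcup_{P\in C_1}\gamma_P$ is in canonical bijection with $\{[{\mathcal V}_p]:p\in C^{ns}\}$ via $p\mapsto[{\mathcal V}_p]$, where the image is regarded as a branch of $C_1$ centred at $\Phi_1^{ns}(p)$; likewise $\bigcup_{O\in C_2}\gamma_O$ is in canonical bijection with $\{[{\mathcal V}_p]:p\in C^{ns}\}$, the image now regarded as a branch of $C_2$ centred at $\Phi_2^{ns}(p)$ (both maps being bijections since distinct points of $C^{ns}$ have disjoint infinitesimal neighborhoods, so distinct classes). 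I would then \emph{define} $[\Phi]^*$ to be the composite of these two identifications, so that on representatives it is simply the identity $[{\mathcal V}_p]\mapsto[{\mathcal V}_p]$, changing only the curve on which the branch is viewed as centred.

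It then remains to verify the three assertions. For the fibre statement, a branch of $C_2$ at $O$ is represented by ${\mathcal V}_p$ with $\Phi_2^{ns}(p)=O$, and $[\Phi]^*$ sends it to the branch of $C_1$ centred at $P=\Phi_1^{ns}(p)$; that $(P,O)\in\Gamma_{[\Phi]}$, and hence $P\in\{P_1,\ldots,P_s\}$, follows from Lemma 5.6, since $\Phi_2^{ns}\circ(\Phi_1^{ns})^{-1}$ is by construction the birational map $[\Phi]$, whence $\Gamma_{[\Phi]}=\Gamma_{\Phi_2^{ns}}\circ\Gamma_{(\Phi_1^{ns})^{-1}}$, whose defining existential quantifier produces exactly such a $p$. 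Injectivity of $[\Phi]^*:\gamma_O\rightarrow\bigcup_{1\leq k\leq s}\gamma_{P_k}$ is immediate, because distinct branches at $O$ are represented by neighborhoods ${\mathcal V}_p,{\mathcal V}_{p'}$ of distinct $p\neq p'$ in $C^{ns}$, and the identity-on-representatives map keeps them distinct. Finally, both $\bigcup_{O\in C_2}\gamma_O$ and $\bigcup_{P\in C_1}\gamma_P$ are identified with the same set $\{[{\mathcal V}_p]:p\in C^{ns}\}$, so $[\Phi]^*$ is a bijection; running the identical construction with the roles of $C_1$ and $C_2$ interchanged (using $\Phi_2^{ns}$ as the primary model morphism, for which $\Phi_1^{ns}=\Phi_{[\Phi^{-1}]}\circ\Phi_2^{ns}$) produces $[\Phi^{-1}]^*$, which on representatives is again the identity and hence inverts $[\Phi]^*$.

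The one point requiring genuine care, and the step I expect to be the main obstacle, is the independence of the whole construction from the choice of the common model $C^{ns}$, together with the compatibility of the identifications above with the branches as originally defined on possibly different non-singular models of $C_1$ and of $C_2$. This is precisely what Lemma 5.1 supplies: any two non-singular models are related by a unique isomorphism commuting with the structure morphisms, and this isomorphism carries infinitesimal neighborhoods bijectively, so the classes $[{\mathcal V}_p]$ defining the branches are transported canonically. Once this is invoked, every assertion reduces to the observation that $[\Phi]^*$ is the identity at the level of $C^{ns}$.
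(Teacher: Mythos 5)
Your proposal is correct and follows essentially the same route as the paper: construct a common non-singular model by composing a representative of $[\Phi]$ with the structure morphism of a non-singular model of $C_{1}$, identify the branches of both curves with the classes $[{\mathcal V}_{p}]$ for $p\in C^{ns}$, define $[\Phi]^{*}$ as the identity on these representatives, and use Lemma 5.6 for the fibre statement and Lemma 5.1 for independence of the choice of model. The only point you pass over lightly, which the paper checks explicitly (though it too finds it trivial), is that the construction is also independent of the choice of representative $\Phi'$ within the equivalence class $[\Phi]$.
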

\begin{proof}
We first claim that there exists a non-singular model $C^{ns}$ of
$C_{1}$ and $C_{2}$ with morphisms $\Phi_{1}:C^{ns}\rightarrow
C_{1}$ and $\Phi_{2}:C^{ns}\rightarrow C_{2}$ such that
$\Phi_{2}=\Phi\circ\Phi_{1}$ and $\Phi_{1}=\Phi^{-1}\circ\Phi_{2}$
as birational maps (*). Choose a non-singular model $C^{ns}$ of
$C_{1}$ with morphism $\Phi_{1}:C^{ns}\rightarrow C_{1}$. Then
$\Phi\circ\Phi_{1}$ defines a birational map from $C^{ns}$ to
$C_{2}$. As $C^{ns}$ is non-singular, it extends(uniquely) to a
birational morphism $\Phi_{2}:C^{ns}\rightarrow C_{2}$. Clearly,
$\{C^{ns},\Phi_{1},\Phi_{2}\}$ have the property $(*)$. In order
to define the map $[\Phi]_{*}$, let $O\in C_{2}$ and suppose that
$\gamma_{O}^{j}$ is a branch corresponding to the equivalence
class $[{\mathcal V}_{O_{j}}]$, where $O_{j}$ lies in the fibre
$\Phi_{2}^{-1}(O)$. We claim that $\Phi_{1}(O_{j})$ lies in the
fibre $\Gamma_{[\Phi]}(x,O)$ $(**)$. As
$\Phi\circ\Phi_{1}=\Phi_{2}$ as birational maps, it follows, by
Lemma 1.21, that $\Gamma_{\Phi\circ\Phi_{1}}=\Gamma_{\Phi_{2}}$.
However, by the previous lemma,
$\Gamma_{\Phi\circ\Phi_{1}}=\Gamma_{\Phi}\circ\Gamma_{\Phi_{1}}$,
which gives $(**)$.  Then, if $\Phi_{1}(O_{j})=P_{k}$, and
$\Phi_{1}^{-1}(P_{k})=\{P'_{k1},\ldots,P'_{kr},\ldots,P'_{kl}\}$,
for $1\leq r\leq l$, we must have that $[{\mathcal
V}_{O_{j}}]=[{\mathcal V}_{P'_{kr}}]$, for some $r$. Hence, we can
set $[\Phi]^{*}(\gamma^{j}_{O})=\gamma^{r}_{P_{k}}$. One needs to
check that this definition does not depend either on the choice of
the non-singular model $C^{ns}$ or the choice of the birational
map $\Phi$ representing the correspondence $\Gamma_{[\Phi]}$. For
the first claim, after fixing a choice of $\Phi$, note that by the
requirement $(*)$, $\Phi_{1}$ is uniquely determined by the choice
of $\{C^{ns},\Phi_{2}\}$. By Lemma 5.1, for any $2$ choices
$\{C_{1}^{ns},\Phi^{1}_{2}\}$ and $\{C_{2}^{ns},\Phi^{2}_{2}\}$,
there exists a connecting isomorphism
$\Theta:(C_{1}^{ns},\Phi^{1}_{2})\leftrightarrow
(C_{2}^{ns},\Phi^{2}_{2})$, which must therefore be a connecting
isomorphism $\Theta:(C_{1}^{ns},\Phi^{1}_{1})\leftrightarrow
(C_{2}^{ns},\Phi^{2}_{1})$. The result then follows immediately by
definition of the branch as an equivalence class of infinitesimal
neighborhoods. For the second claim, after fixing a choice of
$\{C^{ns},\Phi_{1},\Phi_{2}\}$ and replacing $\Phi$ by an
equivalent $\Phi'$, one still obtains $(*)$ for the original
$\{C^{ns},\Phi_{1},\Phi_{2}\}$ and the result follows trivially.\\
The rest of the lemma now follows by proving that
$[\Phi^{-1}]^{*}$ inverts $[\Phi]^{*}$. This is trivial to check
using the particular choice $\{C^{ns},\Phi_{1},\Phi_{2},\Phi\}$.
\end{proof}
\begin{rmk}
Note that points are \emph{not} necessarily preserved by
birational maps, but, by the above, branches \emph{are} always
preserved.
\end{rmk}

We now refine the Italian definition of intersection multiplicity,
in order to take into account the above construction of a branch.
Suppose that $C\subset P^{w}$ is a projective algebraic curve. We
let $Par_{F}$ be the projective parameter space for \emph{all}
hypersurfaces of a given degree $e$. Now, fix a particular form
$F_{\lambda}$ of degree $e$, such that $F_{\lambda}$ has finite
intersection with $C$. The condition that a hypersurface of degree
$e$ contains $C$ is clearly linear on $Par_{F}$, (use Theorem 2.3
to show that the condition is equivalent to the plane condition
$P$ on $Par_{F}$ of passing through $de+1$ distinct points on the
curve $C$.) Now fix a maximal linear system $\Sigma$ containing
$F_{\lambda}$, such that $\Sigma$ has empty intersection with $P$.
It is trivial to check that the corresponding $g_{n}^{r}(\Sigma)$
on $C$ has no fixed points. Now fix a non-singular model $C^{ns}$
of $C$, with corresponding birational morphism $\Phi_{\Sigma'}$.
By the transfer result Lemma 2.31, we obtain a corresponding
$g_{n}^{r}$ without fixed points on $C^{ns}$. Let
$Card(O,V_{\lambda},g_{n}^{r})$ denote the number of times $O\in
C^{ns}$ is counted for this $g_{n}^{r}$ in the weighted set
$V_{\lambda}$. We then define;\\

\begin{defn}

$I_{italian}(p,\gamma_{p}^{j},C,F_{\lambda})=Card(p_{j},V_{\lambda},g_{n}^{r})$ $(*)$\\

where the branch $\gamma_{p}^{j}$ corresponds to $[{\mathcal
V}_{p_{j}}]$ in the fibre $\Gamma_{[\Phi_{\Sigma'}]}(x,p)$.\\

\end{defn}

\begin{rmk}

One may, without loss of generality, assume that $Base(\Sigma')$
is disjoint from the fibre $\Gamma_{[\Phi_{\Sigma'}]}(x,p)$. In
which case, the definition becomes the more familiar;\\

$I_{italian}(p,\gamma_{p}^{j},C,F_{\lambda})=Card(C^{ns}\cap
\overline{F}_{\lambda'}\cap {\mathcal V}_{p_{j}})$,
$\lambda'\in{\mathcal
V}_{\lambda}$ generic in $Par_{\Sigma}$.\\

Here, we have used the notation
$\{\overline{F}_{\lambda}:\lambda\in Par_{\Sigma}\}$ to denote the
family of "lifted" forms on $C^{ns}$ corresponding to the family
$\{F_{\lambda}:\lambda\in Par_{\Sigma}\}$ on $C$, by way of the
birational morphism $\Phi_{\Sigma'}$.\\
\end{rmk}
\begin{lemma}
Given $\Sigma$ containing $F_{\lambda}$. The definition $(*)$ does
not depend on the choice of the non-singular model $C^{ns}$ and
birational morphism $\Phi_{\Sigma'}$.
\end{lemma}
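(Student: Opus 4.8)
The plan is to deduce the independence from the canonical isomorphism between non-singular models supplied by Lemma 5.1, checking that this isomorphism transports every ingredient of $(*)$ from one choice to the other. First I would pass to the concrete form of the definition recorded in the preceding Remark: after enlarging the presenting system $\Sigma'$ if necessary, one may assume $Base(\Sigma')$ is disjoint from the fibre $\Gamma_{[\Phi_{\Sigma'}]}(x,p)$, so that $I_{italian}(p,\gamma_{p}^{j},C,F_{\lambda})=Card(C^{ns}\cap\overline{F}_{\lambda'}\cap{\mathcal V}_{p_{j}})$ for $\lambda'\in{\mathcal V}_{\lambda}$ generic in $Par_{\Sigma}$. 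The claim then becomes that this cardinality agrees for two choices $(C_{1}^{ns},\Phi_{\Sigma_{1}'})$ and $(C_{2}^{ns},\Phi_{\Sigma_{2}'})$, once both are normalised so that their base loci avoid the relevant fibres.

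Given two such choices, Lemma 5.1 furnishes a unique isomorphism $\Theta:C_{1}^{ns}\leftrightarrow C_{2}^{ns}$ with $\Phi_{\Sigma_{2}'}\circ\Theta=\Phi_{\Sigma_{1}'}$. I would first extract its structural consequences. As $\Theta$ intertwines the two structure morphisms to $C$, it restricts to a bijection of fibres $\Gamma_{[\Phi_{\Sigma_{1}'}]}(x,p)\to\Gamma_{[\Phi_{\Sigma_{2}'}]}(x,p)$; and by the intrinsic definition of a branch (Definition 5.2), together with the identification of infinitesimal neighborhoods across models discussed before that definition, $\Theta$ carries the point $p_{j}^{(1)}$ representing $\gamma_{p}^{j}$ in $C_{1}^{ns}$ to the point $p_{j}^{(2)}$ representing the \emph{same} branch $\gamma_{p}^{j}$ in $C_{2}^{ns}$. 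Being an isomorphism, $\Theta$ also maps ${\mathcal V}_{p_{j}^{(1)}}$ bijectively onto ${\mathcal V}_{p_{j}^{(2)}}$.

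It then remains to match the lifted forms. Since $\Phi_{\Sigma_{2}'}\circ\Theta=\Phi_{\Sigma_{1}'}$, pulling $F_{\lambda'}$ back along the structure map of $C_{1}^{ns}$ coincides with pulling it back along that of $C_{2}^{ns}$ and then applying $\Theta^{-1}$, so $\overline{F}_{\lambda'}^{(1)}=\Theta^{-1}(\overline{F}_{\lambda'}^{(2)})$ on the respective curves. Hence $\Theta$ sends $C_{1}^{ns}\cap\overline{F}_{\lambda'}^{(1)}\cap{\mathcal V}_{p_{j}^{(1)}}$ bijectively onto $C_{2}^{ns}\cap\overline{F}_{\lambda'}^{(2)}\cap{\mathcal V}_{p_{j}^{(2)}}$, and the two cardinalities coincide. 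The residual dependence on the presentation $\Sigma'$ for a \emph{fixed} model is the easier half: equivalent birational maps induce the same correspondence $\Gamma_{[\Phi]}$, so the fibre over $p$ and the lifted family $\{\overline{F}_{\lambda'}\}$ depend only on the birational class and not on the chosen basis.

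The step I expect to be the main obstacle is confirming that the $g_{n}^{r}$'s produced on the two models by the transfer result (Lemma 2.31) genuinely correspond under $\Theta$, rather than merely sharing the same total multiplicity. Lemma 2.31 is phrased through a fixed presentation and its canonical sets $V_{\phi_{\Sigma}},W_{\phi_{\Sigma}}$, so some care is needed to see that the weighted set $V_{\lambda}$ on $C_{1}^{ns}$ is literally the $\Theta$-image of the one on $C_{2}^{ns}$; this is exactly where the compatibility $\overline{F}_{\lambda'}^{(1)}=\Theta^{-1}(\overline{F}_{\lambda'}^{(2)})$ does the work, reducing the comparison to the infinitesimal point-count above. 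A secondary point to treat carefully is the base locus: should $Base(\Sigma')$ meet the fibre for one choice but not the other, one must either argue through the $Card(p_{j},V_{\lambda},g_{n}^{r})$ formulation directly or invoke the preceding Remark to normalise both presentations simultaneously before applying $\Theta$.
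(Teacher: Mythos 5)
Your overall strategy is the paper's: split into (a) fixed model, varying presentation, and (b) two models related by the Lemma 5.1 isomorphism $\Theta$, then transport the infinitesimal point-count. But the step you yourself flag as the main obstacle is exactly where your argument has a gap, and the way you propose to close it is circular. The identity $\Phi_{\Sigma_{2}'}\circ\Theta=\Phi_{\Sigma_{1}'}$ holds only as an equality of \emph{birational maps}, not of the explicit polynomial data: the lifted form $\overline{F}_{\lambda}^{(1)}=F_{\lambda}(\phi_{0},\ldots,\phi_{w})$ and the pullback $\Theta^{*}\overline{F}_{\lambda}^{(2)}$ are genuinely different algebraic forms on the ambient projective space (typically of different degrees, with different base loci), so the asserted compatibility $\overline{F}_{\lambda'}^{(1)}=\Theta^{-1}(\overline{F}_{\lambda'}^{(2)})$ is not available as a set-theoretic identity of intersections with $C^{ns}$; it is only valid where all the presentations involved are defined and agree, i.e.\ inside the relevant canonical sets. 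The paper closes this by choosing a presentation $\Phi_{\Sigma_{3}}$ of $\Theta$ with base locus off the fibre, witnessing the cardinality $n$ by transverse intersections inside the common canonical set $V_{\Phi_{\Sigma_{1}}}\cap V_{\Phi_{\Sigma_{2}}\circ\Phi_{\Sigma_{3}}}$ via Lemma 2.13, and then invoking the transfer result Lemma 2.31 to equate the counts for the two lifted families. You cannot avoid this detour: without pushing the witnessing points into the canonical sets, the bijection induced by $\Theta$ on ${\mathcal V}_{p_{j}^{(1)}}$ does not a priori match intersections with $\overline{F}_{\lambda'}^{(1)}$ to intersections with $\overline{F}_{\lambda'}^{(2)}$.

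The same issue undermines your treatment of the "easier half." Your claim that the lifted family $\{\overline{F}_{\lambda'}\}$ depends only on the birational class is false: two presentations $\Phi_{\Sigma_{1}}$, $\Phi_{\Sigma_{2}}$ of the same morphism give different lifted systems, and one may have base points in the fibre $\Gamma_{[\Phi]}(x,p)$ while the other does not — this is precisely the situation the paper's Case 1 is designed to handle, since the definition via $Card(p_{j},V_{\lambda},g_{n}^{r})$ involves removing fixed-point contributions. What is true (and is the content of the proof of Lemma 2.31) is that a point of the common canonical set is counted $s$ times in one weighted set iff it is counted $s$ times in the other; combined with Lemma 2.16 to witness both cardinalities inside that canonical set, this gives the conclusion. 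So the architecture of your proof is right, but at both stages you need to replace the appeal to a literal identity of lifted forms by the canonical-set bookkeeping of Lemmas 2.13, 2.16 and 2.31.
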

\begin{proof}

We divide the proof into the
following $2$ cases;\\

Case 1. $(C^{ns},\Phi)$ is fixed and we have $2$ presentations
$\Phi_{\Sigma_{1}}$ and $\Phi_{\Sigma_{2}}$ of the birational
morphism $\Phi$, as given by Lemma 1.20, such that
$Base(\Sigma_{1})$ possibly includes some of the points in $\Gamma_{[\Phi]}(x,p)$, while $Base(\Sigma_{2})$ is disjoint from this set.\\

Let $V=V_{\Phi_{\Sigma_{1}}}\cap V_{\Phi_{\Sigma_{2}}}$ with
corresponding $W\subset C$. Denote the weighted sets of the
\emph{2} given $g_{n}^{r}$ on $C^{ns}$,in the definition $(*)$,
corresponding to the presentations $\Phi_{\Sigma_{1}}$ and
$\Phi_{\Sigma_{2}}$, by  $\{V^{1}_{\lambda}\}$ and
$\{V^{2}_{\lambda}\}$. By the proof of Lemma 2.31, if $x\in V$,
then $x$ is counted $s$-times in $V^{1}_{\lambda}$ iff $x$ is
counted $s$-times in $V^{2}_{\lambda}$ $(\dag)$. Now, we claim
that
$Card(p_{j},V^{1}_{\lambda},g_{n}^{r})=Card(p_{j},V^{2}_{\lambda},g_{n}^{r})$.
This follows by $(\dag)$ and an easy application of, say Lemma
2.16, to witness both these cardinalities in the canonical set
$V$.\\

Case 2. We have $2$ non-singular models $(C^{ns}_{1},\Phi_{1})$
and $(C^{ns}_{2},\Phi_{2})$, with presentations
$\Phi_{\Sigma_{1}}$ and $\Phi_{\Sigma_{2}}$, such that
$Base(\Sigma_{1})$
is disjoint from $\Gamma_{[\Phi_{1}]}(x,p)$ and $Base(\Sigma_{2})$ is disjoint from $\Gamma_{[\Phi_{2}]}(x,p)$.\\

Using Lemma 5.1, we can find an isomorphism $\Phi:C_{1}\rightarrow
C_{2}$, such that $\Phi_{2}\circ\Phi=\Phi_{1}$. Let
$\{G_{\lambda}:\lambda\in Par_{\Sigma}\}$ denote the lifted forms
on $C^{ns}_{1}$ corresponding to the morphism $\Phi_{\Sigma_{1}}$
and $\{H_{\lambda}:\lambda\in Par_{\Sigma}\}$ denote the lifted
forms on $C^{ns}_{2}$ corresponding to the morphism
$\Phi_{\Sigma_{2}}$. We need to check that;\\

$Card(C^{ns}_{1}\cap G_{\lambda'}\cap{\mathcal
V}_{p_{j}})=Card(C^{ns}_{2}\cap H_{\lambda'}\cap{\mathcal
V}_{q_{j}})$, $\lambda'\in{\mathcal V}_{\lambda}\cap Par_{\Sigma}$
generic.\\

where $\Phi(p_{j})=q_{j}$ and $\{p_{j},q_{j}\}$ in
$\{C_{1}^{ns},C_{2}^{ns}\}$ respectively, correspond to the branch
$\gamma^{j}_{p}$.\\

Suppose that $Card(C^{ns}_{2}\cap H_{\lambda'}\cap{\mathcal
V}_{q_{j}})=n$. We may, without loss of generality, assume that
$Base(\Phi_{\Sigma_{3}})$ is disjoint from
$\Gamma_{[\Phi_{1}]}(x,p)$ in a particular presentation
$\Phi_{\Sigma_{3}}$ of the morphism $\Phi$ $(\dag)$.  Hence,
$Base(\Phi_{\Sigma_{2}}\circ\Phi_{\Sigma_{3}})$ is disjoint from
$\Gamma_{[\Phi_{1}]}(x,p)$ as well $(\dag\dag)$. Let
$\{\Phi_{\Sigma_{3}}^{*}H_{\lambda}:\lambda\in Par_{\Sigma}\}$
denote the lifted forms on $C_{1}^{ns}$ corresponding to this
presentation. By the fact that $\Phi$ is an isomorphism and
$(\dag)$, we have $Card(C^{ns}_{1}\cap
\Phi^{*}H_{\lambda'}\cap{\mathcal V}_{p_{j}})=n$. Now let
$V=V_{\Phi_{\Sigma_{1}}}\cap
V_{\Phi_{\Sigma_{2}}\circ\Phi_{\Sigma_{3}}}$. By $(\dag\dag)$ and
Lemma 2.13, we may witness $Card(C^{ns}_{1}\cap
\Phi^{*}H_{\lambda'}\cap{\mathcal V}_{p_{j}})=n$ inside the
canonical set $V$. Now, using the fact that
$\Phi_{2}\circ\Phi=\Phi_{1}$ as birational maps, and Lemma 2.31,
it follows that $Card(C_{1}^{ns}\cap G_{\lambda'}\cap {\mathcal
V}_{p_{j}})=n$.

\end{proof}

\begin{lemma}
The definition $(*)$ does not depend on the choice of a maximal
independent system $\Sigma$ containing $F_{\lambda}$, having
finite intersection with $C$.
\end{lemma}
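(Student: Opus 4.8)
The plan is to reduce the claim to the non-existence of coincident mobile points, Lemma 2.10, applied on a single fixed non-singular model. First I would invoke the previous lemma, which permits me to fix once and for all a non-singular model $C^{ns}\subset P^{w_{1}}$ together with a birational morphism $\Phi_{\Sigma'}$, and to compute the value in question by the familiar formula of the preceding remark, namely $Card(C^{ns}\cap\overline{F}_{\lambda'}\cap{\mathcal V}_{p_{j}})$ for $\lambda'\in{\mathcal V}_{\lambda}$ generic in $Par_{\Sigma}$. The crucial observation is that, once $\Phi_{\Sigma'}$ has been fixed, the lifted form $\overline{F}_{\lambda}$ attached to the given hypersurface $F_{\lambda}$ depends only on $F_{\lambda}$ itself and not on the ambient system $\Sigma$; only the \emph{direction of variation} of $\lambda'$ changes with $\Sigma$.

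Now let $\Sigma_{1}$ and $\Sigma_{2}$ be two maximal independent systems of degree $e$, each containing $F_{\lambda}$ and having finite intersection with $C$. Since the substitution $\lambda\mapsto\overline{F}_{\lambda}=F_{\lambda}(\phi_{0},\ldots,\phi_{w})$ is linear in the coefficients of $F_{\lambda}$, each $\Sigma_{i}$ lifts to a genuine linear system $\overline{\Sigma}_{i}$ of forms of the \emph{common} degree $ek$ on $P^{w_{1}}$, where $k=\deg(\Phi_{\Sigma'})$; and because $\Sigma_{i}$ does not contain $C$, the system $\overline{\Sigma}_{i}$ does not contain $C^{ns}$ and so has finite intersection with it. The maximality of $\Sigma_{i}$ forces $Base(\Sigma_{i})\cap C=\emptyset$, while the preceding remark lets me arrange $Base(\Sigma')$ to be disjoint from the fibre $\Gamma_{[\Phi_{\Sigma'}]}(x,p)$; together these give $p_{j}\notin Base(\overline{\Sigma}_{i})$. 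Consequently the displayed count is exactly $I_{italian}^{\overline{\Sigma}_{i}}(p_{j},C^{ns},\overline{F}_{\lambda})$.

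The key step is then to apply Lemma 2.10 on $C^{ns}$: since $p_{j}$ is a non-singular point of $C^{ns}$ lying outside $Base(\overline{\Sigma}_{i})$, we obtain $I_{italian}^{\overline{\Sigma}_{i}}(p_{j},C^{ns},\overline{F}_{\lambda})=I_{italian}(p_{j},C^{ns},\overline{F}_{\lambda})$, where the right-hand side is the unrestricted multiplicity computed by varying over \emph{all} forms of degree $ek$ on $P^{w_{1}}$ in the sense of Definition 2.1. This right-hand side involves only the fixed form $\overline{F}_{\lambda}$, the fixed curve $C^{ns}$ and the fixed point $p_{j}$, and is therefore manifestly independent of the index $i$. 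Comparing $i=1$ and $i=2$ yields equality of the two values, proving the lemma.

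The main obstacle I expect is not the final comparison but the justification that both $\overline{\Sigma}_{i}$ genuinely realise this full local multiplicity, i.e. that the hypotheses of Lemma 2.10 really are met: one must check that each $\overline{\Sigma}_{i}$ is a \emph{linear} (not merely algebraic) system of the correct common degree $ek$, that the finite-intersection hypothesis survives the lift under $\Phi_{\Sigma'}$, and that the maximality of $\Sigma_{i}$ is precisely what pushes $p_{j}$ out of the base locus so that no fixed-point correction $I_{p_{j}}$ intervenes (were $p_{j}$ a base point, one would instead have to run the argument through Lemma 2.15 and the mobile multiplicity $I_{italian}^{mobile}$, which complicates the identification with the intrinsic quantity). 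Once these points are secured, the invariance follows formally from Lemma 2.10.
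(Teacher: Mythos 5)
Your proposal is correct and follows essentially the same route as the paper: reduce to a fixed non-singular model via the preceding lemma, use maximality of $\Sigma$ to conclude $Base(\Sigma)\cap C=\emptyset$ so that $p_{j}$ lies outside the base locus of the lifted system, and then invoke the non-existence of coincident mobile points (Lemma 2.10, cited in the paper as Lemma 2.12) to identify the count with the $\Sigma$-independent quantity $I_{italian}(p_{j},C^{ns},\overline{F}_{\lambda})$. The only step you assert rather than prove is that maximality forces $Base(\Sigma)\cap C=\emptyset$; the paper devotes the first half of its proof to exactly this point (if $w$ were a base point, adjoining any finite-intersection form $F_{\mu}$ to $\Sigma$ would, by maximality, produce a combination containing $C$, forcing $w$ to be a base point of every such $F_{\mu}$, which is absurd), so you should supply that argument.
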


\begin{proof}
Let $\Sigma$ be a maximal independent system containing
$F_{\lambda}$. We claim first that $\Sigma$ has no base points on
$C$, ($\dag$). For suppose that $w\in Base(\Sigma)\cap C$. Let
$F_{\mu}$ be \emph{any} form of degree $e$ having finite
intersection with $C$. Then $<F_{\mu},\Sigma>$ defines a system of
higher dimension, hence must intersect $H$ in a point. That is, we
can find parameters $\{\alpha,\beta\}$ and a fixed $F_{\lambda}$
in $\Sigma$ such that $\alpha F_{\mu}+\beta F_{\lambda}$ contains
$C$. It follows immediately that $w$ must also be a base point for
$F_{\mu}$. Clearly, we can find a form of degree $e$, having
finite intersection with $C$, which doesn't contain $w$. This
gives a contradiction and proves ($\dag$). Now, given a choice of
$\Sigma$ containing $F_{\lambda}$, by the proof of the previous
lemma, we may assume that;\\

$I_{italian}(p,\gamma_{p}^{j},C,F_{\lambda})=Card(C^{ns}\cap
\overline{F_{\lambda'}}\cap{\mathcal V}_{p_{j}})$,
$\lambda'\in{\mathcal
V}_{\lambda}$ generic in $Par_{\Sigma}$.\\

As $p$ is not a base point for $\Sigma$ on $C$, it follows
immediately that $p_{j}$ is not a base point for $\Sigma$ on
$C^{ns}$. Hence, by Lemma 2.12, we have that;\\

$I_{italian}(p,\gamma_{p}^{j},C,F_{\lambda})=I_{italian}(p_{j},C^{ns},\overline{F_{\lambda}})$\\

Clearly, this equality does not depend on the particular choice of
$\Sigma$ containing $F_{\lambda}$ but only on the presentation of
the morphism $\Phi:C^{ns}\rightarrow C$. The result follows.

\end{proof}

Following from the definition of the Italian intersection
multiplicity at a branch, we obtain a more refined version of
Bezout's theorem;\\

\begin{theorem}{Hyperspatial Bezout, Branched Version}\\

Let $C\subset P^{w}$ be a projective algebraic curve of degree $d$
and $F_{\lambda}$ a hypersurface of degree $e$, intersecting $C$
in finitely many points. Let $p$ be a point of intersection with
branches given by $\{\gamma_{p}^{1},\ldots,\gamma_{p}^{n}\}$.
Then;\\

$I_{italian}(p,C,F_{\lambda})=\sum_{1\leq j\leq
n}I_{italian}(p,\gamma_{p}^{j},C,F_{\lambda})$\\

In particular;\\

$\sum_{p\in C\cap F_{\lambda}}\sum_{1\leq j\leq
n_{p}}I_{italian}(p,\gamma^{j}_{p},C,F_{\lambda})=de$\\

\end{theorem}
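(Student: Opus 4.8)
The plan is to reduce the branched identity to a straightforward bijective count of intersection points inside infinitesimal neighborhoods on a fixed non-singular model. First I would fix a non-singular model $C^{ns}$ with birational morphism $\Phi^{ns}:C^{ns}\rightarrow C$ and a maximal independent system $\Sigma$ containing $F_{\lambda}$ and having finite intersection with $C$, as in Definition 5.9. By the proof of Lemma 5.12, $\Sigma$ has no base points on $C$, so the fibre $\Gamma_{[\Phi^{ns}]}(x,p)=\{p_{1},\ldots,p_{n}\}$ consists exactly of the points $p_{j}$ representing the branches $\gamma_{p}^{j}$, and we may arrange $Base(\Sigma)$ to be disjoint from this fibre. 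Then Remark 5.10 computes each right-hand term as $I_{italian}(p,\gamma_{p}^{j},C,F_{\lambda})=Card(C^{ns}\cap\overline{F}_{\lambda'}\cap{\mathcal V}_{p_{j}})$, where $\overline{F}_{\lambda'}$ is the lifted form and $\lambda'\in{\mathcal V}_{\lambda}$ is generic in $Par_{\Sigma}$. On the left, since $p\notin Base(\Sigma)$, Lemmas 2.10 and 2.12 give $I_{italian}(p,C,F_{\lambda})=I_{italian}^{\Sigma}(p,C,F_{\lambda})=Card(C\cap F_{\lambda'}\cap{\mathcal V}_{p})$ for the same generic $\lambda'$.

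The heart of the argument is to show that $\Phi^{ns}$ restricts to a bijection between $C^{ns}\cap\overline{F}_{\lambda'}\cap\bigcup_{1\leq j\leq n}{\mathcal V}_{p_{j}}$ and $C\cap F_{\lambda'}\cap{\mathcal V}_{p}$. By genericity of $\lambda'$ and the absence of base points, $p\notin F_{\lambda'}$ and each $p_{j}\notin\overline{F}_{\lambda'}$, so both sides lie in the \emph{punctured} neighborhoods. Any $q\in({\mathcal V}_{p}\setminus\{p\})\cap C$ is non-standard, hence distinct from each of the finitely many standard singular points of $C$ other than $p$; thus $q$ is non-singular and, by Lemma 1.26, has a unique $\Phi^{ns}$-preimage. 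By continuity of the specialisation $\pi$ this preimage specialises to a point of the fibre over $p$, hence lies in $\bigcup_{j}{\mathcal V}_{p_{j}}$, while surjectivity of $\Phi^{ns}$ gives the reverse direction. Fixing corresponding canonical sets $W_{ns}\subset C^{ns}$ and $W\subset C$ on which $\Phi^{ns}$ is an isomorphism, these nearby points all lie in $W$ (the complement $C\setminus W$ being finite and standard), and the relation $\overline{F}_{\lambda'}\cap W_{ns}=(\Phi^{ns}|_{W_{ns}})^{-1}(F_{\lambda'}\cap W)$ shows the correspondence preserves membership in the hypersurface.

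Since the neighborhoods ${\mathcal V}_{p_{1}},\ldots,{\mathcal V}_{p_{n}}$ are pairwise disjoint (distinct standard points having disjoint infinitesimal neighborhoods, as in Lemma 2.4), the source of the bijection decomposes as a disjoint union over $j$; taking cardinalities gives
$$I_{italian}(p,C,F_{\lambda})=\sum_{1\leq j\leq n}Card(C^{ns}\cap\overline{F}_{\lambda'}\cap{\mathcal V}_{p_{j}})=\sum_{1\leq j\leq n}I_{italian}(p,\gamma_{p}^{j},C,F_{\lambda}),$$
which is the first assertion. Summing this identity over all $p\in C\cap F_{\lambda}$ and applying the unbranched Hyperspatial Bezout Theorem 2.3, which gives $\sum_{p\in C\cap F_{\lambda}}I_{italian}(p,C,F_{\lambda})=de$, then yields the ``in particular'' statement.

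I expect the main obstacle to be the rigorous verification of the bijection in the second paragraph: one must argue carefully that the $\Phi^{ns}$-preimages of points infinitely near $p$ fall into the union of the ${\mathcal V}_{p_{j}}$ and that no two branches contribute a common intersection point. This is precisely where the interplay between standard and non-standard points, the non-singularity of the nearby intersections, and the isomorphism on the canonical sets must be combined.
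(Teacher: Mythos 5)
Your proposal is correct and follows essentially the same route as the paper: fix a non-singular model whose presentation has base locus disjoint from the fibre over $p$, witness both $I_{italian}(p,C,F_{\lambda})$ and each $I_{italian}(p,\gamma_{p}^{j},C,F_{\lambda})$ by actual intersection points for a common generic $\lambda'\in{\mathcal V}_{\lambda}$ (the paper does this via Lemma 2.13 and transverse intersections in the canonical sets, you via Remark 5.10 together with Lemmas 2.10 and 2.12 — equivalent setups), and then establish the bijection between $C^{ns}\cap\overline{F}_{\lambda'}\cap\bigcup_{j}{\mathcal V}_{p_{j}}$ and $C\cap F_{\lambda'}\cap{\mathcal V}_{p}$ using the closedness of the correspondence, the disjointness of the ${\mathcal V}_{p_{j}}$, and injectivity of $\Phi^{ns}$ on the canonical set. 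The concluding summation over $p\in C\cap F_{\lambda}$ combined with Theorem 2.3 is exactly the paper's derivation of the second assertion.
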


\begin{proof}
Fix a non-singular model $(C^{ns},\Phi)$ of $C$. Let
$\Phi_{\Sigma'}$ be a particular presentation of the morphism
$\Phi$ such that $Base(\Phi_{\Sigma'})$ is disjoint from the fibre
$\Gamma_{\Phi}(x,p)=\{p_{1},\ldots,p_{n}\}$ $(*)$. Let $\Sigma$ be
a linear system containing $F_{\lambda}$ such that $\Sigma$
has finite intersection with $C$ and;\\

$I_{italian}(p,C,F_{\lambda})=I_{italian}^{\Sigma}(p,C,F_{\lambda})$\\

Let $V_{\Phi_{\Sigma'}}\subset C^{ns}$ and
$W_{\Phi_{\Sigma'}}\subset C$ be the canonical sets associated to
the morphism $\Phi_{\Sigma'}$. By Lemma 2.13, we can witness
$m=I_{italian}(p,C,F_{\lambda})$ by transverse intersections
$\{x_{1},\ldots,x_{m}\}=C\cap F_{\lambda'}\cap{\mathcal V}_{p}$
inside the canonical set $W_{\Phi_{\Sigma'}}$, for ${\lambda}'$
generic in $Par_{\Sigma}$. Again, by Lemma 2.13 and $(*)$, we can
find $\lambda'$ such that for each $p_{j}\in\Gamma_{[\Phi]}(x,p)$,
$m_{j}=I_{italian}(p,\gamma_{p}^{j},C,F_{\lambda})$ is witnessed
by transverse intersections
$\{y_{j}^{1},\ldots,y_{j}^{m_{j}}\}=C^{ns}\cap
\overline{F}_{\lambda'}\cap{\mathcal V}_{p_{j}}$ inside the
canonical set $V_{\Phi_{\Sigma'}}$. We need to show that
$m_{1}+\ldots+m_{j}+\ldots+m_{n}=m$. By properties of
infinitesimals and the fact that $\{p_{1},\ldots,p_{n}\}$ are
distinct, the sets
$\{y_{1}^{1},\ldots,y_{1}^{m_{1}}\},\ldots,\{y_{n}^{1},\ldots,y_{n}^{m_{n}}\}$
are disjoint. If $y_{i}^{j}$ belongs to one of these sets, then
$y_{i}^{j}\in{\mathcal V}_{p_{i}}$, hence
$\Phi_{\Sigma'}(y_{i}^{j})\in{\mathcal V}_{p}$. Moreover,
$\Phi_{\Sigma'}(y_{i}^{j})\in C\cap F_{\lambda'}$. It follows that
$y_{i}^{j}$ corresponds to a unique $x_{k}$ in
$\{x_{1},\ldots,x_{m}\}$. As each $y_{i}^{j}$ lies in
$V_{\Phi_{\Sigma'}}$, this clearly gives an injection from
$\{y_{1}^{1},\ldots,y_{1}^{m_{1}},\ldots,y_{n}^{1},\ldots,y_{n}^{m_{n}}\}$
to $\{x_{1},\ldots,x_{m}\}$. Hence, $m_{1}+\ldots+m_{n}\leq m$. To
prove equality, suppose that $x_{k}$ lies in
$\{x_{1},\ldots,x_{m}\}$. Then there exists a unique $y_{k}$ with
$\Phi_{\Sigma'}(y_{k})=x_{k}$. By a similar argument to the above,
$y_{k}$ must appear in one of the sets
$\{y_{1}^{1},\ldots,y_{1}^{m_{1}}\},\ldots,\{y_{n}^{1},\ldots,y_{n}^{m_{n}}\}$.
This gives the result.

\end{proof}
\begin{rmk}
One may use this  version of Bezout's theorem in order to develop
a more refined theory of $g_{n}^{r}$.
\end{rmk}

We also simplify the branch terminology for later
applications;\\

\begin{defn}
Let $C\subset P^{w}$ be a projective algebraic curve and
$C^{ns}\subset P^{w_{1}}$ some non-singular birational model with
birational morphism $\Phi^{ns}:C^{ns}\rightarrow C$. Let
$\gamma_{p}^{j}$ correspond to the infinitesimal neighborhood
${\mathcal V}_{p_{j}}$ of $p_{j}$ in $\Gamma_{\Phi^{ns}}(x,p)$.
Then we will also define the branch $\gamma_{p}^{j}$ by the formula;\\

$\gamma_{p}^{j}:=\{x\in C:\exists y(\Gamma_{\Phi^{ns}}(y,x)\wedge
y\in{\mathcal V}_{p_{j}})\}$\\

\end{defn}

\begin{rmk}
Note that the definition uses the language ${\mathcal L}_{spec}$,
and, in particular, is not algebraic.

\end{rmk}
We have the following;\\

\begin{lemma}
Definition 5.15 does not depend on the choice of non-singular
model $C^{ns}$ and morphism $\Phi^{ns}$. Lemma 5.7 may be
reformulated replacing the old definition of a branch with
Definition 5.15. Finally, we have, with hypotheses as for the old
definition of intersection multiplicity at a branch, that;\\

$I_{italian}(p,\gamma_{p}^{j},C,F_{\lambda})=Card(C\cap
F_{\lambda'}\cap \gamma_{p}^{j})$, $\lambda'\in{\mathcal
V}_{\lambda}$ generic in $Par_{\Sigma}$ $(*)$\\

\end{lemma}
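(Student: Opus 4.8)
~\ref{lemma}

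The plan is to dispose of the three assertions of Lemma 5.18 in turn, noting that the first and third are the substantive ones while the middle reformulation of Lemma 5.7 is essentially bookkeeping. First I would address the independence of Definition 5.15 from the choice of non-singular model $C^{ns}$ and morphism $\Phi^{ns}$. The key tool here is Lemma 5.1, which provides, for any two non-singular models $C_1^{ns}$ and $C_2^{ns}$ with morphisms $\Phi_1^{ns}$ and $\Phi_2^{ns}$, a unique connecting isomorphism $\Phi:C_1^{ns}\leftrightarrow C_2^{ns}$ satisfying $\Phi_2^{ns}\circ\Phi=\Phi_1^{ns}$. Given this, if $\gamma_p^{j}$ arises from $[{\mathcal V}_{p_j}]$ in the first model and $\gamma_p^{j}$ from $[{\mathcal V}_{q_j}]$ in the second, with $\Phi(p_j)=q_j$ as guaranteed by the correspondence of infinitesimal neighborhoods discussed just after Lemma 5.1, then the two defining sets $\{x\in C:\exists y(\Gamma_{\Phi_1^{ns}}(y,x)\wedge y\in{\mathcal V}_{p_j})\}$ and $\{x\in C:\exists y(\Gamma_{\Phi_2^{ns}}(y,x)\wedge y\in{\mathcal V}_{q_j})\}$ must coincide as subsets of $C$, since $\Phi$ carries ${\mathcal V}_{p_j}$ bijectively onto ${\mathcal V}_{q_j}$ and $\Phi_2^{ns}\circ\Phi=\Phi_1^{ns}$ means both push forward to the same points of $C$.

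Next I would verify that Lemma 5.7 (Birational Invariance of Branches) transfers to the new definition. Since Definition 5.15 simply realises the abstract equivalence class $[{\mathcal V}_{p_j}]$ as an explicit subset of $C$ via the fixed correspondence $\Gamma_{\Phi^{ns}}$, and since the proof of Lemma 5.7 operated purely at the level of these equivalence classes and the composition of correspondences (Lemma 5.6), the injective map $[\Phi]^{*}$ and the bijection $[\Phi]^{*}:\bigcup_{O\in C_2}\gamma_O\rightarrow\bigcup_{O\in C_1}\gamma_O$ are unchanged. I would remark that one only needs to observe that the assignment $[{\mathcal V}_{O_j}]\mapsto \gamma_O^{j}$ (as a subset of $C$) is a bijection between the old and new presentations, so every statement phrased in the old language has an immediate translation.

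Finally, and this is the main obstacle, I would establish the formula $I_{italian}(p,\gamma_p^{j},C,F_{\lambda})=Card(C\cap F_{\lambda'}\cap\gamma_p^{j})$ for $\lambda'\in{\mathcal V}_{\lambda}$ generic in $Par_{\Sigma}$. By Remark 5.10 and the independence results (Lemmas 5.9 and 5.11), we may assume $Base(\Sigma')$ is disjoint from $\Gamma_{[\Phi_{\Sigma'}]}(x,p)$, so that $I_{italian}(p,\gamma_p^{j},C,F_{\lambda})=Card(C^{ns}\cap\overline{F}_{\lambda'}\cap{\mathcal V}_{p_j})$. The heart of the matter is to match these intersection points in the non-singular model with the intersection points in $\gamma_p^{j}\subset C$. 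The plan is to push forward via $\Phi_{\Sigma'}$: each point $y\in C^{ns}\cap\overline{F}_{\lambda'}\cap{\mathcal V}_{p_j}$ lies, after restricting to the canonical set $V_{\Phi_{\Sigma'}}$ (using Lemma 2.13 to witness the multiplicity by transverse intersections inside the canonical set), in bijective correspondence with a point $\Phi_{\Sigma'}(y)\in C\cap F_{\lambda'}$; and since $y\in{\mathcal V}_{p_j}$, the image satisfies $\exists y(\Gamma_{\Phi^{ns}}(y,\Phi_{\Sigma'}(y))\wedge y\in{\mathcal V}_{p_j})$, i.e. it lies in $\gamma_p^{j}$ in the sense of Definition 5.15. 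The hard part is ensuring this correspondence is a genuine bijection onto $C\cap F_{\lambda'}\cap\gamma_p^{j}$ and that no intersection points are lost or doubled; I expect to lean on the transversality and canonical-set arguments of Lemmas 2.13 and 2.16, together with the disjointness of the fibres ${\mathcal V}_{p_1},\ldots,{\mathcal V}_{p_n}$, exactly as in the proof of Theorem 5.12, to conclude that the cardinalities agree.
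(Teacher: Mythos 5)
Your proposal is correct and follows essentially the same route as the paper, which disposes of the three claims by citing Lemma 5.1 for model-independence, the proof of Lemma 5.7 for the reformulation, and the proofs of Lemmas 5.11 and 5.12 for the cardinality formula; you have simply expanded those citations into explicit arguments. The point-matching argument you sketch for the final formula (canonical sets, transversality via Lemma 2.13, disjointness of the infinitesimal neighborhoods) is exactly the mechanism the paper relies on in Theorem 5.13 and Lemmas 5.11--5.12.
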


\begin{proof}
The first part follows immediately from Lemma 5.1 and the fact
that all the data of the lemma may be taken inside a standard
model. The second part is similar, follow through the proof of
Lemma 5.7. The final part may be checked by following carefully
through the proofs of Lemmas 5.11 and 5.12.
\end{proof}

\begin{rmk}
Note that we could not have simplified the above presentation by
taking $(*)$ as our original definition of intersection
multiplicity at a branch. The main reason being that the arguments
on Zariski multiplicities require us to count intersections inside
$C\cap {\mathcal V}_{p}$, rather than the smaller ${\mathcal
L}_{spec}$ definable $C\cap\gamma_{p}^{j}$.

\end{rmk}

We now reformulate the preliminary definitions of Section 2 in
terms of branches.\\

Let $C\subset P^{w}$ be a projective algebraic curve and let
$\Sigma$ be a linear system, having finite intersection with $C$.
Let $g_{n'}^{r}(\Sigma)$ be defined by this linear system $\Sigma$
and let $g_{n}^{r}\subset g_{n'}^{r}$ be obtained by removing its
fixed point contribution. Fix a non-singular model $(C^{ns},\Phi)$
of $C$, with corresponding presentation $\Phi_{\Sigma'}$. By the
transfer result Lemma 2.31, we obtain a corresponding $g_{n}^{r}$
without fixed points on $C^{ns}$. Let
$Card(O,V_{\lambda},g_{n}^{r})$ denote the number of times $O\in
C^{ns}$ is counted for this $g_{n}^{r}$ in the weighted set
$V_{\lambda}$. We then make the following definition;\\

\begin{defn}
$I_{italian}^{\Sigma,mobile}(p,\gamma_{p}^{j},C,\phi_{\lambda})=Card(p_{j},V_{\lambda},g_{n}^{r})\
(*)$\\

where the branch $\gamma_{p}^{j}$ corresponds to $[{\mathcal
V}_{p_{j}}]$ in the fibre $\Gamma_{[\Phi_{\Sigma'}]}(x,p)$.

\end{defn}

As before, one needs the following lemma;\\

\begin{lemma}
The definition $(*)$ does not depend on the choice of non-singular
model $C^{ns}$ and birational morphism $\Phi_{\Sigma'}$.
\end{lemma}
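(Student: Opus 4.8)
The plan is to follow the proof of Lemma 5.11 essentially line by line, replacing the total Italian multiplicity by the mobile count $Card(p_{j},V_{\lambda},g_{n}^{r})$ throughout, and to check that each appeal to the transfer machinery survives this replacement. As in Lemma 5.11, I would split the argument into two cases: a fixed non-singular model with two presentations, and two genuinely distinct non-singular models. The engine of the argument is again the transfer result Lemma 2.31, which moves a $g_{n}^{r}$ \emph{without} fixed points between birational models while preserving the multiplicity of corresponding points in the canonical sets; this applies directly here because the mobile $g_{n}^{r}$ has been built precisely by removing the fixed point contribution.

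First I would treat a fixed model $(C^{ns},\Phi)$ together with two presentations $\Phi_{\Sigma_{1}},\Phi_{\Sigma_{2}}$ of $\Phi$ supplied by Lemma 1.20, arranged (as in Remarks 1.32) so that $Base(\Sigma_{2})$ is disjoint from the fibre $\Gamma_{[\Phi]}(x,p)$ while $Base(\Sigma_{1})$ need not be. Transferring the mobile $g_{n}^{r}$ through each presentation by Lemma 2.31 yields weighted sets $\{V_{\lambda}^{1}\}$ and $\{V_{\lambda}^{2}\}$ on $C^{ns}$. On the common canonical set $V=V_{\Phi_{\Sigma_{1}}}\cap V_{\Phi_{\Sigma_{2}}}$, Lemma 2.31 shows that every point is counted the same number of times in $V_{\lambda}^{1}$ and $V_{\lambda}^{2}$. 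Since the mobile count at $p_{j}$ is by definition witnessed by infinitesimal intersections lying in ${\mathcal V}_{p_{j}}\setminus\{p_{j}\}$, and these fall inside $V$ by the genericity and specialisation arguments of Lemmas 2.13 and 2.16, the two mobile counts at $p_{j}$ agree, regardless of whether $p_{j}$ itself happens to be a base point of $\Sigma_{1}$.

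Second I would treat two distinct non-singular models $(C_{1}^{ns},\Phi_{1})$ and $(C_{2}^{ns},\Phi_{2})$ with presentations whose base loci are disjoint from the respective fibres. Lemma 5.1 furnishes the connecting isomorphism $\Psi:C_{1}^{ns}\rightarrow C_{2}^{ns}$ with $\Phi_{2}\circ\Psi=\Phi_{1}$, restricting to a bijection ${\mathcal V}_{p_{j}}\cong{\mathcal V}_{q_{j}}$ of infinitesimal neighborhoods, where $p_{j}$ and $q_{j}$ both correspond to the branch $\gamma_{p}^{j}$. Composing $\Psi$ with a presentation $\Phi_{\Sigma_{3}}$ of the isomorphism (Lemma 1.20), comparing the lifted forms on the two models, and witnessing the mobile count inside a common canonical set $V$ exactly as in Case 2 of Lemma 5.11, I would conclude from Lemma 2.31 and the fact that $\Psi$ is an isomorphism of infinitesimal neighborhoods that the mobile count at $p_{j}$ equals that at $q_{j}$.

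The main obstacle I anticipate is the bookkeeping of fixed point contributions: the presentation $\Phi_{\Sigma'}$ may attach base points on $C^{ns}$ that are invisible on $C$, so I must confirm that forming the mobile $g_{n}^{r}$ is compatible with the transfer of Lemma 2.31 and that the branch point $p_{j}$ can always be arranged to sit in the canonical set $V_{\Phi_{\Sigma'}}$, away from these base points. Because the mobile multiplicity counts exactly the infinitesimal intersections off $p_{j}$, which always land in the canonical set, this bookkeeping should close precisely as in Lemma 5.11; the only genuinely new observation is the harmless one that the mobile count is by construction insensitive to whatever fixed contribution a presentation attaches at $p_{j}$ itself, which is what makes the mobile version if anything slightly easier than Lemma 5.11.
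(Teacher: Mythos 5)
Your proposal is correct and is exactly what the paper intends: the paper's own proof of this lemma simply states that the argument is the same as Lemma 5.11 and leaves the details to the reader, and you have supplied those details faithfully, using the same two-case decomposition (one model with two presentations, then two distinct models), the same transfer engine (Lemma 2.31), and the same connecting isomorphism from Lemma 5.1. Your closing observation that the mobile count is insensitive to the fixed contribution at $p_{j}$ itself is a sensible and harmless addition.
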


\begin{proof}
The proof is the same as Lemma 5.11, we leave the details to the
reader.
\end{proof}

We now make the following definition;\\

\begin{defn}
Let hypotheses be as in Definition 5.19, then we define;\\

$I^{\Sigma}_{italian}(p,\gamma_{p}^{j},C,\phi_{\lambda})=I_{italian}^{\Sigma,mobile}(p,\gamma_{p}^{j},C,\phi_{\lambda})+1$
if $p\in Base(\Sigma)$\\

$I^{\Sigma}_{italian}(p,\gamma_{p}^{j},C,\phi_{\lambda})=I_{italian}^{\Sigma,mobile}(p,\gamma_{p}^{j},C,\phi_{\lambda})$
\ \ \ \ \ if $p\notin Base(\Sigma)$\\
\end{defn}

\begin{lemma}
Let notation be as in the previous definition. As in Remarks 5.10,
if $(C^{ns},\Phi)$ is a non-singular model of $C$, with
presentation $\Phi_{\Sigma'}$ such that $Base(\Sigma')$ is
disjoint from the fibre $\Gamma_{[\Phi_{\Sigma'}]}(x,p)$, then we
have that;\\

$I^{\Sigma}_{italian}(p,\gamma_{p}^{j},C,\phi_{\lambda})=Card(C^{ns}\cap\overline{\phi_{\lambda'}}\cap\mathcal
V_{p_{j}})$, $\lambda'\in{\mathcal V}_{\lambda}$ generic in
$Par_{\Sigma}.$\\

where again we have used the notation
$\{\overline{\phi_{\lambda}}:\lambda\in Par_{\Sigma}\}$ to denote
the family of "lifted" forms, as in Remarks 5.10.
\end{lemma}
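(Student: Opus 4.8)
The plan is to reduce the claim to the mobile intersection count on the non-singular model and then account for the single point $p_{j}$ separately. First I would unwind Definition 5.19. Since the $g_{n}^{r}$ transferred to $C^{ns}$ via Lemma 2.31 has no fixed points, the number of times $p_{j}$ is counted in its weighted set $V_{\lambda}$ is, by Definition 2.20 together with Lemma 2.22 applied on $C^{ns}$, exactly the mobile multiplicity of the lifted form at $p_{j}$. Thus $I_{italian}^{\Sigma,mobile}(p,\gamma_{p}^{j},C,\phi_{\lambda})=Card(p_{j},V_{\lambda},g_{n}^{r})=Card(C^{ns}\cap\overline{\phi_{\lambda'}}\cap\{{\mathcal V}_{p_{j}}\setminus p_{j}\})$ for $\lambda'\in{\mathcal V}_{\lambda}$ generic in $Par_{\Sigma}$. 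Crucially this holds whether or not $p_{j}$ is a base point of the lifted system, since removing all fixed-point contributions strips off precisely the quantity $I_{p_{j}}$ appearing in Lemma 2.22.

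Next I would establish the base-point dictionary between $C$ and $C^{ns}$. Writing $\Sigma''$ for the lifted system on $C^{ns}$ whose members are the $\overline{\phi_{\lambda}}$ (as constructed in Lemma 2.31), the hypothesis that $Base(\Sigma')$ is disjoint from the fibre $\Gamma_{[\Phi_{\Sigma'}]}(x,p)$ guarantees that the presentation $\Phi_{\Sigma'}$ is defined at $p_{j}$ with $\Phi_{\Sigma'}(p_{j})=p$, and that the restriction of $\overline{\phi_{\lambda}}$ to $C^{ns}$ is the pullback along $\Phi_{\Sigma'}$ of the restriction of $\phi_{\lambda}$ to $C$. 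Hence $\overline{\phi_{\lambda}}(p_{j})=\phi_{\lambda}(p)$ for every $\lambda$, so $p_{j}\in Base(\Sigma'')$ if and only if $p\in Base(\Sigma)$. This is exactly what links the case split in Definition 5.21 to the behaviour of the lifted forms at $p_{j}$.

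Finally I would combine these with Definition 5.21. If $p\notin Base(\Sigma)$, then $p_{j}\notin Base(\Sigma'')$, so for generic $\lambda'$ the form $\overline{\phi_{\lambda'}}$ misses $p_{j}$ and the full count equals the mobile count; since here $I_{italian}^{\Sigma}=I_{italian}^{\Sigma,mobile}$, the lemma follows. If $p\in Base(\Sigma)$, then $p_{j}\in Base(\Sigma'')$, so $p_{j}$ lies in $C^{ns}\cap\overline{\phi_{\lambda'}}$ for every $\lambda'$; as we are counting distinct points and the mobile points lie in $\{{\mathcal V}_{p_{j}}\setminus p_{j}\}$, the full count exceeds the mobile count by exactly one, matching $I_{italian}^{\Sigma}=I_{italian}^{\Sigma,mobile}+1$. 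In both cases $Card(C^{ns}\cap\overline{\phi_{\lambda'}}\cap{\mathcal V}_{p_{j}})$ is reproduced.

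The main obstacle is the honest verification of the displayed identity of the first paragraph, namely that the weighted-set multiplicity $Card(p_{j},V_{\lambda},g_{n}^{r})$ transferred through Lemma 2.31 genuinely agrees with the mobile number computed directly on $C^{ns}$, uniformly across base and non-base points. This rests on the independence result Lemma 5.20 (ensuring that the choice of $(C^{ns},\Phi_{\Sigma'})$ is immaterial) and on the transversality and summability arguments of Section 2, in particular Lemmas 2.13, 2.16, 2.17 and 2.22, which let one realise the multiplicity by distinct transverse intersections inside a canonical set. Everything after that is the bookkeeping of the single distinguished point $p_{j}$.
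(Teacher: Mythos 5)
Your proposal is correct and follows essentially the same route as the paper's proof: the same case split on whether $p\in Base(\Sigma)$, the same observation that $p_{j}$ is a base point of the lifted system exactly when $p$ is one for $\Sigma$, and the same reduction via Lemma 2.31 to the mobile multiplicity of the lifted form at $p_{j}$ using the Section 2 bookkeeping (Lemma 2.22). The only cosmetic difference is that where the paper bridges the full and restricted multiplicities at $p_{j}$ by invoking Lemmas 2.10 and 2.15, you reconcile the counts over ${\mathcal V}_{p_{j}}$ and ${\mathcal V}_{p_{j}}\setminus\{p_{j}\}$ directly, by noting whether $p_{j}$ itself lies on $\overline{\phi_{\lambda'}}$ for generic $\lambda'$.
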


\begin{proof}
We divide the proof into the following cases;\\

Case 1. $p\notin Base(\Sigma)$.\\

Then, by Definition 5.21, we have that;\\

$I^{\Sigma}_{italian}(p,C,\gamma_{p}^{j},\phi_{\lambda})=I_{italian}^{\Sigma,mobile}(p,C,\gamma_{p}^{j},\phi_{\lambda})$ $(1)$.\\

By the assumption on $\Sigma'$, we have that $p_{j}\notin
Base(\Sigma)$ for the "lifted" family of forms on $C^{ns}$,
corresponding to $\Sigma$, $(\dag)$. By the transfer result, Lemma
2.31, the $g_{n}^{r}$ on $C^{ns}$, used to define
$I_{italian}^{\Sigma,mobile}(p,C,\gamma_{p}^{j},\phi_{\lambda})$,
is obtained from the "lifted" family of form on $C^{ns}$ after
removing all fixed point contributions. Therefore, as by $(\dag)$,
this lifted family has no fixed point contribution
at $p_{j}$, we must have that;\\

$I_{italian}(p_{j},C^{ns},\overline{\phi_{\lambda}})=I_{italian}^{\Sigma,mobile}(p,C,\gamma_{p}^{j},\phi_{\lambda})$ $(2)$\\

Combining $(1)$,$(2)$ and using Lemma 2.10, we have that;\\

$I_{italian}^{\Sigma}(p_{j},C^{ns},\overline{\phi_{\lambda}})=I^{\Sigma}_{italian}(p,C,\gamma_{p}^{j},\phi_{\lambda})$\\

The result for this case now follows immediately from the
definition of
$I_{italian}^{\Sigma}(p_{j},C^{ns},\overline{\phi_{\lambda}})$.\\

Case 2. $p\in Base(\Sigma)$.\\

Then, by Definition 5.21, we have that;\\

$I^{\Sigma}_{italian}(p,C,\gamma_{p}^{j},\phi_{\lambda})=I_{italian}^{\Sigma,mobile}(p,C,\gamma_{p}^{j},\phi_{\lambda})+1$ $(1)$.\\

In this case, we have that $p_{j}\in Base(\Sigma)$ for the
"lifted" family of forms on $C^{ns}$, corresponding to $\Sigma$,
$(\dag)$. Let $I_{p_{j}}$ be the fixed point contribution for this
family, as defined in Lemma 2.14. Then, by a similar argument to the above, we have that;\\

$I_{italian}(p_{j},C^{ns},\overline{\phi_{\lambda}})=I_{p_{j}}+I_{italian}^{\Sigma,mobile}(p,C,\gamma_{p}^{j},\phi_{\lambda})$ $(2)$\\

Using Lemma 2.15, we have that;\\

$I_{italian}(p_{j},C^{ns},\overline{\phi_{\lambda}})=I_{p_{j}}+I_{italian}^{\Sigma}(p_{j},C^{ns},\overline{\phi_{\lambda}})-1$
$(3)$\\

Combining $(1),(2),(3)$ gives that;\\

$I_{italian}^{\Sigma}(p_{j},C^{ns},\overline{\phi_{\lambda}})=I^{\Sigma}_{italian}(p,C,\gamma_{p}^{j},\phi_{\lambda})$\\

Again, the result for this case follows immediately from the
definition of $I_{italian}^{\Sigma}(p_{j},C^{ns},\overline{\phi_{\lambda}})$.\\

\end{proof}

As an easy consequence of the previous lemma, we have that;\\

\begin{lemma}
Let $C\subset P^{w}$ be a projective algebraic curve. Let $\Sigma$
be a linear system, having finite intersection with $C$. Then, if
$\gamma_{p}^{j}$ is a branch centred at $p$ and $\phi_{\lambda}$ belongs to $\Sigma$;\\

$I_{italian}^{\Sigma}(p,C,\gamma_{p}^{j},\phi_{\lambda})=Card(C\cap\gamma_{p}^{j}\cap\phi_{\lambda'})$
for $\lambda'\in{\mathcal V}_{\lambda}$ generic in
$Par_{\Sigma}$.\\

$I_{italian}^{\Sigma,mobile}(p,C,\gamma_{p}^{j},\phi_{\lambda})=Card(C\cap(\gamma_{p}^{j}\setminus
p)\cap \phi_{\lambda'})$ for $\lambda'\in{\mathcal V}_{\lambda}$
generic\\
\indent \indent \indent\indent\indent\ \ \ \ \ \ \ \ \ \ \ \ \ \ \ \ \ \ \ \ \ \ \ \ \ \ \ \ \ \ \ \ \ \ \ \ \ \ \ \ \ \ \ \ \ \ \ \  in $Par_{\Sigma}$.\\

where $\gamma_{p}^{j}$ was given in Definition 5.15.

\end{lemma}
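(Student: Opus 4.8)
The plan is to reduce both identities, by way of the previous lemma and Definition 5.19, to transporting an intersection count from a non-singular model $C^{ns}$ back to $C$ along the birational morphism $\Phi_{\Sigma'}\colon C^{ns}\to C$. First I fix, as permitted by Remarks 1.32 and the hypotheses of the previous lemma, a non-singular model $(C^{ns},\Phi)$ together with a presentation $\Phi_{\Sigma'}$ whose base locus $Base(\Sigma')$ is disjoint from the fibre $\Gamma_{[\Phi_{\Sigma'}]}(x,p)=\{p_{1},\ldots,p_{n}\}$; by Lemma 5.17 the branch $\gamma_{p}^{j}$ computed in this model agrees with the one in the statement. The previous lemma then gives at once
\[ I^{\Sigma}_{italian}(p,C,\gamma_{p}^{j},\phi_{\lambda})=Card(C^{ns}\cap\overline{\phi_{\lambda'}}\cap{\mathcal V}_{p_{j}}), \]
for $\lambda'\in{\mathcal V}_{\lambda}$ generic in $Par_{\Sigma}$, while Definition 5.19 sets $I^{\Sigma,mobile}_{italian}(p,C,\gamma_{p}^{j},\phi_{\lambda})=Card(p_{j},V_{\lambda},g_{n}^{r})$, and by the construction of the fixed-point-free $g_{n}^{r}$ on $C^{ns}$ together with Lemma 2.22 (equivalently Definition 2.20 applied on $C^{ns}$) this equals $Card(C^{ns}\cap\overline{\phi_{\lambda'}}\cap({\mathcal V}_{p_{j}}\setminus\{p_{j}\}))$, since deleting the fixed-point contribution of the transferred system at the base point $p_{j}$ is exactly the deletion of $p_{j}$ itself.

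It therefore suffices to produce a bijection
\[ \Phi_{\Sigma'}\colon C^{ns}\cap\overline{\phi_{\lambda'}}\cap{\mathcal V}_{p_{j}}\longrightarrow C\cap\phi_{\lambda'}\cap\gamma_{p}^{j} \]
carrying $p_{j}$ to $p$. That $\Phi_{\Sigma'}$ maps ${\mathcal V}_{p_{j}}\cap C^{ns}$ into $\gamma_{p}^{j}$, and surjects onto $C\cap\gamma_{p}^{j}$, is immediate from Definition 5.15, which defines $\gamma_{p}^{j}$ to be precisely the $\Phi_{\Sigma'}$-image of ${\mathcal V}_{p_{j}}$. For injectivity I would use that $\Phi_{\Sigma'}$ is a birational morphism of curves with $C^{ns}$ non-singular, hence an isomorphism over a dense open subset, so its non-injectivity locus $N\subset C^{ns}$ is finite; as in the proof of Lemma 2.4 the infinitesimal neighborhoods of distinct standard points are disjoint, so $N\cap{\mathcal V}_{p_{j}}$ contains at most $p_{j}$. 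Consequently $\Phi_{\Sigma'}$ is injective on ${\mathcal V}_{p_{j}}\cap C^{ns}$ and $p_{j}$ is the unique preimage of $p$ lying in ${\mathcal V}_{p_{j}}$.

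It remains to match the incidence conditions. Since $Base(\Sigma')$ is a standard closed set disjoint from $p_{j}$, the same disjointness of infinitesimal neighborhoods gives $Base(\Sigma')\cap{\mathcal V}_{p_{j}}=\emptyset$; hence for every $y\in{\mathcal V}_{p_{j}}\cap C^{ns}$ the lifted form of Remarks 5.10 satisfies $\overline{\phi_{\lambda'}}(y)=0$ if and only if $\phi_{\lambda'}(\Phi_{\Sigma'}(y))=0$, because the lift is the pullback of $\phi_{\lambda'}$ and $y$ avoids the base locus. Combining this with the previous paragraph, $\Phi_{\Sigma'}$ restricts to the displayed bijection and, since $\Phi_{\Sigma'}(p_{j})=p$ with $p_{j}$ the unique preimage of $p$ in ${\mathcal V}_{p_{j}}$, restricts further to a bijection after deleting $p_{j}$ and $p$ respectively. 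Passing to cardinalities and substituting into the two identities of the first paragraph yields both formulas.

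The step I expect to be the main obstacle is the injectivity of $\Phi_{\Sigma'}$ on the single infinitesimal neighborhood ${\mathcal V}_{p_{j}}$: this is what separates the branch from its neighbours inside ${\mathcal V}_{p_{j}}$ and guarantees that $p_{j}$ is the sole preimage of $p$ there. The argument rests on the finiteness of the non-injectivity locus of a birational morphism of curves, so that the identifications made by $\Phi_{\Sigma'}$ occur only at finitely many isolated points, each in its own infinitesimal neighborhood, together with the disjointness of infinitesimal neighborhoods of distinct standard points recorded in the proof of Lemma 2.4. A secondary point requiring care is the bookkeeping of the first paragraph, namely the verification through Lemma 2.22 that the mobile count $Card(p_{j},V_{\lambda},g_{n}^{r})$ of the fixed-point-free transferred system coincides with $Card(C^{ns}\cap\overline{\phi_{\lambda'}}\cap({\mathcal V}_{p_{j}}\setminus\{p_{j}\}))$.
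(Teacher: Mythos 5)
Your proposal is correct and follows essentially the same route as the paper, whose own proof is just the two-line citation "Lemma 5.22 and Definition 5.15 for the first part; Definition 5.21 and the first part for the second" — you have simply supplied the details of the transfer (the bijection $\Phi_{\Sigma'}\colon C^{ns}\cap\overline{\phi_{\lambda'}}\cap{\mathcal V}_{p_{j}}\to C\cap\phi_{\lambda'}\cap\gamma_{p}^{j}$ and its injectivity on the infinitesimal neighborhood) that the paper leaves implicit. Your only deviation is deriving the mobile identity directly from Definition 5.19 and Lemma 2.22 rather than from Definition 5.21 together with the first identity, which is an equally valid one-step variant.
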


\begin{proof}
The first part of the lemma follows immediately from Lemma 5.22
and the Definition 5.15 of a branch. The second part follows from
Definition 5.21 and the first part.
\end{proof}

We then reformulate the remaining results of Section 2 in terms of
branches. The notation of Lemma 5.23 will be use for the remainder of this section.\\

\begin{lemma}{Non-Existence of Coincident Mobile Points along a
Branch}\\

Let $C\subset P^{w}$ be a projective algebraic curve. Let $\Sigma$
be a linear system, having finite intersection with $C$, such that
$p\in {C\setminus Base(\Sigma)}$. Then, if $\gamma_{p}^{j}$ is a
branch centred at $p$ and $\phi_{\lambda}$ belongs to $\Sigma$;\\

$I_{italian}(p,C,\gamma_{p}^{j},\phi_{\lambda})=I_{italian}^{\Sigma}(p,C,\gamma_{p}^{j},\phi_{\lambda})=I_{italian}^{\Sigma,mobile}(p,C,\gamma_{p}^{j},\phi_{\lambda})$\\

\end{lemma}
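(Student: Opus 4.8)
The plan is to reduce the assertion to the non-branched Lemma 2.10 by passing to a non-singular model, on which the branch $\gamma_{p}^{j}$ becomes an honest non-singular point. The second equality, $I_{italian}^{\Sigma}(p,C,\gamma_{p}^{j},\phi_{\lambda})=I_{italian}^{\Sigma,mobile}(p,C,\gamma_{p}^{j},\phi_{\lambda})$, is immediate: since $p\notin Base(\Sigma)$, Definition 5.21 identifies the two quantities with no correction term. So all the content lies in the first equality.

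First I would fix a non-singular model $(C^{ns},\Phi)$ together with a presentation $\Phi_{\Sigma'}$ whose base locus $Base(\Sigma')$ is disjoint from the fibre $\Gamma_{[\Phi_{\Sigma'}]}(x,p)$; let $p_{j}\in C^{ns}$ be the point in this fibre corresponding to the branch $\gamma_{p}^{j}$, so that $\gamma_{p}^{j}=[{\mathcal V}_{p_{j}}]$, and let $\{\overline{\phi_{\lambda}}\}$ denote the lifted family on $C^{ns}$. Applying Lemma 5.12 (precisely, the chain of equalities in its proof, which uses the disjointness of $Base(\Sigma')$ from the fibre) gives $I_{italian}(p,C,\gamma_{p}^{j},\phi_{\lambda})=I_{italian}(p_{j},C^{ns},\overline{\phi_{\lambda}})$, the ordinary unrestricted Italian multiplicity on the model. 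Likewise, Lemma 5.22 in the case $p\notin Base(\Sigma)$ yields $I_{italian}^{\Sigma}(p,C,\gamma_{p}^{j},\phi_{\lambda})=I_{italian}^{\Sigma}(p_{j},C^{ns},\overline{\phi_{\lambda}})$, the $\Sigma$-restricted multiplicity on the model.

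It then remains to compare these two quantities on $C^{ns}$. The key observations are that $p_{j}$ is non-singular, since $C^{ns}$ is non-singular everywhere, and that $p_{j}\notin Base(\overline{\Sigma})$: indeed, as $p\notin Base(\Sigma)$ there is a form $\phi_{\mu}\in\Sigma$ with $\phi_{\mu}(p)\neq 0$, and since $\Phi_{\Sigma'}(p_{j})=p$ its lift satisfies $\overline{\phi_{\mu}}(p_{j})\neq 0$, which is exactly the transfer of non-base-points already invoked in Lemma 5.12. Thus $p_{j}\in ({C^{ns}\setminus Base(\overline{\Sigma})})\cap\overline{\phi_{\lambda}}$ is a non-singular point, and Lemma 2.10 applied on $C^{ns}$ gives $I_{italian}(p_{j},C^{ns},\overline{\phi_{\lambda}})=I_{italian}^{\Sigma}(p_{j},C^{ns},\overline{\phi_{\lambda}})$. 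Combining this with the two transfer identities yields $I_{italian}(p,C,\gamma_{p}^{j},\phi_{\lambda})=I_{italian}^{\Sigma}(p,C,\gamma_{p}^{j},\phi_{\lambda})$, which together with the immediate second equality completes the proof.

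I expect the main obstacle to be bookkeeping rather than geometry: one must arrange a single non-singular model and presentation for which the disjointness hypothesis $Base(\Sigma')\cap\Gamma_{[\Phi_{\Sigma'}]}(x,p)=\emptyset$ holds, so that Lemma 5.12 and Lemma 5.22 are simultaneously available in their clean forms, and one must keep track of the author's convention that $\Sigma$ on $C$ and its lift $\overline{\Sigma}$ on $C^{ns}$ are identified through the correspondence. The genuine analytic content, namely that algebraic tangency of a mobile form along a branch cannot produce excess intersection at a non-base point, is entirely absorbed into Lemma 2.10 on the smooth model, so no new infinitesimal or power-series computation is required here.
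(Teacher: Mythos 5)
Your proposal is correct and takes essentially the same route as the paper's own proof: the paper likewise transfers both multiplicities to a non-singular model whose presentation has base locus disjoint from the fibre over $p$ (citing Remarks 5.10 and Lemma 5.22 for the two transfer identities), notes that $p_{j}$ is a non-base, non-singular point for the lifted system, applies Lemma 2.10 on $C^{ns}$, and settles the mobile equality by Definition 5.21. The only cosmetic difference is that you attribute the first transfer identity to the proof of Lemma 5.12 where the paper cites Remarks 5.10; these are the same fact.
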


\begin{proof}
Let $(C^{ns},\Phi)$ be a non-singular model of $C$, with
presentation $\Phi_{\Sigma'}$, such that $Base(\Sigma')$ is
disjoint from $\Gamma_{[\Phi]}(x,p)$. Let
$\{\overline{\phi_{\lambda}}\}$ be the "lifted" family of
algebraic forms on $C^{ns}$ defined by $\Sigma$. By Lemma 5.22, we
have that;\\

$I_{italian}^{\Sigma}(p,C,\gamma_{p}^{j},\phi_{\lambda})=I_{italian}^{\Sigma}(p_{j},C^{ns},\overline{\phi_{\lambda}})$ $(1)$\\

By Remarks 5.10, we have that;\\

$I_{italian}(p,C,\gamma_{p}^{j},\phi_{\lambda})=I_{italian}(p_{j},C^{ns},\overline{\phi_{\lambda}})$ $(2)$\\

Using the fact that $p\notin Base(\Sigma)$ and the hypotheses on
$\Phi_{\Sigma'}$, it follows that $p_{j}\notin Base(\Sigma)$, for
the lifted system defined by $\Sigma$. Hence, we may apply Lemma
2.10, to obtain that;\\

$I_{italian}^{\Sigma}(p_{j},C^{ns},\overline{\phi_{\lambda}})=I_{italian}(p_{j},C^{ns},\overline{\phi_{\lambda}})$ $(3)$\\

The result follows by combining $(1),(2)$ and $(3)$ and using
Definition 5.21.

\end{proof}

\begin{lemma}{Branch Multiplicity at non-base points witnessed by
transverse intersections along the branch}\\

Let $p\in {C\setminus Base(\Sigma)}$ and let $\gamma_{p}^{j}$ be a
branch centred at $p$. Then, if
$m=I_{italian}(p,C,\gamma_{p}^{j},\phi_{\lambda})$, we can find
$\lambda'\in{\mathcal V}_{\lambda}$, generic in $Par_{\Sigma}$,
and distinct $\{p_{1},\ldots,p_{m}\}=C\cap\phi_{\lambda'}\cap(
{\gamma_{p}^{j}\setminus p})$ such that the intersection of $C$
with $\phi_{\lambda'}$ at each $p_{i}$ is transverse for $1\leq
i\leq m$.
\end{lemma}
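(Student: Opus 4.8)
The plan is to follow the transfer strategy used throughout this section: pass to a non-singular model, invoke the corresponding result of Section 2 --- here Lemma 2.13 --- and push the conclusion back down along the birational morphism. First I would fix a non-singular model $(C^{ns},\Phi)$ of $C$ together with a presentation $\Phi_{\Sigma'}$ chosen so that $Base(\Sigma')$ is disjoint from the fibre $\Gamma_{[\Phi]}(x,p)=\{p_{1},\ldots,p_{n}\}$, where $p_{j}$ is the point whose infinitesimal class $[{\mathcal V}_{p_{j}}]$ represents the branch $\gamma_{p}^{j}$. Let $\{\overline{\phi_{\lambda}}\}$ denote the lifted family of forms on $C^{ns}$ defined by $\Sigma$, as in Remarks 5.10. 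Since $p\notin Base(\Sigma)$ and $Base(\Sigma')$ avoids the fibre, $p_{j}\notin Base(\Sigma)$ for this lifted system, and by Remarks 5.10 we have $m=I_{italian}(p,C,\gamma_{p}^{j},\phi_{\lambda})=I_{italian}(p_{j},C^{ns},\overline{\phi_{\lambda}})$.

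The central step is then to apply Lemma 2.13 to the non-singular curve $C^{ns}$, the lifted system $\{\overline{\phi_{\lambda}}\}$, and the point $p_{j}$. This produces $\lambda'\in{\mathcal V}_{\lambda}$ generic in $Par_{\Sigma}$ and distinct points $\{q_{1},\ldots,q_{m}\}=C^{ns}\cap\overline{\phi_{\lambda'}}\cap{\mathcal V}_{p_{j}}$, all lying in the canonical set $V_{\Phi_{\Sigma'}}$ with $p_{j}$ removed, such that the intersection of $C^{ns}$ with $\overline{\phi_{\lambda'}}$ at each $q_{i}$ is transverse. I would then set $p_{i}=\Phi_{\Sigma'}(q_{i})$. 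Because each $q_{i}\in{\mathcal V}_{p_{j}}$, Definition 5.15 gives $p_{i}\in\gamma_{p}^{j}$, and clearly $p_{i}\in C\cap\phi_{\lambda'}$; since $\Phi_{\Sigma'}$ restricts to an isomorphism of $V_{\Phi_{\Sigma'}}$ onto $W_{\Phi_{\Sigma'}}$, the distinctness of the $q_{i}$ carries over to the $p_{i}$, and $p_{i}\neq p$ as $q_{i}\neq p_{j}$.

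To see that $\{p_{1},\ldots,p_{m}\}$ is \emph{exactly} $C\cap\phi_{\lambda'}\cap({\gamma_{p}^{j}\setminus p})$ rather than merely a subset, I would appeal to Lemma 5.24: as $p\notin Base(\Sigma)$, the mobile and ordinary branch multiplicities agree, so by Lemma 5.23 the cardinality of $C\cap(\gamma_{p}^{j}\setminus p)\cap\phi_{\lambda'}$ equals $m$, forcing equality of the sets. For transversality at each $p_{i}$, I would note that $p_{i}\notin Base(\Sigma)$ (the $p_{i}$ lie in ${\mathcal V}_{p}\setminus p$ while $Base(\Sigma)$ is finite and defined over $L$), so that Lemma 5.24 applied at $p_{i}$ yields $I_{italian}(p_{i},C,\phi_{\lambda'})=1$; equivalently, transversality passes directly through the local isomorphism $\Phi_{\Sigma'}$ at $q_{i}$, which is transverse on $C^{ns}$ by the choice above.

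The main obstacle I anticipate is precisely this transfer of transversality: one must check that the local isomorphism $\Phi_{\Sigma'}$ on the canonical set genuinely preserves the reduced, multiplicity-one character of the intersection, that is, identifies the neighborhoods ${\mathcal V}_{q_{i}}$ and ${\mathcal V}_{p_{i}}$ compatibly with the specialisation. This is the same content that makes the Section 2 multiplicity arguments valid on canonical sets, and it is also what guarantees that the conclusion is independent of the auxiliary choices of $C^{ns}$ and $\Phi_{\Sigma'}$; that independence is already secured by the well-definedness results Lemmas 5.11, 5.12 and 5.20, so I would cite these to close the argument cleanly.
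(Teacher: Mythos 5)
Your proposal is correct, but it takes a noticeably longer route than the paper. The paper's own proof never re-opens the non-singular model: it observes that Lemmas 5.23 and 5.24 together already say that for generic $\lambda'\in{\mathcal V}_{\lambda}$ the set $C\cap\phi_{\lambda'}\cap\gamma_{p}^{j}$ consists of exactly $m$ distinct points (since at a non-base point the three branch multiplicities coincide and Lemma 5.23 realises $I^{\Sigma}_{italian}$ as a cardinality along the branch), that these points avoid $p$ because passing through $p$ is a proper closed condition on $Par_{\Sigma}$, and that transversality is immediate from Lemma 2.17 applied on $C$ itself, since the $p_{i}$ lie outside the finite, $L$-defined set $Base(\Sigma)$. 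Your version instead lifts to $C^{ns}$, applies Lemma 2.13 there, and pushes the configuration back down --- which essentially re-proves the content that Lemma 5.23 has already packaged, and then forces you to confront the transfer of transversality through $\Phi_{\Sigma'}$, the very step you flag as the main obstacle. That transfer can be made to work (it is the Lemma 2.31 mechanism), but the paper's choice of Lemma 2.17 downstairs sidesteps it entirely. One small inaccuracy: citing Lemma 5.24 at $p_{i}$ does not by itself yield $I_{italian}(p_{i},C,\phi_{\lambda'})=1$ --- that lemma only equates the three flavours of branch multiplicity at a non-base point; the multiplicity-one conclusion needs genericity of $\lambda'$ via Lemma 2.17 (or Lemmas 2.10 and 2.12 as in the proof of Lemma 2.13). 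Since you also offer that correct alternative, the argument closes, but the cleaner citation is the one the paper makes.
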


\begin{proof}
By Lemmas 5.23 and 5.24, for $\lambda'\in{\mathcal V}_{\lambda}$,
generic in $Par_{\Sigma}$, the intersection
$C\cap\phi_{\lambda'}\cap \gamma_{p}^{j}$ consists of $m$ distinct
points $\{p_{1},\ldots,p_{m}\}$. The condition on $Par_{\Sigma}$
that $\phi_{\lambda}$ passes through $p$ defines a proper closed
subset, hence we may assume these points are all distinct from
$p$. Finally, the transversality result follows from, say Lemma
2.17, using the fact that $\{p_{1},\ldots,p_{m}\}$ cannot lie
inside $Base(\Sigma)$.

\end{proof}

Again, we have analogous results to Lemmas 5.24 and 5.25 for
points in $Base(\Sigma)$;\\

We first require the following;\\

\begin{lemma}
Let $p\in C\cap Base(\Sigma)$ and $\gamma_{p}^{j}$ a branch
centred at $p$. Then there exists an open subset
$U_{\gamma_{p}^{j}}\subset Par_{\Sigma}$ and an integer
$I_{\gamma_{p}^{j}}\geq 1$ such that;\\

$I_{italian}(p,C,\gamma_{p}^{j},\phi_{\lambda})=I_{\gamma_{p}^{j}}$
for
$\lambda\in U_{\gamma_{p}^{j}}$\\

and\\

$I_{italian}(p,C,\gamma_{p}^{j},\phi_{\lambda})\geq
I_{\gamma_{p}^{j}}$ for
$\lambda\in Par_{\Sigma}$\\

\end{lemma}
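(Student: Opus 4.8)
The plan is to deduce the statement from its unbranched analogue, Lemma 2.14, by pushing the entire computation up to a non-singular model. First I would fix a non-singular model $(C^{ns},\Phi)$ of $C$ together with a presentation $\Phi_{\Sigma'}$ of the birational morphism $\Phi$, chosen as in Remarks 5.10 so that $Base(\Sigma')$ is disjoint from the fibre $\Gamma_{[\Phi_{\Sigma'}]}(x,p)=\{p_{1},\ldots,p_{n}\}$, and I would let $p_{j}$ be the point of this fibre corresponding to the branch $\gamma_{p}^{j}$. Writing $\{\overline{\phi_{\lambda}}:\lambda\in Par_{\Sigma}\}$ for the family of lifted forms on $C^{ns}$ obtained through this fixed presentation, the identity I would rely on is the one extracted in the proof of Lemma 5.12, namely
\[
I_{italian}(p,C,\gamma_{p}^{j},\phi_{\lambda})=I_{italian}(p_{j},C^{ns},\overline{\phi_{\lambda}}),
\]
valid for each individual form $\phi_{\lambda}$ of the system. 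This converts the branch multiplicity on the left, as $\lambda$ ranges over $Par_{\Sigma}$, into an ordinary (unbranched) Italian intersection multiplicity at a point of the non-singular curve $C^{ns}$.

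The second step is to recognise the lifted family as a bona fide linear system to which Lemma 2.14 applies. By the construction used in the transfer result Lemma 2.31, the $\overline{\phi_{\lambda}}$ are obtained by substituting a fixed basis of $\Sigma'$ into $\phi_{\lambda}$; they therefore depend linearly on $\lambda$ and span a linear system $\overline{\Sigma}$ on the ambient space of $C^{ns}$, again parametrised by $Par_{\Sigma}$. Because $\Sigma$ has finite intersection with $C$ and $\Phi_{\Sigma'}$ is birational with finite fibres, $\overline{\Sigma}$ has finite intersection with $C^{ns}$; and because $p\in Base(\Sigma)$, every $\phi_{\lambda}$ contains $p$, so every $\overline{\phi_{\lambda}}$ passes through $p_{j}$ and hence $p_{j}\in C^{ns}\cap Base(\overline{\Sigma})$.

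With these two reductions in place the conclusion is formal. I would apply Lemma 2.14 to the projective algebraic curve $C^{ns}$, the linear system $\overline{\Sigma}$ and its base point $p_{j}$, obtaining an open subset $U_{p_{j}}\subset Par_{\Sigma}$ and an integer $I_{p_{j}}\geq 1$ with $I_{italian}(p_{j},C^{ns},\overline{\phi_{\lambda}})=I_{p_{j}}$ for $\lambda\in U_{p_{j}}$ and $I_{italian}(p_{j},C^{ns},\overline{\phi_{\lambda}})\geq I_{p_{j}}$ for all $\lambda\in Par_{\Sigma}$. Transferring back through the displayed identity and setting $U_{\gamma_{p}^{j}}=U_{p_{j}}$ and $I_{\gamma_{p}^{j}}=I_{p_{j}}$ yields precisely the two assertions of the lemma.

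The step I expect to require the most care is not the final application of Lemma 2.14, which is purely formal, but the justification that the identity from Lemma 5.12 is available uniformly for every $\lambda\in Par_{\Sigma}$ through one and the same presentation $\Phi_{\Sigma'}$ — including the base-point regime $p\in Base(\Sigma)$ that is the whole point here. The key observation is that that identity is a statement about a single lifted form relative to a fixed presentation whose base locus avoids the fibre over $p$, and is therefore independent of the ambient system in which $\phi_{\lambda}$ is regarded; once this uniformity is granted, the open set $U_{p_{j}}$ and integer $I_{p_{j}}$ produced downstairs pull back to the same data upstairs with no further infinitesimal bookkeeping.
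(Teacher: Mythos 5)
Your proposal is correct and follows essentially the same route as the paper: the paper's proof likewise fixes a presentation $\Phi_{\Sigma'}$ with $Base(\Sigma')$ disjoint from the fibre over $p$, uses the identity $I_{italian}(p,\gamma_{p}^{j},C,\phi_{\lambda})=I_{italian}(p_{j},C^{ns},\overline{\phi_{\lambda}})$ from the proof of Lemma 5.12, and then obtains the open set and minimal value on $Par_{\Sigma}$. The only cosmetic difference is that the paper re-runs the one-line argument of Lemma 2.14 (closedness of the sets $W_{k}$ plus irreducibility of $Par_{\Sigma}$) rather than citing Lemma 2.14 as a black box applied to the lifted system at $p_{j}$.
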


\begin{proof}
Let $(C^{ns},\Phi)$ be a non-singular model with presentation
$\Phi_{\Sigma'}$ such that $Base(\Sigma')$ is disjoint from
$\Gamma_{[\Phi]}(x,p)$. Then, by the proof of Lemma 5.12, we have
that, for $\lambda\in Par_{\Sigma}$;\\

$I_{italian}(p,\gamma_{p}^{j},C,\phi_{\lambda})=I_{italian}(p_{j},C^{ns},\overline{\phi_{\lambda}})$\\

By properties of Zariski structures;\\

$W_{k}=\{\lambda\in
Par_{\Sigma}:I_{italian}(p_{j},C^{ns},\overline{\phi_{\lambda}})\geq
k\}$\\

are definable and Zariski closed in $Par_{\Sigma}$. The result
then follows by taking $I_{\gamma_{p_{j}}}=min_{\lambda\in
Par_{\Sigma}}I_{italian}(p_{j},C^{ns},\overline{\phi_{\lambda}})$
and using the fact that $Par_{\Sigma}$ is irreducible.

\end{proof}

We can now formulate analogous results to Lemmas 5.24 and 5.25 for
base points;\\

\begin{lemma}
Let $p\in C\cap Base(\Sigma)\cap\phi_{\lambda}$ and
$\gamma_{p}^{j}$ a branch centred at $p$.
Then;\\

$I_{italian}(p,\gamma_{p}^{j},C,\phi_{\lambda})=I_{\gamma_{p}^{j}}+I_{italian}^{\Sigma}(p,\gamma_{p}^{j},C,\phi_{\lambda})-1$\\

and\\

$I_{italian}(p,\gamma_{p}^{j},C,\phi_{\lambda})=I_{\gamma_{p}^{j}}+I_{italian}^{\Sigma,mobile}(p,\gamma_{p}^{j},C,\phi_{\lambda})$\\

\end{lemma}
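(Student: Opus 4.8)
The plan is to transfer the statement to a non-singular model and invoke the unbranched base-point formula, Lemma 2.15, exactly in the spirit of Lemma 5.22. First I would fix a non-singular model $(C^{ns},\Phi)$ of $C$ and a presentation $\Phi_{\Sigma'}$ of the morphism $\Phi$ for which $Base(\Sigma')$ is disjoint from the fibre $\Gamma_{[\Phi]}(x,p)$; this is always arrangeable, as recorded in Remarks 5.10. Let $\{\overline{\phi_\lambda}:\lambda\in Par_{\Sigma}\}$ be the lifted family on $C^{ns}$ and let $p_j$ be the point of $\Gamma_{[\Phi]}(x,p)$ representing the branch $\gamma_p^j$. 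Since $p\in Base(\Sigma)$, we have $\overline{\phi_\lambda}(p_j)=\phi_\lambda(p)=0$ for every $\lambda$, so $p_j$ lies in the base locus of the lifted system on the smooth curve $C^{ns}$. The two dictionary identities I shall use are $I_{italian}(p,\gamma_p^j,C,\phi_\lambda)=I_{italian}(p_j,C^{ns},\overline{\phi_\lambda})$, from the proof of Lemma 5.12, and $I_{italian}^{\Sigma}(p,\gamma_p^j,C,\phi_\lambda)=I_{italian}^{\Sigma}(p_j,C^{ns},\overline{\phi_\lambda})$, which is precisely Case 2 of Lemma 5.22.

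Next I would identify the branch fixed-point contribution $I_{\gamma_p^j}$ with the ordinary fixed-point contribution of the lifted system at $p_j$. By the proof of Lemma 5.26 one has $I_{\gamma_p^j}=\min_{\lambda\in Par_\Sigma}I_{italian}(p_j,C^{ns},\overline{\phi_\lambda})$, while by Lemma 2.14, applied to the lifted system on $C^{ns}$, this same minimum is the integer $I_{p_j}$ attached to the base point $p_j$. Hence $I_{\gamma_p^j}=I_{p_j}$.

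With these identifications in hand the two displayed formulae are immediate. Applying Lemma 2.15 to $p_j$ on $C^{ns}$ gives $I_{italian}(p_j,C^{ns},\overline{\phi_\lambda})=I_{p_j}+I_{italian}^{\Sigma}(p_j,C^{ns},\overline{\phi_\lambda})-1$, and substituting the three identifications above yields the first equation, $I_{italian}(p,\gamma_p^j,C,\phi_\lambda)=I_{\gamma_p^j}+I_{italian}^{\Sigma}(p,\gamma_p^j,C,\phi_\lambda)-1$. For the second equation I would feed into this the relation of Definition 5.21, valid because $p\in Base(\Sigma)$, namely $I_{italian}^{\Sigma}(p,\gamma_p^j,C,\phi_\lambda)=I_{italian}^{\Sigma,mobile}(p,\gamma_p^j,C,\phi_\lambda)+1$; the $+1$ and $-1$ cancel, leaving $I_{italian}(p,\gamma_p^j,C,\phi_\lambda)=I_{\gamma_p^j}+I_{italian}^{\Sigma,mobile}(p,\gamma_p^j,C,\phi_\lambda)$. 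Equivalently, the second formula can be read straight off equation $(2)$ in the proof of Lemma 5.22, once $I_{\gamma_p^j}=I_{p_j}$ is noted.

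I do not anticipate a serious obstacle, since all the genuinely geometric content --- that base points lift to base points and that multiplicities are witnessed by transverse intersections inside the canonical set --- is already available from Lemmas 5.22 and 5.26. The one point requiring care is the identification $I_{\gamma_p^j}=I_{p_j}$: it must be checked that choosing $\Phi_{\Sigma'}$ so that $Base(\Sigma')$ avoids the entire fibre $\Gamma_{[\Phi]}(x,p)$ ensures the passage $p\mapsto p_j$ neither creates nor removes fixed intersection, so that the minimum defining $I_{\gamma_p^j}$ in Lemma 5.26 genuinely coincides with the minimum defining $I_{p_j}$ in Lemma 2.14.
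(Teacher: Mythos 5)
Your proposal is correct, but it takes a different route from the paper's own proof. The paper argues directly: it uses Lemma 5.23 to witness $m-1$ distinct mobile intersections of $C$ with $\phi_{\lambda'}$ in ${\mathcal V}_{p}\setminus\{p\}$ for generic $\lambda'\in{\mathcal V}_{\lambda}$, shows these are transverse via Lemma 2.17, invokes Lemma 5.26 to get $I_{italian}(p,\gamma_{p}^{j},C,\phi_{\lambda'})=I_{\gamma_{p}^{j}}$ at the generic parameter, lifts everything to the non-singular model, and then sums by summability of specialisation to obtain $I_{italian}(p_{j},C^{ns},\overline{\phi_{\lambda}})=I_{\gamma_{p}^{j}}+(m-1)$. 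You instead reduce wholesale to the already-established unbranched base-point formula, Lemma 2.15, applied to $p_{j}$ on $C^{ns}$, after assembling the dictionary identities from the proofs of Lemmas 5.12 and 5.22 and the identification $I_{\gamma_{p}^{j}}=I_{p_{j}}$ (which is immediate, since both are defined as $\min_{\lambda\in Par_{\Sigma}}I_{italian}(p_{j},C^{ns},\overline{\phi_{\lambda}})$, and $p_{j}$ is indeed a base point of the lifted system whenever $p\in Base(\Sigma)$, as the paper itself notes at $(\dag)$ in Case 2 of Lemma 5.22). Your route is shorter and more modular --- it mirrors exactly the structure the paper already used to prove Lemma 5.22, so every ingredient is genuinely available --- whereas the paper's version re-runs the underlying summability computation from scratch; since Lemma 2.15 is itself proved by that same summability argument, the two proofs bottom out in the same geometry. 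Your handling of the second displayed equation, cancelling the $+1$ of Definition 5.21 against the $-1$ of the first equation, is exactly what the paper does. The one point you flag as needing care, that the choice of $\Phi_{\Sigma'}$ with $Base(\Sigma')$ disjoint from $\Gamma_{[\Phi]}(x,p)$ makes the two minima coincide, is indeed the only place where the argument could go wrong, and it is settled by the observation that Lemma 5.26's proof already defines $I_{\gamma_{p}^{j}}$ by the formula that Lemma 2.14 uses for $I_{p_{j}}$.
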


\begin{proof}
In order to prove the first part of the lemma, suppose that
$m=I_{italian}^{\Sigma}(p,\gamma_{p}^{j},C,\phi_{\lambda})$. By
Lemma 5.23, choosing $\lambda'\in{\mathcal V}_{\lambda}$ generic
in $Par_{\Sigma}$, we can find
$\{p_{1},\ldots,p_{m-1}\}=C\cap\phi_{\lambda'}\cap({{\mathcal
V}_{p}\setminus p})$, distinct from $p$, witnessing this
multiplicity. Using the fact that $\{p_{1},\ldots,p_{m-1}\}$ lie
outside $Base(\Sigma)$, we may apply Lemma 2.17 to obtain that the
intersections at these points are transverse. By the previous
Lemma 5.26, we have that
$I_{italian}(p,\gamma_{p}^{j},C,\phi_{\lambda'})=I_{\gamma_{p}^{j}}$.
Now choose a nonsingular model $(C^{ns},\Phi)$, with presentation
$\Phi_{\Sigma'}$, such that $Base(\Sigma')$ is disjoint from
$\Gamma_{[\Phi]}(x,p)$. By definition 5.15 of a branch, we can
find
$\{p_{1}',\ldots,p_{m}'\}=C^{ns}\cap\overline{\phi_{\lambda'}}\cap({\mathcal
V_{p}\setminus p})$. By properties of specialisations,
$Base(\Sigma')$ is also disjoint from this set. We then have that
the intersections between $C^{ns}$ and
$\overline{\phi_{\lambda'}}$ are also transverse at these points
and that
$I_{italian}(p_{j},C^{ns},\overline{\phi_{\lambda'}})=I_{\gamma_{p}^{j}}$.
It then follows by summability of specialisation, see
\cite{depiro2}, that;\\

$I_{italian}(p_{j},C^{ns},\overline{\phi_{\lambda}})=I_{\gamma_{p}^{j}}+(m-1)$.\\

Again, using the presentation of $(C^{ns},\Phi)$, we obtain
that;\\

$I_{italian}(p,\gamma_{p}^{j},C,\phi_{\lambda})=I_{\gamma_{p}^{j}}+(m-1)$.\\

Hence, the result follows. The second part of the lemma follows
immediately from the Definition 5.21 of
$I_{italian}^{\Sigma}(p,C,\gamma_{p}^{j},\phi_{\lambda})$ at a
base point.
\end{proof}

\begin{lemma}
Let $p\in C\cap Base(\Sigma)$ and let $\gamma_{p}^{j}$ be a branch
centred at $p$. Then, if
$m=I_{italian}^{\Sigma}(p,C,\gamma_{p}^{j},\phi_{\lambda})$, we
can find $\lambda'\in {\mathcal V}_{\lambda}$, generic in
$Par_{\Sigma}$, and distinct
$\{p_{1},\ldots,p_{m-1}\}=C\cap\phi_{\lambda'}\cap({\gamma_{p}^{j}\setminus
p})$ such that the intersection of $C$ with $\phi_{\lambda'}$ at
each $p_{i}$ is transverse for $1\leq i\leq m-1$. If
$m=I_{italian}^{\Sigma,mobile}(p,C,\gamma_{p}^{j},\phi_{\lambda})$,
then the same results for distinct $\{p_{1},\ldots,p_{m}\}$ with
the same properties.
\end{lemma}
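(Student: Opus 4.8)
The plan is to treat this as the base-point analogue of Lemma 5.25, exactly as Lemma 2.16 is the base-point analogue of Lemma 2.13, and to reduce it to that non-base case via the transfer relation of Definition 5.21 together with the $\mathcal{L}_{spec}$ characterisation of the branch multiplicities furnished by Lemma 5.23. Throughout I would fix a non-singular model $(C^{ns},\Phi)$ with a presentation $\Phi_{\Sigma'}$ whose base locus $Base(\Sigma')$ is disjoint from the fibre $\Gamma_{[\Phi]}(x,p)$, so that the branch $\gamma_{p}^{j}$ of Definition 5.15 is well-behaved and the canonical set $W_{\Phi_{\Sigma'}}\subset NonSing(C)$ is available for placing the witnessing points.

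First I would dispose of the mobile case, since it is the cleaner of the two. Suppose $m=I_{italian}^{\Sigma,mobile}(p,C,\gamma_{p}^{j},\phi_{\lambda})$. By Lemma 5.23, for $\lambda'\in{\mathcal V}_{\lambda}$ generic in $Par_{\Sigma}$ the set $C\cap(\gamma_{p}^{j}\setminus p)\cap\phi_{\lambda'}$ consists of exactly $m$ distinct points $\{p_{1},\ldots,p_{m}\}$, each lying in ${\mathcal V}_{p}\setminus p$. These points cannot lie in $Base(\Sigma)$: the base locus is finite and defined over $L$, distinct points have disjoint infinitesimal neighborhoods, so the only base point meeting ${\mathcal V}_{p}$ is $p$ itself, and each $p_{i}\neq p$. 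Having placed the $p_{i}$ outside $Base(\Sigma)$, the transversality of $C$ with $\phi_{\lambda'}$ at each $p_{i}$ would follow exactly as in Lemma 5.25 (equivalently, via Lemma 2.17 together with the non-coincidence result Lemma 5.24): a non-base mobile point carrying $I_{italian}(p_{i},C,\phi_{\lambda'})\geq 2$ would, under a further generic infinitesimal variation, split along the branch, forcing the mobile count to exceed $m$, a contradiction. Pushing the $p_{i}$ into the canonical set $W_{\Phi_{\Sigma'}}\subset NonSing(C)$ guarantees that they are non-singular, so the intersections are genuinely transverse.

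For the statement phrased with $m=I_{italian}^{\Sigma}(p,C,\gamma_{p}^{j},\phi_{\lambda})$, I would invoke Definition 5.21: since $p\in Base(\Sigma)$, one has $I_{italian}^{\Sigma,mobile}(p,C,\gamma_{p}^{j},\phi_{\lambda})=m-1$. The first part then applies verbatim with $m-1$ in place of $m$, producing distinct transverse intersections $\{p_{1},\ldots,p_{m-1}\}=C\cap(\gamma_{p}^{j}\setminus p)\cap\phi_{\lambda'}$ lying outside $Base(\Sigma)$, which is precisely the asserted conclusion. The "missing" intersection is the fixed point $p\in Base(\Sigma)$, through which every $\phi_{\lambda'}$ passes and which therefore accounts for the remaining unit of $I_{italian}^{\Sigma}$.

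The only genuinely delicate point is that $\lambda'$ is required to be generic merely within the infinitesimal neighborhood ${\mathcal V}_{\lambda}\cap Par_{\Sigma}$, not generic in all of $Par_{\Sigma}$, so the global transversality statement of Lemma 2.17 does not apply directly. The hard part is thus to rule out a repeated or tangential mobile intersection at some $p_{i}$ using only this restricted genericity; I expect to settle it precisely as in Lemma 5.25, by combining Lemma 5.24 (which identifies $I_{italian}$, $I_{italian}^{\Sigma}$ and $I_{italian}^{\Sigma,mobile}$ at non-base points of the branch) with summability of specialisation, thereby forcing each of the $m$ (respectively $m-1$) distinct points to be counted exactly once.
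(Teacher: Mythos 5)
Your argument is correct and follows essentially the same route as the paper, which likewise derives both parts from Lemma 5.23 (to produce the $m$ or $m-1$ distinct points on $\gamma_{p}^{j}\setminus p$), properties of infinitesimals (to push them outside the finite, $L$-defined $Base(\Sigma)$), and Lemma 2.17 (for transversality), with Definition 5.21 supplying the shift by one between $I_{italian}^{\Sigma}$ and $I_{italian}^{\Sigma,mobile}$ at a base point. The "delicate point" you flag at the end is not actually an obstacle: in this framework "$\lambda'\in{\mathcal V}_{\lambda}$ generic in $Par_{\Sigma}$" already means $\lambda'$ is generic in $Par_{\Sigma}$ over $L$, so the global transversality statement of Lemma 2.17 applies directly, exactly as the paper uses it.
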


\begin{proof}
The first part of the lemma is a straightforward consequence of
Lemma 5.23, properties of infinitesimals, (to show that
$\{p_{1},\ldots,p_{m-1}\}$ lie outside $Base(\Sigma)$) and Lemma
2.17 (to obtain transversality). The second part of the lemma also
follows from Lemma 5.23 and Lemma 2.17 (to obtain transversality).
\end{proof}

We now note the following, connecting the original definition of
$I_{italian}$ with its branched version at non-singular points.

\begin{lemma}
Let $p$ be a nonsingular point of the curve $C$, then there exists
a unique branch $\gamma_{p}$ centred at $p$ and;\\

$I_{italian}(p,C,\phi_{\lambda})=I_{italian}(p,C,\gamma_{p},\phi_{\lambda})$ $(1)$\\

$I_{italian}^{\Sigma,mobile}(p,C,\phi_{\lambda})=I_{italian}^{\Sigma,mobile}(p,C,\gamma_{p},\phi_{\lambda})$ $(2)$\\

\end{lemma}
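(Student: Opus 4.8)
The plan is to reduce both equalities to the single geometric fact that, at a non-singular point, the unique branch fills out the whole infinitesimal neighbourhood of $p$ along $C$. First I would settle existence and uniqueness of $\gamma_p$. Fix a non-singular model $(C^{ns},\Phi^{ns})$ of $C$ with totally defined morphism $\Phi^{ns}:C^{ns}\rightarrow C$, and let $\Gamma_{\Phi^{ns}}(x,p)$ be its fibre over $p$. Since $\Phi^{ns}$ is a non-constant morphism of complete curves it is surjective, so this fibre is non-empty and $p$ is the origin of at least one branch; by Lemma 5.4 a non-singular point is the origin of a single branch, so there is a unique $\gamma_p$, corresponding to a single point $p_1$ with $\Gamma_{\Phi^{ns}}(x,p)=\{p_1\}$.

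I would then obtain $(1)$ directly from the branched version of B\'ezout's theorem. Theorem 5.13 gives
\[ I_{italian}(p,C,\phi_\lambda)=\sum_{1\le j\le n}I_{italian}(p,\gamma_p^j,C,\phi_\lambda), \]
where $\{\gamma_p^1,\ldots,\gamma_p^n\}$ enumerates the branches at $p$. By the previous paragraph $n=1$, so the right-hand side collapses to the single term $I_{italian}(p,\gamma_p,C,\phi_\lambda)$, which is exactly $(1)$ (the order of the arguments $C$ and $\gamma_p$ being immaterial, as throughout this section).

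The substance of the proof is $(2)$, for which I would establish the set identity $\gamma_p=C\cap{\mathcal V}_p$ in the non-standard model. By Definition 5.15, $\gamma_p=\{x\in C:\exists y\,(\Gamma_{\Phi^{ns}}(y,x)\wedge y\in{\mathcal V}_{p_1})\}$. For $\gamma_p\subseteq C\cap{\mathcal V}_p$: if $x=\Phi^{ns}(y)$ with $y\in{\mathcal V}_{p_1}$, then, since $\Phi^{ns}$ is defined over $L$, the specialisation commutes with it, whence $\pi(x)=\Phi^{ns}(\pi(y))=\Phi^{ns}(p_1)=p$ and $x\in{\mathcal V}_p$. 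For the reverse inclusion: if $x\in C\cap{\mathcal V}_p$, surjectivity of $\Phi^{ns}$ in the non-standard model gives $y$ with $\Phi^{ns}(y)=x$, and $\Phi^{ns}(\pi(y))=\pi(x)=p$ forces $\pi(y)$ to be a preimage of $p$, hence $\pi(y)=p_1$ and $y\in{\mathcal V}_{p_1}$, so $x\in\gamma_p$. Granting this, $\gamma_p\setminus p=C\cap({\mathcal V}_p\setminus p)$, so choosing $\lambda'\in{\mathcal V}_\lambda$ generic in $Par_{\Sigma}$, Lemma 5.23 gives
\[ I_{italian}^{\Sigma,mobile}(p,C,\gamma_p,\phi_\lambda)=Card(C\cap(\gamma_p\setminus p)\cap\phi_{\lambda'})=Card(C\cap({\mathcal V}_p\setminus p)\cap\phi_{\lambda'}), \]
and the right-hand cardinality is exactly $I_{italian}^{\Sigma,mobile}(p,C,\phi_\lambda)$ by Definition 2.20, proving $(2)$. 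The one point requiring care is the commutation of $\pi$ with $\Phi^{ns}$ together with uniqueness of the preimage $p_1$; this is precisely where non-singularity of $p$ is used, and is the only step that fails at a singular point, where the fibre has several points and $C\cap{\mathcal V}_p$ splits into the disjoint neighbourhoods of the branches.
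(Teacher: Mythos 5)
Your proposal is correct, and for part $(2)$ it coincides with the paper's own argument: both establish the set identity $\gamma_{p}=C\cap{\mathcal V}_{p}$ from Definition 5.15 (you spell out the specialisation-commutation and surjectivity details that the paper leaves implicit) and then conclude via Lemma 5.23 and Definition 2.20. For part $(1)$, however, you take a genuinely different route. The paper derives $(1)$ from Lemma 5.17 (which identifies $I_{italian}(p,\gamma_{p},C,\phi_{\lambda})$ with $Card(C\cap\phi_{\lambda'}\cap\gamma_{p})$), the identity $\gamma_{p}=C\cap{\mathcal V}_{p}$, Definition 2.6 and Lemma 2.10 (non-existence of coincident mobile points); this chain passes through $I_{italian}^{\Sigma}$ and therefore implicitly requires a linear system $\Sigma$ containing $\phi_{\lambda}$ with $p\notin Base(\Sigma)$, which has to be arranged as in the proof of Lemma 5.12. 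You instead invoke the branched Hyperspatial Bezout Theorem (Theorem 5.13), which expresses $I_{italian}(p,C,\phi_{\lambda})$ as the sum of the branch multiplicities; with a single branch at a non-singular point (Lemma 5.4) the sum collapses to one term. Your route is shorter and avoids any choice of auxiliary linear system, at the cost of invoking the heavier Theorem 5.13; the paper's route stays entirely at the level of the local lemmas of Sections 2 and 5 and makes visible exactly where non-singularity (via Lemma 2.10) enters into $(1)$.
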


\begin{proof}
The fact that there exists a unique branch $\gamma_{p}$, centred
at $p$, follows from Lemma 5.4. By definition 5.15 of a branch, we
then have that;\\

$\gamma_{p}=C\cap {\mathcal V}_{p}$\\

Now $(1)$ follows from Lemma 5.17, Lemma 2.10 and Definition 2.6.
While $(2)$ follows from Definition 2.20 and Lemma 5.23.

\end{proof}

\end{section}
\begin{section}{Cayley's Classification of Singularities}

The purpose of this section is to develop a theory of
singularities for algebraic curves based on the work of Cayley. In
order to make this theory rigorous, one first needs to find a
method of parametrising the branches of an algebraic curve. This
is the content of the following theorem;\\

\begin{theorem}{Analytic Representation of a Branch}\\

Let $C\subset P^{w}$ be a projective algebraic curve. Suppose that
$C$ is defined by equations $\{F_{1}(x_{1},\ldots,x_{w}),\ldots
F_{m}(x_{1},\ldots,x_{w})\}$ in affine coordinates
$x_{i}={X_{i}\over X_{0}}$. Let $p\in C$ correspond to the point
$\bar 0$ in this coordinate system. Then there exist algebraic
power series $\{x_{1}(t),\ldots,x_{w}(t)\}$ such that
$x_{1}(t)=\ldots=x_{w}(t)=0$, $F_{j}(x_{1}(t),\ldots,x_{w}(t))=0$
for $1\leq j\leq m$ and with the property that, for any
algebraic function $F_{\lambda}(x_{1},\ldots,x_{w})$;\\

$F_{\lambda}(x_{1}(t),\ldots,x_{w}(t))\equiv 0$ iff $F_{\lambda}$
vanishes on $C$.\\

Otherwise, $F_{\lambda}$ has finite intersection with $C$ and\\

$ord_{t}F_{\lambda}(x_{1}(t),\ldots,x_{w}(t))=I_{italian}(p,\gamma_{p}^{j},C,F_{\lambda})$ $(*)$\\

We refer to the power series as parametrising the branch
$\gamma_{p}^{j}$.

\end{theorem}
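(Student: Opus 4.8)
The plan is to transport the problem to a non-singular model, where the branch becomes a genuine smooth branch that the algebraic implicit function theorem parametrizes, and then to push the parametrization forward to $C$.

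First I would fix, by Theorem 4.15, a non-singular model $C^{ns}\subset P^{w_{1}}$ together with the birational morphism $\Phi^{ns}:C^{ns}\rightarrow C$; this is totally defined because $C^{ns}$ is non-singular. By Definition 5.2 the branch $\gamma_{p}^{j}$ corresponds to the infinitesimal neighbourhood $\mathcal{V}_{p_{j}}$ of a point $p_{j}$ in the fibre $\Gamma_{\Phi^{ns}}(x,p)$, and $p_{j}$ is a non-singular point of $C^{ns}$. Choosing affine coordinates $(u_{1},\ldots,u_{w_{1}})$ centred at $p_{j}$, smoothness means that $C^{ns}$ is locally a smooth curve at $p_{j}$, so one coordinate restricts to a local uniformizer. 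Applying the algebraic implicit function theorem on an appropriate etale cover, exactly in the sense used in Lemma 2.10 and developed in \cite{depiro1} and \cite{depiro2}, I obtain algebraic power series $(u_{1}(t),\ldots,u_{w_{1}}(t))$ with $u_{i}(0)=0$ that parametrise the unique branch of $C^{ns}$ at $p_{j}$.

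Next I would push this parametrization forward. Since $\Phi^{ns}$ is a morphism regular at $p_{j}$, it is given near $p_{j}$ by rational functions regular there, and setting $x_{i}(t)=(\Phi^{ns})_{i}(u_{1}(t),\ldots,u_{w_{1}}(t))$ produces algebraic power series (a rational function regular at the centre, composed with algebraic power series, is again an algebraic power series), with $x_{i}(0)=0$ because $\Phi^{ns}(p_{j})=p=\bar 0$. As the image of the parametrization lies on $C$, and $C=\mathrm{Zero}(F_{1},\ldots,F_{m})$, we obtain $F_{j}(x_{1}(t),\ldots,x_{w}(t))\equiv 0$ for each $j$. The equivalence ``$F_{\lambda}(x(t))\equiv 0$ iff $F_{\lambda}$ vanishes on $C$'' then splits into an easy direction (if $F_{\lambda}$ vanishes on $C$ it vanishes along the parametrization) and its converse: if $F_{\lambda}$ does not vanish on $C$, then by the finiteness of proper intersections with a curve (Lemma 1.4) $C\cap(F_{\lambda}=0)$ is finite, so the order computation below makes $F_{\lambda}(x(t))$ of finite order, hence $\not\equiv 0$.

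The heart of the matter is the order formula $(*)$. By construction $F_{\lambda}(x_{1}(t),\ldots,x_{w}(t))=\overline{F}_{\lambda}(u_{1}(t),\ldots,u_{w_{1}}(t))$, the pullback of the lifted form $\overline{F}_{\lambda}$ along the smooth branch at $p_{j}$; and by Remark 5.10 together with Lemma 5.12 we have $I_{italian}(p,\gamma_{p}^{j},C,F_{\lambda})=I_{italian}(p_{j},C^{ns},\overline{F}_{\lambda})$. So it suffices to prove, at the \emph{non-singular} point $p_{j}$, that $ord_{t}\,\overline{F}_{\lambda}(u(t))=I_{italian}(p_{j},C^{ns},\overline{F}_{\lambda})$. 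Writing $I_{italian}(p_{j},C^{ns},\overline{F}_{\lambda})=Card(C^{ns}\cap\overline{F}_{\lambda'}\cap\mathcal{V}_{p_{j}})$ for generic $\lambda'\in\mathcal{V}_{\lambda}$, the required identity is the statement that a generic infinitesimal variation of a power series of order $k$ acquires exactly $k$ simple roots in the infinitesimal disc about $t=0$. I expect \textbf{this passage from the analytic (power series) order to the Zariski-structure intersection count to be the main obstacle}, since the clean statement of it in \cite{depiro2} is phrased for plane curves. To reduce to that setting I would, using Lemma 4.13 and the birational reductions of Theorem 3.3, compose with a generic conic projection carrying $C^{ns}$ biunivocally to a plane curve while preserving the branch $\gamma_{p}^{j}$ and its intersection multiplicities; since both the parametrization of a branch and $I_{italian}$ at a branch are birational invariants (Lemma 5.7, Lemma 5.17), the identity established on the plane model transfers back to yield $(*)$, completing the proof.
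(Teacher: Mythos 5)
Your proposal is correct and follows essentially the same route as the paper: reduce to a non-singular model where the branch is the infinitesimal neighbourhood of a smooth point, parametrise that smooth branch by algebraic power series via the implicit function theorem, push forward along the birational morphism, and establish the order formula $(*)$ by transferring to a plane model where the order-versus-multiplicity identity of \cite{depiro2} applies, using Lemmas 5.11, 5.12 and 2.31 to move the multiplicities back. The only cosmetic difference is that the paper constructs the parametrisation on the plane curve first and pushes forward from there, whereas you parametrise directly on $C^{ns}$ and invoke the plane reduction only for the multiplicity computation; you also correctly identify that reduction as the crux of the argument.
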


\begin{proof}
We first prove the theorem in the case when $C\subset P^{w}$ is a
non-singular projective algebraic curve. By Lemma 4.13, we can
find a plane projective algebraic curve $C_{1}\subset P^{2}$ such
that $\{C,C_{1}\}$ are birational and there exists a corresponding
point $p_{1}\in C_{1}$ such that $p_{1}$ is non-singular. Let
$\Phi_{\Sigma}$ and $\Psi_{\Sigma'}$ be presentations such that
$\Psi_{\Sigma'}=\Phi_{\Sigma}^{-1}$ as a birational map. Without
loss of generality, we may assume that
$V_{\Phi_{\Sigma}}=W_{\Psi_{\Sigma'}}\subset C$ and
$V_{\Psi_{\Sigma'}}=W_{\Phi_{\Sigma}}\subset C_{1}$. Moreover, we
may assume that $\{p,p_{1}\}$ lie in
$\{V_{\Phi_{\Sigma}},V_{\Psi_{\Sigma'}}\}$ and correspond to the
origins of the affine coordinate systems $(x_{1},\ldots,x_{w})$
and $(y_{1},y_{2})$. Let
$\Sigma'=\{\psi_{0},\psi_{1},\ldots,\psi_{w}\}$ and let
$(y_{1}(t),y_{2}(t))$ be an analytic representation of $p_{1}\in
C_{1}$, given by the inverse function theorem. We obtain an
analytic representation of $p\in C$ by the
formula;\\

$(x_{1}(t),\ldots,x_{w}(t))=({\psi_{1}\over\psi_{0}}(y_{1}(t),y_{2}(t)),\ldots,{\psi_{w}\over\psi_{0}}(y_{1}(t),y_{2}(t)))$\\

First, note that as $p\notin Zero(\psi_{0})$, we may assume that
$\psi_{0}(0,0)\neq 0$. Hence, we can invert the power series
$\psi_{0}(y_{1}(t),y_{2}(t))$. This clearly proves that $x_{j}(t)$
is a formal power series in $L[[t]]$. That $x_{j}(0)=0$ for $1\leq
j\leq w$ follows from the corresponding property for
$(y_{1}(t),y_{2}(t))$ and the fact that $p$ is situated at the
origin of the coordinate system $(x_{1},\ldots,x_{w})$. Finally,
we need to check that $x_{j}(t)$ define algebraic power series.
This follows obviously from the fact that $\psi_{j}$ and
$\psi_{0}$ define algebraic functions. Now, suppose that
$\{F_{1},\ldots,F_{m}\}$ are defining equations for $C$. Let
$\{F_{1}',\ldots,F_{m}'\}$ be the corresponding equations written
in homogeneous form for the variables $\{X_{0},\ldots,X_{w}\}$,
where $x_{j}={X_{j}\over X_{0}}$. Let $G(Y_{0},Y_{1},Y_{2})$ be
the defining equation for $C_{1}$. We can homogenise the power
series representation of $p_{1}\in C_{1}$ by
$(Y_{0}(t):Y_{1}(t):Y_{2}(t))=(1:y_{1}(t):y_{2}(t))$. Then we must
have that $G(1:y_{1}(t):y_{2}(t))=0$. Now
$F_{k}'(\psi_{0},\ldots,\psi_{w})$ vanishes identically on
$C_{1}$, hence, by the projective Nullstellensatz, there exists a
homogeneous $H_{k}(Y_{0},Y_{1},Y_{2})$ such that;\\

$F_{k}'(\psi_{0},\ldots,\psi_{w})=H_{k}G$\\

It follows that;\\

$F_{k}'(\psi_{0}(1:y_{1}(t):y_{2}(t)),\ldots,\psi_{w}(1:y_{1}(t):y_{2}(t)))\equiv
0$\\

therefore;\\

$F_{k}'(1:{\psi_{1}(y_{1}(t),y_{2}(t))\over\psi_{0}(y_{1}(t),y_{2}(t))}:\ldots:{\psi_{w}(y_{1}(t),y_{2}(t))\over\psi_{0}(y_{1}(t),y_{2}(t))})\equiv
0$\\

which gives;\\

$F_{k}(x_{1}(t),\ldots,x_{w}(t))\equiv 0$\\

as required. The property that an algebraic function $F_{\lambda}$
vanishes on $(x_{1}(t),\ldots,x_{w}(t))$ iff it vanishes on $C$
can be proved in a similar way to the above argument, invoking
Theorem 2.10 of the paper \cite{depiro4}. Alternatively, it can be
proved directly, using the fact that, as
$(x_{1}(t),\ldots,x_{w}(t))$ define algebraic power series,
$(y_{1}-x_{1}(t),\ldots,y_{w}-x_{w}(t))$ defines the equation of a
curve $C'$ on some etale cover $i:(A^{w}_{et},(\bar
0)^{lift})\rightarrow (A^{w},(\bar 0))$ such that $i(C')\subset
C$. If $F_{\lambda}$ vanishes on $C'$, then it must vanish on an
open subset $U$ of $C$, hence as $F_{\lambda}$ is closed, must
vanish on all of $C$ as required.  Finally, we need to check the
property $(*)$. Suppose that $F_{\lambda}$ is an algebraic
function with $m=ord_{t}F_{\lambda}(x_{1}(t),\ldots,x_{w}(t))$,
passing through $p$. Choose $\Sigma_{1}$ containing $F_{\lambda}$
such that $\Sigma_{1}$ has finite intersection with $C$ and
$p\notin Base(\Sigma_{1})$. It follows, using Lemma 2.12, that
$I_{italian}(p,C,F_{\lambda})=I_{italian}^{\Sigma_{1}}(p,C,F_{\lambda})$,
$(\dag)$. Using Lemma 2.31, we can transfer the system
$\Sigma_{1}$ to a system on $C_{1}$. Let $G_{\lambda}$ be the
corresponding algebraic curve to the algebraic form $F_{\lambda}$.
We must have that $p_{1}\notin Base(\Sigma_{1})$, otherwise, as
$p_{1}$ belongs to the canonical set $V_{\Psi_{\Sigma'}}$, we
would have that $p$ belongs to $Base(\Sigma_{1})$ as well. Hence,
using Lemma 2.12 again, we must have that
$I_{italian}(p_{1},C_{1},G_{\lambda})=I_{italian}^{\Sigma_{1}}(p_{1},C_{1},G_{\lambda})$,
$(\dag\dag)$. By direct calculation, we have that;\\

$G_{\lambda}(y_{1}(t),y_{2}(t))=\psi_{0}^{r}(y_{1}(t),y_{2}(t))F_{\lambda}(x_{1}(t),\ldots,x_{w}(t))$\\

for some $r\leq 0$. As $ord_{t}\psi_{0}(y_{1}(t),y_{2}(t))=0$, we
have that;\\

$ord_{t}G_{\lambda}(y_{1}(t),y_{2}(t))=ord_{t}F_{\lambda}(x_{1}(t),\ldots,x_{w}(t))=m$\\

Now, by Theorem 5.1 of the paper \cite{depiro2} and $(\dag\dag)$,
it follows
that;\\

$I_{italian}(p_{1},C_{1},G_{\lambda})=I_{italian}^{\Sigma_{1}}(p_{1},C_{1},G_{\lambda})=m$\\

Now, using Lemma 2.31 and the fact that $\{p,p_{1}\}$ lie in the
canonical sets $V_{\Phi_{\Sigma}}$ and $W_{\Phi_{\Sigma}}$, we
must have that $I_{italian}^{\Sigma_{1}}(p,C,F_{\lambda})=m$ as
well. Hence, by $(\dag)$, it follows that
$I_{italian}(p,C,F_{\lambda})=m$. As $C$ is a non-singular model
of itself, this proves the claim $(*)$ in this special case.\\

We now assume that $C\subset P^{w}$ is \emph{any} projective
algebraic curve. Suppose that $C^{ns}\subset P^{w'}$ is a
non-singular model of $C$ with birational morphism
$\Phi_{\Sigma'}:C^{ns}\rightarrow C$ such that the branch
$\gamma_{p}^{j}$ corresponds to ${\mathcal V}_{p_{j}}$ in the
fibre $\Gamma_{[\Phi]}(x,p)$, disjoint from $Base(\Sigma')$. As
before, we may assume that $\{p,p_{j}\}$ correspond to the origins
of the coordinate systems $(x_{1},\ldots,x_{w})$ and
$(y_{1},\ldots,y_{w'})$. Let
$\Sigma'=\{\phi_{0},\ldots,\phi_{w}\}$. By the previous argument,
we can find an analytic representation
$(y_{1}(t),\ldots,y_{w'}(t))$ of $p_{j}$ in $C^{ns}$, with the
properties given in the statement of the theorem. As before, we
obtain an analytic representation of the corresponding $p\in C$,
by the formula;\\

$(x_{1}(t),\ldots,x_{w}(t))=({\phi_{1}\over\phi_{0}}(y_{1}(t),\ldots,y_{w'}(t)),\ldots,{\phi_{w}\over\phi_{0}}(y_{1}(t),\ldots,y_{w'}(t)))$\\

One checks that this has the required properties up to the
property $(*)$ by a direct imitation of the proof above, with the
minor modification that the projective Nullstellensatz for
$C^{ns}$ gives that, if $\{G_{1},\ldots,G_{k}\}$ are defining
equations for $C^{ns}$, then, if $F$ vanishes on $C^{ns}$, there
must exist homogeneous $\{H_{1},\ldots,H_{k}\}$ such that
$F=H_{1}G_{1}+\ldots+H_{k}G_{k}$. Alternatively, one can refer the
parametrisation to a non-singular point of a plane projective
curve, in which case the argument up to $(*)$ is identical.\\

We now verify the property $(*)$. Suppose that $F_{\lambda}$ is an
algebraic function with
$ord_{t}F_{\lambda}(x_{1}(t),\ldots,x_{w}(t))=m$. Let
${\overline{F}}_{\lambda}$ be the corresponding function on
$C^{ns}$, obtained from the presentation $\Phi_{\Sigma'}$.  By
Lemmas 5.11 and 5.12;\\

$I_{italian}(p,\gamma_{p}^{j},C,F_{\lambda})=I_{italian}(p_{j},C^{ns},\overline{F_{\lambda}})
\ \ (**)$\\

We claim that $ord_{t}{\overline
F}_{\lambda}(y_{1}(t),\ldots,y_{w'}(t))=m$. This follows by
repeating the argument given above. By the properties of
$(y_{1}(t),\ldots,y_{w'}(t))$ and the result verified in the case
of a non-singular curve, we obtain immediately that
$I_{italian}(p_{j},C^{ns},\overline{F_{\lambda}})=m$. Combined
with $(**)$, this gives the required result.\\

\end{proof}

Using the analytic representation, we obtain the following
classification of singularities due to Cayley;\\

\begin{theorem}{Cayley's Classification of Singularities}\\

Let $C\subset P^{w}$ be a projective algebraic curve which is not
contained in any hyperplane section. Let $\gamma_{p}^{j}$ be a
branch of the algebraic curve centred at $p$. Then we can assign a
sequence of non-negative integers
$(\alpha_{0},\alpha_{1},\ldots,\alpha_{w-1})$, called the
\emph{character} of the branch, which has the following
property;\\

Let $\Sigma$ be the system of hyperplanes passing through $p$.
Then there exists a filtration of $\Sigma$ into subsystems of hyperplanes;\\

$\Sigma_{w-1}\subset\Sigma_{w-2}\subset\ldots\subset\Sigma_{1}\subset\Sigma_{0}=\Sigma$\\

with $dim(\Sigma_{i})=(w-1)-i$, such that, for any hyperplane
$H_{\lambda}$ passing through $p$, we have that;\\

$H_{\lambda}\in\Sigma_{i}$ iff
$I_{italian}(p,\gamma_{p}^{j},C,H_{\lambda})\geq\alpha_{0}+\alpha_{1}+\ldots+\alpha_{i}$. $(\dag)$\\

Moreover, these are the only multiplicities which occur.
\end{theorem}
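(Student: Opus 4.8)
The plan is to reduce the whole statement to linear algebra over $L$ by means of the analytic parametrisation of Theorem 6.1. First I would apply that theorem to the branch $\gamma_p^j$, choosing affine coordinates $x_i=X_i/X_0$ in which $p$ is the origin, obtaining algebraic power series $x_i(t)=\sum_{k\geq 1}x_{i,k}t^k$ (each with $x_i(0)=0$) that parametrise $\gamma_p^j$. Since $p$ is the origin, every hyperplane through $p$ has affine form $H_\lambda:\sum_{i=1}^w \lambda_i x_i=0$ with $[\lambda_1:\cdots:\lambda_w]\in P^{w-1}$, so $\Sigma=P^{w-1}$. Setting $v_k=(x_{1,k},\dots,x_{w,k})\in L^w$ and $\lambda\cdot v_k=\sum_i\lambda_i x_{i,k}$, property $(*)$ of Theorem 6.1 gives, for every such $H_\lambda$,
\[
I_{italian}(p,\gamma_p^j,C,H_\lambda)=ord_t\Big(\sum_{i=1}^w\lambda_i x_i(t)\Big)=\min\{k:\lambda\cdot v_k\neq 0\}.
\]

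Next I would introduce the associated flag. Let $W_m$ be the $L$-span of $v_1,\dots,v_m$ (with $W_0=\{0\}$), giving a nested chain $W_0\subseteq W_1\subseteq\cdots$, and write $\lambda\perp W$ to mean $\lambda\cdot v=0$ for all $v\in W$. The crucial geometric input is that the $v_k$ span $L^w$: since $C$ lies in no hyperplane, no $H_\lambda$ contains $C$, so by the vanishing clause of Theorem 6.1 each $\sum_i\lambda_i x_i(t)\not\equiv 0$ for $\lambda\neq 0$, i.e. no nonzero $\lambda$ is orthogonal to every $v_k$. Hence $\dim W_m$ rises from $0$ to $w$ in exactly $w$ unit steps, at positions $\beta_0<\beta_1<\cdots<\beta_{w-1}$ with $\dim W_{\beta_i-1}=i$. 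I would then define the character by $\alpha_0=\beta_0$ and $\alpha_i=\beta_i-\beta_{i-1}$, so that $\alpha_0+\cdots+\alpha_i=\beta_i$; each $\alpha_i\geq 1$, in particular non-negative.

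Finally I would read off both conclusions from the equivalence $I_{italian}(p,\gamma_p^j,C,H_\lambda)\geq m$ iff $\lambda\perp W_{m-1}$. Putting $\Sigma_i=\{[\lambda]\in P^{w-1}:\lambda\perp W_{\beta_i-1}\}$ yields linear subsystems cut out by $\dim W_{\beta_i-1}=i$ independent conditions, so $\dim\Sigma_i=(w-1)-i$, nested as $\Sigma_{w-1}\subset\cdots\subset\Sigma_0$ since the $W_{\beta_i-1}$ increase, with $\Sigma_0=\Sigma$ because $W_{\beta_0-1}=\{0\}$; and $H_\lambda\in\Sigma_i$ iff $I_{italian}(p,\gamma_p^j,C,H_\lambda)\geq\beta_i=\alpha_0+\cdots+\alpha_i$, which is $(\dag)$. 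For the last assertion, the attained order equals $m$ iff $\lambda\in W_{m-1}^\perp\setminus W_m^\perp$, a set nonempty precisely when $W_m\neq W_{m-1}$, i.e. when $m$ is one of the jumps $\beta_i$; thus the only multiplicities occurring are $\beta_0,\dots,\beta_{w-1}$.

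The step I expect to be the main obstacle, and the only place the hypotheses genuinely enter, is the spanning claim that $\{v_k\}$ generates $L^w$: this is exactly where "$C$ contained in no hyperplane" must be converted, via the "$F_\lambda(x(t))\equiv 0$ iff $F_\lambda$ vanishes on $C$" half of Theorem 6.1, into "no nonzero coefficient vector is orthogonal to all $v_k$"; everything else is formal manipulation of orders of power series. I would also record that the construction is intrinsic — the $\beta_i$ are precisely the distinct values of $I_{italian}(p,\gamma_p^j,C,H_\lambda)$, defined independently of the coordinates and parametrisation — so the character and the filtration do not depend on the choices made, and the standing characteristic $0$ assumption is used only through the parametrisation supplied by Theorem 6.1.
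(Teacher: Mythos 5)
Your proposal is correct and follows essentially the same route as the paper's proof: both apply the parametrisation and property $(*)$ of Theorem 6.1 to convert the multiplicity into $ord_{t}$ of a linear combination of the coefficient vectors, use the ``not contained in a hyperplane'' hypothesis to show those vectors span $L^{w}$, and read off the character and filtration from the jumps of the resulting flag. Your flag-and-jumps formulation merely repackages the paper's explicit construction of the linearly independent subsequence $\{\bar a_{m_{1}},\ldots,\bar a_{m_{w}}\}$ and its properties $(i)$--$(iv)$.
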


\begin{proof}
Without loss of generality, we may assume that $p$ is situated at
the origin of the coordinate system $(x_{1},\ldots,x_{w})$. By the
previous theorem, we can find an analytic parameterisation of the
branch $\gamma_{p}^{j}$ of the form;\\

$x_{k}(t)=a_{k,1}t+a_{k,2}t^{2}+\ldots+a_{k,n}t^{n}+\ldots$, for $1\leq k\leq w$\\

Let $\sum_{k=1}^{w}\lambda_{k}x_{k}=0$ be the equation of a
hyperplane $H_{\lambda}$ passing through $p$. Then, we can write
$H_{\lambda}(x_{1}(t),\ldots,x_{k}(t))$ in the
form;\\

$(\sum_{k=1}^{w}\lambda_{k}a_{k,1})t+\ldots+(\sum_{k=1}^{w}\lambda_{k}a_{k,n})t^{n}+O(t^{n})$ (*)\\

Let $\bar a_{n}=(a_{1,n},\ldots,a_{k,n},\ldots,a_{w,n})$. We claim
that we can find a sequence $(\bar a_{m_{1}},\ldots,\bar
a_{m_{w}})$, for $m_{1}<\ldots<m_{i}<\ldots<m_{w}$, such that;\\

(i). $\{\bar a_{m_{1}},\ldots,\bar a_{m_{w}}\}$ is linearly
independent\\

(ii). $V_{m_{i}}=<\bar a_{1},\ldots,\bar a_{m_{i}}>=<\bar a_{m_{1}},\ldots,\bar a_{m_{i}}>$, for $1\leq i\leq w$\\

(iii). If there exist $\{n_{1},\ldots,n_{i}\}$ for
$n_{1}<\ldots< n_{i}$ with $n_{1}\leq
m_{1},\ldots, n_{i}\leq m_{i}$ such that\\

 $V_{m_{i}}=<\bar
a_{n_{1}},\ldots,\bar a_{n_{i}}>$,\\

 then
$n_{1}=m_{1},\ldots,n_{i}=m_{i}$.\\

(iv). $V_{k}=V_{m_{i}}$ for $m_{i}\leq k<m_{i+1}$.\\

The first three properties may be proved by induction on $i$.For
$i=1$, choose the first non-zero vector $\bar a_{m_{1}}$. For the
inductive step, assume we have found $\{\bar a_{m_{1}},\ldots,\bar
a_{m_{i}}\}$ with the required properties $(i)-(iii)$. We claim
that there exists $\bar a_{m_{i+1}}$, with $m_{i+1}>m_{i}$, such
that $\bar a_{m_{i+1}}\notin V_{m_{i}}$, $(**)$.
Suppose not. As $i<w$, the condition;\\

$\bigwedge_{s=1}^{i}(\sum_{k=1}^{w}\lambda_{k}a_{k,m_{s}}=0)$\\

defines a non-empty plane $P$ in the dimension $w-1$ parameter
space $Par_{H}$ of hyperplanes passing through $p$. Choosing
$\lambda\in P$ and using $(*)$, it follows that
$ord_{t}H_{\lambda}(x_{1}(t),\ldots,x_{w}(t))\geq n$ for all
$n>0$. Therefore, $H_{\lambda}(x_{1}(t),\ldots,x_{w}(t))\equiv 0$
and, by Theorem 6.1, $H_{\lambda}$ must contain $C$. This
contradicts the assumption that $C$ is \emph{not} contained in any
hyperplane section. Using $(**)$, choose $m_{i+1}$ minimal such
that $\bar a_{m_{i+1}}\notin V_{m_{i}}$. Properties $(i)$ and
$(ii)$ are trivial to verify. For $(iii)$, assume that $\{\bar
a_{n_{1}},\ldots,\bar a_{n_{i+1}}\}$ are as given in the
hypotheses. Then, the sequence must form a linearly independent
set. Hence, we must have that $V_{m_{i}}=<\bar
a_{n_{1}},\ldots,\bar a_{n_{i}}>$. By the induction hypothesis,
$n_{1}=m_{1},\ldots,n_{i}=m_{i}$. Then, $\bar a_{n_{i+1}}\notin
V_{m_{i}}$, Now, by minimality of $m_{i+1}$, we also have that
$\bar a_{m_{i+1}}=\bar a_{n_{i+1}}$ as required.\\

Property $(iv)$ follows easily from properties $(i)-(iii)$. We
clearly have that $V_{m_{i}}\subseteq V_{k}\subseteq V_{m_{i+1}}$,
for $m_{i}\leq k<m_{i+1}$. If $V_{k}\neq V_{m_{i}}$, then, by
$(i),(ii)$, $V_{k}=V_{m_{i+1}}$. This clearly contradicts $(iii)$.
\\

Now, define;\\

$\Sigma_{i}=\{\lambda\in Par_{H,p}:\bigwedge_{s=1}^{i}(\sum_{k=1}^{w}\lambda_{k}a_{k,m_{s}}=0)\}$, for, $1\leq i\leq w-1$,\\

Then, we obtain a filtration;\\

$\Sigma_{w-1}\subset\ldots\subset\Sigma_{i}\subset\ldots\subset\Sigma_{0}=\Sigma$\\

with $dim(\Sigma_{i})=(w-1)-i$ as in the statement of the theorem.
Define;\\

 $\alpha_{0}=m_{1}$ and $\alpha_{i}=m_{i+1}-m_{i}$ for
$1\leq i\leq w-1$.\\

We need to verify the property $(\dag)$. Suppose that
$H_{\lambda}\in\Sigma_{i}$, for $i\geq 1$. Then $H_{\lambda}$
contains the plane $V_{m_{i}}$ spanned by $\{\bar
a_{m_{1}},\ldots,\bar a_{m_{i}}\}$. Hence, by $(iv)$, it contains
the plane $V_{k}$ for $k<m_{i+1}$. Then, by $(*)$,
$ord_{t}(H_{\lambda}(x_{1}(t),\ldots,x_{w}(t)))\geq m_{i+1}$ and
,by
Theorem 6.1,\\

 $I_{italian}(p,\gamma_{p}^{j},C,H_{\lambda})\geq
m_{i+1}=\alpha_{0}+\ldots+\alpha_{i}$.\\

 Conversely, suppose that\\

$ord_{t}H_{\lambda}(x_{1}(t),\ldots,x_{w}(t))\geq\alpha_{0}+\ldots\alpha_{i}=m_{i+1}$,
for some $i\geq 1$.\\

 Then $H_{\lambda}$ contains the plane
$V_{k}$ for $k<m_{i+1}$. In particular, it contains the plane
$V_{m_{i}}$. Hence $H_{\lambda}\in\Sigma_{i}$. The remaining case
amounts to showing that
$I_{italian}(p,\gamma_{p}^{j},C,H_{\lambda})\geq\alpha_{0}$ for
any hyperplane $H_{\lambda}$ passing through $p$. This follows
immediately from $(*)$, Theorem 6.1 and the fact that $\bar
a_{m_{1}}$ was the first non-zero vector in the sequence $\{\bar
a_{n}:n<\omega\}$. The remark made after the property $(\dag)$
follows immediately from the property $(iv)$ of the sequence
$\{\bar a_{m_{1}},\ldots,\bar a_{m_{w}}\}$.

\end{proof}

\begin{defn}
In accordance with the Italian terminology, we refer to
$\alpha_{0}$ as the order of the branch, $\alpha_{j}$ as the
$j$'th range of the branch, for $1\leq j\leq w-2$, and
$\alpha_{w-1}$ as the final range or class of the branch. We
define $<\bar a_{m_{1}},\ldots,\bar a_{m_{k}}>$ to be the $k$'th
osculatory plane at $p$ for $1\leq k\leq w-1$. We also define the
$w-1$'th osculatory plane to be the osculatory plane. We define
the tuple $(\alpha_{0},\ldots,\alpha_{w-1})$ to be the character
of the branch. Cayley referred to branches of order $1$ as linear
and superlinear otherwise. He referred to branches having a
character of the form $(1,1\ldots,1)$ as ordinary. The Italian
geometers refer to a simple point, which is the origin of an
ordinary branch, as an ordinary simple point. Note that for a
simple (equivalently non-singular) point, the $1$'st osculatory
plane is the same as the tangent line. We will now also use the
terminology, tangent line to a branch, to describe the $1$'st
osculatory plane at any branch. We define a node of a plane curve
to be the origin of at most $2$ ordinary branches with distinct
tangent lines, this definition was used in Theorem 4.16. We also
used, in Theorem 4.16, the fact that a generic point of an
algebraic curve is an ordinary simple point, (this is not true
when the field has non-zero characteristic, see the final section)
a rigorous proof of this result requires duality arguments, we
postpone this proof for another occasion.
\end{defn}

We have the following important results on the projection of a
branch, see Section 4 for the relevant definitions.\\

\begin{theorem}
Let $C\subset P^{w}$ be a projective algebraic curve, as defined
in the previous theorems of this section, and $\gamma_{O}$ a
branch centred at $O$ with character
$(\alpha_{0},\ldots,\alpha_{w-1})$. Let $P$ be chosen generically
in $P^{w}$, then the projection $pr_{P}(\gamma_{O})$ has
character;\\

 $(\alpha_{0},\ldots,\alpha_{w-2})$\\

  If $P$ is situated generically on the osculatory plane, then
 $pr_{P}(\gamma_{O})$ has character;\\

$(\alpha_{0},\ldots,\alpha_{w-2}+\alpha_{w-1})$\\

 More generally, if $P$ is
 situated on the $k$'th osculatory plane for $1\leq k\leq w-2$,
 and not on an osculatory plane of lower order, then the
 projection $pr_{P}(\gamma_{O})$ has character;\\

 $(\alpha_{0},\ldots,\alpha_{k-2},\alpha_{k-1}+\alpha_{k},\alpha_{k+1},\ldots,\alpha_{w-1})$.\\

\end{theorem}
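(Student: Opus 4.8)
The plan is to compute the projected branch directly from the analytic parametrisation provided by Theorem 6.1 and then to read off its character via Theorem 6.2. First I would fix an analytic representation $x(t)=(x_1(t),\ldots,x_w(t))$ of $\gamma_O$ with $x_i(t)=\sum_{n\geq 1}a_{i,n}t^n$, set $\bar a_n=(a_{1,n},\ldots,a_{w,n})$, and recall from the proof of Theorem 6.2 that the osculatory flag is $V_N=\langle \bar a_1,\ldots,\bar a_N\rangle$, with dimension jumps at $N=m_1<\cdots<m_w$ and $\alpha_0=m_1$, $\alpha_i=m_{i+1}-m_i$. The crucial translation is that, for a hyperplane through $O$ with covector $\lambda$, one has $ord_t(\lambda\cdot x(t))\geq N$ iff $\lambda\in V_{N-1}^{\perp}$; hence the distinct contact orders of hyperplanes through $O$ are exactly $\{m_1,\ldots,m_w\}$, and by the property $(\dag)$ of Theorem 6.2 these partial sums recover the character. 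The $k$-th osculatory plane of Definition 6.3 is then, geometrically, the projective subspace $O+V_{m_k}$, so for a finite point the condition that $P$ lies on it reads $\bar p\in V_{m_k}$.

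Next I would set up the projection. By projective invariance of the character (intersection multiplicities with hyperplanes are projective invariants) and by genericity I may assume $P$ is a finite point, choose affine coordinates with $O=\bar 0$ and $P=\bar p=(p_1,\ldots,p_w)$, and identify the target $P^{w-1}$ with the space of lines through $P$; then $pr_P(\gamma_O)$ is the projective branch $t\mapsto[\,x(t)-\bar p\,]$, centred at $[-\bar p]=pr_P(O)$. Since $C$ is non-degenerate, so is $pr_P(C)$ — a hyperplane of $\omega$ containing $pr_P(C)$ would pull back to a hyperplane of $P^w$ containing $C$ — so Theorem 6.2 applies to the projected branch. A hyperplane of $\omega$ through $pr_P(O)$ corresponds to a covector $\mu$ with $\mu\cdot\bar p=0$, i.e. $\mu\in U:=\bar p^{\perp}$, and for such $\mu$ one has $ord_t\bigl(\mu\cdot(x(t)-\bar p)\bigr)=ord_t(\mu\cdot x(t))$ because $\mu\cdot\bar p=0$. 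Thus the character of $pr_P(\gamma_O)$ is recovered from the set of achievable orders $\{ord_t(\mu\cdot x(t)):\mu\in U\setminus\{0\}\}$.

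The computation is then pure linear algebra. Since $U\cap V_N^{\perp}=(\langle\bar p\rangle+V_N)^{\perp}$, an order $N=m_i$ is achieved by some $\mu\in U$ iff $\langle\bar p\rangle+V_{m_i}\supsetneq\langle\bar p\rangle+V_{m_{i-1}}$, and I would show this fails for exactly one index $k^{*}$, namely the unique $i$ with $\bar p\in V_{m_i}\setminus V_{m_{i-1}}$; for every other $i$ the span grows by one and the value $m_i$ is achieved. Hence the projected contact orders are $\{m_1,\ldots,m_w\}\setminus\{m_{k^{*}}\}$, exactly $w-1$ values as required. Deleting $m_{k^{*}}$ from the list of partial sums merges the adjacent ranges $\alpha_{k^{*}-1}$ and $\alpha_{k^{*}}$ into $\alpha_{k^{*}-1}+\alpha_{k^{*}}$ when $k^{*}<w$, and simply removes the final range $\alpha_{w-1}$ when $k^{*}=w$. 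Finally I would pin down $k^{*}$ by dimension counts: a generic $P$ lies in no proper $V_{m_i}$, so $k^{*}=w$ and $\alpha_{w-1}$ drops, giving $(\alpha_0,\ldots,\alpha_{w-2})$; a $P$ generic on the osculatory plane $V_{m_{w-1}}$ avoids $V_{m_{w-2}}$, so $k^{*}=w-1$, giving $(\alpha_0,\ldots,\alpha_{w-2}+\alpha_{w-1})$; and a $P$ generic on the $k$-th osculatory plane but on no lower one has $\bar p\in V_{m_k}\setminus V_{m_{k-1}}$, so $k^{*}=k$, giving $(\alpha_0,\ldots,\alpha_{k-2},\alpha_{k-1}+\alpha_k,\alpha_{k+1},\ldots,\alpha_{w-1})$.

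I expect the main obstacle to be the rigorous justification of the second paragraph: that $t\mapsto[x(t)-\bar p]$ genuinely represents the branch $pr_P(\gamma_O)$ in the sense needed for Theorem 6.2 — in particular handling the projective nature of the target and the degenerate situations where $P$ lies on an osculatory plane, so that the naive order of the projected parametrisation increases — and confirming, via the projection--multiplicity correspondence of Section 4 (Lemma 4.13 together with its branched refinement in Lemma 5.23), that the $\mathcal L_{spec}$ intersection multiplicities on $pr_P(C)$ agree with the power-series orders computed here. The genericity verifications in the final step are routine dimension counts in the parameter space of centres, but they must be stated separately for each of the three cases to ensure that $\bar p$ enters the osculatory flag at precisely the claimed index $k^{*}$.
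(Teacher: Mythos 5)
Your proposal is correct and follows essentially the same route as the paper: both reduce the claim to Theorem 6.1's parametrisation and Theorem 6.2's contact-order characterisation, transfer multiplicities between $C$ and $pr_{P}(C)$ via the branched refinement of Lemma 4.13, and then perform linear algebra on the osculatory flag. Your dual (annihilator) formulation, which isolates the single index $k^{*}$ at which $\bar p$ enters the flag and so handles all three cases uniformly, is a clean repackaging of the paper's case-by-case containment argument, and the obstacle you flag --- justifying that $t\mapsto[x(t)-\bar p]$ computes the Italian multiplicities on the projected branch --- is precisely the step the paper discharges with its identity $(*)$.
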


\begin{proof}
First note that, by Lemma 4.9, if $P$ is situated generically on
the $k$'th-osculatory plane for any $k\geq 1$, and $H'$ is any
hyperplane not containing $P$, the projection $pr_{P}$ is
generally biunivocal. Hence, by Lemma 5.7, the projection
$pr_{P}(\gamma_{O})$ is well defined. We first claim that for any
hyperplane $H_{\lambda}$ in $P^{w-1}$;\\

$I_{italian}(O,\gamma_{O},C,pr^{-1}_{P}(H_{\lambda}))=I_{italian}(pr_{P}(O),pr_{P}(\gamma_{O}),pr_{P}(C),H_{\lambda})$ $(*)$\\

This follows by using the proof of Lemma 4.13 and the fact that
the multiplicity is calculated at a branch, to replace the use of
biunivocity. Now, if $P$ is situated in generic position in
$P^{w}$, then\\
 $\{pr_{P}(\bar a_{m_{1}}),\ldots,pr_{P}(\bar
a_{m_{w-1}})\}$ forms a linearly independent sequence passing
through $pr_{P}(O)\in P^{w-1}$ and, for any hyperplane
$H_{\lambda}\subset P^{w-1}$, we have that, for $i\leq w-2$;\\

$<pr_{P}(\bar a_{m_{1}}),\ldots,pr_{P}(\bar a_{m_{i}})>\subset
H_{\lambda}$ iff $<\bar a_{m_{1}},\ldots,\bar a_{m_{i}}>\subset
<P,H_{\lambda}>$\\

It follows by $(*)$ and Theorem 6.2 that;\\

$I_{italian}(pr_{P}(O),pr_{P}(\gamma_{O}),pr_{P}(C),H_{\lambda})\geq\alpha_{0}+\ldots\alpha_{i}$\\

iff\\

$<pr_{P}(\bar a_{m_{1}}),\ldots,pr_{P}(\bar a_{m_{i}})>\subset
H_{\lambda}\ \ (i\leq w-2)$\\

Hence, by Theorem 6.2 again, we have that the branch
$pr_{P}(\gamma_{O})$ has character
$(\alpha_{0},\ldots,\alpha_{w-2})$.\\

If $P$ is situated generically on the $k$'th oscillatory plane,
but not on an oscillatory plane of lower order, then, the
sequence\\
 $\{pr_{P}(\bar a_{m_{1}}),\ldots,pr_{P}(\bar
a_{m_{k-1}}),pr_{P}(\bar a_{m_{k+1}}),\ldots,pr_{P}(\bar
a_{m_{w-1}})\}$ forms a linearly independent set, and, for $1\leq i\leq k-2$;\\

$<pr_{P}(\bar a_{m_{1}}),\ldots,pr_{P}(\bar a_{m_{i}})>\subset
H_{\lambda}$ iff $<\bar a_{m_{1}},\ldots,\bar a_{m_{i}}>\subset
<P,H_{\lambda}>$\\

whereas;\\

$<pr_{P}(\bar a_{m_{1}}),\ldots,pr_{P}(\bar a_{m_{k-1}})>\subset
H_{\lambda}$ iff $<\bar a_{m_{1}},\ldots,\bar a_{m_{k}}>\subset
<P,H_{\lambda}>$\\

and, for $1\leq j\leq (w-2-k)$;\\

$<pr_{P}(\bar a_{m_{1}}),\ldots,pr_{P}(\bar
a_{m_{k-1}}),pr_{P}(\bar a_{m_{k+1}}),\ldots,pr_{P}(\bar
a_{m_{k+j}})>\subset H_{\lambda}$\\

 iff\\

  $<\bar
a_{m_{1}},\ldots,\bar
a_{m_{k+j}}>\subset <P,H_{\lambda}>$\\

The rest of the theorem then follows immediately by the same
argument as given above.

\end{proof}

We also have the following important consequence of Theorem 6.1.

\begin{theorem}{Linearity of Multiplicity at a Branch}\\

Let $C\subset P^{w}$ be a projective algebraic curve and
$\gamma_{p}^{j}$ a branch of $C$, centred at $p$. Let $\Sigma$ be
an independent system having finite intersection with $C$. Then,
for any $k\geq 1$, the condition;\\

$\{\lambda\in
Par_{\Sigma}:I_{italian}(p,\gamma_{p}^{j},C,F_{\lambda})\geq
k\}$ $(*)$\\

is linear and definable.

\end{theorem}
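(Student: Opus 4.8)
The plan is to reduce the statement to a single computation with the analytic parametrisation of the branch provided by Theorem 6.1, where the dependence of the multiplicity on the parameter $\lambda$ becomes visibly linear. First I would fix a basis $\{\phi_0,\ldots,\phi_n\}$ of the independent system $\Sigma$, so that a choice of basis identifies $Par_\Sigma$ with $P^n$ via $\lambda=[\lambda_0:\cdots:\lambda_n]$ and every form of the system is $F_\lambda=\sum_{i=0}^n \lambda_i\phi_i$. Choosing affine coordinates $x_i=X_i/X_0$ in which $p$ is the origin (we may assume $p$ is in finite position), I dehomogenise each $\phi_i$ to its affine form $\phi_i^{res}$ and substitute the algebraic power series $(x_1(t),\ldots,x_w(t))$ parametrising $\gamma_p^j$ furnished by Theorem 6.1, obtaining power series $g_i(t)=\phi_i^{res}(x_1(t),\ldots,x_w(t))\in L[[t]]$. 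Since dehomogenisation and substitution are linear in the coefficients, $F_\lambda^{res}(x_1(t),\ldots,x_w(t))=\sum_{i=0}^n \lambda_i\, g_i(t)$, so the whole $\lambda$-dependence is carried by a fixed finite family of power series.

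The hypothesis that $\Sigma$ has finite intersection with $C$ guarantees that no $F_\lambda$ contains the irreducible curve $C$, so by the dichotomy in Theorem 6.1 each substituted series is not identically zero and $I_{italian}(p,\gamma_p^j,C,F_\lambda)=ord_t\sum_{i=0}^n \lambda_i\, g_i(t)$. Writing $g_i(t)=\sum_{m\geq 0} c_{i,m}t^m$, the order of $\sum_i \lambda_i g_i(t)$ is at least $k$ exactly when the coefficient of $t^m$ vanishes for each $0\leq m\leq k-1$, i.e. when $\sum_{i=0}^n c_{i,m}\lambda_i=0$ for $m=0,1,\ldots,k-1$. This is a system of $k$ homogeneous linear equations in the coordinates $\lambda_0,\ldots,\lambda_n$, whence the set $(*)$ is the intersection of $Par_\Sigma$ with a linear subspace, which is precisely the asserted linearity.

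For definability I would observe that the coefficients $c_{i,m}$ lie in the field of definition of the data $\{\phi_i,\gamma_p^j\}$, so the linear conditions above are defined over that field; alternatively, transferring to a non-singular model $C^{ns}$ via Remarks 5.10 and Lemma 5.11 gives $I_{italian}(p,\gamma_p^j,C,F_\lambda)=I_{italian}(p_j,C^{ns},\overline{F_\lambda})$, and the sets $\{\lambda:I_{italian}(p_j,C^{ns},\overline{F_\lambda})\geq k\}$ are definable and Zariski closed by definability of multiplicity in Zariski structures, exactly as established in Lemmas 2.14 and 5.26. The genuinely new content beyond that earlier Zariski-structure machinery (which already yields closedness and definability) is the \emph{linearity}, and this rests entirely on the linear dependence of $F_\lambda$ on $\lambda$ surviving the substitution into the branch. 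The hard part, accordingly, is not the algebra of orders but the legitimacy of that substitution: one must check that the affine chart is valid at $p$ and that the formal $ord_t$ computation agrees with the intrinsic branch multiplicity $I_{italian}(p,\gamma_p^j,C,F_\lambda)$, and both of these are supplied precisely by Theorem 6.1.
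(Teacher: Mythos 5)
Your proposal is correct and rests on the same key ingredient as the paper's proof, namely the order formula $(*)$ of Theorem 6.1, which converts $I_{italian}(p,\gamma_{p}^{j},C,F_{\lambda})$ into $ord_{t}F_{\lambda}(x_{1}(t),\ldots,x_{w}(t))$ along the analytic parametrisation of the branch, together with the observation that this substitution is linear in $\lambda$. Where you diverge is in how linearity is extracted from that observation. The paper argues indirectly: it shows that if $F_{\lambda}$ and $F_{\mu}$ both satisfy $(*)$ then so does every member of the pencil they span, so the Zariski-closed set $W$ defined by $(*)$ contains the line through any two of its points, and then concludes (with an appeal to "it follows easily", via a dimension argument on spans of tuples) that $W$ is a plane in $Par_{\Sigma}$. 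You instead expand $F_{\lambda}^{res}(x_{1}(t),\ldots,x_{w}(t))=\sum_{i}\lambda_{i}g_{i}(t)$ in a basis of $\Sigma$ and read off the condition $ord_{t}\geq k$ as the explicit homogeneous linear system $\sum_{i}c_{i,m}\lambda_{i}=0$ for $0\leq m\leq k-1$, which exhibits $(*)$ as a linear subspace directly and simultaneously gives definability (and closedness) for free. Your route is slightly more informative: it bypasses the paper's closure-under-lines argument and its unproved final step, and it makes visible that the codimension of $\Sigma_{i}$ in the filtration of Remarks 6.6 is controlled by the rank of the coefficient matrix $(c_{i,m})$. The only point to keep explicit is the one you already flag: all forms of $\Sigma$ have the same degree, so dehomogenisation at a point in finite position is a single uniform operation and $F_{\lambda}^{res}=\sum_{i}\lambda_{i}\phi_{i}^{res}$ genuinely holds; with that, the argument is complete.
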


\begin{proof}
That $(*)$ is definable follows from the definition of
multiplicity at a branch and elementary facts about Zariski
structures. Moreover, $(*)$ is definable over the parameters of
$C$ and the point $p$. In order to prove linearity, suppose that
$F_{\lambda}$ and $F_{\mu}$ belong to $\Sigma$ and satisfy $(*)$.
Let $(x_{1}(t),\ldots,x_{w}(t))$ be a parameterisation of the
branch $\gamma_{p}^{j}$ as given by Theorem 6.1. Then;\\

$k\leq min\{ord_{t}F_{\lambda}(x_{1}(t),\ldots,x_{w}(t)),ord_{t}F_{\mu}(x_{1}(t),\ldots,x_{w}(t))\}$\\

Then, for any constant $c$;\\

$(F_{\lambda}+cF_{\mu})(x_{1}(t),\ldots,x_{w}(t))=F_{\lambda}(x_{1}(t),\ldots,x_{w}(t))+cF_{\mu}(x_{1}(t),\ldots,x_{w}(t))$\\

Hence,\\

$ord_{t}(F_{\lambda}+cF_{\mu})(x_{1}(t),\ldots,x_{w}(t))\geq k$\\

as well. This shows that the pencil of curves generated by
$\{F_{\lambda},F_{\mu}\}$ satisfies $(*)$. Let $W$ be the closed
projective subvariety of $Par_{\Sigma}$ defined by the condition
$(*)$. Then, $W$ has the property that for any $\{a,b\}\subset W$,
$l_{ab}\subset W$. It follows easily that $W$ defines a plane $H$
in $Par_{\Sigma}$ as required. (Use the fact that for any tuple
$\{a_{1},\ldots,a_{n}\}$ in $W$, the plane
$H_{a_{1},\ldots,a_{n}}\subset W$ and a dimension argument)

\end{proof}

\begin{rmk}
Note that, given $\Sigma$ of dimension $n$ as in the statement of
the theorem, we can, using the above theorem, find a sequence
$\{\beta_{0},\ldots,\beta_{n-1}\}$ and a filtration;\\

$\Sigma_{n-1}\subset\ldots\subset\Sigma_{i}\subset\ldots\subset\Sigma_{0}\subset\Sigma$\\

with $dim(\Sigma_{i})=(n-1)-i$ such that;\\

$I_{italian}(p,\gamma_{p}^{j},C,F_{\lambda})\geq
\beta_{0}+\ldots+\beta_{i}$ iff $F_{\lambda}\in\Sigma_{i}$\\

and these are the only multiplicities which can occur. Note also
that, as an easy consequence of the theorem, given any tuple
$(\beta_{0},\ldots,\beta_{n-1})$ and $\Sigma$ as above;\\

$\{x\in C:x\ has\ character\ (\beta_{0},\ldots,\beta_{n-1})\ with\
respect\ to\ \Sigma\}$\\

is constructible and defined over the field of definition of $C$
and $\Sigma$. In particular, it follows from the previous
Definition 6.3, in characteristic $0$, that there exist only
finitely many points on $C$ which are the origins of non-ordinary branches.\\

\end{rmk}

We have the following important characterisation of
multiplicity at a branch;\\

\begin{theorem}{Multiplicity at a Branch as a Specialised
Condition}\\

Let $C$ and $\Sigma$ be as in the statement of Theorem 6.5 and the
following remark. Fix independent generic points (over $L$)
$\{p_{0j},\ldots,p_{ij}\}$ in $\gamma_{p}^{j}$. Then the system
$\Sigma_{i}$ may be obtained by specialising the condition;\\

$\{\lambda\in Par_{\Sigma}:F_{\lambda}=0$ passes
through $\{p_{0j},\ldots,p_{ij}\}\}$.\\

That is, if $F_{\lambda}\in\Sigma_{i}$, there exists
$\lambda'\in{\mathcal V}_{\lambda}$ such that $F_{\lambda'}$
passes through $\{p_{0j},\ldots,p_{ij}\}$, while, if
$F_{\lambda'}$ passes through $\{p_{0j},\ldots,p_{ij}\}$, then its
specialisation $F_{\lambda}$ belongs to $\Sigma_{i}$.\\

\end{theorem}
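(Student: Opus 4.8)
The plan is to reduce everything to the analytic parametrisation of Theorem 6.1 and then solve a generalised Vandermonde system in the infinitesimal parameters. First I would fix the parametrisation $(x_1(t),\ldots,x_w(t))$ of the branch $\gamma_p^j$ supplied by Theorem 6.1, so that for $F_\lambda\in\Sigma$ one has $F_\lambda(x_1(t),\ldots,x_w(t))=\sum_m L_m(\lambda)\,t^m$, where each $L_m$ is a linear functional, defined over $L$, on the vector space underlying $Par_\Sigma$, and $\mathrm{ord}_t F_\lambda(x(t))=I_{italian}(p,\gamma_p^j,C,F_\lambda)$. Writing $\nu_0<\nu_1<\cdots$ for the orders at which $L_{\nu_r}$ first becomes independent of $\{L_m:m<\nu_r\}$, the construction in Theorem 6.5 and the remark following it identify $\Sigma_i$ with the linear subsystem $\{L_{\nu_0}(\lambda)=\cdots=L_{\nu_i}(\lambda)=0\}$ and show that the $\nu_r$ are exactly the multiplicities that occur. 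Since the $p_{sj}$ are independent generic points of $\gamma_p^j$ and $x(t)$ is defined over $L$, they correspond to independent generic infinitesimal parameter values $\tau_0,\ldots,\tau_i\in{\mathcal V}_0$, and the condition to be specialised is $F_{\lambda'}(x(\tau_s))=0$ for $0\le s\le i$.

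For the direction that specialisation lands in $\Sigma_i$, I would set $w_r=L_{\nu_r}(\lambda')$ and use the relations $L_m=\sum_{\nu_r\le m}c_{m,r}L_{\nu_r}$ (with $c_{m,r}\in L$) to rewrite each equation as $\sum_r w_r\,\Phi_r(\tau_s)=0$, where $\Phi_r(t)=\sum_{m\ge\nu_r}c_{m,r}t^m$ has $\mathrm{ord}_t\Phi_r=\nu_r$ and unit leading coefficient. The $(i+1)\times(i+1)$ matrix $\mathcal{M}=[\Phi_r(\tau_s)]_{0\le s,r\le i}$ is a generalised Vandermonde matrix whose leading term is $\det[\tau_s^{\nu_r}]=\prod_{s<s'}(\tau_{s'}-\tau_s)\cdot s_\mu(\tau)$; for generic distinct $\tau_s$ this does not vanish, so $\mathcal{M}$ is invertible. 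Solving for $w_0,\ldots,w_i$ by Cramer's rule and estimating valuations — the column that is replaced carries the tail $\sum_{r>i}w_r\Phi_r(\tau_s)$, the value at $\tau_s$ of a power series of $t$-order at least $\nu_{i+1}$ — yields a strictly positive valuation for each $w_r$, so every $w_r$ is infinitesimal and $L_{\nu_r}(\pi(\lambda'))=\pi(w_r)=0$. Hence $\pi(\lambda')\in\Sigma_i$.

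For the converse, given $\lambda\in\Sigma_i$ I would seek $\lambda'=\lambda+\delta$ with $\delta$ infinitesimal and prescribed values $L_{\nu_r}(\delta)=e_r$ for $r\le i$ and $L_{\nu_r}(\delta)=0$ for $r>i$. Because $L_{\nu_0}(\lambda)=\cdots=L_{\nu_i}(\lambda)=0$, the quantities $b_s:=F_\lambda(x(\tau_s))$ have $t$-order at least $\nu_{i+1}$, and the requirement $F_{\lambda'}(x(\tau_s))=0$ becomes the same invertible system $\sum_{r=0}^i e_r\Phi_r(\tau_s)=-b_s$. Its unique solution has each $e_r$ of strictly positive valuation by the identical Cramer estimate, so $\delta$ is infinitesimal, $\lambda'\in{\mathcal V}_\lambda$, and $F_{\lambda'}$ passes through every $p_{sj}$ by construction. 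The two implications together give that $\Sigma_i$ is precisely the specialisation of the condition of passing through $\{p_{0j},\ldots,p_{ij}\}$.

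The main obstacle I anticipate is rigorously justifying the infinitesimal estimates: that a genuine algebraic power series of order $\ge N$ with finite coefficients, evaluated at an infinitesimal $\tau$, has valuation at least $N\,v(\tau)$, and that the generalised Vandermonde determinant genuinely fails to cancel for independent generic infinitesimals. These are exactly the refined, non-first-order infinitesimal arguments flagged in the introduction and must be carried out in the non-standard model $P(K)$ using the algebraic power series framework of \cite{depiro1} and \cite{depiro2}. A secondary point needing care is verifying that independent generic points of the branch pull back to algebraically independent generic parameter values $\tau_s$, which holds because the parametrisation $x(t)$ is defined over $L$, so that the $\tau_s$ and the $p_{sj}$ are interdefinable over $L$.
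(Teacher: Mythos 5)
Your reduction of the statement to linear algebra along the branch is correct and illuminating: identifying $\Sigma_i$ with $\{L_{\nu_0}=\cdots=L_{\nu_i}=0\}$ via the parametrisation of Theorem 6.1 is exactly the content of Theorem 6.5 and Remark 6.6, and both directions of the theorem do come down to showing that the solution of your generalised Vandermonde system is infinitesimal. The gap is that the valuation estimate you defer to the end is not merely delicate --- it is false for arbitrary independent generic infinitesimals $\tau_0,\ldots,\tau_i$. Algebraic independence of the $\tau_s$ over $L$ places no constraint whatever on their relative valuations, nor does it prevent cancellation among symmetric functions of them, so the leading (Schur polynomial) part of $\det\mathcal{M}$ can collapse and be overtaken by the correction terms. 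Concretely: let $C$ be the twisted cubic $(t,t^2,t^3)$, $p$ the origin, and $\Sigma$ the $2$-dimensional system $w_0(X_0+X_1)+w_1X_2+w_2X_3$, so that $\nu_0=0$, $\nu_1=2$, $\nu_2=3$ and $\Sigma_1=\{[0:0:1]\}$. Take $\tau_1$ a generic infinitesimal and $\tau_0=-\tau_1+\epsilon$ with $\epsilon$ generic over $L(\tau_1)$ and $v(\epsilon)>2v(\tau_1)$; then $\tau_0,\tau_1$ are independent generic infinitesimals, the corresponding points are independent generic in $\gamma_p$, yet the unique form of $\Sigma$ through them is $[\tau_0^2\tau_1^2:-(\tau_0^2+\tau_0\tau_1+\tau_1^2)-\tau_0\tau_1(\tau_0+\tau_1):(\tau_0+\tau_1)+\tau_0\tau_1]$, whose specialisation is $[0:1:1]$, of order $2$ rather than $3$ on the branch. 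Here $\tau_0+\tau_1$ is anomalously small, the Vandermonde leading term of your denominator cancels, and the Cramer solution $w_1$ is a unit, not an infinitesimal.

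The missing idea --- which is the real content of the paper's proof --- is that the points cannot be taken simultaneously and symmetrically over $L$: they must be chosen \emph{successively}. The paper introduces them one at a time by induction, and at each step invokes the amalgamation property of the universal specialisation to re-base the standard model to $L(p_{i+1,j})^{alg}$, so that the remaining points are generic infinitesimals over the enlarged field; in that hierarchical configuration (each $v(\tau_s)$ dominating the value group generated by the later ones) your determinant estimates do hold and your computation reproduces the right answer. The paper's mechanism for each single step is also different from yours: it is the regularity of the cover $\bar F_{i+1}\subset U\times Par_{\Sigma}$ over the non-singular point, combined with summability of specialisation, which shows that a form of $\Sigma_i$ passing through one further point of $\mathcal{V}_p\cap C$ specialises into $\Sigma_{i+1}$; this argument is insensitive to the relative sizes of the infinitesimals. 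To repair your proof you would need either to adopt this successive choice of the $\tau_s$ (and justify that it entails no loss of generality), or to replace the one-shot Cramer estimate by an argument that survives the cancellations exhibited above.
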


\begin{proof}
We will first assume that the curve $C$ is non-singular, (hence,
by Lemma 5.4, there exists a single branch at $p$). Consider the
cover $F_{i}\subset ({C\setminus p})\times
Par_{\Sigma}$ given by;\\

$F_{i+1}(x,\lambda)\equiv (x\in C\cap F_{\lambda})\wedge (F_{\lambda}\in\Sigma_{i})$ $(i\geq 0)$\\

For generic $q\in C$, the fibre $F_{0}(q,y)$ has dimension
$n-1-(i+1)$. Hence, we obtain an open subset $U\subset C$ such
that $F_{i+1}\subset (U\setminus p)\times Par_{\Sigma}$ is regular
with fibre dimension $n-1-(i+1)$. Let $\bar F_{i+1}$ be the
closure of $F_{i+1}$ in $U\times Par_{\Sigma}$. We claim that
$\Sigma_{i+1}$ is defined by the fibre ${\bar F_{i+1}}(p,y)$.
First observe that, as $p$ has codimension $1$ in $U$, $dim({\bar
F_{i+1}}(p,y))\leq n-1-(i+1)$. As $p$ is non-singular, each
component of the fibre ${\bar F_{i+1}}(p,y)$ has dimension at
least $n-1-(i+1)$. Suppose that ${\bar F_{i+1}}(p,\lambda)$ holds,
then, by regularity of $p$ for the cover $\bar F_{i+1}$, given
$p'\in U\cap {\mathcal V}_{p}$ generic, we can find
$\lambda'\in{\mathcal V}_{\lambda}$ such that
$F_{i+1}(p',\lambda')$, that is $F_{\lambda'}$ passes through $p'$
and $F_{\lambda'}$ belongs to $\Sigma_{i}$. By definition of
$\Sigma_{i}$, we have that $I_{italian}(p,C,F_{\lambda'})\geq
\beta_{0}+\ldots+\beta_{i}$. As $p'$ is distinct from $p$, by
summability of specialisation, see the paper \cite{depiro2}, we
must have that
$I_{italian}(p,C,F_{\lambda})\geq\beta_{0}+\ldots+\beta_{i}+1$.
Therefore, in fact, by the above theorem,
$I_{italian}(p,C,F_{\lambda})\geq\beta_{0}+\ldots+\beta_{i+1}$ and
$F_{\lambda}$ belongs to $\Sigma_{i+1}$. By dimension
considerations, it follows that $\bar F_{i+1}(p,y)$ defines the
system $\Sigma_{i+1}$ as required.\\

We now prove one direction of the theorem by induction on $i\geq
0$. Suppose that $\{p_{0j},\ldots,p_{i+1,j}\}$ are given
independent generic points in $C\cap{\mathcal V}_{p}$. By the
above, if $F_{\lambda}$ belongs to $\Sigma_{i+1}$, then there
exists $\lambda'\in{\mathcal V}_{\lambda}$ such that
$F_{\lambda'}$ belongs to $\Sigma_{i}$ and passes through
$p_{i+1,j}$. Moreover, as all the covers $F_{i}$ are defined over
the field of definition $L$ of $C$, we may take $\lambda'$ to lie
in the field $L_{1}=L(p_{i+1,j})^{alg}$. Hence, $F_{\lambda'}$
does not pass through any of the other independent generic points
$\{p_{0j},\ldots,p_{ij}\}$. Let
$L_{2}=L(p_{0j},\ldots,p_{ij})^{alg}$. As $dim(p_{kj}/L)=1$, for
$1\leq k\leq i$, we may, without loss of generality, assume that
$L_{2}$ is linearly disjoint from $L_{1}$ over $L$. Hence, by the
amalgamation property for the universal specialisation, see the
paper \cite{depiro3}, we have that $\{p_{0j},\ldots,p_{ij}\}$
still belong to ${\mathcal V}_{p}\cap C$ when taking $P(L_{1})$ as
the standard model. Now, we consider the subsystem
$\Sigma'\subset\Sigma$ defined by;\\

$\Sigma'=\{F_{\lambda}:F_{\lambda}\ passes\ through\
p_{i+1,j}\}$\\

As $p_{i+1,j}$ was chosen to be generic, it cannot be a base point
for any of the subsystems;\\

$\Sigma_{n-1}\subset\ldots\subset\Sigma_{i}\subset\ldots\subset\Sigma_{0}\subset\Sigma$\\

Hence, we obtain a corresponding filtration;\\

$\Sigma_{n-2}\cap\Sigma'\subset\ldots\subset\Sigma_{i}\cap\Sigma'\subset\ldots\subset\Sigma_{0}\cap\Sigma'\subset\Sigma'$\\

with the properties in Remarks 6.6. We now apply the induction
hypothesis to $F_{\lambda'}\in\Sigma_{i}\cap\Sigma'$. We can find
$\lambda''\in{\mathcal V}_{\lambda'}$ such that $F_{\lambda''}$
passes through $\{p_{0j},\ldots,p_{ij}\}$ and $F_{\lambda''}$
belongs to $\Sigma'$. Hence $F_{\lambda''}$ passes through
$\{p_{0j},\ldots,p_{i+1,j}\}$. Finally, note that
$\lambda''\in{\mathcal V}_{\lambda}$, if one considers $P(L)$
rather than $P(L_{1})$ as the standard model. Hence, one direction
of the theorem is proved.\\

The converse direction may also be proved by induction on $i\geq
0$. Suppose that $F_{\lambda''}$ passes through independent
generic points\\
 $\{p_{0j},\ldots,p_{i+1,j}\}$ in
$\gamma_{p}^{j}$. As before, we may consider $p_{i+1,j}$ as
belonging to the standard model $P(L_{1})$ and
$\{p_{0j},\ldots,p_{ij}\}$ as belonging to ${\mathcal V}_{p}\cap
C$, relative to $P(L_{1})$. We again consider the subsystem
$\Sigma'\subset\Sigma$ as defined above. Let $\lambda'$ be the
specialisation of $\lambda''$ relative to $P(L_{1})$. Then,
$F_{\lambda'}$ belongs to $\Sigma'$, as $\Sigma'$ is defined over
$p_{i+1,j}$. Moreover, by the inductive hypothesis applied to
$\Sigma'$, $F_{\lambda'}$ also belongs to $\Sigma_{i}$. We now
apply the argument at the beginning of this proof, with $P(L)$ as
the standard model, to obtain that $F_{\lambda}$ belongs to
$\Sigma_{i+1}$, where $\lambda$ is the specialisation of
$\lambda'$, relative to $P(L)$. Clearly $\lambda''$ specialises to
$\lambda$, hence the converse direction is proved.\\

We still need to consider the case for arbitrary $C\subset P^{w}$.
Let $C^{ns}\subset P^{w'}$ be a non-singular model of $C$ and
suppose that the presentation $\Phi_{\Sigma'}$ of
$(C^{ns},\Phi^{ns})$ has $Base(\Sigma')$ disjoint from the fibre
$\Gamma_{[\Phi^{ns}]}(y,p)$. Let $\gamma_{p}^{j}$ correspond to
the infinitesimal neighborhood ${\mathcal V}_{p_{j}}$ of $p_{j}$
in $\Gamma_{[\Phi^{ns}]}(y,p)$ and let $\{{\overline
F}_{\lambda}\}$ be the system $\Sigma$ of lifted forms on $C^{ns}$
corresponding to the space of forms $\{F_{\lambda}\}$ in $\Sigma$.
By Lemma 5.12, it follows that, for $\lambda\in Par_{\Sigma}$,
$I_{italian}(p,\gamma_{p}^{j},C,F_{\lambda})=I_{italian}(p_{j},C^{ns},\overline{F_{\lambda}})$,
hence the character $(\beta_{0},\ldots,\beta_{n})$ of the branch
$\gamma_{p}^{j}$ with respect to the system $\Sigma$ is the same
as the character of the branch $\gamma_{p_{j}}$ with respect to
the lifted system $\Sigma$. Moreover, we obtain the same
filtration of $Par_{\Sigma}$, as given in Remark 6.6, for both
systems with respect to the branches
$\{\gamma_{p}^{j},\gamma_{p_{j}}\}$ $(\dag)$. Now, suppose that we
are given independent generic points $\{p_{0j},\ldots,p_{ij}\}$ in
$\gamma_{p}^{j}$. Then, by definition of $\gamma_{p}^{j}$, we can
find corresponding independent generic points
$\{p_{0j}',\ldots,p_{ij}'\}$ in $\gamma_{p_{j}}$. Now suppose that
$F_{\lambda}\in\Sigma_{i}$, then the corresponding
$\overline{F_{\lambda}}$ belongs to $\Sigma_{i}$. By the proof of
the above theorem for non-singular curves, we can find
$\lambda'\in{\mathcal V}_{\lambda}$ such that
$\overline{F_{\lambda'}}$ passes through
$\{p_{0j}',\ldots,p_{ij}'\}$. Then, by definition, the
corresponding $F_{\lambda}$ passes through
$\{p_{0j},\ldots,p_{ij}\}$. Conversely, suppose that we can find
$\lambda'\in{\mathcal V}_{\lambda}$ such that $F_{\lambda'}$
passes through $\{p_{0j},\ldots,p_{ij}\}$. Then the corresponding
$\overline {F_{\lambda'}}$ passes through
$\{p_{0j}',\ldots,p_{ij}'\}$. By the proof for non-singular
curves, the specialisation $\overline{F_{\lambda}}$ belongs to
$\Sigma_{i}$. Hence, by the observation $(\dag)$ above, the
corresponding $F_{\lambda}$ belongs to $\Sigma_{i}$ as well. The
theorem is then proved.
\end{proof}

\begin{rmk}
One can give a slightly more geometric interpretation of the
preceding theorem as follows;\\

Consider the cover $F\subset C^{i+1}\times Par_{\Sigma}$ given
by;\\

$F(p_{1},\ldots,p_{i+1},\lambda)\equiv\{p_{1},\ldots,p_{i+1}\}\subset
C\cap F_{\lambda}=0$\\

Generically, the cover $F$ over $C^{i+1}$ has fibre dimension
$n-(i+1)$. For a tuple $(p,\ldots,p)\in\Delta^{i+1}$, the
dimension of the fibre $F(p,\ldots,p)$ is $n-1$. By the above,
$\Sigma_{i}\subset F(p,\ldots,p)$, which has dimension $n-(i+1)$,
is regular for the cover, in the sense of the above theorem.\\

The theorem may be construed as a generalisation of an intuitive
notion of tangency. $i+1$ independent generic points on the branch
$\gamma_{p}^{j}$ determine a projective plane $H_{i}$ of dimension
$i$. As these $i+1$ points converge independently to $p$, the
plane $H_{i}$ converges to the $i$'th osculatory plane at $p$. As
with the proofs we have given of many of the original arguments in
\cite{Sev}, the method using infinitesimals in fact reverses this
type of thinking in favour of a more visual approach. In this
case, we have shown that, by moving the $i$'th osculatory plane
\emph{away} from $p$, we can cut the branch
$\gamma_{p}^{j}$ in $i+1$ independent generic points. \\

The theorem also provides an effective method of computing
osculatory planes at a branch $\gamma_{p}^{j}$ for $p\in C$.

\end{rmk}

We now require the following lemma;\\

\begin{lemma}
Let $F_{\lambda'}$ have finite intersection with $C$, where the
parameter $\lambda'$ is taken inside the non-standard model
$P(K)$. Then there exists a maximally independent set
$\{p_{0j},\ldots,p_{ij}\}$ of generic intersections (over $L$)
inside $\gamma_{p}^{j}$.
\end{lemma}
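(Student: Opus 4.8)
The plan is to realise the desired points as the intersections of $F_{\lambda'}$ with $C$ which actually lie on the branch $\gamma_{p}^{j}$, to observe that every such intersection other than $p$ is automatically generic over $L$, and then to pass to a maximal $L$-independent subset by the dimension theory of the ambient Zariski structure. First I would fix a non-singular model $(C^{ns},\Phi)$ and a presentation $\Phi_{\Sigma'}$ with $Base(\Sigma')$ disjoint from $\Gamma_{[\Phi]}(x,p)$, so that, by Definition 5.15 and Theorem 6.1, the branch $\gamma_{p}^{j}$ is parametrised by algebraic power series $(x_{1}(t),\ldots,x_{w}(t))$ with $p$ at $t=0$. Since $F_{\lambda'}$ has finite intersection with $C$, it does not vanish on $C$; hence by Theorem 6.1 the series $g(t)=F_{\lambda'}(x_{1}(t),\ldots,x_{w}(t))$ is not identically zero, and the points of $C\cap F_{\lambda'}$ lying in $\gamma_{p}^{j}$ are exactly the images under the parametrisation of the roots $t_{0}$ of $g$ with $\pi(t_{0})=0$. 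As $C\cap F_{\lambda'}$ is finite, there are only finitely many such points; write them as $\{q_{1},\ldots,q_{k}\}$, discarding $p$ itself should it occur.

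Next I would check that each $q_{i}$ is generic over $L$. Being a point of the branch, $q_{i}\in{\mathcal V}_{p}$, so $\pi(q_{i})=p$, whereas $q_{i}\neq p$. Since $L$ is algebraically closed, $\mbox{acl}(L)$ meets $P(K)$ only in the standard points $P(L)$, and every point of $P(L)$ is fixed by $\pi$; thus $q_{i}\notin P(L)$ forces $q_{i}\notin\mbox{acl}(L)$, and therefore $dim(q_{i}/L)=dim(C)=1$. This is precisely where the hypothesis that $\lambda'$ is taken inside $P(K)$ is used: only for a non-standard parameter can the intersections be infinitesimally near to, but distinct from, the standard point $p$. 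It follows that $\{q_{1},\ldots,q_{k}\}$ is a finite set of generic (over $L$) intersections of $F_{\lambda'}$ with $C$ inside $\gamma_{p}^{j}$.

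Finally I would extract a maximal independent subset. Using the pregeometry furnished by $\mbox{acl}$ over $L$ from the dimension theory of $(i)$ in $(\dag\dag)$, put $p_{0j}=q_{1}$ and, given an $L$-independent tuple $\{p_{0j},\ldots,p_{sj}\}\subseteq\{q_{1},\ldots,q_{k}\}$, adjoin any $q_{i}\notin\mbox{acl}(L,p_{0j},\ldots,p_{sj})$ whenever one is available. Each adjunction raises $dim(p_{0j},\ldots,p_{sj}/L)$ by one, so the process stops after at most $k$ steps and returns a set $\{p_{0j},\ldots,p_{ij}\}$ of $L$-independent generic intersections that is maximal within $C\cap F_{\lambda'}\cap\gamma_{p}^{j}$, as required; these are, by construction, points through which $F_{\lambda'}$ passes, so they feed directly into Theorem 6.7. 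The main obstacle is the first step rather than the (routine) extraction: one must justify that Theorem 6.1 transfers to the form $F_{\lambda'}$ with parameter in $P(K)$, so that ``lying in $\gamma_{p}^{j}$'' is faithfully recorded by the vanishing of $g$, and one must know that $F_{\lambda'}$ genuinely meets the branch away from $p$ (so that $\{q_{1},\ldots,q_{k}\}$ is non-empty and the resulting index $i$ is the one governed, via Lemma 5.23 and the filtration of Remark 6.6, by the multiplicity $I_{italian}(p,\gamma_{p}^{j},C,F_{\pi(\lambda')})$).
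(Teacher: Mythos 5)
Your proposal is correct and, at its core, is the same argument as the paper's: the paper simply takes $W=C\cap F_{\lambda'}\cap\gamma_{p}^{j}$, notes it is finite, and extracts a basis of $W$ over $L$ using strong minimality of $C$ — which is exactly your third paragraph, and your second paragraph makes explicit the (true, and tacit in the paper) fact that any basis element is automatically generic because a nonstandard point of ${\mathcal V}_{p}$ distinct from $p$ cannot lie in $P(L)=\mbox{acl}(L)\cap P(K)$. The one place you diverge is the first paragraph: the paper never invokes Theorem 6.1 or the power-series parametrisation here. Since $\gamma_{p}^{j}$ is already a subset of $C$ by Definition 5.15 (an ${\mathcal L}_{spec}$-definable set), the intersections "inside $\gamma_{p}^{j}$" are just the finite set $C\cap F_{\lambda'}\cap\gamma_{p}^{j}$, with finiteness coming from the hypothesis that $F_{\lambda'}$ has finite intersection with $C$; no identification with roots of $g(t)$ is needed. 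Consequently the "main obstacle" you flag at the end — transferring Theorem 6.1 to a parameter in $P(K)$, and showing the branch is met away from $p$ — is self-imposed and can be dropped: the lemma does not assert the set is non-empty, and the paper's phrasing ("if $dim(W/L)=i+1$, then there exists a basis") covers all cases uniformly. With the first paragraph deleted, your proof is a cleaner, fully detailed version of the paper's.
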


\begin{proof}
Let $W$ be the finite set of intersections inside $\gamma_{p}^{j}$
of $F_{\lambda'}$ with $C$. As $C$ is strongly minimal, if
$dim(W/L)=i+1$, then there exists a basis
$\{p_{0j},\ldots,p_{ij}\}$ of $W$ over $L$. In particular, we have
that $\{p_{0j},\ldots,p_{ij}\}$ are generically independent points
of $C$ and are maximally independent in $W$.
\end{proof}

We then have the following theorem;\\

\begin{theorem}{Intersections along a Branch}\\

Let hypotheses be as in the previous theorem. Let $i$ be maximal
such that $F_{\lambda'}$ belongs to $\Sigma_{i}$ and suppose that
$F_{\lambda'}$ intersects $\gamma_{p}^{j}$ in the maximally
independent set of generic points $\{p_{i+1,j},\ldots,p_{i+r,j}\}$
over $L$, where $r\leq (n-1)-i$. Then, if the branch
$\gamma_{p}^{j}$ has character $(\beta_{0},\ldots,\beta_{n})$ with
respect to the system $\Sigma$, $F_{\lambda'}$ intersects
$(\gamma_{p}^{j}\setminus p)$ in at least
$\beta_{i+1}+\ldots+\beta_{i+r}$ points, counted with
multiplicity.

\end{theorem}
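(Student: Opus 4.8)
The plan is to pass to the analytic picture supplied by Theorem 6.1 and to read everything off from the order in $t$ of the restriction of $F_{\lambda'}$ to a parametrisation of the branch. First I would fix a parametrisation $(x_{1}(t),\ldots,x_{w}(t))$ of $\gamma_{p}^{j}$, defined over the algebraic closure of $L$, with the origin corresponding to $p$, and set $g(t)=F_{\lambda'}(x_{1}(t),\ldots,x_{w}(t))\in K[[t]]$. Writing $\lambda=\pi(\lambda')$ for the standard specialisation of the non-standard parameter $\lambda'$, I would also put $g_{0}(t)=F_{\lambda}(x_{1}(t),\ldots,x_{w}(t))$; since each Taylor coefficient $c_{m}(\cdot)$ of $F_{(\cdot)}(x(t))$ is $L$-linear in the parameter, $g_{0}$ is the coefficientwise specialisation of $g$. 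The whole argument then reduces to comparing $ord_{t}\,g$, $ord_{t}\,g_{0}$, and the orders of $g$ at its nonzero infinitesimal roots.

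The first key step computes $ord_{t}\,g$ exactly. By Remark 6.6 the subspace $\Sigma_{s}$ is cut out by the $L$-linear conditions $c_{m}(\lambda)=0$ for $m<\beta_{0}+\cdots+\beta_{s}$, and the clause that \emph{only} the multiplicities $\beta_{0}+\cdots+\beta_{s}$ occur says precisely that, over $L$, the linear space $\{ord_{t}\geq\beta_{0}+\cdots+\beta_{i}+1\}$ coincides with $\Sigma_{i+1}$. An equality of two $L$-rational linear subspaces transfers to $K$, so for the non-standard $\lambda'$ one still has $\lambda'\in\Sigma_{i+1}(K)$ iff $ord_{t}\,g\geq\beta_{0}+\cdots+\beta_{i}+1$. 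Since $i$ is maximal with $F_{\lambda'}\in\Sigma_{i}$, i.e. $\lambda'\in\Sigma_{i}(K)\setminus\Sigma_{i+1}(K)$, this forces $ord_{t}\,g=\beta_{0}+\cdots+\beta_{i}$, the multiplicity absorbed at $p$ itself.

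The second step bounds $ord_{t}\,g_{0}=I_{italian}(p,\gamma_{p}^{j},C,F_{\lambda})$ from below. Here I would regard $\Sigma_{i}$ as a linear system in its own right: it has finite intersection with $C$, its branch filtration is the tail $\Sigma_{i+r}\subset\cdots\subset\Sigma_{i+1}\subset\Sigma_{i}$, and its character with respect to $\gamma_{p}^{j}$ is $(\beta_{0}+\cdots+\beta_{i},\beta_{i+1},\ldots,\beta_{n})$, since the multiplicity jumps along the filtration are unchanged. Because $F_{\lambda'}\in\Sigma_{i}$ passes through the $r$ independent generic points $\{p_{i+1,j},\ldots,p_{i+r,j}\}$ of the branch, Theorem 6.9 applied to the ambient system $\Sigma_{i}$ (whose level-$(r-1)$ subsystem is $\Sigma_{i+r}$, the bound $r\leq(n-1)-i=\dim\Sigma_{i}$ guaranteeing this level exists) shows that the specialisation $F_{\lambda}$ lies in $\Sigma_{i+r}$. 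Consequently $ord_{t}\,g_{0}=I_{italian}(p,\gamma_{p}^{j},C,F_{\lambda})\geq\beta_{0}+\cdots+\beta_{i+r}$.

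The final step is a bookkeeping of where the intersection goes under specialisation. All intersections of $F_{\lambda'}$ with $\gamma_{p}^{j}$ lie in ${\mathcal V}_{p}$ and specialise to $p$; by summability of specialisation (the branched versions of this section together with \cite{depiro2}) the quantity $ord_{t}\,g_{0}$ absorbed at $p$ for $F_{\lambda}$ is the sum of $ord_{t}\,g$ and the orders $ord_{t_{k}}\,g=I_{italian}(q_{k},\gamma_{p}^{j},C,F_{\lambda'})$ at the finitely many nonzero infinitesimal roots $t_{k}$, i.e. at the intersection points $q_{k}=x(t_{k})\in\gamma_{p}^{j}\setminus p$. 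Subtracting the first step from the second gives $\sum_{q\in(\gamma_{p}^{j}\setminus p)\cap F_{\lambda'}}I_{italian}(q,\gamma_{p}^{j},C,F_{\lambda'})=ord_{t}\,g_{0}-ord_{t}\,g\geq\beta_{i+1}+\cdots+\beta_{i+r}$, which is the assertion. I expect the main obstacle to be exactly this last step: making rigorous that the total order of $g$ at its roots in the maximal ideal equals $ord_{t}\,g_{0}$ (a Weierstrass-preparation and continuity-of-roots statement over the non-standard valued field) and matching each nonzero root's order with the branch intersection multiplicity at the corresponding point. The transfer of the character in the first step is routine once phrased as an equality of $L$-rational linear spaces, and the second step is a direct application of Theorem 6.9 to the subsystem $\Sigma_{i}$.
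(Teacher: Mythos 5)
Your proposal follows essentially the same route as the paper's proof: the key step in both is that the standard specialisation $F_{\lambda}$ of $F_{\lambda'}$ lands in $\Sigma_{i+r}$ by applying the specialisation theorem (Theorem 6.7, not 6.9 as you cite) to the subsystem $\Sigma_{i}$, after which the difference $(\beta_{0}+\cdots+\beta_{i+r})-(\beta_{0}+\cdots+\beta_{i})$ is distributed over $\gamma_{p}^{j}\setminus p$ by summability of specialisation. The paper phrases this directly in terms of $I_{italian}$ and quotes summability from \cite{depiro2} (handling singular $C$ by lifting to a non-singular model) rather than re-deriving it via Weierstrass preparation on the power-series parametrisation, but the decomposition and the inputs are identical.
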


\begin{proof}
We assume first that $C$ is non-singular. Let $F_{\lambda}$ be the
specialisation of $F_{\lambda'}$ relative to the standard model
$P(L)$. Then by Theorem 6.7, $F_{\lambda}$ belongs to
$\Sigma_{i+r}$ (replace the system $\Sigma$ in Theorem 6.7 by the
system $\Sigma_{i}$.) Hence, $I_{italian}(p,C,F_{\lambda})\geq
\beta_{0}+\ldots+\beta_{i+r}$, whereas
$I_{italian}(p,C,F_{\lambda'})=\beta_{0}+\ldots+\beta_{i}$. It
follows immediately, by summability of specialisation, see the
paper \cite{depiro2}, that the total multiplicity of intersections
of $F_{\lambda'}$ with $C$ inside the branch
$(\gamma_{p}^{j}\setminus p)$ is at least;\\

$(\beta_{0}+\ldots+\beta_{i+r})-(\beta_{0}+\ldots+\beta_{i})=\beta_{i+1}+\ldots+\beta_{i+r}$\\

as required. If $C$ is singular, let $(C^{ns},\Phi^{ns})$ be a
non-singular model, with a presentation $\Phi_{\Sigma'}$ such that
$Base(\Sigma')$ is disjoint from $\Gamma_{[\Phi]}(x,p)$. Then,
given the maximally independent set of generic points\\
$\{p_{i+1,j},\ldots,p_{i+r,j}\}$, in $\gamma_{p}^{j}$, for the
intersection of $C$ with $F_{\lambda'}$, we obtain a maximally
independent set for the intersection ${\overline F}_{\lambda'}\cap
C\cap{\mathcal V}_{p}$. By the above, and the fact that the
character of the branch $\gamma_{p}^{j}$ with respect to
$\{F_{\lambda}\}$ equals the character of the branch
$\gamma_{p_{j}}$ with respect to ${\overline F}_{\lambda}$, we
obtain that ${\overline F}_{\lambda'}$ intersects the branch
$(\gamma_{p_{j}}\setminus p_{j})$ in at least
$\beta_{i+1}+\ldots+\beta_{i+r}$ points with multiplicity. Hence,
using for example Lemma 5.12, and the fact that $Base(\Sigma')$ is
disjoint from $\gamma_{p_{j}}$, we obtain that $F_{\lambda'}$
intersects $(\gamma_{p}^{j}\setminus p)$ in at least
$\beta_{i+1}+\ldots+\beta_{i+r}$ points with multiplicity as well.
\end{proof}

\begin{rmk}
Note that, in the statement of the theorem, one cannot obtain that
$F_{\lambda'}$ intersects the branch $(\gamma_{p}^{j}\setminus p)$
in exactly $\beta_{i+1}+\ldots+\beta_{i+r}$ points, with
multiplicity. For example, consider the algebraic curve $C$ given
in affine coordinates by $x^{3}-y^{2}=0$. At $(0,0)$, this has a
cusp singularity with character $(2,1)$. The tangent line or
$1$'st osculatory plane, is given by $y=0$. The line
$y-\epsilon=0$, where $\epsilon$ is an infinitesimal, cuts the
branch of $C$ at $(0,0)$ in exactly $3=2+1$ points. However, the
total transcendence degree (over $L$) of these points is clearly
$1$. Neither can one obtain that $F_{\lambda'}$ intersects the
branch $(\gamma_{p}^{j}\setminus p)$ transversely. For example,
consider the algebraic curve $C$ given in affine coordinates by
$y-x^{2}=0$. Let $\Sigma$ be the $2$-dimensional system consisting
of (projective) lines. As $(0,0)$ is an ordinary simple point of
$C$, it has character $(1,1)$, which is also the character of
$(0,0)$ with respect to the system $\Sigma$. Again, the tangent
line or $1$'st osculatory plane, is given by $y=0$. The line
$y=(2\epsilon)x-\epsilon^{2}$ cuts the branch of $C$ at $(0,0)$ in
exactly one point $(\epsilon,\epsilon^{2})$, with multiplicity
$2$, and specialises to $y=0$.
\end{rmk}

\begin{rmk}
The above theorem is critical in calculations relating to the
transformation of branches by duality. We save this point of view
for another occasion.
\end{rmk}

We finish this section by applying the above ideas to examine the
behaviour of hyperplane systems on an arbitrary projective algebraic curve;\\

\begin{lemma}

Let $C\subset P^{w}$ be \emph{any} projective algebraic curve and let;\\

$V^{*}\subset NonSing(C)\times P^{w*}$ be $\{(x,\lambda):x\in
NonSing(C)\wedge H_{\lambda}\supset l_{x}\}$\\

Then $V^{*}$ defines an irreducible algebraic variety and, if
$\bar V^{*}\subset C\times P^{w*}$ defines its Zariski closure,
then, for a singular point $p$, which is the origin of branches
$\{\gamma_{p}^{1},\ldots,\gamma_{p}^{m}\}$, the fibre $\bar
V^{*}(p)$ consists exactly of the parameters for hyperplanes
$H_{\lambda}$, which contain at least one of the tangent lines
$l_{\gamma_{p}^{j}}$, ($1\leq j\leq m$).

\end{lemma}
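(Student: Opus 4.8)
The plan is to prove the two assertions separately: first that $V^{*}$ is an irreducible algebraic variety, and then the description of the fibre $\bar V^{*}(p)$ over a singular point $p$. For the first part, I would show $V^{*}$ is closed in $NonSing(C)\times P^{w*}$ by reusing the construction of $Tang(C)$ from Section 1. There the tangent line was realised as $l_{x}=\bigcap_{j}dG_{j}(x)$ for the local defining polynomials $\{G_{j}\}$, and the incidence $\{(x,\bar X):\bar X\in l_{x}\}$ was exhibited as an algebraic set. The condition $H_{\lambda}\supset l_{x}$ is then just the algebraic requirement that the linear form $\lambda$ lie in the span of the differentials $\{dG_{j}(x)\}$, so $V^{*}$ is algebraic, indeed closed over $NonSing(C)$. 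Irreducibility follows by examining the projection $\pi:V^{*}\to NonSing(C)$: each fibre is the linear family of hyperplanes through a fixed line, hence an irreducible $P^{w-2}$ of constant dimension, while the base $NonSing(C)$ is open in the irreducible curve $C$. By the elementary dimension theory for equidimensional covers used repeatedly in Sections 1 and 2, a dominant family with irreducible base and irreducible equidimensional fibres has irreducible total space, so $V^{*}$ is irreducible of dimension $w-1$.

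For the fibre description I would work in the nonstandard model $P(K)$ and use the specialisation characterisation of Zariski closure: $\lambda\in\bar V^{*}(p)$ iff there exist $x'\in NonSing(C)\cap{\mathcal V}_{p}$ and $\lambda'\in{\mathcal V}_{\lambda}$ with $H_{\lambda'}\supset l_{x'}$. The central geometric input is that along a branch the tangent line specialises to the branch tangent line. Concretely, by Theorem 6.1 I would fix a power series parametrisation $(x_{1}(t),\ldots,x_{w}(t))$ of $\gamma_{p}^{j}$ centred at $p$; for an infinitesimal $t_{0}\neq 0$ the point $x'=x'(t_{0})$ is non-singular, since the only singular point of $C$ lying in ${\mathcal V}_{p}$ is $p$ itself, and the genuine tangent line $l_{x'}$ has direction the velocity $(x_{1}'(t_{0}),\ldots,x_{w}'(t_{0}))$. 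Dividing by the appropriate power of $t_{0}$, this direction is infinitesimally close (projectively) to the leading coefficient vector $\bar a_{m_{1}}$ spanning the first osculatory plane $l_{\gamma_{p}^{j}}$ of Definition 6.3. Hence $l_{x'}$ specialises to $l_{\gamma_{p}^{j}}$.

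Granting this, both inclusions become short. If $H_{\lambda}\supset l_{\gamma_{p}^{j}}$ for some $j$, I choose such an $x'$ on $\gamma_{p}^{j}$; the linear family $\{\mu:H_{\mu}\supset l_{x'}\}$ is infinitesimally close to $\{\mu:H_{\mu}\supset l_{\gamma_{p}^{j}}\}$, and since $\lambda$ lies in the latter I may perturb it to some $\lambda'\in{\mathcal V}_{\lambda}$ in the former, giving $(x',\lambda')\in V^{*}\cap{\mathcal V}_{(p,\lambda)}$ and so $\lambda\in\bar V^{*}(p)$. Conversely, if $\lambda\in\bar V^{*}(p)$, take $(x',\lambda')$ as in the specialisation criterion; since $x'\in{\mathcal V}_{p}\cap C=\bigcup_{j}\gamma_{p}^{j}$ (from Definition 5.15 and finiteness of $\Phi^{ns}$, every $x'\in{\mathcal V}_{p}\cap C$ lifts to a point of $\bigcup_{j}{\mathcal V}_{p_{j}}$ in the non-singular model), $x'$ lies on some $\gamma_{p}^{j}$, and specialising the closed containment $H_{\lambda'}\supset l_{x'}$ yields $H_{\lambda}\supset l_{\gamma_{p}^{j}}$, as required.

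I expect the main obstacle to be the rigorous identification of the specialisation of $l_{x'}$ with $l_{\gamma_{p}^{j}}$: one must confirm that $l_{x'}$ really is the tangent line to $C$ at the non-singular nonstandard point $x'$, match it with the velocity vector of the branch parametrisation, and verify that passing to leading terms commutes with specialisation of lines; this is exactly where Theorem 6.1 and the tangent-line computations of Section 1 and \cite{depiro2} are indispensable. The secondary point requiring care is the perturbation $\lambda\mapsto\lambda'$, which rests on the smoothness and equidimensionality of the family $l\mapsto\{\mu:H_{\mu}\supset l\}$ of linear subspaces, so that an infinitesimal variation of the line lifts to an infinitesimal variation of any prescribed point of the corresponding fibre.
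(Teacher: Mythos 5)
Your proposal is correct in substance, but it proves the fibre description by a genuinely different route from the paper. The paper never touches the power series directly: it lifts the incidence variety to a non-singular model $C^{ns}$ via a presentation $\Phi_{\Sigma'}$ whose base locus avoids the fibre over $p$, observes that the lifted cover is regular of fibre dimension $w-2$ over the non-singular point $p_{j}$, and then identifies $\bar V^{*,lift}(p_{j})$ with $\Sigma_{1}$ by two applications of Theorem 6.7 (multiplicity at a branch as a specialised condition): a hyperplane in the closed fibre perturbs to one containing $l_{p_{j}'}$ for generic $p_{j}'\in{\mathcal V}_{p_{j}}$, hence to one through two independent generic points of the branch, hence its specialisation lies in $\Sigma_{1}$; a dimension count then gives equality, and the result is pushed back down to $C$. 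You instead work downstairs on $C$ with the analytic parametrisation of Theorem 6.1, differentiate it to show that the tangent line $l_{x'}$ at a nonstandard point $x'=x(t_{0})$ of the branch specialises to the line through $p$ in the direction $\bar a_{m_{1}}$, and then run the specialisation criterion for Zariski closure in both directions. Both arguments work; yours is more direct and avoids invoking Theorem 6.7, but it pays for this by leaning on the semi-rigorous power-series calculus (one must justify that the velocity vector of the parametrisation spans $l_{x'}$ at a nonstandard point, exactly as in the chain-rule computation of Lemma 2.10), and it is tied to characteristic $0$: if the order $m_{1}$ of the branch is divisible by the characteristic, the leading term of the velocity vanishes and your identification of the specialised tangent line breaks down, whereas the paper's argument via specialised point conditions involves no differentiation. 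Your remaining ingredients --- the fibration argument for irreducibility, the decomposition ${\mathcal V}_{p}\cap C=\bigcup_{j}\gamma_{p}^{j}$ via surjectivity of $\Phi^{ns}$, and the lifting of $\lambda$ across the smooth equidimensional family of $(w-2)$-planes of hyperplanes through a varying line --- are all sound and consistent with what the paper leaves to "arguments given in Section 1".
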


\begin{proof}
The fact that $V^{*}$ defines an irreducible algebraic variety
follows easily from arguments given in Section 1. We let $\Sigma$
be the linear system defined by the set of hyperplanes
$\{H_{\lambda}:\lambda\in P^{w*}\}$ and let $C^{ns}$ be a
nonsingular model of $C$, with presentation $\Phi_{\Sigma'}$, such
that $Base(\Sigma')$ is disjoint from the fibres
$\{\Gamma_{[\Phi]}(x,p_{1}),\ldots,\Gamma_{[\Phi]}(x,p_{n})\}$,
where $\{p_{1},\ldots,p_{n}\}$ denote the singular points of $C$,
$(*)$. Let $\{V_{\Phi_{\Sigma'}},W_{\Phi_{\Sigma'}}\}$ be the
canonical sets associated to this presentation and let $\Sigma$
also denote the lifted system of forms on $C^{ns}$. We now lift
the cover $V^{*}$ to $C^{ns}$ by defining $V^{*,lift}\subset V_{\Phi_{\Sigma'}}\times P^{w*}$;\\

$V^{*,lift}=\{(x,\lambda):V^{*}(\Phi_{\Sigma'}(x),\lambda)\}$\\

Using the fact that $V^{*}$ defines an algebraic variety, it is
clear that $V^{*,lift}$ defines an algebraic variety as well. Let
$\bar V^{*,lift}$ be the Zariski closure of this cover inside
$C^{ns}\times P^{w}$. Now let $p$ be a singular point of $C$, let
$\gamma_{p}^{j}$ be a branch centred at $p$ and let $p_{j}$ be the
corresponding point of $C^{ns}$. By the assumption $(*)$, the
character of the branch $\gamma_{p}^{j}$, with respect to the
system $\Sigma$, is the same as the character of the branch
$\gamma_{p_{j}}$, with respect to the lifted system $\Sigma$. In
particular, the filtration given by Remarks 6.6 is the same. As in
Remarks 6.6, we let $\Sigma_{1}\subset\Sigma$ consist of the set
of hyperplanes $\{H_{\lambda}:H_{\lambda}\supset
l_{\gamma_{p}^{j}}\}$. We now claim that the fibre $\bar
V^{*,lift}(p_{j},z)$ defines $\Sigma_{1}$, $(**)$. The proof is
similar to the beginning of Theorem 6.7. Using the fact that
$p_{j}$ is non-singular, we have that $dim(\bar
V^{*,lift}(p_{j},z))=w-2$, $(\dag)$, and is regular for the cover
$(\bar V^{*,lift}/C^{ns})$. Now, suppose that $\bar
V^{*,lift}(p_{j},\lambda)$ holds, then, given generic
$p_{j}'\in({\mathcal V}_{p_{j}}\cap V_{\Phi_{\Sigma'}})$, we can
find $\lambda'\in {\mathcal V}_{\lambda}$ such that $\bar
V^{*,lift}(p_{j}',\lambda')$. In particular, using the assumption
$(*)$ again, the corresponding form $\overline{H_{\lambda'}}$ must
contain the tangent line $l_{p_{j}'}$. Now, applying the result of
Theorem 6.7 to the linear system $\Sigma_{p_{j}'}\subset\Sigma$ of
 forms passing through $p_{j}'$, taking $\{p_{j},p_{j}'\}$ as belonging
 to the standard model, we can find $p_{j}''\in{\mathcal V}_{p_{j}'}$,
  generically independent from $\{p_{j},p_{j}'\}$, and $\lambda''\in{\mathcal
V}_{\lambda'}$ such that $\overline{H_{\lambda''}}$ passes through
$\{p_{j}',p_{j}''\}$. As the pair $\{p_{j}',p_{j}''\}$ are
generically independent over $p_{j}$ and belong to ${\mathcal
V}_{p_{j}}$, taking only $p_{j}$ as belonging to the standard
model, again applying Theorem 6.7, we obtain that the
specialisation $\overline{H_{\lambda}}$ of
$\overline{H_{\lambda''}}$ belongs to $\Sigma_{1}$. Hence,
$\Sigma_{1}\subset \bar V^{*,lift}(p_{j},y)$ and the result $(**)$
then follows from this and the dimension consideration $(\dag)$.
We now project the cover $\bar V^{*,lift}$ to a cover
$W^{*}\subset C\times P^{w}$, by defining;\\

$W^{*}=\{(y,\lambda):y\in C\wedge\exists
x[\Gamma_{[\Phi]}(x,y)\wedge \bar V^{*,lift}(x,\lambda)]\}$\\

By construction, we have that $W^{*}$ is an irreducible closed
projective variety, such that its restriction
$W^{*}|W_{\Phi_{\Sigma'}}$ agrees with $V^{*}$. Hence, $W^{*}=\bar
V^{*}$. Moreover, for a singular point $p$ of $C$, the fibre
$W^{*}(p)$ consists exactly of the parameters for hyperplanes
$H_{\lambda}$, which contain at least one of the tangent lines
$l_{\gamma_{p}^{j}}$, where
$\{\gamma_{p}^{1},\ldots,\gamma_{p}^{j},\ldots,\gamma_{p}^{m}\}$
are the branches centred at $p$. This completes the proof.\\

\end{proof}

\begin{lemma}

Let $C\subset P^{w}$ be \emph{any} projective algebraic curve and let $O$ be
a fixed point of $C$, possibly singular, let;\\

$V^{*}\subset (C\setminus\{O\})\times P^{w*}$ be $\{(x,\lambda):x\in
C\setminus\{O\}\wedge H_{\lambda}\supset l_{Ox}\}$\\

Then $V^{*}$ defines an irreducible algebraic variety and, if
$\bar V^{*}\subset C\times P^{w*}$ defines its Zariski closure,
then, if $O$ is the origin of branches
$\{\gamma_{O}^{1},\ldots,\gamma_{O}^{m}\}$, the fibre $\bar
V^{*}(O)$ consists exactly of the parameters for hyperplanes
$H_{\lambda}$, which contain at least one of the tangent lines
$l_{\gamma_{O}^{j}}$, ($1\leq j\leq m$).

\end{lemma}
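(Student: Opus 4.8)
The plan is to follow the architecture of the preceding lemma (the tangent-line version of $V^{*}$), replacing the tangent-line cover by the present chord cover and replacing its ``limit of tangent lines'' step by a ``limit of chords'' computation. First I would dispose of irreducibility. For $x\in C\setminus\{O\}$ the assignment $x\mapsto l_{Ox}$ is an algebraic morphism into the space of lines $Par_{l}$ constructed in Section 1 (compose with the local trivialisations $\Theta_{ij}$), and for each such line the set $\{\lambda:H_{\lambda}\supset l_{Ox}\}$ is a projective subspace of $P^{w*}$ isomorphic to $P^{w-2}$, varying algebraically with $x$ through the incidence variety $I$. Hence $V^{*}$ is a projective bundle over the irreducible base $C\setminus\{O\}$ with irreducible equidimensional fibres, so it is an irreducible algebraic variety, and $\bar V^{*}$ is its Zariski closure in $C\times P^{w*}$.

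Next I would compute the fibre $\bar V^{*}(O)$. By the defining property of the universal specialisation, $\lambda\in\bar V^{*}(O)$ iff there exist $x'\in{\mathcal V}_{O}\cap(C\setminus\{O\})$ and $\lambda'\in{\mathcal V}_{\lambda}$ with $H_{\lambda'}\supset l_{Ox'}$. By Definition 5.15 every such $x'$ lies on exactly one of the branches $\gamma_{O}^{1},\ldots,\gamma_{O}^{m}$, so the whole lemma reduces to the single geometric assertion that, as $x'$ specialises to $O$ along a branch $\gamma_{O}^{j}$, the chord $l_{Ox'}$ specialises to the tangent line $l_{\gamma_{O}^{j}}$ of that branch, that is, to its first osculatory plane (Definition 6.3). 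To prove this I would parametrise $\gamma_{O}^{j}$ by Theorem 6.1: choosing affine coordinates with $O$ at the origin, write $x_{k}(t)=a_{k,m_{1}}t^{m_{1}}+(\text{higher order})$, where $\bar a_{m_{1}}=(a_{1,m_{1}},\ldots,a_{w,m_{1}})$ is the first non-vanishing coefficient vector, so that $m_{1}=\alpha_{0}$ is the order of the branch and $<\bar a_{m_{1}}>$ spans $l_{\gamma_{O}^{j}}$ (Theorem 6.2, Definition 6.3). For infinitesimal $t$ the direction of the chord $l_{O\,x(t)}$ is $[\,x(t)\,]=[\,\bar a_{m_{1}}+t\,\bar a_{m_{1}+1}+\cdots\,]$, which specialises to $[\bar a_{m_{1}}]$; hence $l_{O\,x(t)}$ specialises to the line through $O$ in direction $\bar a_{m_{1}}$, namely $l_{\gamma_{O}^{j}}$.

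With this in hand both inclusions follow formally. Since containment $H_{\mu}\supset\ell$ is a closed condition on $(\mu,\ell)$ (it is cut out on $Par_{l}\times P^{w*}$ by the incidence variety $I$), the inclusion $\bar V^{*}(O)\subseteq\bigcup_{j}\{\lambda:H_{\lambda}\supset l_{\gamma_{O}^{j}}\}$ holds: a witness $H_{\lambda'}\supset l_{Ox'}$ with $(\lambda',l_{Ox'})$ specialising to $(\lambda,l_{\gamma_{O}^{j}})$ forces $H_{\lambda}\supset l_{\gamma_{O}^{j}}$. Conversely, given $H_{\lambda}\supset l_{\gamma_{O}^{j}}$, I would take $x'$ generic over $L$ on $\gamma_{O}^{j}\cap{\mathcal V}_{O}$; then $l_{Ox'}$ specialises to $l_{\gamma_{O}^{j}}$, and since the hyperplanes through a fixed line form a projective subspace varying algebraically, the containment can be lifted to some $\lambda'\in{\mathcal V}_{\lambda}$ with $H_{\lambda'}\supset l_{Ox'}$, giving $\lambda\in\bar V^{*}(O)$. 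Taking the union over $j$ yields exactly the claimed description.

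I expect the main obstacle to be making the chord-to-tangent specialisation precise and uniform along the branch, in particular controlling the chord direction when the order $\alpha_{0}>1$ (so that the leading term is $t^{m_{1}}\bar a_{m_{1}}$ rather than $t\,\bar a_{1}$) and verifying that the closure picks up nothing beyond the tangent-line hyperplanes. I would handle this exactly as in the preceding lemma: pass to a non-singular model $(C^{ns},\Phi_{\Sigma'})$ with $Base(\Sigma')$ disjoint from the fibre $\Gamma_{[\Phi]}(x,O)=\{O_{1},\ldots,O_{m}\}$, lift the chord cover to $V^{*,lift}$ over $C^{ns}$, identify the fibre of $\bar V^{*,lift}$ over each smooth point $O_{j}$ with the system of hyperplanes through $l_{\gamma_{O}^{j}}$ by the parametrisation above together with a dimension count (each such fibre has dimension $w-2$, and $O_{j}$ is regular for the cover, paralleling the regularity argument of Theorem 6.7 so that there is no excess), and finally project back down to recover $\bar V^{*}$ and the union over $j$.
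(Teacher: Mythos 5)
Your proposal is correct and ends up at the same place as the paper, but the central computation is carried out by a genuinely different route. The paper's proof (which is deliberately parallel to the tangent-line lemma preceding it) never computes the limiting direction of the chord explicitly: it lifts the cover to a non-singular model, uses regularity of $O_{j}$ for the lifted cover to produce, for any $\lambda$ in the fibre, a $\lambda'\in{\mathcal V}_{\lambda}$ with $H_{\lambda'}\supset l_{OO'}$ for $O'$ a generic point of the branch, and then observes that $H_{\lambda'}$ passes through $O$ and through one generic point of $\gamma_{O}^{j}$, so that Theorem 6.7 (applied to the system $\Sigma_{0}$ of hyperplanes through $O$) forces the specialisation $H_{\lambda}$ into $\Sigma_{1}$, i.e.\ onto the tangent line of the branch; the reverse inclusion then comes for free from the dimension count $\dim(\bar V^{*,lift}(O_{j},z))=w-2=\dim(\Sigma_{1})$ together with irreducibility of $\Sigma_{1}$. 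You instead prove the key geometric fact directly: writing $x_{k}(t)=a_{k,m_{1}}t^{m_{1}}+\cdots$ via Theorem 6.1, the chord direction $[\bar a_{m_{1}}+t\bar a_{m_{1}+1}+\cdots]$ specialises to $[\bar a_{m_{1}}]$, and closedness of the incidence condition does the rest; the reverse inclusion you get by an explicit lifting along the $P^{w-2}$-bundle of hyperplanes through a moving line rather than by a dimension count. Your route is more transparent and actually identifies the limit of the chord, but it leans on the point you yourself flag: a witness $x'\in\gamma_{O}^{j}\cap{\mathcal V}_{O}$ for membership in the closure must be matched with a value $x(t)$ of the parametrisation, i.e.\ the Newtonian power-series branch must be known to cover (at least a generic point of) the Italian branch. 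The paper sidesteps exactly this by converting the containment $H_{\lambda'}\supset l_{Ox'}$ into the incidence statement ``$H_{\lambda'}$ passes through $O$ and a generic point of the branch'' and invoking Theorem 6.7, which is formulated entirely in the specialisation machinery. Your concluding fallback (pass to $C^{ns}$, use regularity of $O_{j}$ and the dimension count) is precisely the paper's argument, so the two proofs converge once that obstacle is taken seriously.
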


\begin{proof}
The proof is similar to Lemma 6.13. Using the same notation there, we choose a presentation $\Phi_{\Sigma'}$ of a nonsingular model $C^{ns}$. We define $\Sigma_{1}\subset\Sigma$ to consist of the set of hyperplanes $\{H_{\lambda}:H_{\lambda}\supset l_{\gamma_{O}^{j}}\}$, where $\gamma_{O}^{j}$ is one of the branches centred at $O$. We define $V^{*,lift}$ and $\bar V^{*,lift}$ as in the previous Lemma 6.13, using $\Phi_{\Sigma'}$, (with the corresponding modification of $V^{*}$). The aim is then to show the corresponding statement $(**)$ of Lemma 6.13, that the fibre $\bar V^{*}(O_{j},z)$ defines $\Sigma_{1}$, where $O_{j}$ corresponds to $\gamma_{O}^{j}$ in $C^{ns}$. As before, we have that $dim(\bar V^{*}(O_{j},z))=w-2$, $(\dag)$, and that $O_{j}$ is regular for the cover $(\bar V^{*,lift}/C^{ns})$. If $\bar V^{*,lift}(O_{j},\lambda)$ holds, then, given generic $O_{j}'\in({\mathcal V}_{O_{j}}\cap V_{\Phi_{\Sigma'}})$, we can find $\lambda'\in {\mathcal V}_{\lambda}$ such that $\bar V^{*,lift}(O_{j}',\lambda')$. The corresponding form $H_{\lambda'}$ then contains $l_{OO'}$ for some $O'$, generically independent from $O$, lying on $\gamma_{O}^{j}$. Applying Theorem 6.7 to the set of hyperplanes $\Sigma_{0}$ passing through $O$, we obtain that the specialisation $H_{\lambda}$ contains the tangent line $l_{\gamma_{O}^{j}}$. Combining this result with the dimension consideration $(\dag)$, we obtain $(**)$ as required. The result then follows by the same argument as Lemma 6.13.
\end{proof}
\end{section}

\begin{section}{Some Remarks on Frobenius}

When the field has non-zero characteristic, many of the above
arguments are complicated by the Frobenius morphism. However, we
take the point of view that this is an exception rather than a
general rule, hence the results are true if we exclude unusual
cases. We will consider each of the previous sections separately
and point out where to make these modifications. We briefly remind
the reader that given, algebraic curves $C_{1}$ and $C_{2}$, by a
generally biunivocal map, denoted for this section only using the
repetition of notation, $\phi:C_{1}\leftrightsquigarrow C_{2}$, we
mean a morphism $\phi$, defined on an open subset $U\subset
C_{1}$, such that $\phi$ defines a bijective correspondence
between $U$ and an open subset $V$ of $C_{2}$. In characteristic
$0$, a generally biunivocal map is birational, in the sense of
Definition 1.19. However, this is not true when the field has
non-zero characteristic, Frobenius being a counterexample.
 \\

Section 1. The results of this section hold in arbitrary
characteristic.\\

Section 2. We encounter the first problem in Theorem 2.3, the
proof of which depends on Lemma 2.10. Unfortunately, Lemma 2.10 is
\emph{not} true in arbitrary characteristic. However, as we will
explain below, Lemma 2.10 is true for a linear system $\Sigma$
which defines a birational morphism $\Phi_{\Sigma}$ on $C$. As
this was assumed in Theorem 2.3, its proof \emph{does} hold in
non-zero characteristic.\\

Lemma 2.10 does \emph{not} hold in arbitrary characteristic. Let
$C$ be the algebraic curve defined by $y=0$ in affine coordinates
$(x,y)$. Consider the linear system $\Sigma$ of dimension $1$
defined by $\phi_{t}(x,y):=(y=x^{2}+t)$ in characteristic $2$.
Then $\phi_{t}$ is tangent to $C$ at $(t^{1/2},0)$ for all $t$. In
particular, $(0,0)$ is a coincident mobile point for the linear
system. The reason for the failure of the lemma is that the
function $F(x,y)=y-x^{2}$ defines a Zariski unramified morphism on
$y=0$ at $(0,0)$, which is not etale, it is just the Frobenius map
in characteristic $2$. One can avoid such cases by insisting that
the linear system $\Sigma$ under consideration defines a separable
morphism on $C$ $(*)$. With this extra requirement and a result
from \cite{depiro1} (Theorem 6.11) that any locally Zariski
unramified separable morphism between curves is locally etale, the
proof of Lemma 2.10 holds.\\

The remaining results of the section are unaffected, with the
restriction $(*)$ on $\Sigma$ in non-zero characteristic. In
particular, Lemma 2.30 holds with this restriction. The proof of
the Lemma gives the existence of a generally biunivocal morphism
$\phi$. By seperability, this induces an isomorphism of the
function fields of the respective curves. By an elementary
algebraic and model theoretic argument, see for example \cite{H},
(Theorem 4.4 p25), one can then invert the morphism $\phi$ in the
sense of Definition 1.19. Therefore, $\phi$ will define a birational map.\\

Section 3. The proof of Lemma 3.2 requires results from Section 2
which may not hold in certain exceptional cases. However, the
Lemma is \emph{still} true in arbitrary characteristic. One should
replace the use of Lemma 2.30 by invoking general results for
plane curves in \cite{depiro2}. Lemma 3.6 and Theorem 3.3 also
holds, if we replace birational with biunivocal. In order to
obtain the full statement of Theorem 3.3 in arbitrary
characteristic, one can use the following argument;\\

We obtain from the argument of Theorem 3.3, in arbitrary
characteristic, a generally biunivocal morphism $\phi$ from
$C\subset P^{2}$ to $C_{1}\subset P^{w}$. This induces an
inclusion of function fields $\phi^{*}:L(C_{1})\rightarrow L(C)$.
We may factor this extension as $L(C_{1})\subset L(D)\subset
L(C)$, with $D$ an algebraic curve, $L(D)\subset L(C)$ a purely
inseperable extension and $L(C_{1})\subset L(D)$ a seperable
extension. We, therefore, obtain rational maps
$\phi_{1}:C\rightsquigarrow D$ and $\phi_{2}:D\rightsquigarrow
C_{1}$, such that $\phi_{2}\circ\phi_{1}$ is equivalent to $\phi$
as a biunivocal map between $C$ and $C_{1}$. Now, using the method
of \cite{depiro1} (Remarks 6.5), we may find an algebraic curve
$C'\subset P^{2}$ (apply some power of Frobenius to the
coefficients defining $C$) and a morphism $Frob^{n}:C\rightarrow
C'$, together with a birational map
$\phi_{3}:D\leftrightsquigarrow C'$ such that $Frob^{n}$ and
$\phi_{3}\circ\phi_{1}$ are equivalent as biunivocal maps between
$C$ and $C'$. We now obtain a seperable rational map
$\phi_{4}=\phi_{2}\circ\phi_{3}^{-1}:C'\rightsquigarrow C_{1}$,
such that $\phi_{4}\circ Frob^{n}$ and $\phi$ are equivalent as
biunivocal morphisms. Now let $U\subset NonSing(C)$ be an open set
on which $\phi$ and $\phi_{4}\circ Frob^{n}$ are defined and agree
as morphisms. By an elementary application of the chain rule and
the fact that the differential of the Frobenius morphism is
identically zero, one obtains that, for any $x\in U$,
$(D\phi)_{x}$ contains the tangent line $l_{x}$ of $C$. By the
methods in the introduction of Section 1, this is in fact a closed
condition on $(D\phi)$, hence, in fact $(D\phi)_{x}$ contains the
tangent line $l_{x}$ of $C$, for $x\in NonSing(C)$, at any point
where $\phi$ is defined. We can summarise this more generally in
the following lemma;

\begin{lemma}
Let $\phi:C\rightsquigarrow P^{w}$ be an inseperable rational map,
then, for \emph{any} nonsingular point $x$ of $C$ at which $\phi$
is defined, $(D\phi)_{x}$ contains the tangent line $l_{x}$ of
$C$.
\end{lemma}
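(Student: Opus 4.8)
The plan is to reduce, on a dense open subset, to the factorisation of $\phi$ through a power of the Frobenius morphism that was already carried out in the construction immediately preceding this lemma, to run the chain rule there, and then to spread the conclusion out to the whole nonsingular locus on which $\phi$ is defined by a closedness argument. First I would reproduce the function field factorisation for the given inseparable $\phi$. Writing $\phi^{*}:L(\overline{\phi(C)})\hookrightarrow L(C)$ for the induced inclusion of function fields, I factor it as $L(\overline{\phi(C)})\subseteq L(D)\subseteq L(C)$ with $L(\overline{\phi(C)})\subseteq L(D)$ separable and $L(D)\subseteq L(C)$ purely inseparable of some degree $p^{n}$; here $n\geq 1$ precisely because $\phi$ is inseparable. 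As in \cite{depiro1} (Remarks 6.5), the purely inseparable part is realised geometrically by a power of Frobenius $Frob^{n}:C\rightarrow C'$, where $C'$ is the Frobenius twist of $C$ (obtained by raising the coefficients defining $C$ to the $p^{n}$th power), together with a separable rational map $\psi:C'\rightsquigarrow P^{w}$ such that $\psi\circ Frob^{n}$ and $\phi$ are equivalent as rational maps. Thus there is a dense open $U\subseteq NonSing(C)$ on which $\phi$ and $\psi\circ Frob^{n}$ are both defined and agree as morphisms.

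Next, on $U$ I would simply invoke the chain rule. For $x\in U$ we have $(D\phi)_{x}=(D\psi)_{Frob^{n}(x)}\circ (DFrob^{n})_{x}$, and since the differential of the Frobenius morphism is identically zero, $(DFrob^{n})_{x}=0$; hence $(D\phi)_{x}$ vanishes on all of the tangent direction of $C$ at $x$, that is, $(D\phi)_{x}$ contains the tangent line $l_{x}$ in the sense that it vanishes on $l_{x}$. This is exactly the computation recorded in the paragraph preceding the lemma, now read off for \emph{every} point of the dense open set $U$ rather than for a single point.

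Finally, I would extend the conclusion from $U$ to every nonsingular point of $C$ at which $\phi$ is defined. The entries of $(D\phi)$ are algebraic in $x$ over the locus $\dom(\phi)$, and the tangent line $l_{x}$ varies algebraically over $NonSing(C)$ by the construction of $Tang(C)$ in the introduction of Section 1; consequently the incidence condition that $(D\phi)_{x}$ contain $l_{x}$ cuts out a Zariski closed subset of $NonSing(C)\cap\dom(\phi)$, by the same completeness and quantifier-elimination argument used there to exhibit $Tang(C)$ as a closed projective variety. Since this closed condition already holds on the dense open subset $U$, it must hold at every nonsingular point of $C$ at which $\phi$ is defined, which is the assertion of the lemma.

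The hard part will be the spreading-out step, not the chain-rule computation: the literal composition $\psi\circ Frob^{n}$ need only be defined on the dense open $U$, whereas $\phi$, extended to a morphism on the nonsingular locus as in the proof of Lemma 1.27, may be defined at further points where the chain rule cannot be applied directly (because $\psi$ need not be defined at their Frobenius images). Controlling the differential at those extra points is precisely what forces the closedness argument, so the delicate point is to verify carefully that ``$(D\phi)_{x}$ contains $l_{x}$'' really is an algebraic, hence Zariski closed, condition in the sense of Section 1, and not merely a pointwise statement valid on $U$.
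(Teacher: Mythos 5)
Your proposal is correct and follows essentially the same route as the paper: factor the function field inclusion into a separable part composed with a power of Frobenius, apply the chain rule on a dense open set where the factorisation holds as morphisms (the differential of Frobenius vanishing identically), and then spread the conclusion to all nonsingular points in the domain of $\phi$ by observing that the incidence of $(D\phi)_{x}$ with $l_{x}$ is a Zariski closed condition in the sense of the $Tang(C)$ construction of Section 1. The paper compresses all of this into the paragraph immediately preceding the lemma statement; your only addition is to flag explicitly the closedness verification, which the paper dispatches with a one-line appeal to Section 1.
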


By Remark 3.7, this property is excluded for a transverse
$g_{n}^{r}$ as used in Theorem 3.3.\\

Section 4. The projection construction defined at the beginning of
the section may fail to define a separable morphism in non-zero
characteristic. However, using Lemma 7.1 and methods from Section
1, one can easily show that this only occurs for projective curves
$C$ with the property that, for every $x\in NonSing(C)$, the
tangent line $l_{x}$ passes through a given point $P$. In this
case, the projection of $C$ from $P$ onto any hyperplane will be
inseparable. In \cite{H} such curves are called \emph{strange}.
Non-singular strange curves were completely classified by Samuel in \cite{S};\\

\begin{theorem}
The only strange non-singular projective algebraic curves are the
line and the conic in characteristic $2$.
\end{theorem}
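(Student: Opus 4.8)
The plan is to reduce to the case of a plane curve and there to analyse the defining equation directly. Throughout write $p$ for the characteristic; by the result $(\dag)$ established inside the proof of Lemma 4.2 a strange curve in characteristic $0$ is a line, so assume $p>0$. Recall that $C$ being strange with strange point $P$ means that every tangent line $l_x$, $x\in \mathrm{NonSing}(C)$, passes through $P$. Since $x\in l_x$ as well, for $x\neq P$ the tangent line is forced to be the line $l_{xP}$ joining $x$ and $P$; equivalently, a hyperplane $H$ through $P$ contains $l_x$ exactly when $x\in H$, so \emph{every} hyperplane through $P$ is tangent to $C$ at each of its points of intersection with $C$, giving $I_{italian}(x,C,H)\geq 2$ there. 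Using the analytic parametrisation of Theorem 6.1, if one places $P$ at the origin of an affine chart (assuming first $P\notin C$) and writes $x(t)=(x_1(t),\dots,x_w(t))$ for a branch, the condition that $l_{x(t)}$ pass through $P$ becomes $x'(t)=\lambda(t)x(t)$, whence $(x_i/x_j)'=0$ and each ratio $x_i/x_j$ is a $p$-th power in $L(C)$; this is the precise mechanism by which $(\dag)$ fails in characteristic $p$.

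First I would reduce to a plane curve. For $w\geq 4$ a general projection $pr_Q$ from a point $Q\notin C$, $Q\neq P$, is an isomorphism onto its image (the secant and tangent varieties of a curve have dimension $\leq 3$), and since projection carries lines to lines the image is again smooth and strange with strange point $pr_Q(P)$; iterating brings us down to $P^3$. The passage from $P^3$ to $P^2$ is the delicate point, since a general projection now introduces nodes and destroys smoothness; I expect \textbf{this reduction, together with the global control of smoothness, to be the main obstacle}, and would handle it either by bounding the degree of a smooth strange curve directly in $P^3$ or by carrying the nodal locus through the plane computation below. In any case it suffices to prove $\deg C\leq 2$, for an irreducible nondegenerate curve in $P^w$ has degree $\geq w$, so a strange curve of degree $\leq 2$ already spans at most a $P^2$.

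The heart of the argument is the plane case. Take $C\subset P^2$ smooth irreducible of degree $d\geq 2$, with strange point $P=(0:1:0)\notin C$, defined by a form $\Phi(X,Y,Z)$. Strangeness says the tangent line $\Phi_X X+\Phi_Y Y+\Phi_Z Z$ passes through $P$ for every $x\in C$, i.e. $\Phi_Y$ vanishes on $C$; as $\deg\Phi_Y<\deg\Phi$ and $\Phi$ is irreducible this forces $\Phi_Y\equiv 0$, so $Y$ occurs in $\Phi$ only through $Y^p$. Euler's relation then reads $X\Phi_X+Z\Phi_Z=d\,\Phi$, hence on $C$ we obtain the divisor identity $\mathrm{div}(X)+\mathrm{div}(\Phi_X)=\mathrm{div}(Z)+\mathrm{div}(\Phi_Z)$. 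Smoothness means $\Phi_X,\Phi_Z$ have no common zero on $C$; feeding this into the identity shows that $\mathrm{div}(\Phi_X)$ is supported on $C\cap\{Z=0\}$ and $\mathrm{div}(\Phi_Z)$ on $C\cap\{X=0\}$. At a point $x_0\in C\cap\{X=0\}$ one has $Z(x_0)\neq 0$ (else $x_0=P$) and $\Phi_X(x_0)\neq 0$, so the identity localises to $\mathrm{ord}_{x_0}(\Phi_Z)=\mathrm{ord}_{x_0}(X)$; summing over these points and using that $\Phi_Z$ is supported there yields $d(d-1)=\deg\mathrm{div}(\Phi_Z)=\deg\mathrm{div}(X)=d$. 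Therefore $d=2$.

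Finally I would read off the classification. If $d=1$ then $C$ is a line. If $d=2$, then since $Y$ can enter the degree-$2$ form $\Phi$ only through $Y^p$, a genuine dependence on $Y$ requires $p\leq 2$, hence $p=2$; conversely a smooth conic in characteristic $2$ is strange, its strange point being the nucleus through which all its tangent lines pass (as in the example $Y^2=XZ$). The residual case $P\in C$ collapses to the line, since there one of $\Phi_X,\Phi_Z$ is forced to vanish identically on $C$ and $C$ degenerates. Thus the only smooth strange curves are the line and, in characteristic $2$, the conic, as claimed; the single step genuinely requiring care is the reduction to the plane and the attendant smoothness bookkeeping.
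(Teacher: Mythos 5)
First, a point of orientation: the paper does not prove this statement at all --- it is quoted as Samuel's classification, with a citation to \cite{S} --- so there is no internal proof to compare yours against, and I am judging the proposal on its own terms. Your plane-curve computation is correct and is essentially the classical argument: strangeness at $P=(0:1:0)\notin C$ forces $\Phi_Y$ to vanish on $C$, hence $\Phi_Y\equiv 0$ by irreducibility and degree, Euler's relation gives $X\Phi_X=-Z\Phi_Z$ on $C$, and smoothness (which, given $\Phi_Y\equiv 0$, says exactly that $\Phi_X$ and $\Phi_Z$ have no common zero on $C$) localises the divisors so that $d(d-1)=\deg\mathrm{div}(\Phi_Z)=\deg\mathrm{div}(X)=d$, whence $d=2$ and then $p=2$. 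Two loose ends there: you should rule out $\Phi_Z\equiv 0$ (if it held, Euler's relation would force $X\mid\Phi$ or make $\Phi$ a $p$-th power, contradicting irreducibility), and your dismissal of the case $P\in C$ is wrong as stated --- neither $\Phi_X$ nor $\Phi_Z$ is forced to vanish identically on $C$; what actually happens is that the order computation at $P$, where $X$ and $Z$ both vanish, yields $d(d-1)=d-\mathrm{ord}_P(Z)<d$ and hence $d=1$. Both are repairable in a few lines.

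The genuine gap is the one you flag yourself: the reduction to the plane. Projection from $P^{w}$ down to $P^{3}$ from a general centre is indeed an isomorphism onto its image and preserves strangeness, but the step from $P^{3}$ to $P^{2}$ generically introduces nodes, and your plane argument uses smoothness of the \emph{plane model} in an essential way: at any singular point of a plane curve with $\Phi_Y\equiv 0$, both $\Phi_X$ and $\Phi_Z$ vanish, so the identification of $\mathrm{div}(\Phi_Z)$ with $\mathrm{div}(X)$ breaks down precisely where it is needed. You name two possible repairs but execute neither. Both are workable but neither is trivial: carrying the nodal locus through the computation is Pl\"ucker-type bookkeeping (the nodes inflate $\deg\mathrm{div}(\Phi_Z)$ by a term in the number $\delta$ of nodes, and one must then invoke $g=(d-1)(d-2)/2-\delta\geq 0$ to force $d\leq 2$, after justifying the exact local contribution of each node); alternatively one can avoid the plane entirely by observing that the projection from the strange point is ramified at every point of $C$ (every fibre through $x$ contains $l_x$), hence inseparable, and running a degree and genus argument there, which is closer to Samuel's own route. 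As it stands, the proposal proves the theorem only for smooth \emph{plane} curves; the general statement has not been reduced to that case.
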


However, there are examples of other \emph{singular} strange
projective algebraic curves in $P^{w}$, for $w\geq 2$, not
contained in any hyperplane section. For example, the curves
$Fr_{w}\subset P^{w}$ obtained by iterating Frobenius, given parametrically by;\\

 $(t,t^{p},t^{p^{2}},\ldots,t^{p^{w-1}})$ in characteristic $p$.\\

For these examples, Lemma 4.2 fails. In order to see this, pick
independent points $\{T,S\}$ on $Fr_{w}$ given by
$(t,t^{p},\ldots,t^{p^{w-1}})$ and\\
$(s,s^{p},\ldots,s^{p^{w-1}})$.
Then, the equation of the chord $l_{TS}$ is given parametrically by;\\

$(t+\lambda s,t^{p}+\lambda s^{p},\ldots,t^{p^{w-1}}+\lambda
s^{p^{w-1}})$\\

If $t+\lambda s=v$, and $V$ is given by
$(v,v^{p},\ldots,v^{p^{w-1}})$, then we have that the chord
$l_{TS}$ meets $V$, distinct from $\{T,S\}$, iff we can solve
$\lambda^{p-1}=1,\ldots,\lambda^{p^{w-1}-1}=1$ for $\lambda\neq
1$. This is clearly possible if $p\geq 3$. In this case, we would
have that the chord $l_{TS}$ intersects $Fr_{w}$ in at least $p$
points.\\

 Lemma 4.2 holds in arbitrary characteristic, if we exclude singular strange projective curves,
 however the proof should be modified as it involves Lemma 2.10
applied to a projection. If $C$ is a non-singular strange curve,
using the classification given above, the theorem has no content
as we assumed that $C$ was not contained in any hyperplane
section.\\

\begin{lemma}{Lemma 4.2 in arbitrary characteristic, excluding singular strange projective curves}\\

Let $C\subset P^{w}$, for $w\geq 3$, not contained in any
hyperplane section and such that $C$ is \emph{not} a singular
strange projective curve. Suppose that $\{A,B\}$ are independent
generic points of $C$, then the line $l_{AB}$ does not otherwise
meet the curve $C$.

\end{lemma}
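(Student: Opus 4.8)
The plan is to run the characteristic-$0$ argument of Lemma 4.2 essentially unchanged, the only delicate points being the several appeals to Lemma 2.10 (and to its converse $(*)$), each of which is made through a projection of $C$ from a generic point. The enabling observation is the one recorded in the discussion of Section 4: by Lemma 7.1 and the methods of Section 1, the projection of $C$ from a point $P$ fails to be separable \emph{exactly} when every tangent line $l_{x}$, $x\in NonSing(C)$, passes through $P$, that is, exactly when $C$ is strange with centre $P$. Since $C$ is by hypothesis not a singular strange curve, and since by Theorem 7.2 the only non-singular strange curves are the line and the conic in characteristic $2$ — neither of which can occur for a non-degenerate $C\subset P^{w}$ with $w\geq 3$, as each spans at most a $P^{2}$ — the curve $C$ is not strange. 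Hence no point of $P^{w}$ collects all the tangent lines of $C$, and every projection of $C$ from a generic point, in particular the projections $pr_{P}$ and $pr_{B}$ appearing in the argument, is separable.

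With this in hand I would first re-establish the auxiliary property $(*)$, that a hyperplane through $A$ containing the tangent line $l_{A}$ meets $C$ at $A$ with $I_{italian}(A,C,H_{\lambda})\geq 2$. The tangency statement on the plane model $C_{1}$ is supplied by the plane-curve results of \cite{depiro2}, which are valid in all characteristics, and the transfer back to $C$ is carried out by the separable version of Lemma 2.10 described in the treatment of Section 2 above, using that a birational map is of degree one and therefore separable, together with Theorem 6.11 of \cite{depiro1}. Once $(*)$ is available, the derivation of the identity $l_{1}=l_{2}$ for the projected tangent lines and of the concurrence statements $(***)$ and $(****)$ proceeds verbatim as in characteristic $0$, since every intermediate invocation of Lemma 2.10 is now legitimate: each is applied to a projection of $C$ from a generic point, and such projections are separable.

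The main obstacle is the degenerate step $(\dag)$, which is genuinely \emph{false} in characteristic $p$: its hypothesis, that the tangent lines at an open set of non-singular points all pass through a common point $Q$, is precisely the definition of a strange curve, and a strange curve need not be a line. The plan here is to rule out the concurrence $(****)$ directly for our non-strange $C$, rather than to project from $Q$ (which would be the inseparable projection). Concretely, for generic $x$ one has $pr_{B}(l_{x})=l_{pr_{B}(x)}\ni Q$, so that the $2$-plane $<B,l_{x}>$ contains the fixed line $l_{BQ}$ and therefore each tangent line $l_{x}$ meets $l_{BQ}$; feeding this incidence, together with the genericity and independence of $\{A,B\}$, back into the configuration, I would force the tangent lines of $C$ through a common point, making $C$ strange and contradicting the hypothesis. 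The crux — and the genuinely hard part — is exactly this last passage, from \textquotedblleft every tangent line meets the fixed line $l_{BQ}$\textquotedblright\ to \textquotedblleft $C$ is strange\textquotedblright, which is where the exclusion of \emph{singular} strange curves (and not merely the non-existence of a centre) must be used; the one remaining sporadic case, that a projection $pr_{B}(C)\subset P^{2}$ could be a characteristic-$2$ conic, is disposed of separately, since a conic lies in a plane and would place $C$ in a hyperplane section, contrary to hypothesis.
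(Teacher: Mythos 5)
Your separability reduction is exactly the paper's: since $C$ is by hypothesis not a singular strange curve, and by Samuel's classification a non-singular strange curve is a line or a conic and so spans only a $P^{2}$, no point of $P^{w}$ carries all the tangent lines of $C$, hence the projections $pr_{P}$ and $pr_{B}$ are separable and the appeals to Lemma 2.10 are legitimate. You also locate the obstruction correctly at $(\dag)$ and correctly pull the concurrence $(****)$ back from $pr_{B}(C)$ to the incidence statement that, on an open subset of $NonSing(C)$, each tangent line $l_{x}$ meets the fixed line $l_{B}$ (your $l_{BQ}$ is just $l_{B}$, since $Q=pr_{B}(l_{B})$). Up to this point you track the paper's argument.

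The problem is that the step you yourself flag as ``the genuinely hard part'' --- getting from ``every tangent line meets $l_{B}$'' to a contradiction --- is the whole content of the lemma in positive characteristic, and you do not supply it; ``feeding this incidence back into the configuration'' is a hope, not a proof, and the conclusion you anticipate (all tangent lines concurrent) does not follow from meeting a single fixed line. The paper closes the gap with a short dichotomy: since $B$ is generic, the same argument with $A$ as centre gives an open $W'$ on which $l_{x}$ meets \emph{both} $l_{A}$ and $l_{B}$; these are distinct (else $C$ is a line) and meet in one point $Q$; a line meeting two distinct concurrent lines either passes through $Q$ or lies in the plane $P_{AB}$ they span; each alternative is a closed condition on $x$, so by irreducibility one of them holds on all of $W'$. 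The first makes $C$ strange, excluded by the hypothesis plus Samuel's theorem; the second forces the generic chord, and hence $C$, into $P_{AB}$, contradicting non-degeneracy. Two smaller points: your aside that a characteristic-$2$ conic $pr_{B}(C)$ ``would place $C$ in a hyperplane section'' is wrong ($pr_{B}(C)$ always lies in the target plane of the projection, which says nothing about $C$), and the degenerate case in which $pr_{P}(l_{A})$ is a point was absorbed into $(\dag)$ in characteristic $0$, so it now needs separate treatment --- the paper dispatches it at the outset by observing that it forces $l_{A}=l_{B}=l_{AB}$ and hence $C$ to be a line.
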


\begin{proof}
We use the same notation as in Lemma 4.2. Let $pr_{P}$ be the
projection defined in this lemma. Suppose that $pr_{P}$ is
inseperable, then, by the above remarks $C$ is a strange
projective algebraic curve, such that all its tangent lines
$l_{x}$, for $x\in NonSing(C)$, pass through $P$. Hence, we can
assume that $pr_{P}$ is seperable. We now show that the degenerate
case $(\dag)$ cannot occur. Suppose that $pr_{P}(l_{A})$ is a
point. We have that $dim_{P}(A)=1$, hence we can find an open
$W\subset NonSing(C)$, defined over $P$, such that, for $x\in W$,
$l_{x}$ passes through $P$. In particular, as $dim_{P}(B)=1$,
$l_{B}$ passes through $P$, hence we must have that
$l_{A}=l_{B}=AB$. As $A$ and $B$ were independent, it follows
easily that $C$ must be a line $l$, which is a contradiction. We
can now follow through the proof of Lemma 4.2 to obtain that;\\

There exists an open $W\subset NonSing(C)$, defined over the field
of definition of $C$, such that, for $y\in pr_{B}(W)$, the $l_{y}$ intersect in a point $Q$. $(****)$\\

It follows that, for $x\in W$, the $l_{x}$ intersect $l_{B}$. In
particular, $l_{A}$ intersects $l_{B}$. If $l_{A}=l_{B}$, we
obtain that $C$ is a line, hence we may assume that $l_{A}\cap
l_{B}=Q$. As $B$ was generic, we can find an open subset
$W'\subset NonSing(C)$, defined over $AB$, such that, for $x\in
W'$, the $l_{x}$ intersect $l_{A}$ and $l_{B}$. Then, either, for
such $x\in W'$, the $l_{x}$ all pass through $Q$ or the $l_{x}$
all lie in the plane $P_{AB}$ defined by $l_{A}$ and $l_{B}$. In
the first case, we have that $C$ is a strange curve, contradicting
the hypotheses. In the second case, we use the fact that the plane
$Pl=P_{AB}$ must be defined over the field of definition of $C$
and then, by the fact that the generic chord $l_{AB}$ lies in
$Pl$, that $C$ must be contained in $Pl$ as well, contradicting
the hypotheses.
\end{proof}

Lemma 4.5 is true if we exclude singular strange projective
curves. In order to obtain the corresponding result for a singular
strange projective curve $C$, pick a generic point point $P\in
P^{w}$. Let $x\in C$ be generic and independent from $P$. We claim
that $l_{Px}$ does not otherwise meet the curve $(*)$. If not, we
can find $y\in C$, distict from $x$, such that $P\in l_{xy}$.
Hence, $dim(P/xy)\leq 1$ and $dim(P/x)\geq 3$.  Now calculate
$dim(Pxy)$ in two different ways;\\

(i). $dim(Pxy)=dim(P/xy)+dim(xy)\leq 1+2=3$\\

(ii).$dim(Pxy)=dim(y/Px)+dim(Px)\geq 0+4=4$\\

This clearly gives a contradiction. It follows, using $(*)$, by an
elementary model theoretic argument, that the projection $pr_{P}$
onto any hyperplane $H$ will be generally biunivocal on $C$. Lemma
4.6 may also be easily modified to include the case of singular
strange curves. Theorem 4.8 holds in arbitrary characteristic by
the modifications of Lemma 4.5 and Lemma 4.6 and by ensuring that
the projections $pr_{P}$ always define seperable morphisms. In the
case of strange curves, we can always ensure this by picking the
centre of projection $P$ to be disjoint from the \emph{bad} point
$Q$, defined as the intersection of the tangent lines. Lemma 4.9
is still true in arbitrary characteristic but the proof needs to
be modified in order to take into account singular strange
projective curves, (we implicitly used Lemma 4.2 in the proof).
Using the same notation as in the lemma, given $x\in C$, using the
same argument, we can find $P\in P^{w}$ generic, such that
$l_{xP}$ does not otherwise meet the curve. Now using the
modification of Lemma 4.5, the projection from $P$ will be
generally biunivocal and, by construction, biunivocal at $x$. We
can then obtain the lemma by repeating this argument. Lemma 4.12
holds in arbitrary characteristic provided the projection $pr$ is
seperable. As we have already remarked, this can always be
arranged in non-zero characteristic. Lemma 4.14 holds in arbitrary
characteristic, using the previous modified lemmas, and the fact
that a seperable biunivocal map, between $C$ and $pr(C)$, may be
inverted to give a birational map including the nonsingular points
of $pr(C)$. It follows that Theorem 4.15 holds in arbitrary
characteristic as well, by the modifications from Section 3.
Finally, Theorem 4.16 holds by checking the result for certain
further unusual curves, depending on generalisations of results in
later sections, (see $(\dag)$ below) . We should note that,
without these generalisations, Theorem 4.16 still holds if one
accepts the weaker definition of a node as the origin of $2$
linear branches (see Definition 6.3).\\

Section 5. The results of this section hold in arbitrary
characteristic up to Lemma 5.24. We only make the remark that it
is always possible to choice a \emph{maximal} linear system such
that it defines a separable morphism on a projective curve $C$.
The proof of Lemma 5.24 has the same complications as Lemma 2.10.
Again, we can avoid these complications and recover the remaining
results of the section in arbitrary characteristic, by the
assumption on the linear system $\Sigma$ that it defines a
seperable morphism on $C$. \\

Section 6. The results up to Definition 6.3 hold, by appropriate
choices of linear systems $\Sigma$. In Definition 6.3, the claim
that a generic point of an algebraic curve is an ordinary simple
point does not hold in arbitrary characteristic, $(\dag)$. An
example is given by the plane quartic curve
$F(x,y,z)=x^{3}y+y^{3}z+z^{3}x=0$ over a field of characteristic
$3$. Every point of this curve is an inflection point, that is a
point with character $(1,2)$. In
this case, the natural duality map;\\

$DF:C\rightarrow C^{*}$\\

$[x:y:z]\mapsto [F_{x}:F_{y}:F_{z}]=[z^{3}:x^{3}:y^{3}]$\\

 is purely inseparable. In order to prove Theorem 4.16 in arbitrary
characteristic, one needs to classify such exceptional curves.
This can be done, using work of Plucker on the transformation of
branches by duality, we save this point of view for another
occasion.
\end{section}


\begin{thebibliography}{99}
\bibitem{Piero} De Prospectiva Pingendi, (1482), Piero della
Francesca.\\
\bibitem{H} Algebraic Geometry, Springer (1977), Robin
Hartshorne.\\
\bibitem{HZ} Zariski Geometries, Journal of the AMS, 9(1996),
1-56, E. Hrushovski and B.Zilber.\\
\bibitem{N} Note on the intersection of an algebraic variety with
the generic hyperplane, Mem. Coll. Sci. Univ. Kyoto Ser. A Math,
26:2 (1950) pp. 185-187, Y. Nakai.\\
\bibitem{depiro1} Zariski Structures and Algebraic Curves, available on my website, (2007), old version "Zariski Structures and Algebraic Geometry" on AG ArXiv (0402301), (2004) T. de Piro.\\
\bibitem{depiro2} A Non-Standard Bezout Theorem for Curves, available on my website, (2007), old version "A Non Standard Bezout Theorem" on AG/LO ArXiv (0406176), (2004) T. de Piro.\\
\bibitem{depiro3} Infinitesimals in a Recursively Enumerable Prime Model, available on my website(2007), also on LO ArXiv (0510412), (2005), T. de Piro\\
\bibitem{depiro4} A Theory of Divisors for Algebraic Curves, available on my website(2007), T.de Piro\\
\bibitem{S} Lectures on old and new results on algebraic curves
(notes by S. Anantharaman), Tata Inst. Fund. Res. (1966), 127 pp,
P. Samuel.\\
\bibitem{Sev} Trattato di geometria algebrica, 1: Geometria delle serie lineari, Zanichelli, Bologna, (1926),
 Francesco Severi.\\
 \bibitem{Sev1} A proposito del "De prospectiva pingendi di Piero
 della Francesca", Atti e Mem. della R. Acc. Petrarca di Lettere,
 Arti e Scienze, (1943), 27-2, Francesco Severi.\\
\bibitem{Z} Lecture Notes on Zariski Geometries (2005), unpublished, see http://www.maths.ox.ac.uk/~zilber,
Boris Zilber.\\
\end{thebibliography}
\end{document}